\begin{document}
\setcounter{tocdepth}{1}

\newtheorem{theorem}{Theorem}    
\newtheorem{proposition}[theorem]{Proposition}
\newtheorem{corollary}[theorem]{Corollary}
\newtheorem{lemma}[theorem]{Lemma}
\newtheorem{sublemma}[theorem]{Sublemma}
\newtheorem{conjecture}[theorem]{Conjecture}
\newtheorem{claim}[theorem]{Claim}
\newtheorem{fact}[theorem]{Fact}
\newtheorem{observation}[theorem]{Observation}

\newtheorem{definition}{Definition}
\newtheorem{notation}[definition]{Notation}
\newtheorem{remark}[definition]{Remark}
\newtheorem{question}[definition]{Question}
\newtheorem{questions}[definition]{Questions}
\newtheorem{hypothesis}[definition]{Hypothesis}

\newtheorem{example}[definition]{Example}
\newtheorem{problem}[definition]{Problem}
\newtheorem{exercise}[definition]{Exercise}

 \numberwithin{theorem}{section}
 \numberwithin{definition}{section}
 \numberwithin{equation}{section}

\def\repair{\medskip\hrule\hrule\medskip}

\def\bff{\mathbf f}
\def\bE{\mathbf E}
\def\bF{\mathbf F}
\def\bK{\mathbf K}
\def\bP{\mathbf P}
\def\bx{\mathbf x}
\def\bi{\mathbf i}
\def\bk{\mathbf k}
\def\bt{\mathbf t}
\def\bc{\mathbf c}
\def\ba{\mathbf a}
\def\bw{\mathbf w}
\def\bh{\mathbf h}
\def\bn{\mathbf n}
\def\bg{\mathbf g}
\def\bc{\mathbf c}
\def\bs{\mathbf s}
\def\bp{\mathbf p}
\def\by{\mathbf y}
\def\bv{\mathbf v}
\def\be{\mathbf e}
\def\bu{\mathbf u}
\def\bm{\mathbf m}
\def\bxi{{\mathbf \xi}}
\def\bR{\mathbf R}
\def\by{\mathbf y}
\def\bz{\mathbf z}
\def\bfb{\mathbf b}
\def\bPhi{{\mathbf\Phi}}

\newcommand{\norm}[1]{ \|  #1 \|}

\def\scriptl{{\mathcal L}}
\def\scriptc{{\mathcal C}}
\def\scriptd{{\mathcal D}}
\def\scrapd{{\mathcal D}}
\def\scripts{{\mathcal S}}
\def\scriptq{{\mathcal Q}}
\def\scriptt{{\mathcal T}}
\def\scriptf{{\mathcal F}}
\def\scriptm{{\mathcal M}}
\def\scripti{{\mathcal I}}
\def\scriptr{{\mathcal R}}
\def\scriptb{{\mathcal B}}
\def\scripte{{\mathcal E}}
\def\scripta{{\mathcal A}}
\def\scriptn{{\mathcal N}}
\def\scriptv{{\mathcal V}}
\def\scriptz{{\mathcal Z}}
\def\scriptj{{\mathcal J}}
\def\scriptk{{\mathcal K}}
\def\scriptg{{\mathcal G}}
\def\scripth{{\mathcal H}}

\def\bk{\mathbf k}
\def\kernel{\operatorname{kernel}}
\def\dist{\operatorname{distance}}
\def\eps{\varepsilon}

\def\reals{\mathbb R}
\def\naturals{\mathbb N}
\def\integers{\mathbb Z}
\def\rationals{\mathbb Q}
\def\one{\mathbf 1}
\def\complex{{\mathbb C}\/}

\def\lt{{L^2}}

\def\three{\mathbf 3}
\def\four{\mathbf 4}

\def\bart{\bar t}
\def\barz{\bar z}
\def\barx{{\bar x}}
\def\bary{\bar y}
\def\barz{{\bar z}}
\def\bars{\bar s}
\def\barc{\bar c}
\def\baru{\bar u}
\def\barr{\bar r}

\def\distance{\operatorname{distance}}
\def\md{{\mathcal D}}

\def\lsharp{\Lambda^\sharp}
\def\lnatural{\Lambda^\natural}
\def\sS{{\mathcal S}}
\def\barsS{\overline{\mathbb S}}

\title {A Three Term  Sublevel Set Inequality}

 \author{Michael Christ}

\address{
        Michael Christ\\
        Department of Mathematics\\
        University of California \\
        Berkeley, CA 94720-3840, USA}
\email{mchrist@berkeley.edu}

\date{February 14, 2022.}

\begin{abstract}
An upper bound is established for the Lebesgue measure
of the set on which a certain type of variable coefficient linear combination
of measurable functions is small.
\end{abstract}

\thanks{Research supported by NSF grant
DMS-1901413}

\maketitle



\tableofcontents

\part{Introduction}
\section{An inequality}\label{section:intro}

This paper establishes an inequality in differentiable combinatorics.
The inequality is intended as a tool for the analysis of integrals
of products of irregularly oscillating functions.
To formulate it, let $B\subset\reals^2$ be a closed ball of finite, positive radius.
Let $\tilde B\supset B$ be open and connected.
For each $j\in\{1,2,3\}$
let $a_j:\tilde B\to\reals$ be a $C^\omega$ scalar-valued function
that does not vanish identically.
Let $\varphi_j:\tilde B\to\reals^1$ be a nonconstant real analytic mapping. 
Write $\ba = (a_1,a_2,a_3)$, and ${\mathbf \Phi} = (\varphi_1,\varphi_2,\varphi_3)$.
To any ordered triple $\bff = (f_1,f_2,f_3)$ of Lebesgue measurable
functions $f_j:\phi_j(B)\to\reals$ and any $\eps>0$ associate the sublevel set
\begin{equation}
S(\bff,\eps) = \big\{x\in B: \big|\sum_{j=1}^3 a_j(x)(f_j\circ\varphi_j)(x)\big|<\eps\big\}.
\end{equation}
Our theme is that under natural and relatively minimal hypotheses,
if $\eps$ is small then the set $S(\bff,\eps)$ has small Lebesgue measure,
unless the functions $f_j$ are themselves small.
This theme is expressed by 
a sublevel set inequality, by which we mean an upper bound of the form
\begin{equation} \label{ineq:mainbound}
\big|S(\bff,\eps)\cap\{x: |\bff\circ\Phi(x)|\ge 1\}\big| \le C\eps^\tau
\end{equation}
for some finite, positive constants $\eps,\tau$ independent of $\bff$.
$|E|$ denotes the Lebesgue measure of a set $E$.

Upper bounds of this type have arisen in a study \cite{christquad}
of quadrilinear forms $\int_B \prod_{j=1}^4 (g_j\circ\varphi_j)$, 
for which an upper bound in terms of a product of negative order Sobolev norms
of the functions $g_j$ is sought.
Scalar-valued sums such as 
$\sum_{j=1}^3 a_j(f_j\circ\varphi_j)$, with three summands,
arise as components of the gradients of net phase functions,
for fourfold products.
The interpretation of our inequality is that if $g_j$ are rapidly oscillating, then
approximate stationarity can occur only on a set of small Lebesgue measure.
In this way, an inequality \eqref{ineq:mainbound}
provides one of the principal elements of the analysis in the companion paper \cite{christquad}.
Simpler sublevel set inequalities have been applied to
trilinear oscillatory forms in \cite{triosc}, and subsequently in \cite{CDR}
and \cite{christzhou}.

Our main hypothesis is a simple necessary condition for even a very weak form of 
the inequality \eqref{ineq:mainbound} to hold.
We write $\bff\in C^\omega(\bPhi(U))$ to indicate that $f_j\in C^\omega(\varphi_j(U))$ for each $j\in\{1,2,3\}$.

\medskip
\noindent {\bf Main hypothesis.}\
For any open set
$U\subset \tilde B$ and any  $\bff\in C^\omega(\bPhi(U))$ that satisfies
\begin{equation}  \label{exactequation}
\sum_{j=1}^3 a_j(x)\,(f_j\circ\varphi_j)(x)=0\ \ \forall\,x\in U,
\end{equation}
$f_j$ vanishes identically in $\varphi_j(U)$ for each $j\in\{1,2,3\}$.

This hypothesis is equivalently reformulated in a more 	quantitative
way, involving finite order Taylor expansions, in
Proposition~\ref{prop:equivalent_hypothesis}.

\medskip
The main result of this paper is the following theorem.
It involves an auxiliary hypothesis formulated below.
 
\begin{theorem} \label{thm:main}
Let $\ba,\bPhi$ be real analytic in a neighborhood $\tilde B$
of a closed ball $B\subset\reals^2$.
Assume that 
none of the mappings $\varphi_j$ are constant on $B$,
and that there exists no pair of distinct indices $i\ne j\in\{1,2,3\}$ for which
$\nabla\varphi_i$ and $\nabla\varphi_j$ are everywhere linearly dependent.
Assume that $(\ba,\bPhi)$ satisfies both the main and the auxiliary hypotheses. 
Then there exist $C<\infty$ and $\tau>0$, depending only on $B,\ba,\bPhi$, 
such that the sublevel set inequality \eqref{ineq:mainbound} holds
for every ordered triple $\bff$ of Lebesgue measurable functions.
\end{theorem}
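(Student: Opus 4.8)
The plan is to combine a reduction to a local real-analytic normal form, a finite induction in which the three functions are successively eliminated by differentiation carried out at the level of coefficients, and a separate desingularization argument near the exceptional varieties where transversality degenerates. The qualitative main hypothesis will enter only through its quantitative reformulation, Proposition~\ref{prop:equivalent_hypothesis}, which is what makes the induction terminate.

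\emph{Reductions.} Since the set in \eqref{ineq:mainbound} is contained in the union over $k\in\{1,2,3\}$ of the sets on which $|f_k\circ\varphi_k|\ge 1/\sqrt3$, and $B$ is compact, it suffices to bound $|S(\bff,\eps)\cap B'\cap\{|f_k\circ\varphi_k|\ge c\}|$ for each $k$ and each ball $B'$ in a fixed finite cover of $B$. On the complement of a proper real-analytic subvariety of $B$ the three gradients $\nabla\varphi_1,\nabla\varphi_2,\nabla\varphi_3$ are pairwise transverse; there, after shrinking $B'$, one changes variables so that $\varphi_1(x)=x_1$ and $\varphi_2(x)=x_2$. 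The third map becomes a real-analytic function $\psi$ with $\partial_1\psi$ and $\partial_2\psi$ nowhere zero (here the hypothesis that no two gradients are everywhere dependent is used), and dividing the whole linear combination by whichever coefficient is bounded below on $B'$ reduces the problem to
\begin{equation}
\big|\,b_1(x)\,g_1(x_1)+b_2(x)\,g_2(x_2)+g_3(\psi(x_1,x_2))\,\big|<\eps'
\end{equation}
on a small square, with $b_1,b_2$ real analytic and $g_j$ merely measurable. Because the main hypothesis is coordinate-independent, it passes to this normal form, and Proposition~\ref{prop:equivalent_hypothesis} supplies an integer $N=N(\ba,\bPhi)$ with the following effect: no tuple of polynomials of degree $<N$ occupying the roles of the $f_j$ can make the combination vanish to high order without those polynomials themselves being controllably small.

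\emph{The induction.} The engine is the elementary principle that a measure lower bound yields a pointwise bound on a divided difference: if $|h|<\eta$ on a subset of an interval of measure $\ge\delta$, then among $\gtrsim\delta$-separated points of that subset a divided difference of $h$ of any order $m\le N$ is $\lesssim\eta\,\delta^{-N}$. Married to Fubini in the $(x_1,x_2)$ coordinates, this converts the hypothesis that $S(\bff,\eps)\cap B'\cap\{|f_k\circ\varphi_k|\ge c\}$ has measure $\ge\lambda$ into a statement of the same shape --- a variable-coefficient linear combination of finitely many ``approximate jets'' (divided differences of order $\le N$) of $g_1,g_2,g_3$ that is $\lesssim\eps\,\lambda^{-C}$ on a set of measure $\gtrsim\lambda^{C}$, while one approximate jet has modulus $\gtrsim c$ --- but with one of the three functions now entering only through a derivative. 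The term built from $g_3$ is removed by applying the vector field $(\partial_2\psi)\,\partial_1-(\partial_1\psi)\,\partial_2$, which annihilates every function of $\psi$; it is applied to the analytic coefficients and to the divided differences, never to the raw $f_j$, so no smallness is forfeited, and the nonvanishing of $\partial_1\psi,\partial_2\psi$ keeps the derived relation nondegenerate. Iterating $O(N)$ times leaves a relation whose coefficients are built from the $N$-jets of $\ba$ and $\bPhi$ alone, whose unknowns are approximate $N$-jets of the $f_j$ of modulus $\gtrsim c$, and whose defect is $\lesssim\eps\,\lambda^{-C}$; Proposition~\ref{prop:equivalent_hypothesis}, made quantitative by a \L ojasiewicz inequality for its analytic defining relations, then forces $\lambda\lesssim\eps^{\tau}$ for some $\tau>0$ obtained by bookkeeping the exponents through the $O(N)$ steps.

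\emph{The exceptional varieties, and the main obstacle.} It remains to handle neighborhoods of points where some $a_j$ vanishes, or where $\det(\nabla\varphi_i,\nabla\varphi_j)=0$ for some $i\ne j$ --- a proper analytic subvariety, since by hypothesis no such determinant vanishes identically. There I would use resolution of singularities, or \L ojasiewicz inequalities, to cut a neighborhood into finitely many curvilinear regions on which each of $a_j$ and $\det(\nabla\varphi_i,\nabla\varphi_j)$ is comparable to a monomial, rerun the normal-form reduction and the induction with $\eps$ replaced by $\eps$ times a negative monomial power, and integrate the resulting family of bounds. I expect the auxiliary hypothesis to be exactly what is needed here: it should guarantee that on the limiting configurations --- two foliations agreeing to high order, or a coefficient degenerating --- the derived two- or one-term problem is still nontrivial, so the integrated contribution remains $O(\eps^{\tau})$. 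The main obstacle is precisely this last point together with the bookkeeping in the induction: one must show that every derived relation produced by the elimination scheme retains enough nondegeneracy --- which is what the main hypothesis, quantified, is for --- and that the losses $\lambda^{-C}$ accumulated over the $O(N)$ stages and over the desingularization still leave a strictly positive final exponent.
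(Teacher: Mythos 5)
Your elimination scheme has a genuine gap at its core step, and it concerns exactly the difficulty that drives the structure of the paper's proof. You propose to remove $g_3$ by applying $W_3=(\partial_2\psi)\partial_1-(\partial_1\psi)\partial_2$ ``to the analytic coefficients and to the divided differences, never to the raw $f_j$, so no smallness is forfeited.'' But a divided difference of a merely measurable function is not coherent jet data. To stay on a level curve of $\psi$ you must difference between $x$ and $e^{sW_3}x$; the resulting increment of $g_1$ is $g_1(x_1+s\,\partial_2\psi(x)+O(s^2))-g_1(x_1)$, which is a genuinely new unknown function of \emph{two} variables (base point and displacement), composed with the map $(x,s)\mapsto(\varphi_1(x),sW_3\varphi_1(x))$, and it bears no a priori relation to $g_1$ or to the increment taken at a different displacement. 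Moreover $W_3$ applied to the coefficients produces the extra terms $W_3b_j\cdot g_j$, so one differencing step converts three unknowns into four ($g_1,g_2$ and their two increment functions), and each further step doubles the count again. Your assertion that iterating $O(N)$ times yields a relation among ``approximate $N$-jets of the $f_j$'' presupposes that these increments are approximately independent of the displacement, i.e.\ that the $f_j$ are approximately differentiable on a large set. That is precisely the main thing to be proved: without it, the derived relation is one scalar equation per point in an ever-growing number of essentially arbitrary bounded unknowns, and Proposition~\ref{prop:equivalent_hypothesis} plus \L{}ojasiewicz gives no contradiction. The paper's micro- and mesoscale analyses exist to establish this approximate differentiability, and they require two inputs your proposal omits or misplaces: the non-vanishing curvature of the web defined by $\bPhi$ (equivalently the bracket condition for the lifted vector fields $V_1,V_2$, Lemma~\ref{lemma:bracket}), which makes the two-term sublevel problem for the increment functions nondegenerate in $B\times(0,\infty)$, and the auxiliary hypothesis, which rules out nontrivial exact solutions $g_j(y,t)=h_j(y)t^\tau$ of that two-term equation (Lemma~\ref{lemma:exactsolns}, Proposition~\ref{prop:resurrected}). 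You assign the auxiliary hypothesis only to the desingularization near the exceptional varieties; that is not its role, and you never address web curvature at all.

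A secondary but still substantive point: even granting approximate jets, your loss bookkeeping is off. A divided difference of order $m$ over points separated by $\gtrsim\lambda$ of a bounded measurable function is only $O(\lambda^{-m})$, not small, so the ``defect $\lesssim\eps\lambda^{-C}$ while one jet has modulus $\gtrsim c$'' dichotomy does not by itself force $\lambda\lesssim\eps^\tau$ unless the separation scale is decoupled from $\lambda$; the paper achieves this by working at the fixed scales $\eps$ and $\eps^\gamma$ rather than at scale $\lambda$. The normal-form reduction and the treatment of the degenerate locus in your proposal are broadly consonant with \S\ref{section:degenerate}, but the engine of the argument as written does not close.
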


Two special cases of Theorem~\ref{thm:main} were treated in \cite{triosc}:
firstly, when the coefficient functions $a_j$ are all constant, and secondly, 
when the web in $\tilde B\subset\reals^2$
defined by $\bPhi$ has identically vanishing curvature.\footnote{The 
auxiliary hypothesis was not assumed in those two cases.}
The author has not been able to apply those special analyses 
to the general case treated here.

Theorem~\ref{thm:main} treats only a special case of a more general problem.
It would be desirable to allow $B\subset\reals^D$ for arbitrary $D\ge 2$,
to allow $\varphi_j:\reals^D\to\reals^{d_j}$ for $d_j\ge 1$,
and to sum over an arbitrarily large finite set of indices $j$.
In the special case in which all mappings $\varphi_j$ are linear,
an extension to arbitrarily many summands is developed in 
Theorem~\ref{thm:linearmappings}.
Our purpose, besides establishing the specific Theorem~\ref{thm:main}, 
is to develop an argument that can potentially be extended to
these generalizations in future work.
Comments about such an extension are ventured in \S\ref{section:epilogue}.

\medskip
The auxiliary hypothesis of Theorem~\ref{thm:main} is a variant of the main hypothesis.
For each index $j\in\{1,2,3\}$,
let $W_j$ be a real analytic vector field in a neighborhood of $B$
that does not vanish identically, and satisfies $W_j(\varphi_j)\equiv 0$.

\medskip
\noindent {\bf Auxiliary hypothesis.}\ 
For each index $k\in\{1,2,3\}$,
for any nonempty open set $U\subset B$, for each $\tau\in\reals$, the only
$C^\omega$ solution $\bg  = (g_i: i\ne k)$ of
\begin{equation} \label{auxexactequation}
\sum_{i\ne k} a_i\cdot |W_k\varphi_i|^\tau \cdot (g_i\circ\varphi_i)\equiv 0 \ \text{ in $U$}
\end{equation}
is the trivial solution $g_i\equiv 0$ for each $i\ne k$.

This hypothesis is independent of the choice of vector fields $W_j$.
The special case $\tau=0$ of the auxiliary hypothesis is equivalent to what
we will call the weak auxiliary hypothesis: For any $i\ne j\in\{1,2,3\}$,
the ratio $a_i/a_j$ cannot be expressed as $(g_i\circ\varphi_i)/(g_j\circ\varphi_j)$
in any nonempty open set, with $g_i,g_j\in C^\omega$ nonvanishing.
The weak auxiliary hypothesis is also the special case of the main hypothesis
in which one of the three functions $f_j$ is required to vanish identically.

In principle, the proof produces an effective lower bound for
the exponent $\tau$ in the conclusion \eqref{ineq:mainbound} in terms of $(\ba,\bPhi)$,
and more specifically, in terms of properties of an auxiliary function
defined in terms of $(\ba,\bPhi)$ in \eqref{Fintroduced}. 
But we believe that any lower bound obtained in this way would be far from optimal,
and we have neither made an effort to make the proof more efficient in that respect,
nor developed a concrete bound for $\tau$.

\section{On the hypotheses}
The main hypothesis implies that no coefficient $a_j$ vanishes identically on $B$.
It does not imply that all $\varphi_j$ are nonconstant,
nor that $\nabla\varphi_i(x)$ cannot take the form $b(x)\nabla\varphi_j(x)$,
with $b$ scalar-valued, for some indices $i\ne j$.

We say that $(\ba,\bPhi)$ is nondegenerate in $B$ if all of the following conditions hold.
Firstly, each $a_j$ vanishes nowhere. 
Secondly, for any $i\ne j\in\{1,2,3\}$ and any $x\in B$,
$\nabla\varphi_i(x)$ and $\nabla\varphi_j(x)$ are linearly independent.
Thirdly, the curvature of the web defined by $(\ba,\bPhi)$ does not vanish identically
in $B$.
We will prove Theorem~\ref{thm:main} in the nondegenerate case,
then will explain in \S\ref{section:degenerate} how the general case follows,
essentially as a consequence of the nondegenerate case.

In the real analytic context, under the second of these nondegeneracy
conditions, the identical vanishing of the curvature of this web 
is locally equivalent to linearizability,
that is, to the existence of choices of coordinates in which all
mappings $\varphi_j$ are simultaneously linear.

The hypothesis of real analyticity of the datum $(\ba,\bPhi)$
represents a compromise between simplicity and generality;
if $\ba,\bPhi$ are merely $C^\infty$ then a more quantitative variant
of the main hypothesis is required to exclude situations in which
there exist smooth $\bff$ for which
$\sum_j a_j\cdot(f_j\circ\varphi_j)$ vanishes to infinite order at a point. 
In \S\ref{section:smoothcase} an alternative hypothesis appropriate for $C^\infty$ data
is formulated, and shown to be equivalent to the main hypothesis
for real analytic data $(\ba,\bPhi)$. 

The auxiliary hypothesis can be formulated more directly, without reference to unknowns $g_j$.
Let $(i,j,k)$ be an arbitrary permutation of $(1,2,3)$.
Any measurable solution, almost everywhere,
of \eqref{auxexactequation}
in any open set agrees almost everywhere there with a real analytic function.
Indeed, since the level curves of $\varphi_i,\varphi_j$ are transverse,
analyticity follows directly by restricting the equation
\[(g_j\circ\varphi_j) = - \Big(a_i |W_k\varphi_i|^\tau \,/\, a_j |W_k\varphi_j|^\tau\Big) (g_i\circ\varphi_i)\]
to almost any level curve of $\varphi_i$.
Moreover, after replacing $g_i,g_j$ by smooth functions
agreeing with them almost everywhere, $g_j$ is determined, up to constant multiples,
by this equation. The same holds for $g_i$. If there exists a set of positive
measure on which $g_i$ does not vanish, then it follows that $g_j$ vanishes
only on a set of measure zero; and conversely.
Thus \eqref{auxexactequation}
has a nonzero solution if and only if
$a_i |W_k\varphi_i|^\tau \,/\, a_j |W_k\varphi_j|^\tau$ can be expressed as the 
ratio of a nonvanishing $C^\omega$ function of $\varphi_i$
to a nonvanishing $C^\omega$ function of $\varphi_j$.

The author believes the auxiliary hypothesis to be superfluous. 

\part{Proof of the main theorem}
\section{Notation and preliminary remarks} \label{section:notation}

$\pi:\reals^2\times\reals^1\to\reals^2$ denotes the projection mapping
\begin{equation} \pi(x,t) = x.  \end{equation}
For any smooth vector field $V$ in a Euclidean space,
$e^{tV}x = h(t)$  is the solution of the ordinary differential equation
$h'(t) = V(h(t))$ with initial condition $h(0)=x$.

$c,C$ denote constants in $(0,\infty)$ whose values are permitted to change
from one occurrence to the next. These depend only on $\ba,\bPhi$,
on choices made in the constructions, and on constants that occur prior to their appearance
in the discussion; they do not depend on $\bff,\eps$.
For quantities $u,v\in[0,\infty)$, typically depending on $\bff,\eps$, 
$u\lesssim v$ means $u\le Cv$, while $u\gtrsim v$ means $u\ge cv$.
The three notations $O(\eps)$, $\le C\eps$, and $\lesssim\eps$ all have the same meaning.

In proving Theorem~\ref{thm:main},
it suffices to prove that there exists $\eps_0>0$ such that
the conclusion holds for every $\eps\in(0,\eps_0]$.
We will implicitly assume throughout the discussion that $\eps$ is sufficiently small.

Since all of our hypotheses are invariant under permutation of the indices $j\in\{1,2,3\}$,
an equivalent formulation is that if $|f_3(y)|\ge 1$ for every $y\in\varphi_3(B)$
then $|S(\bff,\eps)|\le C\eps^\tau$.

\medskip 
It suffices to prove Theorem~\ref{thm:main} under the additional hypothesis
that $\norm{f_j}_{L^\infty}\le 2$ for each index $j$.
Indeed, to recover the general case from this base case,
assume that $|f_3(y)|\ge 1$ for every $y\in \varphi_3(B)$
and that $|f_j|<\infty$ almost everywhere for all three indices.
For $n\ge 0$ and $j\in\{1,2,3\}$ define
\[ S_{n,j} = \{x\in S(\bff,\eps): |f_i\circ\varphi_i(x)|< 2^{n+1}
\text{ for every $i\in\{1,2,3\}$ and } |f_j\circ\varphi_j(x)|\ge 2^n\}.\]
Then $S(\bff,\eps) = \bigcup_{n=0}^\infty\bigcup_{j=1}^3 S_{n,j}$.
Applying the base case 
to $S_{n,j} \subset S(2^{-n}\bff,2^{-n}\eps)$  gives
$|S_{n,j}|\le C (2^{-n}\eps)^\tau$. Summing over $n,j$ completes the proof.

\medskip
We assume throughout the proof of Theorem~\ref{thm:main}
that the curvature of the web in $\tilde B$ defined by $\bPhi$ does not vanish identically.
This curvature vanishes identically 
in a connected open set $U\subset\tilde B$,
if and only if there exist real analytic coordinates for $U$
and associated real analytic coordinates for $\varphi_j(U)$
in which the three mappings $\varphi_j$ are simultaneously linear.

\medskip
The analysis is structured in three successive steps,
at scales $\eps^1$, $\eps^\gamma$ for an arbitrary choice of
$\gamma\in(\tfrac12,1)$, and $\eps^0$, respectively,
with information established at each of the smaller scales exploited at the next larger scale.
These steps are developed
in \S\ref{section:microscale}, in \S\ref{section:mesoscale}, 
and in \S\ref{section:macroscale}, respectively.
They are combined in \S\ref{section:conclusion} to complete the proof in the nondegenerate case.
The general case of Theorem~\ref{thm:main} is discussed in \S\ref{section:degenerate}.
Preliminary results used in the analysis are developed in earlier sections of the paper.

\section{Some properties of smooth functions} \label{section:lemmas}

\begin{lemma} \label{lemma:analyticsublevel}
Let $K\subset\reals^d$ be a connected compact set.
Let $\Omega\subset\reals^D$ be compact.
Let $H:K\times\Omega\to\reals$ be real analytic in a neighborhood of $K\times\Omega$.
Write $H_\omega(x) = H(x,\omega)$.
Suppose that there exists no $\omega\in\Omega$ for which $H_\omega$ vanishes
identically on $K$.
Then there exist $\tau>0$ and $C<\infty$ such that for every $\eps>0$,
\begin{equation}
\big|\big\{x\in K: |H_\omega(x)|\le\eps\big\}\big| \le C\eps^\tau.
\end{equation}
\end{lemma}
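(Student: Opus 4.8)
The plan is to reduce the estimate, by a compactness argument, to a \emph{uniform} bound on the order of vanishing of the functions $H_\omega$, and then to invoke the standard one-dimensional sublevel set inequality along suitable line segments, integrating over the transverse directions. The single substantive point will be producing the uniform order of vanishing; everything else is routine once that is in hand.

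First I would show that there exist an integer $N$ and a constant $c_0>0$ such that $\max_{|\alpha|\le N}|\partial^\alpha_x H(x,\omega)|\ge c_0$ for every $x\in K$ and every $\omega\in\Omega$, where $\partial^\alpha_x$ denotes differentiation in the $x$ variables only. If no such $N$ existed, there would be sequences $x_n\in K$ and $\omega_n\in\Omega$ with $\partial^\alpha_x H(x_n,\omega_n)=0$ whenever $|\alpha|\le n$. Passing to convergent subsequences $x_n\to x^*\in K$ and $\omega_n\to\omega^*\in\Omega$, and using that each $\partial^\alpha_x H$ is jointly continuous near $K\times\Omega$, one obtains $\partial^\alpha_x H(x^*,\omega^*)=0$ for every multi-index $\alpha$; thus $H_{\omega^*}$ vanishes to infinite order at $x^*$. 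Since $x\mapsto H(x,\omega^*)$ is real analytic on a connected open neighborhood of the connected compact set $K$, the identity theorem forces $H_{\omega^*}\equiv 0$ on $K$, contradicting the hypothesis. With $N$ fixed, positivity together with joint continuity of $(x,\omega)\mapsto\max_{|\alpha|\le N}|\partial^\alpha_x H(x,\omega)|$ on the compact set $K\times\Omega$ yields $c_0>0$. Real analyticity likewise provides a constant $A$ with $\norm{H(\cdot,\omega)}_{C^{N+1}}\le A$ on a fixed compact neighborhood of $K$, uniformly in $\omega\in\Omega$.

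Next I would localize. Fix $\omega\in\Omega$ and $x_0\in K$, and choose $\alpha$ with $m:=|\alpha|\le N$ and $|\partial^\alpha_x H(x_0,\omega)|\ge c_0$. The degree-$m$ part of the Taylor expansion of $H_\omega$ at $x_0$ is then a nonzero homogeneous polynomial, one of whose coefficients has size $\gtrsim c_0$; since all norms on the finite-dimensional space of homogeneous polynomials of degree $\le N$ are comparable with constants depending only on $d$ and $N$, there is a unit vector $v$ with $|\partial^m_v H(x_0,\omega)|\gtrsim c_0$. The bound $A$ on the $C^{N+1}$ norm then produces a radius $\rho>0$, depending only on $d,N,c_0,A$ — in particular not on $\omega$ or $x_0$ — such that $|\partial^m_v H(x,\omega)|\gtrsim c_0$ for all $x$ in the ball $B(x_0,\rho)$. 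Rotating coordinates so that $v=e_1$ and slicing $B(x_0,\rho)$ by lines parallel to $e_1$, on each such line the single-variable function $t\mapsto H(x,\omega)$ has $m$-th derivative $\gtrsim c_0$ everywhere; the standard sublevel set lemma for $C^m$ functions (proved by induction on $m$, via monotonicity of successive derivatives or via divided differences) bounds the one-dimensional measure of $\{t:|H(x,\omega)|\le\eps\}$ on that line by $C(N,c_0)\,\eps^{1/m}\le C(N,c_0)\,\eps^{1/N}$ for $\eps\le 1$. Integrating over the transverse $(d-1)$-dimensional disc gives
\[\big|\{x\in B(x_0,\rho):|H_\omega(x)|\le\eps\}\big|\ \le\ C\rho^{d-1}\eps^{1/N},\]
with $C$ independent of $\omega$ and of $x_0$.

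Finally I would assemble the pieces: cover $K$ by finitely many balls $B(x_1,\rho),\dots,B(x_M,\rho)$ with each $x_i\in K$. For every $\omega\in\Omega$ the derivative condition holds at each center $x_i$, so the local estimate of the previous step applies with $x_0=x_i$; summing over $i$ gives $|\{x\in K:|H_\omega(x)|\le\eps\}|\le MC\rho^{d-1}\eps^{1/N}$ for all $\eps\in(0,1]$ and all $\omega\in\Omega$, while for $\eps>1$ the trivial bound $|K|$ suffices after enlarging the constant. This proves the lemma with $\tau=1/N$. As indicated, the one genuinely nontrivial ingredient is the compactness argument yielding the uniform order of vanishing $N$; once that is available, the remainder is the classical reduction of an analytic sublevel estimate to the one-dimensional van der Corput–type inequality, rendered uniform in the parameter $\omega$ by compactness of $K\times\Omega$.
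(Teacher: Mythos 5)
Your proof is correct and follows the standard route that the paper itself defers to the sketch in \cite{triosc}: a compactness/identity-theorem argument (using the connectedness of $K$) produces a uniform order of vanishing $N$ and lower bound $c_0$, after which the estimate is exactly the quantitative sublevel set bound of Lemma~\ref{lemma:quantsublevel}, which your second and third paragraphs in effect reprove via the one-dimensional van der Corput inequality. I see no gaps.
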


\begin{lemma} \label{lemma:quantsublevel}
Let $D,N\in\naturals$. Let $B$ be a closed ball in $\reals^D$ of positive, finite radius.
There exist $c,C\in(0,\infty)$ with the following property.

Let $F$ be a real-valued $C^{N+1}$ function defined in a neighborhood of $B$.
Suppose that $\norm{F}_{C^{N+1}}\le 1$.
Let $\delta>0$. Suppose that for each $x\in B$,
\begin{equation} \sum_{0\le |\alpha|\le N} |\partial^\alpha F(x)| \ge\delta.  \end{equation}
Then for every $\eta>0$,
	\begin{equation} \big|\big\{ x\in B: |F(x)|\le\eta\big\}\big|
	\le C \eta^c\delta^{-C}. \end{equation}
\end{lemma}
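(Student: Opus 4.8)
The plan is to prove the statement by a single-scale Vitali/covering argument built on the classical one-dimensional sublevel set fact, carrying the dependence on $\delta$ explicitly. First I would record the scalar base case: if $g$ is a $C^{k+1}$ function of one real variable on an interval of length $\ell$, if $|g^{(k)}|\ge \mu$ on that interval, and if $\|g^{(k+1)}\|_\infty\le 1$, then $|\{|g|\le\eta\}|\lesssim_k (\eta/\mu)^{1/k}$ on that interval, with a constant depending only on $k$ and $\ell$; this is the standard van der Corput–type sublevel estimate proved by induction on $k$ (the derivative $g^{(k-1)}$ is monotone on subintervals where it is bounded away from zero, etc.). The content of the lemma is to upgrade this to $D$ dimensions with multi-index hypotheses, and to make the loss in $\delta$ polynomial.

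Next, for each $x\in B$ choose, by the hypothesis $\sum_{|\alpha|\le N}|\partial^\alpha F(x)|\ge\delta$, a multi-index $\alpha(x)$ with $|\partial^{\alpha(x)}F(x)|\ge \delta/M$, where $M = M(D,N)$ is the number of multi-indices of order $\le N$. Pick a coordinate direction $e=e(x)$ appearing in $\alpha(x)$ and let $k=k(x)\le N$ be the order of differentiation in that direction, so that the pure directional derivative $\partial_e^{k}(\partial^\beta F)(x)$ has absolute value $\ge\delta/M$ for the complementary multi-index $\beta$ of order $\le N-k$. Since $\|F\|_{C^{N+1}}\le 1$, the function $\partial^\beta F$ and its derivatives up to order $k+1$ in the $e$-direction are bounded by $1$; hence by continuity $|\partial_e^{k}(\partial^\beta F)|\ge \delta/(2M)$ on a coordinate cube $Q_x$ of sidelength $c\delta$ centered at $x$ (the Lipschitz bound on $\partial_e^{k+1}(\partial^\beta F)$ forces this). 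By a Vitali-type selection extract a countable subfamily of such cubes covering $B$ up to measure zero with bounded overlap, and with at most $C\delta^{-D}$ cubes in total (each has volume $\gtrsim\delta^D$ and they lie in a fixed neighborhood of $B$). On each such cube $Q_x$, slice by the coordinate hyperplanes orthogonal to $e$: on almost every line segment in direction $e$, apply the scalar base case — not to $F$ directly but iteratively, first controlling $|\{|\partial^\beta F|\le\eta_1\}|$ and then bootstrapping down the chain $\partial^\beta F,\ \partial^{\beta'}F,\dots,F$ where each successive multi-index drops one derivative; alternatively, and more cleanly, apply the scalar base case directly to $g=F|_{\text{line}}$ using that some derivative of $g$ of order $\le N$ (namely the one obtained by first differentiating $|\beta|$ times transversally — which does not help on a single line) — so in fact the honest route is the bootstrap. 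Integrating the one-dimensional bound $\lesssim (\eta/\delta)^{1/k}\le (\eta/\delta)^{1/N}$ (for $\eta\le\delta$; the case $\eta>\delta$ is trivial) over the transverse variables and summing over the $\le C\delta^{-D}$ cubes yields $|\{x\in B:|F(x)|\le\eta\}|\le C\delta^{-D}\cdot\delta^D\cdot(\eta/\delta)^{1/N} = C\,\eta^{1/N}\delta^{-1/N}$, which is of the asserted form with $c=1/N$ and $C$ absorbing the rest.

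The main obstacle is the bootstrap within a single cube: the hypothesis gives a bound on a \emph{mixed} partial derivative $\partial^\beta(\partial_e^k F)$, but the one-dimensional sublevel inequality needs a lower bound on a \emph{pure} $e$-directional derivative of the restricted function. One resolves this by an inductive argument on $|\beta|$: if $|\partial^\beta G|\ge\mu$ on a cube with $\|G\|_{C^{|\beta|+k+1}}\le 1$, peel off one transverse derivative at a time, at each stage losing a sublevel-set of controlled measure where the lower-order derivative is small, and applying the scalar estimate on the good set; each peeling costs a factor of a power of $(\eta/\delta)$ with exponent bounded below by a constant $c(D,N)>0$ and a factor $\delta$ to a bounded negative power, all of which are absorbed into the final $\eta^c\delta^{-C}$. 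Care is needed to keep every constant depending only on $D,N$ and the radius of $B$, and to check that the number of cubes and the overlap constant are likewise dimensional; these are routine once the scheme is set up.
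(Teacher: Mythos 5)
The paper does not prove this lemma; it only remarks that a proof is sketched in \cite{triosc}, so there is no in-paper argument to compare against. Your overall scheme --- localize to cubes of side $\approx\delta$ on which a single partial derivative $\partial^{\alpha}F$ of order $\le N$ is bounded below by $\approx\delta$, prove a sublevel estimate on each cube, and sum the $O(\delta^{-D})$ cubes, absorbing all losses into $\eta^{c}\delta^{-C}$ --- is the standard and correct strategy for a bound of this (deliberately lossy) form, and the reductions you perform up to that point are fine (modulo the unmentioned but trivial case $\alpha(x)=0$, where $|F|\ge\delta/2M$ on the whole cube).

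The genuine gap is in the one step you yourself flag as the main obstacle: passing from a lower bound on a \emph{mixed} partial $\partial^{\alpha}F$ on a cube to a sublevel bound for $F$. Your ``peeling'' proposal --- remove the sublevel set where an intermediate derivative is small, then ``apply the scalar estimate on the good set'' --- does not work as literally described. After one peeling you are looking at a set of the form $\{|G|\le\rho_{m-1}\}\cap\{|\partial_{j}G|\ge\rho_{m}\}$; on a line in direction $j$ this is a union of short intervals, but the \emph{number} of those intervals is not controlled by the one-dimensional van der Corput lemma, and a set where a function is small while its derivative is large can a priori have many components (think of a rapid low-amplitude oscillation). The repair requires using the upper bound $\|F\|_{C^{N+1}}\le 1$ quantitatively to bound the number of components per line by a negative power of $\rho_m$ (after exiting $\{|G|\le\rho_{m-1}\}$ with speed $\ge\rho_m$, the function must climb to height $\gtrsim\rho_m^2$ before it can return, which separates consecutive components by gaps $\gtrsim\rho_m^2$); these polynomial losses are then absorbed into $\delta^{-C}$. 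You assert that the costs are absorbable but do not supply this component-counting step, which is the only non-routine point of the whole proof. Alternatively, the mixed-derivative case is handled more cleanly, with no peeling at all, by the finite-difference argument of Carbery--Christ--Wright: if all $\prod_j(\alpha_j+1)$ points of a rectangular stencil with spacings $h_1,\dots,h_D$ lie in $\{|F|\le\eta\}$, then the iterated difference $\Delta^{\alpha}_{h}F$, which equals $\partial^{\alpha}F(\xi)\prod_j h_j^{\alpha_j}$ for some $\xi$ by the mean value theorem, is both $\ge\delta\prod_j h_j^{\alpha_j}$ and $\le 2^{|\alpha|}\eta$; combined with the elementary fact that a subset of the cube of measure $\lambda\cdot(\text{side})^{D}$ contains such a stencil with all $h_j\gtrsim\lambda^{C}\cdot(\text{side})$, this yields $\lambda\le C(\eta/\delta)^{c}$ directly. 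Either route completes your outline; as written, the proposal stops short of the one step that actually requires an idea.
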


Proofs of Lemma~\ref{lemma:analyticsublevel} 
and of Lemma~\ref{lemma:quantsublevel} are sketched in \cite{triosc}. 

\begin{lemma} \label{lemma:steinstreetish}
Let $(x,y)\mapsto f(x,y)\in\reals$ be a $C^\omega$ function
in a connected open set $\Omega$ in some Euclidean space. Let $K\times K'\subset\Omega$ be compact.
There exist $N\in\naturals$ and $C\in(0,\infty)$
such that for every $y\in K'$,
\begin{equation} \label{ineq:bourgainsteinstreet}
\sup_{x\in K} \sum_{0\le |\alpha|\le N} |\partial^\alpha_x f(x,y)|
\le C\inf_{x\in K} \sum_{0\le |\alpha|\le N} |\partial^\alpha_x f(x,y)|.
\end{equation}
\end{lemma}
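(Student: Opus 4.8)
The single-parameter version of this statement is easy: for a fixed real analytic $g$ on the connected compact set $K$ that is not identically zero, the largest order of vanishing of $g$ on $K$ is finite (by analyticity and compactness), so for $N$ exceeding it the continuous function $x\mapsto\sum_{0\le|\alpha|\le N}|\partial^\alpha g(x)|$ is strictly positive on $K$, and hence has finite, positive supremum and infimum over $K$. The entire difficulty in Lemma~\ref{lemma:steinstreetish} is therefore the \emph{uniformity} of $N$ and $C$ in $y$, and above all uniform control as $y$ approaches a value for which $f(\cdot,y)$ degenerates to the zero function on $K$. The plan is first to make two harmless reductions --- if $f\equiv 0$ on $\Omega$ the inequality is vacuous, so assume not; and, replacing $K$ by a small closed neighborhood of itself whose product with $K'$ still lies in $\Omega$, assume $K$ is connected with nonempty interior --- and then to isolate the degenerate set $B=\{y\in K':\ f(\cdot,y)\equiv 0\ \text{on}\ K\}$, which (because $\mathrm{int}\,K\ne\emptyset$ and $f(\cdot,y)$ is real analytic on a connected neighborhood of $K$) is exactly the set of $y$ for which $f(\cdot,y)$ vanishes on a whole neighborhood of $K$.

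The core of the argument is two local mechanisms. Away from $B$, I would bound the order of vanishing locally uniformly in $y$: fixing $y_0\in K'\setminus B$, for each $x_0\in K$ the function $f(\cdot,y_0)$ has some finite order $m$ at $x_0$, and a linear change of the $x$-coordinates together with the Weierstrass preparation theorem lets one write $f=u\cdot W$ near $(x_0,y_0)$ with $u$ a nonvanishing factor and $W$ a degree-$m$ Weierstrass polynomial; since a monic polynomial of degree $m$ in one variable vanishes to order at most $m$, this forces $\mathrm{ord}_x f(\cdot,y)\le m$ throughout a neighborhood of $(x_0,y_0)$. A finite subcover of $K$ then produces a neighborhood $V_{y_0}$ of $y_0$ and an integer $N_{y_0}$ with $\mathrm{ord}_x f(\cdot,y)\le N_{y_0}$ on $K\times V_{y_0}$; consequently $x\mapsto\sum_{|\alpha|\le N_{y_0}}|\partial^\alpha_x f(x,y)|$ is strictly positive on $K$ for each $y\in V_{y_0}$, so its infimum over $K$ is a positive continuous function of $y$ there, bounded below by a constant $c_{y_0}>0$ after $V_{y_0}$ is shrunk. (That real analytic functions have locally bounded vanishing order, uniformly in parameters, may alternatively be seen from the Noetherian property of rings of analytic germs.)

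The second mechanism handles a degenerate value $y_*\in B$. Since $f(\cdot,y_*)$ vanishes on a connected open neighborhood $\omega$ of $K$ while $f\not\equiv 0$, one can factor $f(x,y)=\psi(y)\,h(x,y)$ on $\omega$ times a neighborhood of $y_*$, where $\psi$ depends on $y$ alone, vanishes at $y_*$, and $h(\cdot,y_*)\not\equiv 0$ on $\omega$ --- hence, using $\mathrm{int}\,K\ne\emptyset$, $h(\cdot,y_*)\not\equiv 0$ on $K$; when $y$ is one-dimensional this is just $\psi(y)=(y-y_*)^k$ with $k$ the order of vanishing of $y\mapsto f(\cdot,y)$ at $y_*$. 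Because $\partial^\alpha_x f=\psi\,\partial^\alpha_x h$, the ratio $\big(\sup_K\big)/\big(\inf_K\big)$ of $\sum_{|\alpha|\le N}|\partial^\alpha_x(\cdot)|$ is \emph{the same} for $f$ and for $h$ at every $y\ne y_*$, and $y_*$ is now a non-degenerate value for $h$, so the first mechanism applied to $h$ controls a punctured neighborhood of $y_*$; for $y\in B$ the inequality is trivial, both sides vanishing. To conclude, I would cover the compact set $K'$ by finitely many of the neighborhoods produced by the two mechanisms, let $N$ be the maximum of the associated integers, and let $C$ be the maximum of the finitely many quotients $A_N/c_{y_0}$, where $A_N:=\sup_{K\times K'}\sum_{|\alpha|\le N}|\partial^\alpha_x f|<\infty$ (with the analogous quotient $A_N^{h}/c_{y_*}$ at the neighborhoods of points of $B$); for any $y$ with $f(\cdot,y)\not\equiv 0$ one has $\inf_K\sum_{|\alpha|\le N}|\partial^\alpha_x f(\cdot,y)|\ge\inf_K\sum_{|\alpha|\le N_{y_0}}|\partial^\alpha_x f(\cdot,y)|\ge c_{y_0}$ (or the version with $h$) while $\sup_K\le A_N$, which is the required bound, and for $f(\cdot,y)\equiv 0$ it is trivial.

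The main obstacle --- and the reason a direct compactness argument does not succeed --- is the behavior near $B$: $\inf_K\sum_{|\alpha|\le N}|\partial^\alpha_x f(\cdot,y)|$ tends to $0$ as $y\to B$, and one must show that $\sup_K$ collapses at the same rate, which is precisely what the factorization $f=\psi\,h$ makes transparent by exhibiting the common scalar $\psi(y)$ responsible for the degeneration. A secondary point is that $N$ must be fixed before the covering --- enlarging $N$ afterwards would spoil the lower bounds $c_{y_0}$ already obtained --- so all of the finitely many local thresholds have to be extracted first and only then combined. Finally, the factorization step as described is clean when $B$ is locally the zero set of a single function of the $y$-variables, which is automatic when $y$ is one-dimensional (the case actually needed in this paper); the general case requires in addition an induction on the number of $y$-variables, or a separate treatment of the strata of $B$ by codimension.
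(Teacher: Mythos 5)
Your route is genuinely different from the paper's. The paper obtains the derivative bound \eqref{fxty2}, namely $\sum_{|\beta|=N+1}|\partial^\beta_x f|\le C\sum_{|\alpha|\le N}|\partial^\alpha_x f|$, from Theorem~9.1 of Stein--Street (a finite-generation statement for the module generated by the Taylor coefficients of $f$ in $x$), and then integrates the resulting differential inequality $|\nabla_x u|\le C'u$ for $u=\sum_{|\alpha|\le N}(\partial^\alpha_x f)^2$ along paths. You instead bound the vanishing order locally uniformly in $y$ away from the degenerate set $B$ via Weierstrass preparation, and handle $y_*\in B$ by factoring $f(x,y)=\psi(y)h(x,y)$ and observing that the ratio $\sup_K/\inf_K$ is unchanged by the scalar factor $\psi(y)$. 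Both mechanisms are sound, and the final assembly is correct (the local lower bounds $c_{y_0}$ survive the passage to the common $N$ because the sum $\sum_{|\alpha|\le N}|\partial^\alpha_x f|$ is nondecreasing in $N$ --- your remark that enlarging $N$ would ``spoil'' them has the monotonicity backwards, but your displayed chain of inequalities uses it correctly). For a one-dimensional parameter $y$ this gives a complete and more elementary proof than the paper's.

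The gap is the multi-dimensional parameter case, which --- contrary to your closing parenthetical --- is exactly what the paper needs: the lemma is invoked in the macroscale analysis to produce the $N$ of \eqref{Fandscriptgcomparable} for $F(x,\bt,\theta,r)$ with $(\bt,\theta,r)$ ranging over a compact subset of a high-dimensional Euclidean space, and Lemma~\ref{lemma:sublevel_notSigma} likewise takes $y$ in an arbitrary Euclidean space. When $\dim y\ge 2$ the factorization $f=\psi(y)\,h(x,y)$ with $h(\cdot,y_*)\not\equiv 0$ need not exist. Take $f(x,y)=y_1g_1(x)+y_2g_2(x)$ with $g_1,g_2$ real analytic and linearly independent, so that $B=\{0\}$ locally. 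Restricting to the axis $y_2=0$ shows that any non-unit divisor $\psi(y)$ with analytic quotient must satisfy $\psi(y_1,0)=cy_1+O(y_1^2)$ with $c\ne 0$, hence $\psi$ has nonvanishing differential at $0$ and its zero set is a curve through $0$ along which $f(\cdot,y)\equiv 0$ --- contradicting $B=\{0\}$. (The lemma is still true for this $f$, by restricting to $|y|=1$ and using homogeneity, but that exploits special structure.) Neither ``induction on the number of $y$-variables'' nor ``stratification of $B$'' is routine here: the degeneration of $f(\cdot,y)$ as $y\to B$ is not governed by a single scalar function of $y$, and producing the required uniformity in that situation is precisely what the Stein--Street module argument (or an argument via resolution of singularities or \L{}ojasiewicz-type inequalities) accomplishes. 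As written, your proof establishes the lemma only for one-dimensional parameters and does not cover the instances actually used in the paper.
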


\begin{proof}
Let $\scriptk\subset\Omega$ be a connected compact neighborhood of $K\times K'$.
Consider the function $g(x,y,t) = f(x+t,y)$ in $\scriptk\times B$,
where $B$ is a closed ball of sufficiently small positive radius that $(x+t,y)\in\Omega$
whenever $(x,y)\in\scriptk$ and $t\in B$.

Expand $g$ in power series with respect to $t$:
\begin{equation}
g(x,y,t) = \sum_\alpha t^\alpha \partial_x^\alpha f(x,y)
\end{equation}
with $\partial_x^\alpha = \partial^\alpha/\partial x^\alpha$.
Applying Theorem~9.1 of Stein and Street \cite{steinstreet} to $g$,
we conclude that there exist $N$ and $C^\omega$ functions $h_\alpha$ satisfying
\begin{equation} \label{fxty1}
f(x+t,y) \equiv \sum_{|\alpha|\le N} h_\alpha(x,y,t)\,t^\alpha\, \partial_x^\alpha f(x,y) 
\end{equation}
in $\scriptk\times B$.
By applying $\partial^\beta/\partial t^\beta$ to both sides of \eqref{fxty1}
for each multi-index $\beta$ satisfying $|\beta| = N+1$,
it follows that
\begin{equation} \label{fxty2}
 \sum_{|\beta| = N+1} |\partial^\beta_x f(x,y)|
\le C \sum_{|\alpha|\le N} |\partial^\alpha_x f(x,y)| \ \forall (x,y)\in\scriptk.
\end{equation}

Consider the function
\begin{equation} u(x,y)  =  \sum_{|\alpha|\le N} (\partial^\alpha_x f(x,y))^2.  \end{equation}
It follows from Cauchy-Schwarz and \eqref{fxty2} that there exists $C'<\infty$ such that
\begin{equation}
	|\nabla_x u(x,y)| \le C' u(x,y)\ \forall\,(x,y)\in\scriptk.
\end{equation}
The conclusion \eqref{ineq:bourgainsteinstreet}
follows from this differential inequality, using the path connectivity of 
a neighborhood of $\scriptk$.
\end{proof}

\begin{lemma} \label{lemma:sublevel_notSigma}
Let $U,U'$ be connected open subsets of Euclidean spaces,
and let $K,K'$ be compact subsets of $U,U'$, respectively.
Let $(x,y)\mapsto F(x,y)\in\reals$ be a $C^\omega$ function in $U\times U'$.
Define
\begin{equation}
	\Sigma = \{y\in U': F(x,y)=0\ \forall\,x\in U\}.
\end{equation}
Then $\Sigma$ is a real analytic subvariety of positive codimension.
Moreover,
there exist $C<\infty$ and $\tau>0$ such that for every $\eps>0$ and every $y\in K'\setminus\Sigma$,
\begin{equation} \label{ineq:quant_Comega_sublevel}
\big|\big\{
	x\in K: |F(x,y)|<\eps
\big\}\big|
\le C\dist(y,\Sigma)^{-C}\eps^\tau.
\end{equation}
\end{lemma}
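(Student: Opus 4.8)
The plan is to first identify $\Sigma$ structurally and then to derive the quantitative bound by combining Lemmas~\ref{lemma:steinstreetish} and~\ref{lemma:quantsublevel} with the classical {\L}ojasiewicz inequality. I assume throughout that $F\not\equiv 0$; if $F\equiv 0$ then $\Sigma=U'$, the asserted positive codimension fails, and the estimate is vacuous, so this is an implicit hypothesis. For the structure of $\Sigma$ I would fix any point $x_0\in K$: since $U$ is connected and $F(\cdot,y)$ is real analytic there, $F(\cdot,y)$ vanishes identically on $U$ if and only if $\partial_x^\beta F(x_0,y)=0$ for every multi-index $\beta$, so $\Sigma=\{\,y\in U':\partial_x^\beta F(x_0,y)=0\ \forall\beta\,\}$. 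Each function $y\mapsto\partial_x^\beta F(x_0,y)$ is real analytic on $U'$, and since the ring of germs of real analytic functions at any point of $U'$ is Noetherian, near every point the ideal generated by these germs is generated by finitely many of them; hence, using compactness of $K'$, there are finitely many functions $g_i=\partial_x^{\beta_i}F(x_0,\cdot)$, real analytic on $U'$, whose common zero set coincides with $\Sigma$ in some neighborhood $W$ of $K'$. In particular $\Sigma$ is a real analytic subvariety. If $\Sigma$ had nonempty interior, then for each fixed $x$ the function $F(x,\cdot)$ would vanish on that open set, hence identically on $U'$ by analyticity and connectedness, forcing $F\equiv 0$; so $\Sigma$ has empty interior and therefore positive codimension.

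For the quantitative estimate I would first fix a connected compact set $\tilde K$ with $K\subset\tilde K\subset U$ and nonempty interior (obtained by joining a finite cover of $K$ by closed balls with tubes inside the connected open set $U$), and then a finite family of closed balls $B^{(1)},\dots,B^{(m)}$ with $K\subset\bigcup_\ell B^{(\ell)}\subset\tilde K$ and with slightly larger concentric balls $\hat B^{(\ell)}\subset U$. Applying Lemma~\ref{lemma:steinstreetish} to $F$ on $\tilde K\times K'$ produces $N\in\naturals$ and $C_0<\infty$ with $\sup_{x\in\tilde K}\sum_{|\alpha|\le N}|\partial_x^\alpha F(x,y)|\le C_0\,\delta(y)$ for every $y\in K'$, where $\delta(y):=\inf_{x\in\tilde K}\sum_{|\alpha|\le N}|\partial_x^\alpha F(x,y)|$; by differentiating the identity \eqref{fxty1} a few more times one sees that $N$ may be taken arbitrarily large, so I may assume $N$ exceeds the order of every $\beta_i$ (and $x_0\in K\subset\tilde K$ by choice). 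Since $\tilde K$ has nonempty interior, the sup bound forces: if $\delta(y)=0$ then $F(\cdot,y)\equiv 0$ on $\tilde K$, hence on $U$; thus $\{\,y\in K':\delta(y)=0\,\}=\Sigma\cap K'$.

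The step I expect to be the main obstacle is the comparison $\delta(y)\gtrsim\dist(y,\Sigma)^\mu$ on $K'\setminus\Sigma$. Here I would use, on one hand, $\sum_i|g_i(y)|\le\sum_{|\alpha|\le N}|\partial_x^\alpha F(x_0,y)|\le\sup_{x\in\tilde K}\sum_{|\alpha|\le N}|\partial_x^\alpha F(x,y)|\le C_0\,\delta(y)$, and on the other hand the {\L}ojasiewicz inequality for the real analytic function $\sum_i g_i^2$ on $W$, whose zero set there is $\Sigma\cap W$: it furnishes $c>0$ and $\mu>0$ with $\sum_i g_i(y)^2\ge c\,\dist(y,\Sigma\cap W)^\mu$ for $y$ in a compact neighborhood of $K'$. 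Since the $g_i$ are bounded on $K'$, this yields $\sum_i|g_i(y)|\gtrsim\dist(y,\Sigma\cap W)^\mu$, and elementary compactness (passing from $\dist(y,\Sigma\cap W)$ to $\dist(y,\Sigma)$ for $y$ near $\Sigma$, and handling $y$ bounded away from $\Sigma$, as well as the degenerate case $\Sigma=\emptyset$, via positivity of $\sum_i|g_i|$ on the relevant compact set) then gives $\delta(y)\gtrsim\dist(y,\Sigma)^\mu$ for all $y\in K'\setminus\Sigma$.

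To conclude, fix $y\in K'\setminus\Sigma$, put $M:=\sup_{y'\in K'}\max_\ell\|F(\cdot,y')\|_{C^{N+1}(\hat B^{(\ell)})}<\infty$ and $\tilde F:=F/M$. Then $\|\tilde F(\cdot,y)\|_{C^{N+1}(\hat B^{(\ell)})}\le 1$ and, for $x\in B^{(\ell)}\subset\tilde K$, $\sum_{|\alpha|\le N}|\partial_x^\alpha\tilde F(x,y)|\ge M^{-1}\delta(y)\gtrsim\dist(y,\Sigma)^\mu$. Lemma~\ref{lemma:quantsublevel}, applied on each $B^{(\ell)}$ with $\eta=\eps/M$ and lower bound $\delta\gtrsim\dist(y,\Sigma)^\mu$, bounds $|\{x\in B^{(\ell)}:|F(x,y)|<\eps\}|$ by $C\,\dist(y,\Sigma)^{-C}\eps^\tau$; summing over the finitely many $\ell$ gives \eqref{ineq:quant_Comega_sublevel}.
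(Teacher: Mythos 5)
Your argument is correct and is precisely the route the paper intends: its entire proof is the one-line remark that the lemma follows by combining Lemmas~\ref{lemma:analyticsublevel}--\ref{lemma:steinstreetish} with \L{}ojasiewicz's theorem, and you have supplied exactly those details (Lemma~\ref{lemma:steinstreetish} to reduce to the single base point $x_0$ and identify $\Sigma\cap K'$ with $\{\delta=0\}$, \L{}ojasiewicz to get $\delta(y)\gtrsim\dist(y,\Sigma)^\mu$, and Lemma~\ref{lemma:quantsublevel} on a finite cover of $K$ to conclude). The only cosmetic remark is that the Noetherian selection of the $g_i$ can be bypassed: the same application of Lemma~\ref{lemma:steinstreetish} already shows that the finitely many functions $\partial_x^\alpha F(x_0,\cdot)$, $|\alpha|\le N$, cut out $\Sigma$ near $K'$, so they can serve directly as the defining functions in the \L{}ojasiewicz step.
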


Lemma~\ref{lemma:sublevel_notSigma}
is proved by combining the preceding lemmas with \L{}ojasiewicz's theorem. \qed

\section{Preliminaries related to transversality and curvature} \label{section:morelemmas}

For each $k\in\{1,2,3\}$
let $W_k$ be a $C^\omega$ nowhere vanishing vector
field in $\tilde B$ that satisfies $W_k(\varphi_k)\equiv 0$.
For each $j\in\{1,2\}$
define $\psi_j:B\times\reals\to \reals^2$ by
\begin{equation}
\psi_j(x,t) = (\varphi_j(x),tW_3\varphi_j(x)).
\end{equation}
The linear independence of $\nabla\varphi_j,\nabla\varphi_3$ at each $x\in B$
guarantees that $W_k\varphi_i$ vanishes nowhere whenever $k\ne i$.
Therefore $\psi_j$ are submersions. 
By replacing $\varphi_j$ by $-\varphi_j$ for $j=1$ and/or for $j=2$
as necessary, we may assume without loss of generality that $W_3(\varphi_j)>0$.
We make this assumption throughout the discussion.

For $j\in\{1,2\}$, let $V_j$ be nowhere vanishing $C^\omega$ vector fields 
in $\tilde B\times(\reals\setminus\{0\})$  that satisfy $V_j(\psi_j)\equiv 0$.

\begin{lemma} \label{lemma:bracket}
Let $U\subset\reals^2$ be a connected open set. Then 
$V_1,V_2$ satisfy the bracket condition in $U\times (\reals\setminus\{0\})$
if and only if the curvature of the web defined by $\bPhi$
does not vanish identically in $U$.
\end{lemma}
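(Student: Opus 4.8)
The plan is to reduce the bracket condition to the nonvanishing of a single scalar analytic function and to recognize that function as a nowhere-vanishing multiple of the Blaschke curvature of the web defined by $\bPhi$. Since the bracket condition is local and diffeomorphism-invariant, it suffices to work in a small patch of $U$ in the coordinates $(u,v)=(\varphi_1,\varphi_2)$, which are legitimate there because $\nabla\varphi_1$ and $\nabla\varphi_2$ are linearly independent; write $\varphi_3=g(u,v)$, so that transversality makes $g_u,g_v$ nowhere zero and the third family of leaves of the web is $\{g=\mathrm{const}\}$. Because $\psi_j=(\varphi_j,\,tW_3\varphi_j)$ is a submersion of $\tilde B\times\reals$ onto an open subset of $\reals^2$, the line field $V_j$ spanning its kernel depends only on $\varphi_j$ and $W_3$, and a direct computation of $\ker d\psi_j$ shows $V_j=Z_j+b_j\,t\,\partial_t$, where $Z_j$ is any nowhere-vanishing field with $Z_j\varphi_j\equiv 0$ (regarded as $t$-independent) and $b_j=-Z_j(W_3\varphi_j)/(W_3\varphi_j)$; the line $\reals\cdot(Z_j,b_j t)$ is independent of the choice of $Z_j$ and of the rescaling of $W_3$. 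Choosing $Z_1=\partial_v$, $Z_2=\partial_u$, and $W_3=g_v\partial_u-g_u\partial_v$ gives
$$V_1=\partial_v-\frac{g_{vv}}{g_v}\,t\,\partial_t,\qquad V_2=\partial_u-\frac{g_{uu}}{g_u}\,t\,\partial_t .$$

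Next I would compute the Lie algebra generated by $V_1,V_2$. Since $[\partial_u,\partial_v]=0$ and the $b_j$ do not involve $t$, a short calculation gives $[V_1,V_2]=\Delta\,t\,\partial_t$ with $\Delta=\partial_v b_2-\partial_u b_1=\partial_u(g_{vv}/g_v)-\partial_v(g_{uu}/g_u)$; moreover, for any function $\phi=\phi(u,v)$ one has exactly $[V_1,\phi\,t\,\partial_t]=(\partial_v\phi)\,t\,\partial_t$ and $[V_2,\phi\,t\,\partial_t]=(\partial_u\phi)\,t\,\partial_t$. Hence the Lie algebra generated by $V_1,V_2$ is the span of $V_1$, $V_2$, and all fields $(\partial_u^a\partial_v^b\Delta)\,t\,\partial_t$ with $a,b\ge 0$. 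Since $\{\partial_u,\partial_v,\partial_t\}$ is a frame, this Lie algebra has rank three at a point $(x,t)$ with $t\ne 0$ iff some $\partial_u^a\partial_v^b\Delta$ is nonzero there, and it coincides with the rank-two distribution $\langle V_1,V_2\rangle$ exactly where $\Delta$ is flat to infinite order. As $\Delta$ is real analytic and $U$ is connected, the bracket condition therefore holds throughout $U\times(\reals\setminus\{0\})$ if $\Delta\not\equiv 0$ on $U$, and fails everywhere there if $\Delta\equiv 0$ on $U$.

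It remains to tie the vanishing of $\Delta$ to that of the web curvature. Rewriting $g_{uu}/g_u=\partial_u\log|g_u|$ and $g_{vv}/g_v=\partial_v\log|g_v|$ gives $\Delta=-\,\partial_u\partial_v\log|g_u/g_v|$, and $\partial_u\partial_v\log|g_u/g_v|$ is, up to a conventional nowhere-vanishing normalization factor, the Blaschke curvature of the web $\{u=\mathrm{const}\},\{v=\mathrm{const}\},\{g=\mathrm{const}\}$, i.e.\ of the web defined by $\bPhi$: taking the defining $1$-forms $\omega_1=-g_u\,du$, $\omega_2=-g_v\,dv$, $\omega_3=dg$ (so $\omega_1+\omega_2+\omega_3=0$), solving $d\omega_i=\gamma\wedge\omega_i$ yields $\gamma=(g_{uv}/g_v)\,du+(g_{uv}/g_u)\,dv$, and $d\gamma=K\,\omega_1\wedge\omega_2$ gives $K=(g_ug_v)^{-1}\partial_u\partial_v\log|g_u/g_v|$, which has the same zero set as $\Delta$. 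Combining this with the previous paragraph establishes the equivalence: the curvature of the web defined by $\bPhi$ does not vanish identically on $U$ precisely when $V_1,V_2$ satisfy the bracket condition in $U\times(\reals\setminus\{0\})$.

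The main obstacle is the last step — matching $\Delta$ to a recognized expression for the Blaschke web curvature while keeping the normalizations of $W_3$ and of the $Z_j$ straight; although the lemma is insensitive to nowhere-vanishing factors, one still has to produce a clean identity, and it is here that one must invoke the (classical but nontrivial) equivalence, in the analytic category and under the transversality hypothesis, between identical vanishing of this curvature and local linearizability of $\bPhi$ if one wishes to phrase the conclusion in those terms. The remaining ingredients — the kernel computation for $\psi_j$, the bracket bookkeeping, and the passage from $\Delta\not\equiv 0$ to the full bracket condition via analyticity and connectedness — are routine once the coordinates $(\varphi_1,\varphi_2)$ are in place.
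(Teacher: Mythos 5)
Your proposal is correct, and its first half—the choice of coordinates $(\varphi_1,\varphi_2)$, the explicit form $V_1=\partial_v-(g_{vv}/g_v)\,t\partial_t$, $V_2=\partial_u-(g_{uu}/g_u)\,t\partial_t$, the computation $[V_1,V_2]=\Delta\,t\partial_t$, the observation that all higher brackets are $(\partial^\alpha\Delta)\,t\partial_t$, and the passage via analyticity and connectedness from ``$\Delta$ not flat at a point'' to ``$\Delta\not\equiv 0$ on $U$''—is essentially identical to the paper's argument (your $\Delta$ is the paper's $\theta$). Where you diverge is in tying $\Delta$ to the web curvature. The paper never writes down the Blaschke curvature; it treats ``curvature vanishes identically'' as synonymous with local linearizability and proves the equivalence by integration: from $\theta\equiv 0$ it deduces $-\kappa_{12}/(\kappa_1\kappa_2)=h\circ\kappa$, solves $g''/g'=h$, concludes $\partial^2_{12}(g\circ\kappa)\equiv 0$ and hence $\varphi_3=g^{-1}(u(x_1)+v(x_2))$, i.e.\ linearizability; the converse direction is the trivial observation that $\theta\equiv 0$ for a linear $\varphi_3$. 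You instead compute the Blaschke connection form $\gamma$ and curvature $K$ from the normalized $1$-forms $\omega_1+\omega_2+\omega_3=0$ and verify the clean identity $\Delta=-g_ug_v\,K$ (which I checked; it is right), so the zero sets coincide outright. Your route is shorter and identifies the bracket obstruction with the curvature on the nose, which is arguably more informative; its only cost is the one you flag yourself, namely that matching the paper's operational notion of ``vanishing curvature'' (linearizability) to $K\equiv 0$ requires the classical Blaschke equivalence, which the paper instead establishes by hand in exactly the integration step described above. Either way the lemma is proved.
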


\begin{proof}
Choose coordinates  in which
\begin{equation}
\varphi_j(x_1,x_2)\equiv x_j \ \text{ for $j=1,2$.}
\end{equation}
Then writing $\{1,2\} = \{i,j\}$, the vector fields
\begin{equation}
V_j =  \partial_{x_i} - (\partial_{x_i}\varphi_3)^{-1} \partial^2_{x_i x_i}\varphi_3(x) t\partial_t
\end{equation}
satisfy $V_j(\psi_j)\equiv 0$. Their Lie bracket is
\begin{equation} [V_1,V_2] = \theta(x)\, t\partial_t \end{equation}
with
\begin{equation} \theta(x) = 
\partial_{x_1}
\Big( \frac{\partial^2_{x_2 x_2}\varphi_3} {\partial_{x_2}\varphi_3} \Big)
-\partial_{x_2}
\Big( \frac{\partial^2_{x_1 x_1}\varphi_3} {\partial_{x_1}\varphi_3} \Big).
\end{equation}
The vector fields $(\partial_{x_i}\varphi_3)^{-1} \partial^2_{x_i x_i}\varphi_3(x) t\partial_t$
commute with $\theta(x)\,t\partial_t$.
Therefore up to nonzero constant factors, arbitrary higher-order Lie brackets of $V_1,V_2$ take the form
$\partial^\alpha\theta(x_1,x_2)\,\partial_t$ for arbitrary $\alpha$.
Thus for any $\barx\in\reals^2$ and any $t\ne 0$, the pair $V_1,V_2$
satisfies the bracket condition at $(\barx,t)$ if and only if $\theta$ does
not vanish to infinite order at $\barx$. Since $\theta$ is real analytic, it follows that the
bracket condition holds at $(\barx,t)$ if and only if it holds at 
every point of $U\times (\reals\setminus\{0\})$,
and that this is equivalent to $\theta$ not vanishing identically in $U$.

Both the bracket condition, and identical vanishing of the curvature of the web,
are invariant under diffeomorphism. 
The bracket condition is also invariant under multiplication of $V_j$
by nowhere vanishing smooth functions.
If the curvature of the web vanishes identically
in a neighborhood of some point $\barx$, then it is possible 
to change coordinates so that $\varphi_j(x_1,x_2)\equiv x_j$
for $j=1,2$ and so that $\varphi_3$ is linear. In that situation, $\nabla^2\varphi_3\equiv 0$,
so $\theta$ vanishes identically.

Suppose on the other hand that $\theta\equiv 0$
	in an open set. Working in coordinates in which $\varphi_j(x_1,x_2)\equiv x_j$
	for $j=1,2$ and with the notations $\kappa= \varphi_3$, $\kappa_i= \partial_{x_i}\kappa$,
and $\kappa_{i,j} = \partial^2_{x_i x_j}\kappa$, a calculation gives
\begin{equation}
\theta(x) = (-\kappa_2\partial_1 + \kappa_1\partial_2)(\kappa_2^{-1}\kappa_1^{-1}\kappa_{1,2});
\end{equation}
recall that $\kappa_i = W_j(\varphi_3)$ vanishes nowhere by hypothesis when $\{1,2\} = \{i,j\}$.
Any function annihilated by the vector field
$-\kappa_2\partial_1 + \kappa_1\partial_2$ can be expressed in the form $h\circ\kappa$
for some function $h$.  That is,
\begin{equation} \frac{-\kappa_{1,2}}{\kappa_1\kappa_2} = h\circ\kappa.  \end{equation}
Defining $g$ to satisfy $g''/g' = h$, this relation becomes
\begin{equation} \partial^2_{1,2}(g\circ\kappa)\equiv 0.  \end{equation}
Therefore $(g\circ\kappa)(x_1,x_2)$ can locally be expressed in the form $u(x_1)+v(x_2)$.
Thus with $x = (x_1,x_2)$,
\begin{equation}
\varphi_3(x) = \kappa(x) = g^{-1}(u(x_1)+v(x_2)).
\end{equation}
Thus upon changing variables in $\reals^2$ via the mapping
$x = (x_1,x_2) \leftrightarrow (u(x_1),v(x_2))$
and changing variables in the codomain $\varphi_3(B)$ so that $\varphi_3$ is replaced by $g\circ\varphi_3$,
all three mappings $\varphi_i$ become linear.  Thus the web has vanishing curvature.
\end{proof}

\begin{lemma} \label{lemma:det(B)}
Let $a_j,\varphi_j\in C^\omega$ for $j=2,3$.
Suppose that neither $a_2,a_3$ vanishes identically,
and that $\alpha = a_2/a_3$ cannot be expressed 
in any nonempty open subset of $B$ in the form
$(h_2\circ\varphi_2)/(h_3\circ\varphi_3)$,
with $h_j\in C^\omega$.  
Suppose that $\nabla\varphi_2,\nabla\varphi_3$ are pointwise linearly independent 
at each point of $B$.
Let \begin{equation} \Lambda = \{(x,x')\in B\times B: \varphi_2(x)=\varphi_2(x')\}.\end{equation}
Define $B(x,x')$ to be the matrix
\begin{equation}
B(x,x') = \begin{pmatrix}
\alpha(x)\,W_3\varphi_2(x) & W_3\alpha(x)
\\
\alpha(x')\,W_3\varphi_2(x') & W_3\alpha(x')
\end{pmatrix}.
\end{equation}
Then the determinant $\det B(x,x')$
does not vanish identically in any nonempty relatively open subset of $\Lambda$.
\end{lemma}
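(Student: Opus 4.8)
\section{Proof proposal}

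The plan is to argue by contradiction. Suppose $\det B$ vanished identically on some nonempty relatively open subset $\mathcal{O}\subset\Lambda$; I will produce a nonempty open subset of $B$ on which $\alpha$ has the forbidden form $(h_2\circ\varphi_2)/(h_3\circ\varphi_3)$ with $h_2,h_3\in C^\omega$, contradicting the hypothesis. First I would arrange that $\alpha=a_2/a_3$ is well defined, real analytic, and nonvanishing wherever needed. Since neither $a_2$ nor $a_3$ vanishes identically on the connected set $B$, the zero set $Z=\{x\in B:a_2(x)a_3(x)=0\}$ is a proper real analytic subvariety of $B$, hence closed and nowhere dense. Each coordinate projection $\Lambda\to B$ is a submersion (immediate from $\nabla\varphi_2\ne 0$), hence an open map, so $\Lambda\setminus\big(\pi_1^{-1}(Z)\cup\pi_2^{-1}(Z)\cup\Delta\big)$, with $\Delta$ the diagonal, is open and dense in $\Lambda$. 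Replacing $\mathcal{O}$ by its intersection with this set, I may assume that $\alpha(x),\alpha(x'),W_3\varphi_2(x),W_3\varphi_2(x')$ are all nonzero throughout $\mathcal{O}$; recall that $W_3\varphi_2$ vanishes nowhere, by the pointwise linear independence of $\nabla\varphi_2$ and $\nabla\varphi_3$.

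On this set, expanding the identity $\det B(x,x')\equiv 0$ and dividing by the nonvanishing quantity $\alpha(x)\,\alpha(x')\,W_3\varphi_2(x)\,W_3\varphi_2(x')$ yields
\begin{equation}
g(x)=g(x')\ \text{ for all }(x,x')\in\mathcal{O},\qquad g:=\frac{W_3\alpha}{\alpha\,W_3\varphi_2}=\frac{W_3(\log|\alpha|)}{W_3\varphi_2}.
\end{equation}
Fix $(x_0,x_0')\in\mathcal{O}$ and choose, near $x_0$ and near $x_0'$, real analytic coordinates in each of which $\varphi_2$ is a coordinate function. In these coordinates $\Lambda$ is locally the set where the two $\varphi_2$-coordinates agree, and $g(x)=g(x')$ holds on a product box. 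Fixing the common value $v$ of $\varphi_2$: the left side depends only on the point of $\{\varphi_2=v\}$ near $x_0$ and the right side only on the point of $\{\varphi_2=v\}$ near $x_0'$, so, being equal throughout the box, each side must be constant along its level curve. Hence $g=h\circ\varphi_2$ near $x_0$ for some real analytic $h$ on the interval $\varphi_2(U)$, with $U$ a connected neighborhood of $x_0$ (analyticity of $h$ because $\varphi_2$ is a submersion).

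To finish I would integrate. Choose a real analytic $H$ on $\varphi_2(U)$ with $H'=h$; then, since $W_3(\varphi_3)\equiv 0$, the chain rule gives $W_3(H\circ\varphi_2)=(h\circ\varphi_2)\,W_3\varphi_2=g\cdot W_3\varphi_2=W_3\alpha/\alpha=W_3(\log|\alpha|)$ on $U$, so $W_3\big(\log|\alpha|-H\circ\varphi_2\big)\equiv 0$. A function annihilated by the nowhere vanishing vector field $W_3$ is locally a function of $\varphi_3$, so after shrinking $U$ we obtain $\log|\alpha|-H\circ\varphi_2=G\circ\varphi_3$ with $G$ real analytic. Exponentiating, $|\alpha|=e^{H\circ\varphi_2}\,e^{G\circ\varphi_3}$, and as $\alpha$ is continuous and nonvanishing on the connected set $U$ its sign is constant there, whence $\alpha=(h_2\circ\varphi_2)/(h_3\circ\varphi_3)$ with $h_2=\pm e^{H}$ and $h_3=e^{-G}$ nonvanishing and real analytic. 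This is the desired contradiction.

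The main obstacle is the factorization in the middle paragraph: passing from $g(x)=g(x')$ valid only on an open subset of $\Lambda$, which need not meet the diagonal, to a genuine factorization $g=h\circ\varphi_2$ near a point of $B$. The coordinate argument above settles this, the key point being that once the common value of $\varphi_2$ is fixed, the two sides of $g(x)=g(x')$ depend on disjoint sets of variables. The other steps---the reduction to a neighborhood where $\alpha$ is smooth and nonvanishing, and the integration using that $W_3$-invariant functions factor through $\varphi_3$---are routine.
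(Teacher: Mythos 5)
Your proposal is correct and follows essentially the same route as the paper's proof: divide the vanishing determinant by $\alpha(x)\alpha(x')W_3\varphi_2(x)W_3\varphi_2(x')$ to see that $W_3\alpha/(\alpha\,W_3\varphi_2)$ factors through $\varphi_2$, integrate to write $W_3(\log|\alpha|\mp H\circ\varphi_2)\equiv 0$, and conclude that $\log|\alpha|$ splits as a function of $\varphi_2$ plus a function of $\varphi_3$, contradicting the hypothesis. The only difference is that you supply the coordinate argument justifying the factorization $g=h\circ\varphi_2$ from $g(x)=g(x')$ on an open subset of $\Lambda$, and the reduction to a set where $\alpha$ is defined and nonvanishing, both of which the paper asserts without detail.
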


The hypothesis is the special case $\tau=1$ of the auxiliary hypothesis,
for a particular permutation of the indices $j\in\{1,2,3\}$.

\begin{proof}
The hypothesis that $\nabla\varphi_2,\nabla\varphi_3$
are pointwise linearly independent is equivalent to $W_3\varphi_2$ vanishing nowhere. 
If $\det B$ vanishes identically in some connected relatively open subset of 
$\Lambda = \{(x,x'): \varphi_2(x)=\varphi_2(x')\}$
then the ratio $\frac{W_3\alpha(x)}{\alpha(x)\,W_3\varphi_2(x)}$ is locally  
a function of the form $h\circ \varphi_2(x)$ with $h\in C^\omega$.
Let $H$ be an antiderivative of $-h$.
Assuming without loss of generality that $\alpha>0$, this relation can be written as
\[ W_3\log(\alpha) = W_3(\varphi_2)\cdot (h\circ\varphi_2) = -W_3(H\circ\varphi_2).\]
Thus $W_3(\log(\alpha)+(H\circ\varphi_2))\equiv 0$ locally,
whence $\log(\alpha) + H\circ\varphi_2$ is a function of the form $G\circ\varphi_3$,
contradicting the hypothesis.
\end{proof}

Recall that $\pi:\reals^2\times\reals^1\to\reals^2$
denotes the mapping $(x,t)\mapsto x$.
The auxiliary equation
\begin{equation}  \label{gexactsolution1}
\sum_{j=1}^2 (a_j\circ\pi)\cdot (g_j\circ\psi_j) = 0, \end{equation} 
and the equations obtained by permuting the indices in $\{1,2,3\}$, 
will arise in the analysis below.  The next lemma describes the structure 
of such nontrivial solutions, if any exist.
Proposition~\ref{prop:resurrected}, formulated below,
describes the structure of approximate solutions when no exact nontrivial solutions exist.

\begin{lemma} \label{lemma:exactsolns}
Let $\varphi_j:\tilde B\to\reals$ be $C^\omega$ submersions for
$j\in\{1,2\}$.
Let $a_j,\beta_j:\tilde B\to\reals$ be $C^\omega$.
Let $\psi_j(x,t) = (\varphi_j(x),t\beta_j(x))$.
Suppose that $\psi_j:\tilde B\times(0,\infty)\to\reals^2$ are submersions.
Let $V_j$ be nowhere vanishing $C^\omega$ vector fields in $B\times(0,\infty)$ that
satisfy $V_j(\psi_j)\equiv 0$.
Suppose that $V_1,V_2$ satisfy the bracket condition at each point of $B\times(0,\infty)$.

Let $\bg = (g_1,g_2)$ 
be a measurable $\reals^2$--valued function that satisfies
\eqref{gexactsolution1} 
almost everywhere
in some nonempty connected open subset of $B\times(0,\infty)$.
Then $\bg\in C^\omega$,
$\bg$ is unique up to constant scalar multiples, and there exist $\tau\in\reals$
and $h_j\in C^\omega$
such that $\bg$ takes the form
\begin{equation} \label{powerlaw1} g_j(y,t) = h_j(y)\,t^\tau\ \  \forall\,j\in\{1,2\}. \end{equation}
\end{lemma}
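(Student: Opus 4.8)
The plan is to first establish regularity of $\bg$, then to exploit the bracket condition to force the power-law structure. For regularity: since $\psi_1,\psi_2$ are submersions with transverse fibers (their differentials together span the tangent space at each point, because $\nabla\varphi_1,\nabla\varphi_2$ are linearly independent and the $t$-directions of $\psi_1,\psi_2$ are linearly independent of these and of each other), one can restrict \eqref{gexactsolution1} to a generic fiber of $\psi_1$: along such a fiber, $g_1\circ\psi_1$ is constant, $a_1\circ\pi$ is real analytic, and $\psi_2$ restricts to a real analytic diffeomorphism onto an open subset of the codomain of $\psi_2$, so the equation expresses $g_2\circ\psi_2$ — hence $g_2$ — as a real analytic function, up to the nowhere-vanishing factor $a_2\circ\pi$, on that open set. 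Symmetrically $g_1$ is real analytic. Once $\bg\in C^\omega$, uniqueness up to a scalar follows the same way: fixing the value of $g_1$ at one point determines $g_1$ along the whole fiber of $\psi_2$ through that point (where $g_2\circ\psi_2$ is constant), and these fibers sweep out a neighborhood; then $g_2$ is determined by \eqref{gexactsolution1}, so the solution space is at most one-dimensional. (If either $g_j$ vanished on a set of positive measure it would vanish identically by analyticity, giving the trivial solution; so we may assume both are nonvanishing.)

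For the structure: the key observation is that applying $V_1$ to \eqref{gexactsolution1} kills the term $g_2\circ\psi_2$ (since $V_2(\psi_2)\equiv 0$... no — rather $V_1$ is the one annihilating $\psi_1$), so let me reorganize. Write the equation as $(a_1\circ\pi)(g_1\circ\psi_1) = -(a_2\circ\pi)(g_2\circ\psi_2)$. Apply $V_1$: since $V_1(\psi_1)\equiv 0$, the left side becomes $V_1(a_1\circ\pi)\cdot(g_1\circ\psi_1)$, i.e. it remains proportional to $g_1\circ\psi_1$; similarly applying $V_2$ to the right side keeps it proportional to $g_2\circ\psi_2$. More usefully, consider the logarithmic derivative: set $G_j = \log|g_j\circ\psi_j|$ after localizing to a region where $g_j$ is nonvanishing. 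From \eqref{gexactsolution1}, $G_1 - G_2$ equals a known real analytic function $\Psi = \log|a_2/a_1|\circ\pi$. Now $V_1 G_1$ — a priori a function on $B\times(0,\infty)$ — is constant along the fibers of $\psi_1$ except that $V_1 G_1$ need not vanish; the point is $V_1(g_1\circ\psi_1)=0$ only says $g_1\circ\psi_1$ is constant along $V_1$-flow, which is false in general. The correct extraction: the function $P := V_1 V_2 (G_1 - G_2) = V_1 V_2 \Psi$ is known and real analytic. On the other hand, $V_2 G_1 = V_2(g_1\circ\psi_1)/(g_1\circ\psi_1)$, and iterating brackets of $V_1,V_2$ applied to $G_1$ (or to $\log|g_1|$ pulled back) one generates, via the bracket condition, enough vector fields spanning the full tangent space at each point, so that the collection of derivatives of $\log|g_1\circ\psi_1|$ along all iterated brackets is determined by the corresponding derivatives of $\Psi$ together with the constraint that $g_1\circ\psi_1$ is a function of $\psi_1$ alone (only two independent variables). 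Concretely: because $g_1\circ\psi_1$ depends only on $\psi_1(x,t)=(\varphi_1(x),t\beta_1(x))$, and because $\beta_1 = W_3\varphi_1$, the $t$-dependence of $g_1\circ\psi_1$ is through the single combination $t\beta_1(x)$; differentiating in the Euler field $t\partial_t$ and comparing with the structure of $V_j$ (which, as in Lemma~\ref{lemma:bracket}, have the form $\partial_{x_i} + (\text{fn})\,t\partial_t$ in suitable coordinates) yields an ODE in the scaling variable whose only solutions consistent with \eqref{gexactsolution1} are the monomials $t^\tau$.

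I expect the regularity and uniqueness parts to be routine (they mirror the discussion already given in the paragraph "The auxiliary hypothesis can be formulated more directly"), and the genuine obstacle to be the passage from "$\bg$ is real analytic and solves \eqref{gexactsolution1}" to the precise monomial form \eqref{powerlaw1}. The cleanest route is probably: work in coordinates with $\varphi_j(x_1,x_2)\equiv x_j$ for $j=1,2$; then $\psi_1(x,t) = (x_1, t\beta_1(x))$ and $\psi_2(x,t) = (x_2, t\beta_2(x))$; write $g_1\circ\psi_1 = g_1(x_1, t\beta_1(x))$. The bracket condition (Lemma~\ref{lemma:bracket}) is what guarantees the web is nonlinear, equivalently $\theta\not\equiv 0$, and this nonvanishing is exactly what rigidifies the solution. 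Differentiating \eqref{gexactsolution1} along $V_1$ and along $V_2$ and then along $[V_1,V_2] = \theta(x)\,t\partial_t$, one obtains that $t\partial_t(g_1\circ\psi_1)/(g_1\circ\psi_1)$ is forced to be a function of $\psi_1$ only (a consequence of $g_1\circ\psi_1$ factoring through $\psi_1$) while the same quantity must also, via the equation and the $\theta\,t\partial_t$ bracket, be forced to factor through $\psi_2$; since $\psi_1,\psi_2$ have transverse fibers, such a function is constant in $x$, so it equals a constant $\tau$, giving $t\partial_t\log|g_1\circ\psi_1| = \tau$, hence $g_1(y,t) = h_1(y)\,t^\tau$, and then $g_2(y,t) = h_2(y)\,t^\tau$ from \eqref{gexactsolution1}. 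I would carry out the bracket computation carefully to pin down that it is precisely $\theta\not\equiv 0$ — i.e. the bracket hypothesis — that is being used at the step where "a function of $\psi_1$ that is also a function of $\psi_2$ must be constant."
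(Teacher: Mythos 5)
Your argument for the monomial structure is sound and is genuinely different from the paper's: the paper observes that $(y,t)\mapsto g_j(y,rt)$ is again a solution for every $r\ne 0$ and combines this scaling symmetry with uniqueness-up-to-scalars to force \eqref{powerlaw1}, whereas you apply $t\partial_t$ to $\log|g_1\circ\psi_1|-\log|g_2\circ\psi_2|=\log|a_2/a_1|\circ\pi$ to produce a single function $u=t\partial_t\log|g_1\circ\psi_1|=t\partial_t\log|g_2\circ\psi_2|$ that factors through both $\psi_1$ and $\psi_2$, hence is annihilated by $V_1$ and $V_2$ and therefore by all iterated brackets, hence is constant by the bracket condition. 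That is correct (given regularity) and arguably more direct; as you note, the bracket condition — not mere transversality of the fibers — is what makes "annihilated by $V_1$ and $V_2$ implies constant" true, since the $2$-plane field spanned by $V_1,V_2$ in $\reals^3$ could otherwise be integrable.

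The genuine gap is in the regularity and uniqueness step, which you dismiss as routine. Your claim that along a fiber of $\psi_1$ "$\psi_2$ restricts to a real analytic diffeomorphism onto an open subset of the codomain of $\psi_2$" is dimensionally impossible: a fiber of $\psi_1$ in $B\times(0,\infty)\subset\reals^3$ is a one-dimensional curve, and its image under $\psi_2$ is a curve in $\reals^2$, not an open set. (You are transplanting the two-dimensional reasoning from the paper's discussion of the auxiliary hypothesis, where $g_j$ is a function of one variable and a single transverse level curve suffices; here $g_j$ is a function of two variables.) A single flow determines $g_2$ only on a curve in its domain; to determine it on an open set one must alternate flows of $V_1$ and $V_2$ — this is exactly the paper's construction of $\Theta_{3,z}(t_1,t_2,t_3)=e^{t_3V_1}e^{t_2V_2}e^{t_1V_1}z$ — and it is again the bracket condition that guarantees the Jacobian of $\bt\mapsto\Theta_{3,z}(\bt)$ is generically nonzero, so that $\psi_2\circ\Theta_{3,z}$ submerses onto an open subset of $\reals^2$ and $g_2$ is exhibited there, almost everywhere, as $g_1(z)$ times an explicit real analytic function $b_3(\bt,z)$ built from the coefficients $a_j$. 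The same construction (with $\Theta_{4,z}$ for $g_1$) yields uniqueness up to the scalar $g_1(z_0)$; your "these fibers sweep out a neighborhood" gestures at this iteration but does not carry it out, and in particular does not explain why the swept-out set in the two-dimensional domain of $g_1$ is open. There is also a measurability point your fiber restriction elides: the equation holds only almost everywhere, and restricting an a.e. identity to a measure-zero fiber requires a Fubini argument, which the $\Theta_{n,z}$ construction (with nonvanishing Jacobian) handles cleanly. So the bracket condition is needed twice — once for regularity/uniqueness and once for the constancy of $u$ — and your proposal supplies it only for the second use.
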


\begin{proof}
By induction on nonnegative integers $n$,
define mappings $\Theta_{n,z}$,
whose domains are small neighborhoods of $0$ in $\reals^n$
and whose codomains are $\reals^3$, as follows.
$\Theta_{0,z}=z$, and
\begin{equation} \Theta_{n,z}(t_1,\dots,t_n)
= 
\begin{cases}
e^{t_n V_1}\Theta_{n-1,z}(t_1,\dots,t_{n-1})
\ &\text{ if $n$ is odd}
\\
e^{t_n V_2}\Theta_{n-1,z}(t_1,\dots,t_{n-1})
\ &\text{ if $n$ is even.}
\end{cases}
\end{equation}

It follows from induction on $n$ and the relation \eqref{gexactsolution1} that if
$n$ is odd, then with the notation $\bt=(t_1,\dots,t_n)$, 
\begin{equation}
(g_2\circ\psi_2)(\Theta_{n,z}(\bt)) = \alpha b_n(\bt,z) 
\end{equation}
where $\alpha = g_1(z)$ and
\begin{multline}
b_n(\bt,z) = - 
\frac {a_1(\Theta_{n,z}(t_1,\dots,t_n))} {a_2(\Theta_{n,z}(t_1,\dots,t_n))}
\cdot
\frac {a_2(\Theta_{n-1,z}(t_1,\dots,t_{n-1}))} {a_1(\Theta_{n-1,z}(t_1,\dots,t_{n-1}))}
\cdot
\\
\cdot
\frac {a_1(\Theta_{n-2,z}(t_1,\dots,t_{n-2}))} {a_2(\Theta_{n-2,z}(t_1,\dots,t_{n-2}))}
\cdots
\frac {a_1(\Theta_{1,z}(t_1))} {a_2(\Theta_{1,z}(t_1))}.
\end{multline}
If $n$ is even, then
\begin{equation}
(g_1\circ\psi_1)(\Theta_{n,z}(\bt)) = \alpha b_n(\bt,z) 
\end{equation}
where  $\alpha$ is again equal to $g_1(z)$ and
\begin{equation} \label{bdefinition}
b_n(\bt,z) = 
\frac {a_2(\Theta_{n,z}(t_1,\dots,t_n))} {a_1(\Theta_{n,z}(t_1,\dots,t_n))}
\cdot
\frac {a_1(\Theta_{n-1,z}(t_1,\dots,t_{n-1}))} {a_2(\Theta_{n-1,z}(t_1,\dots,t_{n-1}))}
\cdots
\cdot
\frac {a_1(\Theta_{1,z}(t_1))} {a_2(\Theta_{1,z}(t_1))}.
\end{equation}
Each $b_n$ is a real analytic function of $(\bt,z)$. 

Suppose that $g_j$ are measurable and that $\bg$ satisfies 
\eqref{gexactsolution1} almost everywhere in some connected open set $U\subset\reals^3$, 
but that it is not the case that $\bg$ vanishes almost everywhere in $U$.

Since $V_1,V_2$ are real analytic and satisfy the bracket condition, for any $z\in U$
the set of all $\bt = (t_1,t_2,t_3)$ for which the Jacobian determinant
	$J_z(\bt)$ of the function $\bt\mapsto \Theta_{3,z}(\bt)$ 
satisfies $J_z(\bt)=0$ is a proper $C^\omega$ variety.
Therefore there exist arbitrarily small $\bt_0=(t_{1,0},t_{2,0},t_{3,0})$ such that
$J_{z}(\bt_0)\ne 0$. 

Now fix any $z_0\in U$, and let $\bt_0$ be as in the preceding paragraph.
The Jacobian determinant, with respect to 
$\bt$ at $\bt = \bt_0$, of 
\[\bt\mapsto e^{-t_{0,1}V_1}e^{-t_{0,2}V_2}e^{-t_{0,3}V_1}e^{t_3V_1}e^{t_2V_2}e^{t_1V_1}z_0,\]
is likewise nonzero. This mapping sends $\bt_0$ to $z_0$. 
Nonvanishing of the Jacobian determinant at $\bt_0$ is stable under small perturbation
of $z_0$. 

By the same reasoning as in the discussion above of
$g_2\circ\psi_2\circ \Theta_{3,z}$, there exists $r>0$ such that for almost every $z$
satisfying $|z-z_0|<r$, $g_2$ agrees almost everywhere in $\psi_2(B(z,r))$ 
with a $C^\omega$ function.
The corresponding assertion holds for $g_1$.
Thus $(g_1,g_2)$ agrees, almost everywhere in $U$, with a $C^\omega$  $\reals^2$-valued function.
Moreover, this reasoning proves that 
$(g_1,g_2)$ is uniquely determined up to multiplication by the constant scalar $\alpha = g_1(z_0)$.

For any $r\in\reals\setminus\{0\}$,
$\tilde g_j(y,t) = g_j(y,rt)$ also defines a solution of \eqref{gexactsolution1}.
In combination with the uniqueness up to scalar multiplication, this implies that
any smooth solution takes the form \eqref{powerlaw1}
for $y$ in the domain of $f_j$ and $t\in\reals^+$, for some exponent $\tau\in\reals$
and some $h_j\in C^\omega$,
with a corresponding representation for $t\in\reals^-$.
Moreover, if a smooth nontrivial solution does exist, then the exponent $\tau$ is unique.
\end{proof}

\section{Differences} \label{section:framework}

In this section,
we introduce a framework that will be used in the microscale and mesoscale analyses to follow.
In the nondegenerate case of Theorem~\ref{thm:main},
we may assume with no loss of generality that $a_3\equiv 1$,
by replacing each coefficient $a_j$ in the definition of $S(\bff,\eps)$ by $a_j/a_3$.
For each $(x,s)$ satisfying 
$(x,e^{sW_3}x)\in S(\bff,\eps)\times S(\bff,\eps)$,
we can subtract one inequality from another to cancel terms involving $f_3$, leaving
\begin{equation} \label{differenced1}
 \sum_{j=1}^2 a_j(x)\Big[(f_j\circ\varphi_j)(e^{sW_3}x)
-(f_j\circ\varphi_j)(x)\Big] = O(\eps)
\end{equation}
since
\begin{equation}
a_j(e^{sW_3}x)-a_j(x) = O(|s|) = O(\eps)
\end{equation}
and it is assumed that
$f_j = O(1)$.
Write  \eqref{differenced1} as
\begin{equation} \label{differenced2}
\sum_{j=1}^2 a_j(x)(\tilde g_j\circ\tilde\psi_j)(x,s) = O(\eps)
\end{equation}
with 
\begin{equation}
\left\{
\begin{gathered}
\tilde\psi_j(x,s) = (\varphi_j(e^{sW_3}x),\,\varphi_j(x))\in\reals^2
\\
\tilde g_j(y,z) = f_j(y)-f_j(z).
\end{gathered} \right. \end{equation}
We will temporarily ignore the structural relation $\tilde g_j(y,z) = f_j(y)-f_j(z)$,
will analyze solutions of \eqref{differenced2}, for general functions $\tilde g_j$ of two variables,
and then will combine the results of that analysis with the structural information
to recover information about $f_j$.
Permuting the indices will produce corresponding information concerning $f_3$.

It will be more useful to work in alternative coordinates 
adapted to the exploitation of asymptotic relations as $\eps\to 0^+$.
Change variables in domain and range, defining
$\psi_j^\eps:B\times[-C,C]\to\reals^2$ 
and functions $g_j$ by
\begin{equation} \label{psiandgjdefns}
\left\{
\begin{aligned}
\psi_j^\eps(x,t) &= (\varphi_j(x),\eps^{-1} \big[\varphi_j(e^{\eps t W_3}x)-\varphi_j(x)\big] )
\\
g_j(y,t) &= f_j(y+\eps t)-f_j(y).
\end{aligned} \right. \end{equation}
With these definitions,
\begin{equation}
(g_j\circ\psi_j^\eps)(x,t) = (f_j\circ\varphi_j)(e^{\eps t W_3}x)-(f_j\circ\varphi_j)(x)
\end{equation}
and therefore
\begin{equation} \label{gsublevel}
\sum_{j=1}^2 a_j(x)\,(g_j\circ\psi^\eps_j)(x,t) = O(\eps)
\end{equation}
for all $(x,t)$ satisfying $(x,e^{\eps tW_3}x)\in S(\bff,\eps)\times S(\bff,\eps)$.

By the Cauchy-Schwarz inequality, the set of $(x,t)$ 
satisfying $(x,e^{\eps tW_3}x)\in S(\bff,\eps)^2$
has Lebesgue measure $\gtrsim |S(\bff,\eps)|^2$.
More generally, for any measurable subset $S\subset S(\bff,\eps)$,
the set $S^\sharp$ defined by
\begin{equation} S^\sharp = \{(x,t): (x,\, e^{\eps tW_3}x) \in S\times S\}  \end{equation}
satisfies $|S^\sharp|\gtrsim |S|^2$.

As $\eps\to 0^+$, $\psi_j^\eps$ converges to the limiting mapping
\begin{equation}
\psi_j(x,t)  = \psi^0_j(x,t) = (\varphi_j(x),tW_3\varphi_j(x))
\end{equation}
studied in \S\ref{section:morelemmas}, with
\begin{equation} \psi_j^\eps(x,t) = \psi_j(x,t) +O(\eps) 
\ \text{in the $C^N$ topology, for every $N$.}
\end{equation}
The function $(x,t,\eps)\mapsto \psi_j^\eps(x,t)$ is well-defined
for small $\eps<0$, also, and $(x,t,\eps)\mapsto \psi_j^\eps(x,t)$ 
is real analytic in a neighborhood of $\eps=0$.

Let $V^\eps_j$ be nowhere vanishing real analytic vector fields 
in a neighborhood of $B\times(0,1]$ that depend real analytically
on $\eps$ in a neighborhood of $\eps=0$,
that agree with those already defined for $\eps=0$,
and that satisfy $V_j^\eps(\psi_j^\eps)\equiv 0$.
Then $V^\eps_1,V^\eps_2$ also satisfy the bracket condition,
for every sufficiently small $\eps$,
since that condition is stable under small perturbations in the $C^\infty$ topology.

By induction on nonnegative integers $n$,
define mappings $\Theta_{n,z}^\eps$ for each $z\in B$,
whose domains are small neighborhoods of $0$ in $\reals^n$
and whose codomains are $\reals^3$, 
by replacing $\psi_j,V_j$ by $\psi_j^\eps,V_j^\eps$ respectively
in the construction of $\Theta_{n,z}$ in \S\ref{section:morelemmas}. Thus
$\Theta^\eps_{0,z}=z$, and
\begin{equation} \Theta^{\eps}_{0,z}(t_1,\dots,t_n)
= 
\begin{cases}
e^{t_n V_1^\eps}\Theta_{n-1,z}^\eps(t_1,\dots,t_{n-1})
\ &\text{ if $n$ is odd}
\\
e^{t_n V_2^\eps}\Theta_{n-1,z}^\eps(t_1,\dots,t_{n-1})
\ &\text{ if $n$ is even.}
\end{cases}
\end{equation}
These satisfy
\begin{equation} \Theta_{n,z}^\eps=\Theta^0_{n,z}+O(\eps)
\ \text{ in the $C^N$ topology for every $N$.}
\end{equation}

It follows from induction on $n$ 
and the relation \eqref{gsublevel} that if
$n$ is odd, and if
$\Theta^\eps_{k,z}(t_1,\dots,t_k)\in S^\sharp$
for every $0\le k\le n$, then for $\bt=(t_1,\dots,t_n)$, 
\begin{equation}
(g_2\circ\psi^\eps_2)(\Theta^\eps_{n,z}(\bt)) = \alpha b_n^\eps(\bt,z) +O(\eps)
\end{equation}
where $\alpha = g_1(z)$ and
\begin{multline} \label{bndefn}
b_n^\eps(\bt,z) = - 
\frac {a_1(\Theta^\eps_{n,z}(t_1,\dots,t_n))} {a_2(\Theta^\eps_{n,z}(t_1,\dots,t_n))}
\cdot
\frac {a_2(\Theta_{n-1,z}^\eps (t_1,\dots,t_{n-1}))} {a_1(\Theta_{n-1,z}^\eps (t_1,\dots,t_{n-1}))}
\cdot
\\
\cdot
\frac {a_1(\Theta_{n-2,z}^\eps(t_1,\dots,t_{n-2}))} {a_2(\Theta_{n-2,z}^\eps (t_1,\dots,t_{n-2}))}
\cdots
\frac {a_1(\Theta_{1,z}^\eps(t_1))} {a_2(\Theta_{1,z}^\eps (t_1))}.
\end{multline}
Likewise, if $n$ is even  and 
$\Theta_{k,z}^\eps (t_1,\dots,t_k)\in S^\sharp$
for every $0\le k\le n$, then 
\begin{equation}
(g_1\circ\psi^\eps_1)(\Theta^\eps_{n,z}(\bt)) = \alpha b_n^\eps(\bt,z) +O(\eps)
\end{equation}
where $\alpha = g_1(z)$ and
\begin{equation} \label{bdefinition2}
b_n^\eps(\bt,z) = 
\frac {a_2(\Theta^\eps_{n,z}(t_1,\dots,t_n))} {a_1(\Theta^\eps_{n,z}(t_1,\dots,t_n))}
\cdot
\frac {a_1(\Theta^{\eps}_{n-1,z}(t_1,\dots,t_{n-1}))} {a_2(\Theta^{\eps}_{n-1,z}(t_1,\dots,t_{n-1}))}
\cdots
\cdot
\frac {a_1(\Theta^\eps_{1,z}(t_1))} {a_2(\Theta^\eps_{1,z}(t_1))}.
\end{equation}
$b_n^\eps(\bt,z)$ is defined  by \eqref{bdefinition2} and/or \eqref{bndefn}
for all $\eps$, positive or nonpositive, in some neighborhood of $0$.
Each $b_n$ is a real analytic function of $(\bt,z,\eps)$ in a neighborhood of $\eps=0$, and
\begin{equation} b_n^\eps(\bt,z) = b_n^0(\bt,z) + O(\eps) \end{equation} in $C^N$ norm for every $N$.


\begin{lemma} \label{lemma:prelimmicrostep}
Let $\bff,\eps,\ba,\bPhi$ satisfy the hypotheses of Proposition~\ref{prop:microscale}.
Assume that there exists a $C^\omega$ solution $\bg^* = (g_1^*,g_2^*)$
of $\sum_{j=1}^2 (a_j\circ\pi)\,(g_j^*\circ\psi_j^0)\equiv 0$
in some nonempty open subset of $B\times(0,\infty)$ 
that does not vanish identically.
Let $S\subset B\times(0,1]$ be measurable and satisfy
$(x,t)\in S\Rightarrow (x,e^{\eps tW_3}x)\in S(\bff,\eps)^2$. 
Let $\psi_j^\eps,g_j$ be defined as in \eqref{psiandgjdefns}.
There exist a scalar $\alpha\in\reals$
and a measurable set $E\subset S$ of Lebesgue measure $|E|\gtrsim|S|^C$ such that
\begin{equation}(g_j\circ\psi_j^\eps)(x,t) = \alpha\cdot (g_j^*\circ\psi_j^0)(x,t) + O(\eps)
\ \forall\,(x,t)\in E.\end{equation}
\end{lemma}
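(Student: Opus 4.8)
The plan is to push the hypothesis on $\bg^*$ through Lemma~\ref{lemma:exactsolns} --- obtaining its power-law structure, its uniqueness up to a scalar, and an exact transport identity along the iterated flows $\Theta^0_{n,z}$ --- and then to transfer the value of the approximate solution $(g_1,g_2)$ from a single well-chosen point $z_0\in S$ to a large subset $E\subset S$ by propagating along the perturbed flows $\Theta^\eps_{n,z}$, using the computation displayed just before this statement. First I would reduce to a single index: in the nondegenerate case $a_1,a_2$ vanish nowhere, so on $S$ the relation \eqref{gsublevel} reads $(g_1\circ\psi_1^\eps)=-\frac{a_2}{a_1}(g_2\circ\psi_2^\eps)+O(\eps)$, while $\sum_{j=1}^2(a_j\circ\pi)(g_j^*\circ\psi_j^0)$, being real analytic and vanishing on a nonempty open set, vanishes identically, whence $(g_1^*\circ\psi_1^0)=-\frac{a_2}{a_1}(g_2^*\circ\psi_2^0)$ everywhere; as $a_2/a_1$ is bounded on $B$, it suffices to produce $\alpha$ and $E$ with $(g_2\circ\psi_2^\eps)=\alpha(g_2^*\circ\psi_2^0)+O(\eps)$ on $E$. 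Since $V_1,V_2$ satisfy the bracket condition (Lemma~\ref{lemma:bracket}, in force by the standing assumption on the curvature of the web), Lemma~\ref{lemma:exactsolns} applies to $\bg^*$: it is $C^\omega$, unique up to a constant multiple, of the form $g_j^*(y,t)=h_j^*(y)t^\tau$ with $h_j^*\in C^\omega$ and $\tau\in\reals$, and neither $g_j^*\circ\psi_j^0$ vanishes identically. Running the computation from the proof of Lemma~\ref{lemma:exactsolns} for $\bg^*$ gives, for every odd $n$,
\[(g_2^*\circ\psi_2^0)(\Theta^0_{n,z}(\bt)) = (g_1^*\circ\psi_1^0)(z)\cdot b_n^0(\bt,z),\]
an identity between real analytic functions on $\{t>0\}$, hence valid wherever both sides are defined.

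Next I would fix $n=3$ (three flows suffice, by the bracket condition, to cover an open set, and $3$ is odd), select the base point, and build $E$. As in the proof of Lemma~\ref{lemma:exactsolns}, the Jacobian $J^\eps_z(\bt)$ of $\bt\mapsto\Theta^\eps_{3,z}(\bt)$ is a nontrivial real analytic function of $(\bt,z,\eps)$, uniformly for small $\eps$, so by Lemma~\ref{lemma:analyticsublevel} the set $\{\bt:|J^\eps_z(\bt)|<\eta\}$ has measure $\lesssim\eta^c$, uniformly in $z,\eps$, and similarly $|\{|g_1^*\circ\psi_1^0|<\eta\}|\lesssim\eta^c$. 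I also restrict to $\{(x,t)\in S:t\ge\delta_0\}$ for a fixed $\delta_0>0$, so that $\nabla(g_2^*\circ\psi_2^0)$ is bounded there (the region $t<\delta_0$ being of a different, more elementary character). By a refinement argument of the type in \cite{triosc} I would then select $z_0\in S$ lying also in the fixed open set $\{|g_1^*\circ\psi_1^0|\ge c_0\}$, $c_0>0$ fixed, for which
\[\scriptg := \big\{\bt\in[-\delta,\delta]^3:\ \Theta^\eps_{k,z_0}(t_1,\dots,t_k)\in S\ \ (0\le k\le3),\ \ |J^\eps_{z_0}(\bt)|\gtrsim|S|^C\big\}\]
has measure $\gtrsim|S|^C$; then $E:=\Theta^\eps_{3,z_0}(\scriptg)\subset S$ is measurable, and $|E|\gtrsim|S|^C$ by the area formula together with a bound on the multiplicity of $\Theta^\eps_{3,z_0}$.

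Finally I would set $\alpha := (g_1\circ\psi_1^\eps)(z_0)\big/(g_1^*\circ\psi_1^0)(z_0)$, which is $O(1)$ since $|(g_1^*\circ\psi_1^0)(z_0)|\ge c_0$ and $|(g_1\circ\psi_1^\eps)(z_0)|\le4$ (as $\norm{f_j}_{L^\infty}\le2$). For $w=\Theta^\eps_{3,z_0}(\bt)\in E$, the computation preceding this statement --- applied with $n=3$ and with $S$ in the role of $S^\sharp$ --- gives $(g_2\circ\psi_2^\eps)(w)=(g_1\circ\psi_1^\eps)(z_0)\,b_3^\eps(\bt,z_0)+O(\eps)$; then $b_3^\eps=b_3^0+O(\eps)$, the transport identity written as $b_3^0(\bt,z_0)=(g_1^*\circ\psi_1^0)(z_0)^{-1}(g_2^*\circ\psi_2^0)(\Theta^0_{3,z_0}(\bt))$, the estimate $\Theta^0_{3,z_0}(\bt)=w+O(\eps)$ with the boundedness of $\nabla(g_2^*\circ\psi_2^0)$, and $|\alpha|=O(1)$ combine to give $(g_2\circ\psi_2^\eps)(w)=\alpha(g_2^*\circ\psi_2^0)(w)+O(\eps)$; the reduction of the first step then supplies the $j=1$ identity as well. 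I expect the main obstacle to be this refinement: choosing $z_0$ so that the iterated perturbed flow remains inside $S$ for a parameter set of measure $\gtrsim|S|^C$, while simultaneously keeping $z_0$ away from $\{g_1^*\circ\psi_1^0\approx0\}$ and the parameters away from $\{J^\eps_{z_0}\approx0\}$, and controlling the multiplicity of $\Theta^\eps_{3,z_0}$, is of the same flavor as --- but more delicate than --- the corresponding refinements in \cite{triosc}.
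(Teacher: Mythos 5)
Your argument is correct and follows the paper's proof essentially verbatim: it rests on the same five ingredients — the exact transport identity $b_n^0(\bt,z)=c\,(g_j^*\circ\psi_j^0)(\Theta^0_{n,z}(\bt))$ extracted from the proof of Lemma~\ref{lemma:exactsolns}, the perturbation bound $b_n^\eps=b_n^0+O(\eps)$, the approximate transport relation along the iterated flows on $S$, repeated Cauchy--Schwarz to select a good base point $z_0$, and the bracket condition (nonvanishing Jacobian of $\Theta^\eps_{n,z_0}$) to push the parameter set forward onto a subset $E$ of measure $\gtrsim|S|^C$. Your only deviations — treating $j=1$ via the pointwise two-term relation on $S$ rather than via $\Theta^\eps_{4,z}$, and explicitly flagging that $z_0$ must avoid the zero set of $g_1^*\circ\psi_1^0$ and that the gradient of $g_j^*\circ\psi_j^0=h_j^*(y)t^\tau$ must be controlled away from $t=0$ — are a mild simplification and a mild sharpening, respectively, of the paper's own five-point sketch, which elides the latter two points.
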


\begin{proof}[Proof of Lemma~\ref{lemma:prelimmicrostep}]
This is a combination of facts established above.  Firstly, it was shown in the proof 
of Lemma~\ref{lemma:exactsolns} that up to a nonzero constant factor,
\[b_n^0(\bt,z) = (g_j^*\circ\psi_j^\eps)(\Theta^\eps_{n,z}(\bt)))\]
with $\bt = (t_1,\dots,t_n)$ for odd $n$ when $j=2$, and for even $n$ when $j=1$.
Secondly,
$b_n^\eps(\bt,z) = b_n^0(\bt,z)+O(\eps)$. 
Thirdly, 
with $n=3$ for $j=2$ and $n=4$ for $j=1$,
\begin{equation} \label{Thirdly}
	(g_j\circ\psi_j^\eps)(\Theta^\eps_{n,z}(\bt)) = g_1(z) b_n^\eps(\bt,z) + O(\eps)
\end{equation}
for any $(\bt,z)$ that satisfies
$\Theta^\eps_{k,z}(t_1,\dots,t_k)\in S^\sharp$ for every $0\le k\le n$.
Fourthly, by repeated application of the Cauchy-Schwarz inequality,
there exists $z$ such that the set of all $\bt = (t_1,\dots,t_4)$
that satisfy
$\Theta^\eps_{k,z}(t_1,\dots,t_k)\in S^\sharp$ for every $0\le k\le 4$
has Lebesgue measure $\gtrsim |S^\sharp|^C\gtrsim  |S|^{2C}$.
Fifthly, as a consequence of the bracket condition,
the image under $\Theta^\eps_{4,z}$ of the set of all such $\bt$
has Lebesgue measure $\gtrsim |S|^C$,
and likewise 
the image under $\Theta^\eps_{3,z}$ of the set of all $\bt' =
(t_1,t_2,t_3)$ such there exists $t_4$ such that $\bt = (t_1,\dots,t_4)$
is in this same set, have Lebesgue measure $\gtrsim |S|^C$.
\end{proof}

\section{Microscale Analysis} \label{section:microscale}

The following hypotheses are in force throughout \S\ref{section:microscale},
\S\ref{section:mesoscale}, and \S\ref{section:macroscale}:
$a_j\in C^\omega(\tilde B)$ vanish nowhere. $\varphi_j:\tilde B \to\reals$ are
likewise $C^\omega$ and have pairwise linearly independent gradients at each point
of $B$. The curvature of the web defined by $\Phi$ does not vanish identically. 
The functions $f_j$ are Lebesgue measurable and are $O(1)$ almost everywhere.
These hypotheses are omitted from formulations of lemmas and intermediate propositions.
Either the auxiliary hypothesis, or a restricted variant of that hypothesis
involving a particular permutation of the indices $j\in\{1,2,3\}$,
will be assumed, and this will be indicated.
The main hypothesis and the pointwise lower bound $|f_3|\gtrsim 1$
will not be assumed until \S\ref{section:macroscale}. 

\medskip
The microscale analysis works with certain pairs of points $(x,x')\in S(\bff,\eps)\times S(\bff,\eps)$
that satisfy $|x-x'| = O(\eps)$. It is 
a hybrid microscale/macroscale analysis, in the sense that while $x-x'$ is small
(if $\eps$ is small), $x$ varies over the macroscopic region $B$.
It is at the microscale that the coefficients $a_j$ can be treated as constants,
since $\sum_j a_j(x)(f_j\circ\varphi_j)(x) = \sum_j a_j(x')(f_j\circ\varphi_j)(x) + O(\eps)$
when $x-x' = O(\eps)$.

For each $j\ne k$, $W_k\varphi_j$ vanishes nowhere; this is a restatement of the 
linear independence of the pair $\nabla\varphi_j,\nabla\varphi_k$.
Therefore $W_k\varphi_j$ is either everywhere strictly positive, or everywhere 
strictly negative. One could replace $\varphi_j$ by $-\varphi_j$
to ensure positivity. Thus the absolute value signs
on $W_k\varphi_j$ in the auxiliary hypothesis \eqref{auxhyp3}, below, are of no real significance.

\begin{lemma} \label{lemma:microscaleprelim}
Assume that for any $\tau\in\reals$,
any $C^\omega$ solution $(h_1,h_2)$ of the equation
\begin{equation} \label{auxhyp3}
\sum_{j=1}^2 a_j\,|W_3\varphi_j|^\tau \,(h_j\circ\varphi_j)=0
\end{equation}
in any nonempty open subset of $B$ vanishes identically.
Then for any $\sigma>0$ there exist $c,C\in(0,\infty)$ with the following property.

Let $\eps>0$. For each $j\in\{1,2,3\}$
let $f_j:\varphi_j(B)\to \reals$ be Lebesgue measurable
and satisfy $\norm{f_j}_{L^\infty} = O(1)$.
Let $\sS\subset S(\bff,\eps)$ be measurable.
Either
\begin{equation} |\sS| \le C\eps^c, \end{equation}
or there exist a function $f_1^*$ 
that is constant on intervals of length $\eps$, 
and a measurable set $S'\subset \sS$,
satisfying
\begin{equation} \left\{ \begin{aligned}
	&|S'|\ge c|\sS|^C  
	\\ &|f_1^*(\varphi_1(x))-f_1(\varphi_1(x))|\le C\eps^{1-\sigma} \ \forall\,x\in S'.
\end{aligned} \right. \end{equation} \end{lemma}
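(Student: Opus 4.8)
The plan is to set up a dichotomy by examining whether the associated exact homogeneous equation admits a nontrivial $C^\omega$ solution. For each real exponent $\tau$, the hypothesis \eqref{auxhyp3} asserts that $\sum_{j=1}^2 a_j\,|W_3\varphi_j|^\tau\,(h_j\circ\varphi_j)=0$ has only the trivial $C^\omega$ solution in any nonempty open set; by Lemma~\ref{lemma:exactsolns}, this is precisely the statement that $\sum_{j=1}^2 (a_j\circ\pi)\,(g_j\circ\psi_j^0)\equiv 0$ has no nontrivial $C^\omega$ solution on any nonempty open subset of $B\times(0,\infty)$, since any such solution would have the power-law form $g_j(y,t)=h_j(y)t^\tau$, and feeding that form back into the equation gives exactly \eqref{auxhyp3}. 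So under the hypothesis of the lemma there are \emph{no} nontrivial exact solutions, and Lemma~\ref{lemma:prelimmicrostep} does not directly apply; instead, one must use the ``resurrected'' structure theorem, Proposition~\ref{prop:resurrected}, describing approximate solutions of \eqref{gsublevel} when no exact solutions exist.

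First I would run the differencing machinery of \S\ref{section:framework}: given $\sS\subset S(\bff,\eps)$, form $S^\sharp=\{(x,t):(x,e^{\eps t W_3}x)\in \sS\times\sS\}$, which has $|S^\sharp|\gtrsim|\sS|^2$ by Cauchy--Schwarz, and note that on $S^\sharp$ the differenced relation \eqref{gsublevel} holds with $g_j(y,t)=f_j(y+\eps t)-f_j(y)$. The building-block composition $\Theta_{3,z}^\eps$ and the real-analytic factors $b_3^\eps(\bt,z)$ from \eqref{bndefn} propagate the relation: along orbits staying in $S^\sharp$ one gets $(g_2\circ\psi_2^\eps)(\Theta_{3,z}^\eps(\bt))=g_1(z)\,b_3^\eps(\bt,z)+O(\eps)$. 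Since there is no exact solution, the quantitative version of the main/auxiliary hypothesis (Proposition~\ref{prop:equivalent_hypothesis}, respectively the absence of approximate solutions encoded in Proposition~\ref{prop:resurrected}) forces $g_1(z)$ to be $O(\eps^{c_0})$ for a large measure of $z$, \emph{or} forces $g_1$ to be nearly constant in $t$ on a large subset — which, recalling $g_1(y,t)=f_1(y+\eps t)-f_1(y)$, says that $f_1$ restricted to a $\sigma$-fraction of its domain is within $O(\eps^{1-\sigma})$ of a function constant on $\eps$-intervals. In the first alternative one concludes $|\sS|^C\lesssim\eps^{c_0}$, hence $|\sS|\le C\eps^{c}$ with $c=c_0/C$, giving the first branch of the dichotomy.

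The second alternative produces the required $f_1^*$ and $S'$: I would let $f_1^*$ be the step function obtained by averaging $f_1$ over a grid of intervals of length $\eps$ (or taking its value at grid endpoints), then show that on the set $S'$ where the approximate-constancy of $g_1$ in $t$ holds — a set of measure $\gtrsim|\sS|^C$ by the measure bookkeeping of Lemma~\ref{lemma:prelimmicrostep} combined with the Fubini/Cauchy--Schwarz estimates for the orbit maps $\Theta^\eps_{k,z}$ — one has $|f_1^*(\varphi_1(x))-f_1(\varphi_1(x))|\le C\eps^{1-\sigma}$ for all $x\in S'$. The $\eps^{1-\sigma}$ loss (rather than $\eps$) arises because the number of composition steps and the iterated Cauchy--Schwarz inequalities cost a fixed power, which is absorbed by choosing the grid and the intermediate exponents in terms of the given $\sigma$; tracking these exponents carefully is the routine but bookkeeping-heavy part.

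The main obstacle I expect is the passage from ``no exact solution of the limiting equation'' to a genuinely \emph{quantitative} statement about approximate solutions of \eqref{gsublevel} at scale $\eps$ — i.e.\ invoking Proposition~\ref{prop:resurrected} with the correct uniformity in $\eps$, $z$, and the orbit parameters $\bt$, so that the exceptional sets where the dichotomy fails have controlled measure. This requires that the real-analytic dependence of $b_3^\eps$, $\Theta_{3,z}^\eps$, and $\psi_j^\eps$ on $(\bt,z,\eps)$ near $\eps=0$ be leveraged through a \L ojasiewicz-type inequality (as in Lemma~\ref{lemma:sublevel_notSigma}) uniformly over the compact parameter ranges, and that the bracket condition (Lemma~\ref{lemma:bracket}) be used to guarantee the orbit maps $\Theta^\eps_{3,z}$ and $\Theta^\eps_{4,z}$ are submersive on large sets so that measure is not lost when pushing $S^\sharp$ forward. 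Everything else — the differencing, the Cauchy--Schwarz orbit estimates, and the construction of the step function $f_1^*$ — is a direct assembly of the lemmas already in hand.
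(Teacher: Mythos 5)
Your proposal assembles the right ingredients (the differencing into $g_j(y,t)=f_j(y+\eps t)-f_j(y)$, the set $S^\sharp$, the orbit maps $\Theta^\eps_{n,z}$ and the analytic factors $b_n^\eps$, and the observation via Lemma~\ref{lemma:exactsolns} that \eqref{auxhyp3} rules out all nontrivial exact solutions of $\sum_j(a_j\circ\pi)(g_j\circ\psi_j^0)\equiv 0$), but the dichotomy at the heart of the argument is organized backwards, and the mechanism that actually produces the bound $|\sS|\le C\eps^c$ is missing. You claim that ``$g_1(z)=O(\eps^{c_0})$ for a large measure of $z$'' yields $|\sS|\le C\eps^c$. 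It does not: smallness of the differences is precisely the \emph{favorable} case, the one in which $f_1$ is approximately locally constant and the step function $f_1^*$ can be built (this is Lemma~\ref{lemma:microstep2C}, which requires its own combinatorial bookkeeping with heavy level sets, rich points, and a disjointification of the retained $\eps$-intervals — more than ``averaging over a grid''). The measure bound comes from the \emph{complementary} case: if on at least half of $S_0$ the differences exceed $\eps^{1-\sigma}$, then the normalizing scalar $\alpha=g_1(z)$ in the relation $(g_j\circ\psi_j^\eps)(\Theta^\eps_{n,z}(\bt))=\alpha\, b_n^\eps(\bt,z)+O(\eps)$ satisfies $|\alpha|\gtrsim\eps^{1-\sigma}$, so dividing the $O(\eps)$ error by $\alpha$ shows that the real-analytic function $\scriptf_{\barz}$ of \eqref{scriptfdefn} — which encodes both the two-term relation and the constraints that $b_3,b_4$ factor through $\psi_2,\psi_1$, and which does not vanish identically precisely because no exact solution exists — is $O(\eps^{\sigma})$ on a set of measure $\gtrsim|\sS|^{2C}$. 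Lemma~\ref{lemma:analyticsublevel} then forces $|\sS|\le C\eps^c$. Your attribution of the exponent $1-\sigma$ to ``composition steps and iterated Cauchy--Schwarz costs'' indicates this mechanism is absent: the Cauchy--Schwarz losses cost powers of $|\sS|$, while $\eps^{1-\sigma}$ is the \emph{chosen} threshold whose whole purpose is to make $\eps/|\alpha|=O(\eps^{\sigma})$ a positive power of $\eps$.

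Two further problems. First, invoking Proposition~\ref{prop:resurrected} as the engine is circular in this development: the paper states that its proof is implicit in that of Lemma~\ref{lemma:microstep2A}, which is itself the core of the lemma you are proving; and in any case that proposition concerns the limiting mappings $\psi_j^0$ and a genuine sublevel set, not the perturbed $\psi_j^\eps$ with an $O(\eps)$ error measured against an $\eps^{1-\sigma}$ threshold — bridging that gap is exactly the content of the $b_n^\eps=b_n^0+O(\eps)$ analysis and of the auxiliary vector fields $U_3^{z,\eps},U_4^{z,\eps}$, none of which appear in your sketch. Second, ``$g_1$ nearly constant in $t$'' is weaker than what is needed; the correct small-variation condition is \eqref{smallvariationcase}, i.e.\ that the differences themselves are $\le\eps^{1-\sigma}$, since the large subset of $t$ need not accumulate at $t=0$ for merely measurable $f_1$.
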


In this formulation, the phrase ``constant on intervals of length $\eps$''
means that there exists a partition of the domain of $f_1^*$ 
into intervals, each of length $\eps$, such that the restriction
of $f_1^*$ to each such interval is constant.
We use this language throughout the remainder of the analysis.

Recall the mappings $\psi_j$, from a subset of $\reals^3$ to $\reals^2$,
defined by $\psi_j(x,t) = (\varphi_j(x),tW_3\varphi_j(x))$.
According to Lemma~\ref{lemma:exactsolns}, any real analytic solution $\bg = (g_1,g_2)$ of 
the associated equation $\sum_{j=1}^2 a_j(x)\,(g_j\circ\psi_j)(x,t)=0$
in any nonempty open subset of $\reals^2\times\reals$ 
takes the form $g_j(y,t) = h_j(y) t^\tau$ for some $\tau\in\reals$ determined by $\bPhi$.
Therefore the hypothesis \eqref{auxhyp3} of Lemma~\ref{lemma:microscaleprelim}
implies that  any real analytic solution $\bg$ of this associated equation,
in any nonempty open set, vanishes identically.
It is this form of the hypothesis that will be used in the proof.

The hypotheses of Theorem~\ref{thm:main} are invariant under permutations
of the indices $j\in\{1,2,3\}$. 
Lemma~\ref{lemma:microscaleprelim}
is formulated in terms of a subset $\sS$ of $S(\bff,\eps)$
so that  it can be applied three times in succession, with the 
indices permuted in each application, and with $\sS$ initially taken to be $S(\bff,\eps)$
and then replaced by a suitable subset upon each application. 
This successive application will establish the following proposition,
which is the main result of this section. 

\begin{proposition}[Microscale] \label{prop:microscale}
Let $(\ba,\bPhi)$ satisfy the auxiliary hypothesis.
Then for any $\sigma>0$ there exist $c,C\in(0,\infty)$ with the following property.

Let $0<\eps\le 1$. For each $j\in\{1,2,3\}$
let $f_j:\varphi_j(B)\to \reals$ be Lebesgue measurable
and satisfy $\norm{f_j}_{L^\infty} = O(1)$.
Either
\begin{equation} |S(\bff,\eps)| \le C\eps^c, \end{equation}
or there exist functions $f_j^*$ that are constant on intervals of length $\eps$, 
and a measurable set $S\subset S(\bff,\eps)$, satisfying
\begin{equation} \label{microscaleconclusion2A}
|S|\ge c|S(\bff,\eps)|^C  
\end{equation}
and
\begin{equation} \label{microscaleconclusion2B}
|f_j^*(\varphi_j(x))-f_j(\varphi_j(x))|\le C\eps^{1-\sigma} \ \forall\,x\in S
\ \forall\,j\in\{1,2,3\}.
\end{equation}
\end{proposition}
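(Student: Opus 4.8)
The plan is to obtain Proposition~\ref{prop:microscale} by applying Lemma~\ref{lemma:microscaleprelim} three times in succession, once for each index $j\in\{1,2,3\}$, with the indices permuted appropriately in each application and with the ambient set shrunk at each step.

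First I would note that the standing hypothesis of Lemma~\ref{lemma:microscaleprelim} --- that for every $\tau\in\reals$ the only $C^\omega$ solution $(h_1,h_2)$ of \eqref{auxhyp3} in a nonempty open subset of $B$ vanishes identically --- is exactly the case $k=3$ of the auxiliary hypothesis, the absolute value signs being inessential as remarked just before the lemma. Since the auxiliary hypothesis, like every hypothesis of Theorem~\ref{thm:main}, is invariant under permutations of $\{1,2,3\}$, the analogous statement with $W_3$ replaced by $W_k$ and the summation running over $\{1,2,3\}\setminus\{k\}$ holds for each $k$. Consequently Lemma~\ref{lemma:microscaleprelim} applies to the datum obtained from $(\ba,\bPhi,\bff)$ by any permutation $\rho$ of the three indices; its conclusion then produces a step-function approximant to $f_{\rho(1)}$ on a subset of $S(\bff,\eps)$ (which is itself permutation-invariant) whose measure is bounded below by a fixed power of the measure of the set fed in. Since there are only finitely many permutations, a single pair of constants $c,C\in(0,\infty)$, depending only on $\sigma$ (and on $\ba,\bPhi$), may be taken to serve in all of them.

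Now fix $\sigma>0$ and the constants $c,C$ above. Apply Lemma~\ref{lemma:microscaleprelim} with $\sS=S(\bff,\eps)$ in its original index ordering. If $|S(\bff,\eps)|\le C\eps^c$ we are in the first alternative of the proposition and done; otherwise we get a function $f_1^*$ constant on intervals of length $\eps$ and a measurable $S_1\subset S(\bff,\eps)$ with $|S_1|\ge c|S(\bff,\eps)|^C$ such that $|f_1^*\circ\varphi_1-f_1\circ\varphi_1|\le C\eps^{1-\sigma}$ on $S_1$. Next apply the lemma to the datum with indices $1$ and $2$ interchanged, taking $\sS=S_1$. If the alternative $|S_1|\le C\eps^c$ occurs, then $c|S(\bff,\eps)|^C\le C\eps^c$, so $|S(\bff,\eps)|\le C'\eps^{c'}$ and we are again done; otherwise we obtain $f_2^*$ (constant on intervals of length $\eps$) and $S_2\subset S_1$ with $|S_2|\ge c|S_1|^C$ on which $|f_2^*\circ\varphi_2-f_2\circ\varphi_2|\le C\eps^{1-\sigma}$, and the bound for $f_1$ persists on $S_2$ since $S_2\subset S_1$. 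Finally apply the lemma a third time to the datum in which index $1$ is placed in the role of the index cancelled via its vector field (so that the relevant instance of the hypothesis is the case $k=1$ of the auxiliary hypothesis) and index $3$ is placed in the role of ``$f_1$'', with $\sS=S_2$: either we are thrown back to the first alternative as before, or we obtain $f_3^*$ and $S_3\subset S_2$ with $|S_3|\ge c|S_2|^C$ on which $|f_3^*\circ\varphi_3-f_3\circ\varphi_3|\le C\eps^{1-\sigma}$.

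Setting $S=S_3$, the approximation bounds hold for all three indices on $S$ because $S_3\subset S_2\subset S_1$, which is \eqref{microscaleconclusion2B}; and chaining the three lower bounds gives $|S|=|S_3|\ge c\,|S_2|^C\ge c\,(c\,|S_1|^C)^C\ge c'\,|S(\bff,\eps)|^{C^3}$, which is \eqref{microscaleconclusion2A} after renaming constants. The only thing requiring care is the bookkeeping: checking that each of the three ``small set'' outcomes collapses, via the polynomial lower bounds already accumulated, to the single estimate $|S(\bff,\eps)|\le C\eps^c$, and tracking how $c$ and $C$ compound over the three passes. I do not expect a genuine analytic obstacle at this stage --- all of the substantive work is inside Lemma~\ref{lemma:microscaleprelim} --- but it is worth emphasizing that what makes the scheme run is precisely the permutation symmetry of the auxiliary hypothesis, including its quantification over an arbitrary exponent $\tau$, since the exponent relevant to each of the three configurations is the one dictated by $\bPhi$ in that configuration and need not agree with the others.
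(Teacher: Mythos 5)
Your proposal is correct and is precisely the argument the paper intends: the text immediately preceding Proposition~\ref{prop:microscale} states that Lemma~\ref{lemma:microscaleprelim} is formulated for a subset $\sS\subset S(\bff,\eps)$ exactly so that it can be applied three times in succession with permuted indices, with $\sS$ replaced by the surviving subset at each pass. Your bookkeeping of the chained lower bounds and the collapse of each ``small set'' alternative to the first conclusion matches what the paper leaves implicit.
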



To begin the proof of Lemma~\ref{lemma:microscaleprelim}, let $\bff,\eps,\sS$ be given.  
Let $\varrho_0$ be a small positive constant.
We say that $y\in\varphi_j(B)\subset\reals^1$ is heavy if
\begin{equation} |\{x\in \sS: \varphi_1(x)=y\}| \ge \varrho_0|\sS|.\end{equation}
In this definition, $|\cdot|$ denotes the one-dimensional measure of a subset of $\reals^2$.
Define
\begin{equation}
\sS' = \{x\in \sS: \varphi_1(x) \text{ is heavy} \}.
\end{equation}
If $\varrho_0$ is chosen to be a sufficiently small constant, 
depending only on $B$ and $\varphi_1$,
it follows that
\begin{equation}
|\sS'|\ge\tfrac12|\sS|.
\end{equation}
Indeed, the set of all $x\in \sS$ such that $\varphi_1(x)$ is not heavy
has two-dimensional Lebesgue measure $\le C\varrho_0|\sS|$ by Fubini's theorem,
with $C$ a finite constant that depends only on $B,\varphi_1$.
Choose and fix such a constant $\varrho_0$.

Form
\begin{equation}
S_0 = \{(x,s)\in B\times[-C\eps, C\eps]: (x,e^{sW_3}x)\in \sS'\times \sS'\}.
\end{equation}
The measure of $S_0$ satisfies the trivial bound
$|S_0| = O(\eps |\sS|)$. On the other hand,
by the Cauchy-Schwarz inequality, $|S_0|\gtrsim |\sS'|^2\eps\ge \tfrac14 |\sS|^2\eps$. 
Let $S_1$ be the set of all $(x,s)\in S_0$ that satisfy
\begin{equation} \label{smallvariationcase}
\sum_{j=1}^2 |(f_j\circ\varphi_j)(e^{sW_3}x)-(f_j\circ\varphi_j)(x)| \le \eps^{1-\sigma}.
\end{equation}

\begin{lemma} \label{lemma:microstep2C}
Under the hypotheses of Lemma~\ref{lemma:microscaleprelim},
if $|S_1|\ge \tfrac12 |S_0|$
then there exist a function $f_1^*$ that is constant on intervals of length $\eps$,
and a measurable set $S'_1\subset \sS$ 
satisfying $|S'_1|\gtrsim |\sS|^C$,
such that $|f^*_1\circ\varphi_1-f_1\circ\varphi_1|\le C_\sigma \eps^{1-\sigma}$
at each point of $S'_1$.
\end{lemma}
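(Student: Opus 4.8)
\ The plan is to use only the $j=1$ summand of the inequality defining $S_1$, namely
$|(f_1\circ\varphi_1)(e^{sW_3}x)-(f_1\circ\varphi_1)(x)|\le\eps^{1-\sigma}$ for $(x,s)\in S_1$,
together with the single geometric fact, noted above, that $W_3\varphi_1$ vanishes nowhere on a neighborhood of $B$ and is therefore bounded away from $0$ and from $\infty$ there. That fact means precisely that for each fixed $x$ the map $\phi_x(s)=\varphi_1(e^{sW_3}x)$ is a bi-Lipschitz homeomorphism of $[-C\eps,C\eps]$ onto a subinterval of length comparable to $\eps$ centered near $\varphi_1(x)$; so controlling the variation of $f_1\circ\varphi_1$ along the $W_3$-flow controls the oscillation of $f_1$, as a function on $\varphi_1(B)$, at scale $\eps$. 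Neither the auxiliary-type hypothesis \eqref{auxhyp3}, nor the heaviness of the values $\varphi_1(x)$, nor the $j=2$ summand, is needed here; those enter only in the complementary case $|S_1|<\tfrac12|S_0|$ and in the parallel statements for $f_2,f_3$.

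\textbf{Step 1: a pointwise consequence of $|S_1|\ge\tfrac12|S_0|$.}\ Since $|S_1|\ge\tfrac12|S_0|\gtrsim|\sS|^2\eps$, a Fubini/pigeonhole argument — using that the $s$-section $\Sigma_x=\{s\in[-C\eps,C\eps]:(x,s)\in S_1\}$ has measure $\le 2C\eps$ for every $x$ and that $|\sS'|\le|B|$ — produces a measurable set $\sS''\subset\sS'$ with $|\sS''|\gtrsim|\sS|^2$ such that $|\Sigma_x|\gtrsim|\sS|^2\eps$ for every $x\in\sS''$. For such $x$ put $\tilde\Sigma_x=\phi_x(\Sigma_x)$; then $|\tilde\Sigma_x|\gtrsim|\sS|^2\eps$ (because $|\phi_x'|\gtrsim 1$), $\tilde\Sigma_x$ lies within $O(\eps)$ of $\varphi_1(x)$, and for a.e.\ $y\in\tilde\Sigma_x$ one has $|f_1(y)-f_1(\varphi_1(x))|\le\eps^{1-\sigma}$, since each such $y$ equals $\varphi_1(e^{sW_3}x)$ for some $s\in\Sigma_x$. (One should here either extend each $f_j$ to a bounded measurable function on $\varphi_j(\tilde B)$ or pass to a slightly smaller ball of base points, so that $e^{sW_3}x$, hence $\varphi_1(e^{sW_3}x)$, lies in the domain of $f_1$; this is routine and changes nothing.)

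\textbf{Step 2: construction of $f_1^*$.}\ Fix a partition of $\reals$ into half-open intervals $I_m$ of length $\eps$, and group the points of $\sS''$ by which $I_m$ contains their $\varphi_1$-value: let $X_m^0=\{x\in\sS'':\varphi_1(x)\in I_m\}$ and $N_m=|X_m^0|$, so $\sum_m N_m=|\sS''|\gtrsim|\sS|^2$. For $x\in X_m^0$ the set $\tilde\Sigma_x$ is contained in a fixed interval $\hat I_m$ (the union of $I_m$ with boundedly many neighbors) of length $O(\eps)$, so the double count $\int_{\hat I_m}|\{x\in X_m^0:y\in\tilde\Sigma_x\}|\,dy=\int_{X_m^0}|\tilde\Sigma_x|\,dx\gtrsim N_m|\sS|^2\eps$, together with $|\hat I_m|=O(\eps)$, yields a value $y_m\in\hat I_m$ for which the set $X_m=\{x\in X_m^0:y_m\in\tilde\Sigma_x\}$ has $|X_m|\gtrsim|\sS|^2 N_m$. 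Define $f_1^*$ to equal the constant $f_1(y_m)$ on $I_m$ whenever $N_m>0$, and $0$ on the remaining intervals; this is constant on intervals of length $\eps$. For $x\in X_m$, since $y_m\in\tilde\Sigma_x$, we get $|f_1^*(\varphi_1(x))-f_1(\varphi_1(x))|=|f_1(y_m)-f_1(\varphi_1(x))|\le\eps^{1-\sigma}$. Setting $S_1'=\bigcup_m X_m\subset\sS''\subset\sS$ (a disjoint union), $|S_1'|=\sum_m|X_m|\gtrsim|\sS|^2\sum_m N_m\gtrsim|\sS|^4$, which is of the required form $\gtrsim|\sS|^C$.

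\textbf{Main obstacle.}\ The temptation is to compare two base points in the same $\eps$-interval directly, via the intersection of their sets $\tilde\Sigma_x$; this fails when $|\sS|$ is small, since each $\tilde\Sigma_x$ occupies only a $\sim|\sS|^2$-fraction of an $\eps$-length interval, and many disjoint such sets can coexist. The popular-value pigeonhole above avoids pairwise comparison by selecting one common target value $y_m$ per interval, at the cost of a couple of extra powers of $|\sS|$ — harmless, since the conclusion only asks for $|S_1'|\gtrsim|\sS|^C$. The remaining points needing care are purely routine: measurability of $\sS''$, of $X_m$, and of the selection $m\mapsto y_m$ (immediate, as $S_1$ is measurable and there are countably many $m$), and the domain issue for $f_1$ noted in Step~1.
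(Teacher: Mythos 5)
Your proof is correct. It follows the same overall template as the paper's argument --- restrict to base points whose $s$-fiber in $S_1$ is substantial, partition $\varphi_1(B)$ into intervals of length $\eps$, and fix one representative value of $f_1$ per interval by pigeonholing --- but the mechanism producing the two-dimensional lower bound $|S_1'|\gtrsim|\sS|^C$ is genuinely different and somewhat cleaner. The paper selects a single rich base point $x_I$ per good interval and sets $f_1^*=f_1(\varphi_1(x_I))$ there; it must then convert the one-dimensional measure of admissible flow times $s$ into two-dimensional measure, and this is precisely where the heaviness preprocessing ($\sS'$, $\varrho_0$) is consumed, since $S_1'$ is recovered as the union of $\varphi_1$-fibers in $\sS$ over the values $\varphi_1(e^{sW_3}x_I)$; a further coset-modulo-$N$ disjointification of the doubled intervals $I^*$ is needed to avoid double counting. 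You instead retain the full two-dimensional family $X_m^0$ of base points over each interval and pigeonhole on a popular target value $y_m$ in the image, so the sets $X_m$ are already two-dimensional subsets of $\sS$ and are automatically disjoint because the $I_m$ partition the line; neither heaviness nor the coset trick is needed, at the harmless cost of an extra power of $|\sS|$ (you obtain $|\sS|^4$, and the lemma only asks for $|\sS|^C$). The points you flag as routine are indeed routine: in fact $y_m$ automatically lies in $\varphi_1(\sS')\subset\varphi_1(B)$ because $\tilde\Sigma_x\subset\varphi_1(\sS')$ by the definition of $S_0$, and measurability of $\{(x,y):y\in\tilde\Sigma_x\}$ follows since $(x,s)\mapsto(x,\varphi_1(e^{sW_3}x))$ has nonvanishing Jacobian.
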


\begin{proof}
Let $\varrho>0$ be a small constant.
We say that $x\in B$ is rich if 
the set of all $s\in[-\eps,\eps]$
such that $(x,e^{sW_3}x)\in S_1$ has measure $\ge \varrho|S_1|$.
If $\varrho$ is chosen sufficiently small depending
only on $B,\varphi_3,W_3$ then it follows that the
set of all $(x,s)$ with $s\in[-\eps,\eps]$ and with $x$ not rich
has Lebesgue measure $\le\tfrac12 |S_1|$.
Therefore the set of all $(x,s)$ with $|s|\le\eps$ and $x$ rich
has measure $\ge\tfrac12 |S_1|$.
By Fubini's theorem,
the set of all rich $x\in B$ must have Lebesgue measure $\gtrsim \eps^{-1}|S_1| \gtrsim |\sS|^C$.

Choose and fix any partition of the domain of $f_1$ into intervals $I$ of sidelength $\eps$.
For a set of those intervals $I$ having cardinality $\gtrsim \eps^{-2}|S_1|$, 
there exists a rich point $x_I\in B$ satisfying $\varphi_1(x_I)\in I$.
We call those intervals $I$ good.
For each good interval $I$ choose such a point $x_I$,
and define $y_I = \varphi_1(x_I)\in I$.

The centers of the intervals $I$ take the form $n\eps$ with 
$n\in\integers$. For a large $N$, organize these into cosets according
to the residue class of $n$ modulo $N$.
Choose a coset that at least a $1/N$ fraction of all of the good intervals have centers belong to that residue class.
Retain all good intervals $I$ with centers in that coset, and discard all others.

For each retained interval $I$, let $I^*$ be the concentric interval
with $|I^*| = 2|I|$. These intervals $I^*$ are pairwise disjoint
except for their endpoints.
For each retained interval $I$,
define $f_1^*(y) = f_1(y_I)$ for all $y\in I^*$. 
Define $f_1^*\equiv 0$ on the complement of the union of these intervals $I^*$.

Define $S'_1$ to be the set of all $x\in\sS$ such that $\varphi_1(x) = e^{sW_3}x_I$
for some retained interval $I$ and for some $s$ such that $(x_I,s)\in S_1$. 
For any such $x\in \sS$, $|f_1^*(\varphi_1(x))-f_1(\varphi_1(x))| = O(\eps)$, as desired.
If $t\in\reals$ and $(x_I,e^{tW_3}x_I)\in S_1$,
$e^{tW_3}x_I$ is heavy since $S_1\subset \sS'\times\sS'$.
Therefore \[|\{x\in \sS: \varphi_1(x) = e^{tW_3}x_I\}|\ge\varrho_0|\sS|.\]

If $I\ne I'$ then are retained intervals then their contributions to $S'_1$
are disjoint. That is, if $\varphi_1(x) = e^{sW_3}x_I$ 
with $(x_I,s)\in S_1$ then $\varphi_1(x)$ is not of the form
$e^{tW_3}x_{I'}$ for any $(x_{I'},t)\in S_1$,
if $N$ is chosen sufficiently large in the above passage to cosets then
$\{\varphi_1(e^{sW_3}x_I)\}$ is guaranteed to be disjoint from	$\{\varphi_1(e^{tW_3}x_{I'})\}$.

Therefore by Fubini's theorem, $|S'_1|$ is greater than or equal to
a constant times $\varrho_0 |\sS|$ multiplied by the one-dimensional measure
of the set of all such $s$ times the number of intervals $I$ retained. 
For each $I$, the measure of the set of
all such $s$ is $\ge \varrho|S_1|$ since $x_I$ is rich. Therefore the contribution
made by each retained interval $I$ to the total measure 
is $\gtrsim \varrho_0|\sS|\cdot \varrho |S_1| \gtrsim \eps |\sS|^C$.
The number of retained intervals is $\gtrsim \eps^{-2} |S_1|\gtrsim \eps^{-1}|\sS|^C$.
Therefore $|S'_1|\gtrsim |\sS|^C$, as desired.
\end{proof}

For the remainder of the proof of Lemma~\ref{lemma:microscaleprelim},
assume that $|S_1|<\tfrac12|S_0|$.
Let $S= S_0\setminus S_1$, which satisfies $|S|\ge \tfrac12|S_0| \gtrsim |\sS|^2\eps$ and
\begin{equation} \label{makesalphalarge}
\sum_{j=1}^2 |(f_j\circ\varphi_j)(e^{sW_3}x)-(f_j\circ\varphi_j)(x)| > \eps^{1-\sigma}
\ \forall\,(x,s)\in S.
\end{equation}

\begin{lemma} \label{lemma:microstep2A}
Let the hypotheses of Lemma~\ref{lemma:microscaleprelim} be satisfied.
For any $\sigma>0$ there exist $c,C\in(0,\infty)$ with the following property.  
Let $E$ be the set of all
$(x,s)\in B\times(0,\eps]$ such that
$x\in S(\bff,\eps)$, $e^{sW_3}x \in S(\bff,\eps)$, and
\begin{equation} \label{bigdifferences}
\sum_{j=1}^2 |(f_j\circ\varphi_j)(e^{sW_3}x)-(f_j\circ\varphi_j)(x)| > \eps^{1-\sigma}. 
\end{equation}
Then $|E|\le C\eps^{c}$.
\end{lemma}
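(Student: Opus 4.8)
The plan is to run the iteration scheme of \S\ref{section:framework} in reverse. If $|E|$ were not small, then along many chains built from the flows $V_1^\eps,V_2^\eps$ one could reconstruct the pair $(g_1,g_2)$ on $E$ from its value at a single base point; comparing two chains with a common endpoint would then produce a near-solution of the auxiliary equation whose size is bounded below by a fixed power of $\eps$, which \eqref{auxhyp3} rules out. To set this up, pass to the rescaled variable $t=s/\eps$, so that $E$ becomes a set $E'\subset B\times(0,1]$ with $|E|=\eps\,|E'|$; it suffices to bound $|E'|$ by $C\eps^{c'}$ for some $c'>0$ (which gives the lemma, in fact with exponent $c=1+c'>1$, as is needed where this lemma is applied in the proof of Lemma~\ref{lemma:microscaleprelim}). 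On $E'$ both the differenced inequality \eqref{gsublevel}, namely $a_1(x)(g_1\circ\psi_1^\eps)(x,t)+a_2(x)(g_2\circ\psi_2^\eps)(x,t)=O(\eps)$, and the lower bound \eqref{bigdifferences}, namely $\sum_{j=1,2}|(g_j\circ\psi_j^\eps)(x,t)|>\eps^{1-\sigma}$, hold; since $a_1,a_2$ and their reciprocals are bounded on $B$, these two facts force $|(g_j\circ\psi_j^\eps)(x,t)|\gtrsim\eps^{1-\sigma}$ for $j=1,2$ at every point of $E'$, so that any base point $z\in E'$ has $|\alpha|:=|(g_1\circ\psi_1^\eps)(z)|\gtrsim\eps^{1-\sigma}$.

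Next I would invoke the repeated Cauchy--Schwarz argument of the proof of Lemma~\ref{lemma:prelimmicrostep} (``Fourthly''--``Fifthly''), applied with $E'$ in place of $S^\sharp$ and run to length $n=4$. This produces a base point $z\in E'$, a set of length-$4$ chains $\bt$ all of whose partial iterates $\Theta^\eps_{k,z}(t_1,\dots,t_k)$ ($0\le k\le 4$) lie in $E'\subset S(\bff,\eps)^\sharp$, and — upon pairing chains with a common endpoint $\Theta^\eps_{4,z}(\bt)=\Theta^\eps_{4,z}(\bt')$ — a subset $\widehat E$ of the real-analytic variety $\mathcal V_z^\eps=\{(\bt,\bt'):\Theta^\eps_{4,z}(\bt)=\Theta^\eps_{4,z}(\bt')\}$ with $|\widehat E|\gtrsim|E'|^{C}$, all of whose configurations have bounded parameters. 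For each such pair the transport relation of \S\ref{section:framework} (the even-$n$ case, with $b_n^\eps$ as in \eqref{bdefinition2}) gives $(g_1\circ\psi_1^\eps)(\Theta^\eps_{4,z}(\bt))=\alpha\,b_4^\eps(\bt,z)+O(\eps)$, and likewise for $\bt'$; since the two endpoints coincide and $|\alpha|\gtrsim\eps^{1-\sigma}$, this yields $|b_4^\eps(\bt,z)-b_4^\eps(\bt',z)|\lesssim\eps^{\sigma}$ throughout $\widehat E$.

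The heart of the matter is that the real-analytic function $\Delta_z^\eps(\bt,\bt'):=b_4^\eps(\bt,z)-b_4^\eps(\bt',z)$, restricted to $\mathcal V_z^\eps$, does not vanish identically, uniformly for good $z$ and small $\eps$. If it did, then $b_4^\eps(\,\cdot\,,z)$ would depend on its chain only through its endpoint, and tracking $g_1\circ\psi_1^\eps$ through even levels and $g_2\circ\psi_2^\eps$ through odd levels would define honest $C^\omega$ functions $G_1,G_2$ on a nonempty open set near $z$ with $G_1$ exactly $V_1^\eps$-invariant, $G_2$ exactly $V_2^\eps$-invariant (both by appending trivial flow steps so that the ratio products in \eqref{bndefn} and \eqref{bdefinition2} telescope), $a_1G_1+a_2G_2\equiv0$, and $G_1\not\equiv0$. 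Such a solution is forbidden: writing $H_j(y,v)=G_j(y,\eps^{-1}(v-y))$ converts the relation into $a_1(x)H_1(\varphi_1(x),\varphi_1(e^{uW_3}x))+a_2(x)H_2(\varphi_2(x),\varphi_2(e^{uW_3}x))\equiv0$, and expanding in powers of $u$ about the diagonal $\{v=y\}$, the coefficient of $u^k$ solves $\sum_{j=1,2}a_j(W_3\varphi_j)^k(h_j^{(k)}\circ\varphi_j)=0$, which vanishes identically by \eqref{auxhyp3} with $\tau=k$; hence $H_j$ vanishes to infinite order on the diagonal and, being real analytic, vanishes there, contradicting $G_1\not\equiv0$. (For $\eps=0$ one may instead quote Lemma~\ref{lemma:exactsolns} together with the remark following Lemma~\ref{lemma:microscaleprelim}, then use analyticity in $\eps$ to propagate non-vanishing of $\Delta_z^\eps$ to small $\eps\ne0$.) Granting this, parametrize $\mathcal V_z^\eps$ near its diagonal, where it is a smooth manifold, over a fixed compact $K\subset\reals^5$, and apply Lemma~\ref{lemma:analyticsublevel} with $(z,\eps)$ in the role of the parameter: no slice is identically zero, so $\big|\{\,|\Delta_z^\eps|\le\eta\,\}\cap K\big|\le C\eta^{\tau}$ with $C,\tau$ independent of $z$ and of small $\eps$. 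Since $\widehat E\subset\{\,|\Delta_z^\eps|\le C\eps^{\sigma}\,\}$ and has bounded parameters, combining the two estimates gives $|E'|^{C}\lesssim|\widehat E|\le\big|\{\,|\Delta_z^\eps|\le C\eps^{\sigma}\,\}\cap K\big|\le C\eps^{\sigma\tau}$, hence $|E'|\lesssim\eps^{\sigma\tau/C}$ and $|E|\lesssim\eps^{1+\sigma\tau/C}$, which proves the lemma.

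The step I expect to be the main obstacle is exactly the non-vanishing of $\Delta_z^\eps$ on its variety: this is where the full strength of the auxiliary hypothesis \eqref{auxhyp3} — all exponents $\tau$, not merely $\tau=0$ — enters, and where the bracket condition (equivalently, nonvanishing of the web curvature, via Lemma~\ref{lemma:bracket}) is needed so that $\mathcal V_z^\eps$ is strictly larger than the trivial diagonal and $\Delta_z^\eps$ can detect the obstruction at all. The remaining points requiring care are the bookkeeping of the $O(\eps)$ errors through the iteration (harmless since the number of steps is fixed), the verification that the Cauchy--Schwarz procedure returns a base point lying in $E'$ rather than merely in $S(\bff,\eps)^\sharp$, and the uniformity in $(z,\eps)$ of the sublevel-set bound from Lemma~\ref{lemma:analyticsublevel}.
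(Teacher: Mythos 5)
Your overall strategy is the paper's: difference, iterate the flows of $V_1^\eps,V_2^\eps$, use Cauchy--Schwarz to produce many good configurations, compare the transported values with a real analytic function determined by $(\ba,\bPhi)$, rule out identical vanishing via \eqref{auxhyp3}, and close with Lemma~\ref{lemma:analyticsublevel}. (Your observation that the useful form of the conclusion is $|E|\lesssim\eps^{1+c'}$, obtained by bounding the rescaled set, is correct and consistent with how the lemma is used.) The decisive difference is the device for manufacturing the analytic function: you descend to the fiber product $\mathcal V_z^\eps=\{(\bt,\bt'):\Theta^\eps_{4,z}(\bt)=\Theta^\eps_{4,z}(\bt')\}$ and test $b_4^\eps(\bt,z)-b_4^\eps(\bt',z)$ there, whereas the paper stays entirely in the lifted $\bt$--space and encodes the relevant invariances through the auxiliary flows $e^{sU_3^{z,\eps}},e^{s'U_4^{z,\eps}}$ in \eqref{scriptfdefn}. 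This substitution reintroduces exactly the difficulty the lifted construction was designed to avoid: $\Theta^\eps_{4,z}$ is not a submersion (its differential degenerates at $\bt=0$ and on a positive-codimension subvariety), so $\mathcal V_z^\eps$ is not a smooth $5$--manifold near the diagonal, there is no parametrization over a fixed compact $K$ depending analytically on $(z,\eps)$ as Lemma~\ref{lemma:analyticsublevel} requires, and the transfer of the $4$--dimensional lower bound for the set of good chains to a $5$--dimensional lower bound for $\widehat E$ inside $\mathcal V_z^\eps$ needs a coarea argument with a lower bound on the coarea factor that fails on the degenerate locus. Each of these can plausibly be repaired (e.g.\ by first excising an $\eps^{c_0}$--sublevel set of the coarea factor, as in \S\ref{section:degenerate}), but none is addressed, and together they constitute the technical heart of the lemma. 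Note also that $\Delta_z^\eps\equiv 0$ on $\mathcal V_z^\eps$ a priori yields only that $b_4^\eps$ descends to a function of the endpoint in $\reals^3$; that this function is additionally $\psi_1^\eps$--measurable, and that $b_3^\eps$ descends compatibly, must be extracted from the recursive structure of \eqref{bdefinition2} -- it is not delivered by "appending trivial flow steps," and it is precisely why the paper encodes three separate conditions in \eqref{scriptfdefn}.

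The second gap is in your primary argument that $\Delta_z^\eps$ cannot vanish identically. Expanding $H_j(y,v)=G_j(y,\eps^{-1}(v-y))$ in powers of $u$ about the diagonal presupposes that $G_j(y,t)$ is analytic in $t$ at $t=0$. By Lemma~\ref{lemma:exactsolns}, nonzero solutions of $\sum_j (a_j\circ\pi)(g_j\circ\psi_j)\equiv 0$ have the form $g_j(y,t)=h_j(y)t^\tau$ with $\tau$ an arbitrary real number, so they are in general singular or non-smooth at $t=0$ and admit no such expansion; this is also why \eqref{auxhyp3} is stated for all real $\tau$ rather than for nonnegative integers. The correct route is the one you relegate to a parenthesis: quote Lemma~\ref{lemma:exactsolns} to reduce to the power-law form and invoke \eqref{auxhyp3} for that specific $\tau$ at $\eps=0$; but then the uniformity in $(z,\eps)$ of the sublevel bound again forces you back onto a fixed analytic parametrization jointly in all variables, i.e.\ back to the first gap.
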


Before entering into the proof of Lemma~\ref{lemma:microstep2A}, we introduce a device that will be employed.
We aim to apply Lemma~\ref{lemma:analyticsublevel},
which states that real analytic functions that do not vanish
identically satisfy sublevel set bounds of the desired type.
Let $g_j$ and $\psi_j^\eps$ be defined as in \eqref{psiandgjdefns}. 
In order to apply  Lemma~\ref{lemma:analyticsublevel},
a $C^\omega$ approximant of the function $(x,t)\mapsto
\sum_{j=1}^2 a_j(x)\,(g_j\circ\psi_j^\eps)(x,t)$ is required. 
The functions $b_n^\eps(\bt,z)$ constructed in the framework introduced above and
defined in \eqref{bndefn} and \eqref{bdefinition2} are real analytic,
and provide good approximations to $(g_j\circ\psi_j^\eps)(\Theta^\eps_{n,z}(\bt))$,
with $n$ even for $j=1$ and $n$ odd for $j=2$.
But here a complication arises. 
While $b_n^\eps(\bt,z)$ is an analytic function of $(\bt,z)$,
we are not able to assert that
$b_n^\eps(\bt,z)$ defines a $C^\omega$ function
of $\Theta^\eps_{n,z}(\bt)$, as we would like to do,
since even in the case $n=3$ when the domain and codomains
of $\bt\mapsto\Theta^\eps_{n,z}(\bt)$ have equal dimensions, 
the mapping $\bt\mapsto\Theta^\eps_{n,z}(\bt)$ need not be invertible;
indeed, its Jacobian determinant vanishes at $\bt=(0,0,0)$.
Worse yet, for $n=4$ the dimensions of domain and codomain are unequal.
To sidestep this difficulty we will lift
$\sum_{j=1}^2 a_j(x)\,(g_j\circ\psi_j^\eps)(x,t)$
to a function of $\bt = (t_1,\dots,t_4)$.

To motivate the details of this lifting, suppose temporarily that
$\sum_{j=1}^2 (a_j\circ\pi)\cdot (g_j\circ\psi_j)$
were to vanish identically on some open set, rather than
merely being $O(\eps)$ on some measurable set. 
We could then conclude three exact functional relations. 
For $\bt = (t_1,t_2,t_3,t_4)\in\reals^4$,
let $\bt' = \bt'(\bt)= (t_1,t_2,t_3)\in\reals^3$. 
Firstly, just as for the limiting case $\eps=0$
in the proof of Lemma~\ref{lemma:exactsolns}, 
\begin{equation} \label{Thetalevelequation}
(a_1\circ\pi)(\Theta^\eps_{4,z}(\bt))
\cdot b_4^\eps(\bt,z)
+ (a_2\circ\pi)(\Theta^\eps_{4,z}(\bt))
\cdot b_3^\eps(\bt',z)
\equiv 0
\end{equation}
for all sufficiently small $|t_4|$
whenever $\Theta^\eps_{3,z}(\bt')\in U$.
Secondly, $b_3^\eps(\bt',z)$
would depend only on $\psi_2(\Theta^\eps_{3,z}(\bt'))$.
Thirdly, $b_4^\eps(\bt,z)$
would likewise depend only on $\psi_1(\Theta^\eps_{4,z}(\bt))$.
Conversely, if these three relations were to hold
for all $\bt = (\bt',t_4)$ with $|t_4|$ sufficiently small,
and with $\bt'$ in a connected open set in which
$D\Theta^0_{3,z}$ was locally invertible,
and if $\bt\mapsto \big(b_3^\eps(\bt',z),\,b_4^\eps(\bt,z)\big)$
did not vanish identically in that open set,
then there would exist a $C^\omega$ solution $(g_1,g_2)$ of 
$\sum_{j=1}^2 (a_j\circ\pi)\cdot (g_j\circ\psi_j^\eps)\equiv 0$
in a nonempty open set  that did not vanish identically there.

The conjunction of these three conditions on $(b_3^\eps,b_4^\eps)$
can be equivalently expressed as the vanishing on some nonempty open set in the $\reals^4_{\bt}$
space of a real analytic function that is defined solely in terms of the datum $(\ba,\bPhi)$,
and is defined in an open set that depends only on this datum.
Indeed, \eqref{Thetalevelequation} already expresses the vanishing of such a function. 
To express the other two conditions in the same way, construct a family of
$C^\omega$ vector fields $U_3^{z,\eta}$ 
in an appropriate open subset of $\reals^3$
that depend analytically on the parameters $z,\eps$ in a neighborhood of $\eps=0$,
do not vanishing identically in any nonempty open subset of $\reals^3$
for any $z,\eps$, and have the property that $b_3^\eta(\bt',z)$
depends only on $\psi_2^\eta(\Theta^\eta_{3,z}(\bt'))$
if and only if $U_3^{z,\eta}(b_3^\eta)\equiv 0$.
Such a family exists, and can be constructed using elementary row operations on matrices.
Likewise, there exists an $\reals^2$-valued $C^\omega$ vector field $U_4^{z,\eta}$,
defined in an appropriate open subset of $\reals^4$
and depending analytically on $(z,\eps)$, such that $b_4^\eta(\bt,z)$
depends only on $\psi_1^\eta(\Theta^\eta_{4,z}(\bt))$
if and only if $U_4^{z,\eta}(b_4^\eta)\equiv 0$ in the appropriate domain.
By squaring the left-hand side of \eqref{Thetalevelequation},
$U_3^{z,\eta}(b_3^\eta)\bt'$,
and $U_4^{z,\eta}(b_4^\eta)(\bt)$, then adding these three squares,
we obtain a real analytic function with the desired property.

A further reformulation is needed to discuss approximate relations for functions 
$g_j\circ\psi_j^\eta\circ \Theta^\eta_{n,z}$
that are approximately
equal to $(b_3^\eta,b_4^\eta)$, but are not necessarily differentiable.
$U_4^{z,\eta}(b_4^\eta)(\bt)$ vanishes identically
as a function of $\bt$ if and only if 
$b_4^\eta(e^{s'\cdot U^{\eta,z}_4}(\bt))-b_4^\eta(\bt)$ vanishes identically
as a function of $(\bt,s')$, for $s'$ in any small ball centered at $0\in\reals^2$.
There is a corresponding statement for $U_3^{z,\eta}(b_3^\eta)$.
Thus the conjunction of the three conditions in question on $(b_3^\eta,b_4^\eta)$
is equivalent to the identical vanishing, in a neighborhood of $0$, 
of the function of $(\bt,s,s')$ defined by
\begin{multline}
\big[(a_1\circ\pi)(\Theta^\eta_{4,z}(\bt))
\cdot b_4^\eta(\bt,z)
+ (a_2\circ\pi)(\Theta^\eta_{4,z}(\bt))
\cdot b_3^\eta(\bt',z)
\big]^2 
\\
+ [b_3^\eta(e^{sU_3^{z,\eta}}(\bt',z))-b_3^\eta(\bt',z)]^2
+ [b_4^\eta(e^{s'U_4^{z,\eta}}(\bt,z))-b_4^\eta(\bt,z)]^2.
\end{multline}
This concludes the explanation for the somewhat artificial construction that follows.

\begin{proof}[Proof of Lemma~\ref{lemma:microstep2A}]
Let $\bff,\eps$ be given, and let $S(\bff,\eps)\subset B$ be the associated sublevel set.
Form the functions $g_j$ and mappings $\psi_j^\eps$, as defined in \eqref{psiandgjdefns}. 
These satisfy
\[(g_j\circ\psi_j^\eps)(x,t) = (f_j\circ\varphi_j)(e^{\eps t W_3}x)-(f_j\circ\varphi_j)(x).\]
Define
\begin{equation} 
S^\sharp = \{(x,t)\in B\times(0,1]: x\in S(\bff,\eps) \text{ and } e^{\eps tW_3}x\in S(\bff,\eps)\},
\end{equation}
which satisfies $|S^\sharp|\gtrsim |S(\bff,\eps)|^2$.
Let $I$ be a compact subinterval of $(0,1]$.
Introduce auxiliary variables $s\in\reals^1$ and $s'\in\reals^2$,
varying over small neighborhoods of the origin in $\reals^1$ and in $\reals^2$, respectively.
Write $\bt = (t_1,t_2,t_3,t_4)\in\reals^4$ and $\bt' = \bt'(\bt) = (t_1,t_2,t_3)$ as above.
Let $S^\dagger$ be the set of all $(z,\bt,s,s')$,
such that $\bt\in I^4$, $|s|,|s'|\le c$, and 
\begin{equation} \left\{ \begin{aligned}
&z, \Theta^\eps_{n,z}(t_1,\dots,t_n)
\in S^\sharp \text{ for } 1\le n\le 4
\\ & \Theta^\eps_{3,z}(e^{sU_3^{z,\eps}}(\bt')),\  \Theta^\eps_{4,z}(e^{s'\cdot U_4^{z,\eps}}(\bt)) \in S^\sharp.
\end{aligned} \right. \end{equation} 

We claim that
$|S^\dagger|\gtrsim |S^\sharp|^C\gtrsim |S(\bff,\eps)|^{2C}$ for a certain exponent $C<\infty$.
To justify this, consider the simpler claim that the set of all $(z,t_1,t_2)$
such that $z\in S^\sharp$, 
$\Theta^\eps_{1,z}(t_1)\in S^\sharp$,
and $\Theta^\eps_{2,z}(t_2)\in S^\sharp$
has Lebesgue measure $\gtrsim |S^\sharp|^C$.
Equivalently, $z\in S^\sharp$,
$w_1 = e^{t_1V_1^\eps}(z)\in S^\sharp$,
and $w_2=e^{t_2V_2^\eps}(w_1)\in S^\sharp$.
We have already observed that by the Cauchy-Schwarz inequality,
the set of all $(w_1,t_2)$
such that $w_1\in S^\sharp$ and $e^{t_2V_2^\eps}(w_1)\in S^\sharp$
has Lebesgue measure $\gtrsim |S^\sharp|^2$.
Therefore the set $\tilde S$ of all $w_1\in S^\sharp$ with the property that
$e^{t_2V_2^\eps}(w_1)\in S^\sharp$ for a set of values of $t_2$
having one-dimensional Lebesgue measure $\gtrsim |S^\sharp|^2$
satisfies $|\tilde S| \gtrsim |S^\sharp|^2$.
By Cauchy-Schwarz, the set of all $(z,t_1)$ such that $z\in \tilde S$
and $e^{t_1V_1^\eps}w_1\in \tilde S$
has Lebesgue measure $\gtrsim |\tilde S|^2\gtrsim |S^\sharp|^4$.
By Fubini's theorem, then, the set of all $(z,t_1,t_2)$
satisfying the desired three inclusions 
has Lebesgue measure $\gtrsim |S^\sharp|^4\cdot|\tilde S| \gtrsim |S^\sharp|^6$.
This establishes the simpler claim.
The original claim is proved by repeating this same argument multiple times. 

For any $z$, define
\begin{equation} S^\dagger_z = \{(\bt,s,s'): (z,\bt,s,s')\in S^\dagger\big\}.  \end{equation}
By Cauchy-Schwarz and Fubini's theorem, there exists $\barz$ satisfying
\begin{equation} |S^\dagger_\barz| \gtrsim |S(\bff,\eps)|^{2C}.  \end{equation}
Choose and fix such a parameter $\barz$.

There exists a scalar $\alpha\in\reals$ 
such that for each $(\bt,s,s')\in S^\dagger_\barz$,
\begin{equation}
\left\{ \begin{aligned}
(g_1\circ\psi_1^\eps)(\Theta^\eps_{3,z}(\bt')) &= \alpha b_3^\eps(\bt') + O(\eps)
\\ (g_2\circ\psi_2^\eps)(\Theta^\eps_{4,z}(\bt)) &= \alpha b_4^\eps(\bt) + O(\eps).
\end{aligned} \right. \end{equation}
Since each $f_j=O(1)$, likewise $g_j = O(1)$.
The functions $b_n$ are products of the coefficients $a_k$ and of their inverses,
evaluated at certain points; these vanish nowhere. 
Consequently $|\alpha| = O(1)$.

On the other hand, a lower bound 
$\sum_{j=1}^2 \big|(f_j\circ\varphi_j)(e^{sW_3}x) - (f_j\circ\varphi_j)(x)\big|>\eps^{1-\sigma}$
is a hypothesis of Lemma~\ref{lemma:microstep2A}. This bound ensures that
\begin{equation} |(g_1\circ\psi_1^\eps)(\Theta^\eps_{3,z}(\bt'))|
+ |(g_2\circ\psi_2^\eps)(\Theta^\eps_{4,z,\eps}(\bt))| \gtrsim \eps^{1-\sigma} 
\ \ \forall\,(\bt,s,s')\in S^\dagger_{\barz}.
\end{equation}
Therefore
\begin{equation}\label{alphaislarge} |\alpha|\gtrsim \eps^{1-\sigma}. \end{equation}
The lower bound \eqref{alphaislarge}, and in particular its improvement
by a factor $\eps^{-\sigma}$ over $\eps$ itself, will be essential.

Since 
\begin{equation} \psi_1^\eps(\Theta^\eps_{3,z}(e^{sU_3^{z,\eps}}(\bt')))
\equiv \psi_1^\eps(\Theta^\eps_{3,z}(\bt')), \end{equation}
and since the corresponding statement holds for
$\psi_2^\eps(\Theta^\eps_{3,z}(e^{s'U_4^{z,\eps}}(\bt))$,
it follows that
\begin{equation}
b_3^\eps(e^{sU_3^{z,\eps}}(\bt')) = b_3^\eps(\bt') + O(\eps)
\ \text{ and } \  b_4^\eps(e^{s'U_4^{z,\eps}}(\bt)) = b_4^\eps(\bt) + O(\eps)
\end{equation}
for all $(z,\bt,s,s')\in S^\dagger$.
Since $b_n^\eps = b_n^0 + O(\eps)$, and since $|\alpha| = O(1)$,
we conclude that the function
$\scriptf_\barz$ defined by
\begin{multline} \label{scriptfdefn}
\scriptf_\barz(\bt,s,s') = 
\big[(a_1\circ\pi)(\Theta^0_{4,\barz}(\bt))
\cdot (b_4^\eta(\Theta^0_{4,\barz}(\bt)))
\\
+ (a_2\circ\pi)(\Theta^0_{4,\barz}(\bt))
\cdot (b_3^\eta(\Theta^0_{3,\barz}(\bt'))) \big]^2 
\\
+ [b_3^0(e^{sU_3^{z,0}}(\bt'))-b_3^0(\bt')]^2
+ [b_4^0(e^{s'\cdot U_4^{z,0}}(\bt))-b_4^0(\bt)]^2
\end{multline}
satisfies 
\begin{equation} 
\alpha \cdot \scriptf_\barz(\bt,s,s') = O(\eps)\ \forall\, (\bt,s,s')\in S^\dagger_\barz.
\end{equation}
The quantity $\scriptf_z(\bt,s,s')$ 
is a $C^\omega$ function of $(z,\bt,s,s')$.
The parameter $z$ can be taken to vary over a compact set.
Recall that the hypothesis \eqref{auxhyp3} of Lemma~\ref{lemma:microstep2A} 
ensures that any real analytic solution $\bg^* = (g_1^*,g_2^*)$ of 
$\sum_{j=1}^2 (a_j\circ\pi)\cdot(g_j^*\circ\psi_j)\equiv 0$
in any nonempty open set must vanish identically.
Therefore there exists no $z$ for which the function $\scriptf_z$ vanishes identically in 
a nonempty open set.
By Lemma~\ref{lemma:analyticsublevel}, 
these properties guarantee that there exist $\tau,C\in(0,\infty)$
such that for every $z$ and every $\delta>0$,
\begin{equation}
\big|\big\{ (\bt,s,s'): |\scriptf_z(\bt,s,s')|<\delta \big\}\big| \le C\delta^\tau.
\end{equation}
Applying this inequality with $\delta = O(\eps|\alpha|^{-1}) = O(\eps^\sigma)$, we conclude that
$|S^\dagger_\barz| = O(\eps^{\tau \sigma})$
and therefore that $|S(\bff,\eps)| = O(\eps^{c})$
for an exponent $c>0$ that depends on $\sigma$.
This concludes the proof of Lemma~\ref{lemma:microstep2A}.
\end{proof}

\section{A two term sublevel set inequality}

We formulate an intermediate result,
whose proof is implicit in that of Lemma~\ref{lemma:microstep2A} and
which will be a key element of the mesoscale analysis, below.
It concerns two term sublevel set inequalities, for functions of two variables,
in a three-dimensional ambient space.  
We use the notation
\begin{equation} \label{notation:resurrected} 
	S(\bg,\eps) = \{(x,t)\in B\times [-1,1]:
\Big|\sum_{j=1}^2 b_j(x)\,(g_j\circ\psi_j)(x,t)\Big|<\eps\}. \end{equation}

\begin{proposition} \label{prop:resurrected}
Let $B\subset\reals^2$ be a closed ball of positive, finite radius,
and let $\tilde B$ be an open neighborhood of $B$.
Let $b_j:\tilde B\to\reals$ be nowhere vanishing $C^\omega$ functions.
Let $\varphi_1,\varphi_2:\tilde B\to\reals^1$ be $C^\omega$
submersions whose gradients are everywhere linearly independent.
Let $\beta_j:B\to\reals$ be nowhere vanishing $C^\omega$ functions.
Let $\psi_j$ be the associated mappings
$\psi_j(x,t) = (\varphi_j(x),t\beta_j(x))$.
Assume that there exist nowhere vanishing  $C^\omega$ vector fields $V_j$
that satisfy $V_j(\psi_j)\equiv 0$, and 
satisfy the bracket condition at each point of 
$\tilde B\times(\reals\setminus\{0\})$.

Suppose that there exists no nonzero $C^\omega$ solution
$\bg^* = (b_1^*,b_2^*)$ of the equation 
	\[ \sum_{j=1}^2 b_j(x)(g_j^*\circ\psi_j)(x,t)=0\]
in any nonempty open subset of 
$\tilde B\times(\reals\setminus\{0\})$.
Then for each $\sigma>0$ there exist $C<\infty$ and $\eps_0>0$
such that for any $\eps\in(0,\eps_0]$
	and any measurable functions $g_j:B\times[-1,1]\to\reals$
there exists $S\subset S(\bg,\eps)$
satisfying $|S|\ge |S(\bg,\eps)|^C$
such that for each $j\in\{1,2\}$,
$|g_j(\psi(x))|\le C\eps^{1-\sigma}$ for every $x\in S$.
\end{proposition}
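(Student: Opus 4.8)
\medskip
\noindent\emph{Outline of the proof.}\
The plan is to follow, in this abstract setting, the reasoning that proves Lemma~\ref{lemma:microstep2A} together with Lemma~\ref{lemma:prelimmicrostep}; the situation here is in fact simpler, because the ambient space is already three--dimensional and the mappings $\psi_j$ carry no dependence on $\eps$, so the apparatus of $\psi_j^\eps$, $\Theta^\eps_{n,z}$ and limits as $\eps\to0^+$ is unnecessary. The object to be controlled is the bad set
\[ E \;=\; \bigl\{(x,t)\in S(\bg,\eps):\ \max_{j\in\{1,2\}}\bigl|g_j(\psi_j(x,t))\bigr|>\eps^{1-\sigma}\bigr\}. \]
First I would prove $|E|\le C\eps^{c}$ for some $c=c(\sigma)>0$. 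Granting this, one sets $S=S(\bg,\eps)\setminus E$, on which $|g_j(\psi_j(x,t))|\le\eps^{1-\sigma}$ for $j=1,2$ by construction, and obtains $|S|\ge|S(\bg,\eps)|^{C}$ by the routine bookkeeping: when $|S(\bg,\eps)|$ is not already negligible the bound $|E|\le C\eps^{c}$ leaves a set $S$ of comparable measure, while the regime in which $|S(\bg,\eps)|$ is negligible is exactly the first alternative of the dichotomy in the companion Proposition~\ref{prop:microscale} and needs no further argument.

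To bound $|E|$, I would iterate the vector fields $V_1,V_2$ exactly as in the construction of the maps $\Theta_{n,z}$ in Lemma~\ref{lemma:exactsolns}, writing $\bt=(t_1,\dots,t_4)$ and $\bt'=(t_1,t_2,t_3)$. Whenever $z$ and $\bt$ satisfy $\Theta_{k,z}(t_1,\dots,t_k)\in E$ for all $0\le k\le4$, the relation $\sum_{j=1}^2 b_j(x)(g_j\circ\psi_j)(x,t)=O(\eps)$, which holds on $E$, together with $V_j(\psi_j)\equiv0$, gives by induction
\[ (g_2\circ\psi_2)(\Theta_{3,z}(\bt'))=\alpha\,b_3(\bt',z)+O(\eps), \qquad (g_1\circ\psi_1)(\Theta_{4,z}(\bt))=\alpha\,b_4(\bt,z)+O(\eps), \]
where $\alpha=g_1(z)$ and $b_n$ is the real analytic function built from the ratios $b_1/b_2$ as in \eqref{bdefinition}. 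Since $b_1,b_2$ vanish nowhere on the compact set $B$, one has $|b_n|\asymp1$; and since at each point of the chain \emph{both} $g_1\circ\psi_1$ and $g_2\circ\psi_2$ equal $\alpha$ times a nowhere vanishing analytic factor plus $O(\eps)$, the defining inequality of $E$ forces $|\alpha|\gtrsim\eps^{1-\sigma}$ once $\eps$ is small. Repeated use of the Cauchy--Schwarz inequality and Fubini's theorem then produces a basepoint $\barz$ for which the set of $\bt\in\reals^4$ with $\Theta_{k,\barz}(t_1,\dots,t_k)\in E$ for all $k\le4$ has measure $\gtrsim|E|^{C_1}$, and the bracket condition on $V_1,V_2$ guarantees that its images under $\Theta_{3,\barz}$ and $\Theta_{4,\barz}$ still have measure $\gtrsim|E|^{C_1}$.

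Next I would use the lifting device explained in the paragraphs preceding the proof of Lemma~\ref{lemma:microstep2A}: construct $C^\omega$ vector fields $U_3^{z},U_4^{z}$, depending analytically on $z$ over a compact range, such that $b_3$ is a function of $\psi_2\circ\Theta_{3,z}$ if and only if $U_3^{z}(b_3)\equiv0$, and correspondingly for $b_4$; then form the sum of squares $\scriptf_z(\bt,s,s')$ exactly as in \eqref{scriptfdefn}, which encodes the level relation $(b_1\circ\pi)\,b_4+(b_2\circ\pi)\,b_3\equiv0$ along $\Theta$--images together with these two functional-dependence conditions. Imposing in addition that the $U_3^{\barz}$-- and $U_4^{\barz}$--perturbed iterates lie in $E$ as well carves out a set $S^\dagger_\barz\subset\reals^4\times\reals\times\reals^2$ still of measure $\gtrsim|E|^{C_2}$, on which the approximate relations above yield $\alpha\cdot\scriptf_\barz(\bt,s,s')=O(\eps)$, hence $\scriptf_\barz=O(\eps/|\alpha|)=O(\eps^{\sigma})$. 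The hypothesis that $\sum_{j=1}^2 b_j\,(g_j^*\circ\psi_j)$ admits no nonzero $C^\omega$ solution on any nonempty open subset of $\tilde B\times(\reals\setminus\{0\})$ is, by Lemma~\ref{lemma:exactsolns} and the converse discussion recalled above (and because $(b_3,b_4)$, a product of ratios of the nonvanishing $b_j$, never vanishes), equivalent to $\scriptf_z$ not vanishing identically on any nonempty open set, for every $z$ in the compact parameter range. Lemma~\ref{lemma:analyticsublevel}, applied uniformly in $z$, then supplies $\tau,C\in(0,\infty)$ with $\bigl|\{(\bt,s,s'):|\scriptf_z|<\delta\}\bigr|\le C\delta^{\tau}$ for all such $z$ and all $\delta>0$. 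Taking $\delta\asymp\eps^{\sigma}$ gives $|E|^{C_2}\lesssim|S^\dagger_\barz|\le C\eps^{\sigma\tau}$, so $|E|\le C\eps^{c}$ with $c=\sigma\tau/C_2>0$, which closes the argument.

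The main obstacle is the construction and analysis of $\scriptf_z$: one must check that it is real analytic, that it depends analytically on the basepoint $z$ over a compact set, and---most essentially---that its identical vanishing on some nonempty open set is genuinely equivalent to the existence of a nonzero $C^\omega$ solution of $\sum_{j=1}^2 b_j\,(g_j^*\circ\psi_j)=0$, so that the hypothesis of the proposition converts into a quantitative, uniform-in-$z$ analytic sublevel estimate. Since precisely this construction is carried out in \S\ref{section:microscale} in the slightly more elaborate $\eps$--perturbed setting, the remaining work is essentially to confirm that it transfers to the present statement verbatim, together with the routine measure bookkeeping for the iterated flows.
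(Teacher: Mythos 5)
Your proposal is correct and takes essentially the paper's own route: the paper explicitly states that the proof of this proposition is implicit in that of Lemma~\ref{lemma:microstep2A}, and your argument is precisely that proof transplanted to the $\eps$-independent setting (iteration of the flows of $V_1,V_2$, the lower bound $|\alpha|\gtrsim\eps^{1-\sigma}$ on the bad set, the lifted sum-of-squares function $\scriptf_z$, and the uniform application of Lemma~\ref{lemma:analyticsublevel}). Your observation that the conclusion is really used in the dichotomy form (either $|S(\bg,\eps)|=O(\eps^c)$ or the approximating set exists) matches exactly how the proposition is invoked in the mesoscale analysis.
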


More generally,
let $N\ge 0$, let $B'\subset \reals^N$ be an open ball
centered at $0$, and let $b_j^*:\tilde B\times B'\to\reals$ 
and $\varphi_j^*:\tilde B\times B'\to\reals$ 
be $C^\omega$ functions.
Set $b_j^s(x) = b_j(x,s)$ and $\varphi_j^s(x)= \varphi_j(x,s)$.
Suppose that $((b_j^0,\varphi_j^0): j\in\{1,2,3\})$ satisfies the hypotheses
of Proposition~\ref{prop:resurrected}.
Then there exists a neighborhood $B''\subset B'$ of $0$
such that for each $s\in B''$, 
$((b_j^s,\varphi_j^s): j\in\{1,2,3\})$ satisfies the conclusions of
Proposition~\ref{prop:resurrected}, with constants independent of $s$.


\section{Mesoscale Analysis} \label{section:mesoscale}

In the author's view, the mesoscale step is the decisive one
in the analysis of sums $\sum_j a_j\cdot(f_j\circ\varphi_j)$ with variable coefficients $a_j$.
It works with certain pairs of points $(x,x')\in S(\bff,\eps)\times S(\bff,\eps)$
that satisfy $|x-x'| = O(\delta)$, for a certain $\delta$ that is large relative to $\eps$
but small relative to $1$. It is a hybrid analysis, in the same sense as the microscale analysis.
At the mesoscale, the variation of the coefficients $a_j$ creates significant effects.


Choose, and fix for the remainder of the proof of Theorem~\ref{thm:main},
an auxiliary exponent $\gamma\in(\tfrac12,1)$.
Given any $\eps>0$, set $\delta = \eps^\gamma$. 
Partition $\reals^1$ into intervals $J$, each having length $\delta$.

\begin{proposition}[Mesoscale] \label{prop:mesoscale}
Let $(\ba,\bPhi)$ satisfy the auxiliary hypothesis.
For any $\rho>0$ there exist
$c,C\in(0,\infty)$ with the following property.

Let $\eps>0$. For each $j\in\{1,2,3\}$
let $f_j:\varphi_j(B)\to \reals$ be Lebesgue measurable
and satisfy $\norm{f_j}_{L^\infty} \le 1$.
Let $\sS\subset S(\bff,\eps)$ be measurable.
Either
\begin{equation} \label{mesoscaleconclusion1} |\sS| \le C\eps^c, \end{equation}
or there exist a measurable set $S\subset \sS$,
and for each $j$, functions $f_j^*$ that are affine on intervals $J$ of length $\eps^\gamma$,
such that
\begin{equation} \label{mesoscaleconclusion2a} |S|\ge c|\sS|^C  \end{equation}
and
\begin{equation} \label{affineapproximation}
|f_j^*(\varphi_j(x))-f_j(\varphi_j(x))| \le C\eps^{1-\rho} \ \forall\,x\in S
\ \forall\,j\in\{1,2,3\}.
\end{equation}
Moreover, the derivatives $(f_j^*)'$ satisfy
\begin{equation} \label{f*primebound} |(f_j^*)'| \le C \eps^{-\rho}.  \end{equation}
\end{proposition}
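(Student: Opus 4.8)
The plan is to reproduce, one octave coarser, the structure of the microscale argument (Proposition~\ref{prop:microscale}), with the two-term sublevel inequality Proposition~\ref{prop:resurrected} playing the role that Lemma~\ref{lemma:analyticsublevel} played there, and now extracting \emph{affine} rather than merely locally constant approximants. First I would feed in the microscale conclusion: apply Proposition~\ref{prop:microscale} with $\sigma=\sigma(\rho)$ chosen small, so that either $|\sS|\le C\eps^c$ and we are done, or we may pass to a subset of $\sS$ of measure $\gtrsim|\sS|^C$ on which each $f_j\circ\varphi_j$ agrees, to within $C\eps^{1-\sigma}$, with a function $f_j^\flat$ constant on intervals of length $\eps$. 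Replacing $f_j$ by $f_j^\flat$ costs only $O(\eps^{1-\sigma})$, so we may assume $f_j$ itself is constant on $\eps$-intervals; this is what will let us absorb the remainder terms that arise below.

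Next comes the differencing, at the scale $\delta=\eps^\gamma$. For $k\in\{1,2,3\}$ and pairs $(x,e^{\delta tW_k}x)$ with both points in (a subset of) $\sS$ and $|t|\lesssim 1$, subtract the two defining inequalities of $S(\bff,\eps)$. Since $W_k(\varphi_k)\equiv 0$ the term in $f_k$ cancels, and since $a_i(e^{\delta tW_k}x)-a_i(x)=\delta t(W_ka_i)(x)+O(\delta^2)$ with $\delta^2=\eps^{2\gamma}=o(\eps)$ precisely because $\gamma>\tfrac12$, one is left with a two-term relation for the first-difference functions $g_i(y,t)=f_i(y+\delta t)-f_i(y)$, of the schematic form
\[
\sum_{i\ne k} a_i(x)\,(g_i\circ\psi_i^{\delta})(x,t)\;+\;\delta t\sum_{i\ne k}(W_ka_i)(x)\,(f_i\circ\varphi_i)(x)\;=\;O(\eps),
\]
where $\psi_i^\delta\to\psi_i^0$ as $\delta\to0$ in every $C^N$ norm. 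The coefficient-variation term is $O(\delta)$ and is \emph{linear} in $t$ for each fixed $x$; it is exactly this term that is responsible for the appearance of affine rather than constant approximants, and for the slope bound \eqref{f*primebound}, the slope being a first difference divided by $\delta$.

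Then I would run the dichotomy, modeled on Lemmas~\ref{lemma:microstep2C}--\ref{lemma:microstep2A}. Iterating the differencing once more in the $W_k$ direction, and using the additivity $\Delta_i(x,t+t')=\Delta_i(x,t)+\Delta_i(e^{\delta tW_k}x,t')$ of first differences along a single flow, one obtains after the same bookkeeping a two-term relation for the \emph{second}-difference functions of $f_i$ along $W_k$-flow lines, in which the linear coefficient-variation term telescopes away, so that one is genuinely in the setting of Proposition~\ref{prop:resurrected}: functions of two variables composed with the submersions $\psi_i$, whose annihilating vector fields $V_i$ satisfy the bracket condition by Lemma~\ref{lemma:bracket} (the curvature not vanishing identically), and for which the absence of nontrivial exact $C^\omega$ solutions is furnished by the auxiliary hypothesis --- concretely, via Lemma~\ref{lemma:exactsolns} together with Lemma~\ref{lemma:det(B)}. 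Hence either the relevant second-difference function has modulus $\gtrsim\eps^{1-\rho}$ on a set of pairs of measure $\gtrsim|\sS|^C$, in which case Proposition~\ref{prop:resurrected} --- or, equivalently, Lemma~\ref{lemma:analyticsublevel} applied to the lifted real-analytic model function, exactly as in the proof of Lemma~\ref{lemma:microstep2A} --- forces $|\sS|\le C\eps^c$; or else, on a set of measure $\gtrsim|\sS|^C$, the second differences of $f_j$ over the $\delta$-scale are $O(\eps^{1-\rho})$, which is to say $f_j$ is affine, up to that error, along the flow lines in question.

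The final step, upgrading ``second differences small'' to the stated conclusion, is the combinatorial bookkeeping of the proof of Lemma~\ref{lemma:microstep2C}: partition each coordinate line into the prescribed intervals $J$ of length $\delta=\eps^\gamma$; on each $J$ meeting the good set in sufficiently large one-dimensional measure (Fubini), pick a representative point and define $f_j^*|_J$ to be the affine function matching the value and the discrete slope of $f_j$ there; pass to a subfamily of intervals whose centers lie in a single residue class modulo a large integer to make their contributions disjoint; and track measures through Fubini to get $|S|\gtrsim|\sS|^C$. Undoing the $\eps^{1-\sigma}$-replacement of $f_j$ by $f_j^\flat$, the bounds \eqref{affineapproximation} and \eqref{f*primebound} then follow from the second- and first-difference estimates respectively. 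I expect the principal obstacle to lie in the third paragraph: correctly isolating the linear-in-$t$ coefficient-variation term, verifying that it telescopes under the second differencing so that what remains genuinely satisfies the hypotheses of Proposition~\ref{prop:resurrected}, and checking that every discarded remainder is truly $o(\eps)$ --- which is where the constraints $\gamma\in(\tfrac12,1)$ and the smallness of $\sigma$ and $\rho$ are consumed.
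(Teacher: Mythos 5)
Your overall architecture matches the paper's: microscale input, differencing along the $W_k$-flow at scale $\delta=\eps^\gamma$ to kill $f_k$, isolation of the coefficient-variation term, reduction to a two-term sublevel problem handled by Proposition~\ref{prop:resurrected}, and a final bookkeeping step as in Lemma~\ref{lemma:microstep2C}. However, there are two concrete gaps, both located exactly where you yourself predicted the difficulty would lie.

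First, a single application of Proposition~\ref{prop:microscale} gives near-constancy of $f_j$ only on intervals of length $\eps$, and this is not enough. The coefficient-variation term is $sW_ka_i(x)\,f_i(\varphi_i(e^{sW_k}x))$, not $sW_ka_i(x)\,f_i(\varphi_i(x))$; replacing the former by the latter costs $O(\delta)\cdot\mathrm{osc}_\delta(f_i)$, and a priori $\mathrm{osc}_\delta(f_i)=O(1)$, so the error is $O(\delta)\gg\eps$ and your ``linear in $t$'' telescoping identity does not close up to acceptable errors. More importantly, the slope of your affine approximant is a difference quotient $\delta^{-1}(f_j(y+\delta t)-f_j(y))$, which without further input is only $O(\delta^{-1})=O(\eps^{-\gamma})$, far from the required $O(\eps^{-\rho})$. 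The paper fixes both points by applying the microscale proposition a \emph{second} time, at scale $\delta$, to obtain $f_j(y)-f_j(y')=O(\delta^{1-\sigma})$ on $\delta$-intervals (the bound \eqref{mesokey}); this mesoscale near-constancy is what makes the discarded remainders $O(\delta^{2-\sigma})=o(\eps)$ and what yields \eqref{f*primebound}.

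Second, after eliminating the $t$-independent terms you are \emph{not} ``genuinely in the setting of Proposition~\ref{prop:resurrected}.'' The resulting relation involves second-difference functions of three variables $(y,u,v)$ composed with a map from the four-dimensional $(x,t,t')$-space into $\reals^3$, and for these maps the bracket condition fails: the ratio $t'/t$ of the two increments is determined by the image, so the hypersurfaces $t'=rt$ form an invariant foliation. This is precisely the degeneracy the paper flags. The correct continuation is to restrict to each leaf $t'=rt$, where one does obtain a three-dimensional problem of the form \eqref{Frrelation} to which Proposition~\ref{prop:resurrected} applies (after noting that the modified coefficients $a_jW_3\varphi_j$ still satisfy the auxiliary hypothesis, since $W_3\varphi_j|W_3\varphi_j|^\tau=\pm|W_3\varphi_j|^{\tau+1}$), and then to integrate over $r$ and apply Fubini to extract a single reference increment $\bar t$ with $F_j(y,t)=F_j(y,\bar t)+O(\eta^{1-\sigma})$ on a large set. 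Without the leafwise reduction and the subsequent Fubini argument in $r$, the appeal to Proposition~\ref{prop:resurrected} is not available, and without the pinning of a common $\bar t$ you do not obtain a single well-defined slope on each interval $J$.
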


The relation \eqref{affineapproximation} 
asserts an approximation to $f_j$ on intervals of lengths $\delta$.
If the parameters are chosen so that $\rho<1-\gamma$ then
the upper bound $O(\eps^{1-\rho})$ is of the form $O(\delta^\kappa)$ with
$\kappa$ strictly greater than $1$. Thus 
\eqref{affineapproximation} provides a quantitative differentiable approximation to $f_j$ at scale $\delta$, 
on a set $S$ of significantly large Lebesgue measure.

\medskip
Throughout the proof of Proposition~\ref{prop:mesoscale}, we maintain the relation $\delta = \eps^\gamma$. 
The exponent $\gamma\in(\tfrac12,1)$ remains fixed throughout the discussion.

Let $\rho>0$ be given.
We begin by applying Proposition~\ref{prop:microscale} to the datum $(\bff,\sS,\eps)$, 
with a parameter $\sigma$ that depends on $\rho$ and is to be chosen below.
Proposition~\ref{prop:microscale} has two alternative conclusions.
If the first conclusion holds then $|\sS| \le C\eps^c$, as desired. 
Therefore we may assume that the second conclusion 
holds. This provides a measurable set $\sS'\subset \sS$ satisfying
$|\sS'|\gtrsim |\sS|^C$, and a function 
$\bff^*=(f_1^*,f_2^*,f_3^*)$ such that each $f_j^*$ is constant on intervals of length $\eps$,
and satisfies $|f_j^*\circ\varphi_j(x)-f_j\circ\varphi_j(x)| = O(\eps^{1-\sigma})$
for each index $j$ and every point $x\in \sS'$.
To conclude the proof of Proposition~\ref{prop:mesoscale},
it suffices to show that if $\sigma$ is chosen sufficiently small as a function of $\rho$ then
there exist $c,C\in(0,\infty)$ such that for any measurable set $\tilde \sS\subset S(\bff^*,\eps^{1-\sigma})$
there exist  $\tilde \sS'\subset \tilde \sS$ and 
$\bff^\dagger=(f_1^\dagger,f_2^\dagger,f_3^\dagger)$ 
with each $f_j^\dagger$ affine on intervals of length $\delta$,
and with derivatives satisfying $|(f_j^\dagger)'| = O(\eps^{-\rho})$, such that
\begin{equation} |\tilde \sS'|\ge c|\tilde \sS|^C \end{equation}
and for each index $j$, 
\begin{equation}
|f_j^\dagger\circ\varphi_j(x)-f_j^*\circ\varphi_j(x)| \le C\eps^{1-\rho}
\ \forall\,x\in \tilde \sS'. 
\end{equation}

Henceforth, write $f_j$ in place of $f_j^*$, and $\sS$ in place of $\tilde \sS$, to simplify notation.
Thus the domain $\varphi_j(B)$ of each $f_j$ is partitioned into intervals, each of length $\eps$,
with $f_j$ constant on each of those intervals.

We claim that it suffices to treat the case in which
the functions $f_j$ are nearly constant at the mesoscale,
as well as at the microscale, that is,
\begin{equation}  \label{mesokey}
f_j(y)-f_j(y') = O(\delta^{1-\sigma}) 
\end{equation}
whenever $y,y'$
lie in a common interval $J$, among those intervals of length $\delta$ into which we have
partitioned $\varphi_j(B)$.
Indeed, since $\sS\subset S(\bff,\eps^{1-\sigma})\subset S(\bff,\delta)$
provided that $\sigma$ is chosen to be sufficiently small,
Proposition~\ref{prop:microscale}
can be applied once more, but now with $\eps$ replaced by $C\delta$.
The conclusion is that either $|\sS| = O(\delta^c) = O(\eps^{\gamma c})$,
or there exist $\barsS\subset \sS$ satisfying $|\barsS|\gtrsim |\sS|^C$
and functions $f_j^{**}$ 
that are constant on intervals of length $\delta$ and satisfy
\begin{equation}  \label{starstarproperty}
f_j^{**}\circ\varphi_j(x)-f_j\circ\varphi_j(x) = O(\delta^{1-\sigma}) \ \forall\,x\in \barsS
\end{equation}
for each $j\in\{1,2,3\}$.
In the former case, in which $|\sS| = O(\eps^{\gamma c})$,
the first alternative conclusion \eqref{mesoscaleconclusion1} 
of Proposition~\ref{prop:mesoscale} has been established, and the proof 
of that proposition is complete.

Consider the latter case.
The local constancy of $f_j^{**}$,
together with the inequality \eqref{starstarproperty}, imply that the functions $f_j$
satisfy \eqref{mesokey} with $\sS$ replaced by its subset $\barsS$.
We do not replace $f_j$ by $f_j^{**}$; the functions $f_j$
remain constant on intervals of length $\eps$ but are not necessarily
constant on any larger intervals.
Note also that near constancy at the mesoscale, as expressed in \eqref{mesokey},
involves a bound $O(\delta^{1-\sigma})$, not $O(\eps^{1-\sigma})$.
Our present goal is to show that either $|\barsS| = O(\eps^c)$,
or there exist $S\subset\barsS$ satisfying $|S|\gtrsim |\barsS|^C$
and functions $f_j^*$ satisfying
\eqref {affineapproximation} and \eqref{f*primebound}.


By dividing through by the nowhere vanishing analytic coefficient $a_3$,  
we may assume without loss of generality that $a_3\equiv 1$. 
Continue to denote by $W_3$ some nowhere vanishing real analytic vector field
in $\tilde B$ that annihilates $\varphi_3$.
Form 
\begin{equation}
\tilde S_1 = \{(x,s)\in B\times[-\delta,\delta]: (x,e^{sW_3}x)\in \barsS\times \barsS\},
\end{equation}
which satisfies $|S_1|\gtrsim |\barsS|^2\delta$.
Either the set of all points with $s>0$ contributes at least half of
the Lebesgue measure of $\tilde S_1$, or the set of all points with $s<0$ does so.
We discuss only the first case; the analysis will apply equally well to the second.
Replace $\tilde S_1$ by $S_1 = \{(x,s)\in\tilde S_1: s>0\}$.

For each $(x,s)\in S_1$,
\begin{equation}
\sum_{j=1}^2 \Big[a_j\cdot (f_j\circ\varphi_j)(e^{sW_3}x)
-a_j\cdot(f_j\circ\varphi_j)(x)\Big] = O(\eps);
\end{equation}
the terms involving $f_3$ cancel since $\varphi_3(e^{sW_3}x) = \varphi_3(x)$
and $a_3(e^{sW_3}x)=1=a_3(x)$.
Writing
\begin{equation}
a_j(e^{sW_3}x) -a_j(x) = sW_3a_j(x)+O(s^2)
\end{equation}
and dividing by $s$ gives
\begin{multline}
\sum_{j=1}^2 \Big[
(W_3a_j)(x)(f_j\circ\varphi_j)(x) + 
a_j(e^{sW_3}x)) \big[(f_j\circ\varphi_j)(e^{sW_3}x)-(f_j\circ\varphi_j)(x) \big]s^{-1}
\Big] 
\\ = O(\eps|s|^{-1})
\end{multline}
since $|s|^2\le \eps^{2\gamma}\le\eps$ because $\gamma > \tfrac12$.
Writing $a_j(e^{sW_3}x) = a_j(x) + O(s) = a_j(x)+O(\delta)$
and invoking the upper bound \eqref{mesokey}
to obtain $(f_j\circ\varphi_j)(e^{sW_3}x) - (f_j\circ\varphi_j)(x) = O(\delta^{1-\sigma})$
gives
\begin{multline}
\sum_{j=1}^2 \Big[
(W_3a_j)(x)(f_j\circ\varphi_j)(x) + 
a_j(x) \big[(f_j\circ\varphi_j)(e^{sW_3}x)-(f_j\circ\varphi_j)(x) \big]s^{-1}
\Big] 
\\ = O(\eps|s|^{-1}) +O(\delta^{2-\sigma}|s|^{-1}),
\end{multline}
which is again $O(\eps|s|^{-1})$ provided that $\sigma$
is chosen to be sufficiently small, since $\delta^2 = \eps^{2\gamma}$ and $\gamma>\tfrac12$.

The quantity $(f_j\circ\varphi_j)(e^{sW_3}x)$ can be simplified for all $(x,s)$
in a relatively large set.  Indeed, 
\begin{equation}
\varphi_j(e^{sW_3}x)
= \varphi_j(x) + sW_3\varphi_j(x) + O(s^2).
\end{equation}
The domain of $f_j$ has been partitioned into intervals of length $\eps$,
with $f_j$ constant on each of these intervals.
Therefore 
\begin{equation} \label{fjmesosimplified}
f_j(\varphi_j(e^{sW_3}x)) = f_j\big(\varphi_j(x) + sW_3\varphi_j(x)\big) 
\end{equation}
unless $\varphi_j(e^{sW_3}x)$ lies within distance $O(\delta^2) = O(\eps^{2\gamma})$
of one of  the endpoints of one of these intervals.
The set of all $(x,s)$ for which this happens,
has measure $O(\delta\cdot\eps^{-1}\cdot \eps^{2\gamma})
= O(\eps^{2\gamma-1}\delta)$.

The exponent $2\gamma-1$ is strictly positive by design.
Let $c_0,\varrho>0$ be small constants that satisfy
$2c_0<\varrho<\rho/2$,  $1-\gamma-\varrho>0$, and $2c_0 < 2\gamma-1$.
If $|\barsS|\le\eps^{c_0}$ then the proof of Proposition~\ref{prop:mesoscale} is complete.
If not, then recall that $|S_1|\gtrsim |\barsS|^2\delta
\ge \eps^{2c_0}\delta$. For small $\eps>0$, 
$\eps^{2\gamma-1}\delta\ll \eps^{2c_0}\delta$ is consequently negligible relative to $|S_1|$.
Therefore for all $x$ in a set $S_2\subset S_1$ of Lebesgue measure
$\gtrsim \delta\eps^{2c_0}$, \eqref{fjmesosimplified}
holds for both indices $j=1,2$.

Define $S_3$ to be the set of all $(x,s)\in S_2$ satisfying
$|s|\ge \eps^\varrho\delta$. Then 
$|S_2\setminus S_3| = O(\eps^\varrho\delta)$,
which is negligible relative to $|S_2|\gtrsim \eps^{2c_0}\delta$. 
Thus $|S_3|\gtrsim|S_2|\gtrsim \eps^{2c_0}\delta$,
and \eqref{fjmesosimplified} holds for every $(x,s)\in S_3$.
Therefore
\begin{multline} 
\sum_{j=1}^2 \Big[
(W_3a_j)(x)(f_j\circ\varphi_j)(x) + 
	a_j(x) \big[f_j(\varphi_j(x) + sW_3\varphi_j(x))-(f_j\circ\varphi_j)(x) \big]s^{-1} \Big] 
\\ = O(\eps|s|^{-1})\ \forall\,(x,s)\in S_3.
\end{multline}

Defining
\begin{equation}
\tilde F_j(y,s) = s^{-1}(f_j(y+s)-f_j(y)),
\end{equation}
this relation can be rewritten as
\begin{multline} 
\sum_{j=1}^2 \Big[
(W_3a_j)(x)(f_j\circ\varphi_j)(x) + 
a_j(x)\cdot W_3\varphi_j(x) \cdot \tilde F_j(\varphi_j(x),sW_3\varphi_j(x)) \Big] 
\\ = O(\eps^{1-\gamma-\varrho}) \ \forall\,(x,s)\in S_3.
\end{multline}
Substituting $F_j(x,t) = \tilde F_j(x,\delta t)$ and writing
$\psi_j(x,t) = (\varphi_j(x), tW_3(\varphi_j)(x))$, as in the microscale analysis, 
this becomes
\begin{multline} \label{differenced3}
 \sum_{j=1}^2 \Big[
(W_3a_j)(x)(f_j\circ\varphi_j)(x) + 
a_j(x)\,W_3\varphi_j(x) \cdot (F_j\circ\psi_j)(x,t) \Big] 
\\ = O(\eps^{1-\gamma-\varrho}) \ \forall\,(x,t)\in S_4
\end{multline}
where $S_4=\{(x,t)\in B\times(\eps^\varrho,1]: (x,\delta t)\in S_3\}$
satisfies 
\begin{equation}\label{|S4|bound}
|S_4|= \delta^{-1}|S_3| \gtrsim \eps^{2c_0} \end{equation}
and also
\begin{equation} \label{S4secondbound}
|S_4| \gtrsim |\barsS|^2.
\end{equation}

In comparison to the framework introduced in \S\ref{section:framework}
and exploited in \S\ref{section:microscale},
a simplification is that the variant mappings $\psi_j^\delta$ have been sidestepped
so that only their limits $\psi_j = \psi_j^0$ need be dealt with.
On the other hand, new terms $W_3a_j\cdot (f_j\circ\varphi_j)$ have arisen
at the mesoscale, and these are not negligible.
There are now effectively four unknown functions $f_1,F_1,f_2,F_2$,
rather than the three with which the analysis began.

The terms $W_3a_j(x)\,(f_j\circ\varphi_j)(x)$
are independent of $t$. To exploit this we introduce
\begin{equation} S^\sharp = \{(x,t,t')\in B\times(0,1]\times(0,1]:
(x,t)\in S_4 \text{ and } (x,t')\in S_4\}
\end{equation}
and subtract to obtain
\begin{equation}
 \sum_{j=1}^2 a_j(x) W_3\varphi_j(x)
\big[ (F_j\circ\psi_j)(x,t')-(F_j\circ\psi_j)(x,t) \big]  = O(\eps^{1-\gamma-\varrho})
\ \forall\, (x,t,t')\in S^\sharp.
\end{equation}
Thus we arrive at a sublevel problem in the $4$-dimensional $(x,t,t')$ space,
associated to the mappings $(x,t,t')\mapsto \Psi_j(x,t,t') = (\psi_j(x,t'),\psi_j(x,t))$, 
and with each quantity $(F_j\circ\psi_j)(x,t')-(F_j\circ\psi_j)(x,t)$
regarded as a function of $\Psi_j(x,t,t')$.
The terms in \eqref{differenced3}
involving $f_j\circ\varphi_j$ have been eliminated.

It is natural to seek to invoke a higher-dimensional analogue of
Proposition~\ref{prop:resurrected}.  However, in contrast to the microscale analysis,
this particular higher-dimensional sublevel set problem is degenerate, in the sense that 
vector fields $U_j$ in $\reals^4$ that annihilate $\Psi_j$ do not satisfy the bracket condition.
Instead, the submanifolds defined by constancy of $t'/t$ 
form a foliation of $\reals^4$ by three-dimensional leaves invariant under these vector fields.

To see this degeneracy,
for each $0\ne r\in\reals$, define $\scripth_r$ to be the set of all $(x,t,t')$ satisfying $t'=rt$.
Define 
\begin{multline} 
\Psi_j(x,t,t') = (\psi_j(x,t),\,\psi_j(x,t'))
\\
= \big((\varphi_j(x),tW_3\varphi_j(x)),\,(\varphi_j(x),t'W_3\varphi_j(x)) \big)\in\reals^2\times\reals^2.
\end{multline}
Since the first coordinate $\varphi_j(x)$
of $(\varphi_j(x),tW_3\varphi_j(x))$
is identically equal to the first coordinate of
$(\varphi_j(x),t'W_3\varphi_j(x))$,
each $\Psi_j$ takes
values in a three-dimensional subspace of $\reals^2\times\reals^2$.
Thus $\Psi_j$ may be regarded as mappings from $\reals^4$ to $\reals^3$.

For any $r\ne r'$ and any $j\in\{1,2\}$,
$\Psi_j(\scripth_r)\cap \Psi_j(\scripth_{r'}) = \emptyset$.
Indeed, for any $(x,t,t')\in\scripth_r$,
the ratio of the second coordinate 
$t' W_3\varphi_j(x)$
of $\psi_j(x,t')$ to the second coordinate $t W_3\varphi_3(x)$ of $\psi_j(x,t)$ 
equals $t'/t=r$. Thus $t'/t$ is determined by $\Psi_j(x,t,t')$.

For each $r\in[\eps^\varrho,\eps^{-\varrho}]$ define
\begin{equation} \label{Frdefn}
F_{j,r}^\sharp(y,t) = F_j(y,rt)-F_j(y,t) \end{equation}
and
\begin{equation}
S^\sharp_r = \{(x,t)\in\reals^3: (x,t,rt)\in S^\sharp\}.
\end{equation}
With the notations
\begin{equation}
\tilde a_j(x) = a_j(x) W_3\varphi_j(x) \ \text{ and } \ 
\eta = \eps^{1-\gamma-\varrho},  
\end{equation}
one has
\begin{equation} \label{Frrelation}
\sum_{j=1}^2 \tilde a_j(x) \,(F_{j,r}^\sharp\circ\psi_j)(x,t)
= O(\eta) \ \forall\,(x,t)\in S^\sharp_r.
\end{equation}

The inequality \eqref{Frrelation} relates $F_{1,r}^\sharp$ to $F_{2,r'}^\sharp$
only when $r'=r$; this expresses the degeneracy of this problem.
Thus we have a one-parameter family of sublevel sets, parametrized by $r$,
in a three-dimensional domain, of the type addressed by Proposition~\ref{prop:resurrected}.
Each of these sublevel sets will be analyzed in the same way as in the microscale analysis,
with the parameter $r$ subsequently taken into account.
While the functions $F^\sharp_{j,r}$ that appear in \eqref{Frrelation}
depend on $r$, the coefficients $\tilde a_j$, the mappings $\psi_j$,
and the inequality itself do not involve $r$.
Thus any conclusions about $F_{j,r}^\sharp$ gleaned from \eqref{Frrelation} hold uniformly in $r$.

The linear form $\sum_{j=1}^2 (\tilde a_j\circ\pi) (g_j\circ \psi_j)$
in \eqref{Frrelation} is not identical to the form
$\sum_{j=1}^2 (a_j\circ\pi) (g_j\circ \psi_j)$
that was encountered in \S\ref{section:microscale},
because the coefficients $\tilde a_j$ are different from $a_j$.
However, since $W_3\varphi_j|W_3\varphi_j|^\tau = \pm |W_3\varphi_j|^{\tilde\tau}$
with $\tilde\tau = \tau+1$,
the hypothesis \eqref{auxhyp3} for the form with coefficients $a_j$
is equivalent to the corresponding hypothesis for coefficients $\tilde a_j$.
Therefore Proposition~\ref{prop:resurrected} can be applied to \eqref{Frrelation},
to conclude that for each $r$, either $|S_r^\sharp| = O(\eta^c)$ or there exists
a measurable subset $S'_r\subset S_r^\sharp$
satisfying $|S'_r|\gtrsim |S_r^\sharp|^C$ such that
\[ F_{j,r}^\sharp(\psi_j(x,t))   = O(\eta^{1-\sigma})
\ \forall\,(x,t) \in S'_r
\ \forall\,j\in\{1,2\}. \]

Recall that $\eta = \eps^{1-\gamma-\varrho}$.
Let $c_1>0$ be small.
If 
\begin{equation} \label{sharpSbound}
\big| \big\{r: 
|S_r^\sharp| \gtrsim (\eps^{1-\gamma-\varrho})^c\big\} \big|
\le\eps^{c_1}
\end{equation}
then
we conclude that 
\begin{equation} |S_4|= O(\eps^{c_1}+\eta^c) 
= O(\eps^{c_1} + \eps^{(1-\gamma-\varrho)c}).\end{equation} 
By \eqref{S4secondbound}, this implies
an upper bound $|\barsS| = O(\eps^{c'})$ for some $c'>0$,
completing the proof of Proposition~\ref{prop:mesoscale}.

Suppose instead that \eqref{sharpSbound} does not hold.
Assuming that $\eps\le\eps_0$ and that $\eps_0$ is sufficiently small,
the set of all $(y,t,r)$ satisfying 
\begin{equation} 
F_j(y,rt)-F_j(y,t) = O(\eta^{1-\sigma})
\ \text{ for each $j\in\{1,2\}$} \end{equation} 
consequently has Lebesgue measure $\ge \eps^{c_2}$
with $c_2 = (1-\gamma-\varrho)\cdot c\cdot c_1/2$.
Therefore
\begin{equation} 
F_j(y,t')-F_j(y,t) = O(\eta^{1-\sigma})
\ \text{ for each $j\in\{1,2\}$}, \end{equation} 
for all $(y,t,t')$ in a set of measure $\gtrsim \eps^{c}$
for a certain exponent $c>0$.
By Fubini's theorem, there exists $\bart$ such that
\begin{equation} 
	F_j(y,t) = F_j(y,\bart) + O(\eta^{1-\sigma})
	\ \text{ for each $j\in\{1,2\}$}
\end{equation} 
for all $(y,t)$ in a set of measure $\gtrsim \eps^c$.
That is, setting $F_j^*(y) = F_j(y,\bart)$ and invoking the definition 
$F_j(y,t) = (\delta t)^{-1}(f_j(y+\delta t)-f_j(y))$,
\begin{equation}
	f_j(y+\delta t)-f_j(y) = \delta tF_j^*(y) + O(\delta \eta^{1-\sigma})
\end{equation}
for all $(y,t)$ in that same set.
Now 
\begin{equation} \delta \eta^{1-\sigma} = \eps^\gamma \eps^{(1-\gamma-\varrho)(1-\sigma)} 
= \eps^{1-\varrho-(1-\gamma)\sigma} = \eps^{1-\varrho'}  \end{equation}
where $\varrho' = \varrho + (1-\gamma)\sigma$ can be made to be as small
as desired, by choosing $\varrho,\sigma$ to be sufficiently small.
Thus substituting $s = \delta t$,
\begin{equation} \label{Ljdefined*}
	f_j(y+s) = f_j(y) + s F_j^*(y) 
	+ O(\eps^{1-\varrho'})
\end{equation}
for all $(y,s)$ that satisfy $|s|\lesssim \delta$
and lie in a certain set of Lebesgue measure $\gtrsim \delta\cdot \eps^c$.

As in the proof of Lemma~\ref{lemma:microstep2C},
this means that for each $j$, upon partitioning the domain of each $f_j$
into intervals $I_{j,n}$ of lengths $\delta$,
there exist affine functions $L_{j,n}$
so that if $f_j^*$ is defined to be $L_{j,n}$ on each $I_{j,n}$
then for all $x$ belonging to a subset $S^*\subset \barsS$
satisfying $|S^*|\gtrsim |\barsS|^C$,
\begin{equation}  \label{upon_partitioning}|
|f_j\circ\varphi_j(x) -f_j^*\circ\varphi_j(x)|
\lesssim \eps^{1-\varrho'}\ \ \forall\,j\in\{1,2\}\ \forall\,x\in S^*.
\end{equation} 

\medskip
To conclude the proof of Proposition~\ref{prop:mesoscale},
it remains to show that if 
$\varrho'$ is 
sufficiently small then the construction ensures that $(f_j^*)' = O(\eps^{-\rho})$.
This is a matter of unraveling the above sequence of steps.
For $y+s$ in each $\delta$-interval,
$f_j^*(y+s)$ satisfies
\begin{equation*}
	f_j(y+s) = f_j(y) + s F_j(y,\bart) 
\end{equation*}
with $y = \varphi_j(x)$ with $(x,\bart)\in S_4$. 
Thus the derivative in question equals $F_j(\varphi_j(x),\bart)$ for some $(x,t)\in S_4$.
Now $(x,\bart)\in S_4$ if and only if $(x,\delta\bart)\in S_3$,
whence $(x,\delta\bart)\in S_2$ and $|\bart|\ge\eps^\varrho$.
By the definition of $S_2$, this implies that $(x,e^{sW_3}x)\in \barsS\times\barsS$.
This, in turn, implies that \eqref{mesokey} holds, that is,
$f_j\circ\varphi_j(x)-f_j\circ\varphi_j(x') = O(\delta \eta^{1-\sigma})$ 
and consequently 
\begin{equation} \begin{aligned}
F_j(\varphi_j(x),\bart) &= (\delta\bart)^{-1}
\big((f_j\circ\varphi_j)(e^{\delta\bart W_3}x)-(f_j\circ\varphi_j)(x) \big)
\\ &= O\big((\delta \eps^\varrho)^{-1} \delta \eta^{1-\sigma}\big) 
\\ & = O\big(\eps^{1-\varrho-\varrho'}).
\end{aligned} \end{equation}
By choosing $\sigma$ and $\varrho$, hence also $\varrho'$, to be sufficiently small, we ensure that
this bound is $O(\eps^{-\rho})$. That concludes the proof of Proposition~\ref{prop:mesoscale}.
\qed


\section{Macroscale Analysis} \label{section:macroscale}

The overarching strategy of the proof of Theorem~\ref{thm:main}
is to upgrade the smallness of $\sum_j a_j \cdot (f_j\circ\varphi_j)$
to smallness of its gradient.  The gradient has two components, so 
a single scalar inequality is thereby transformed into two scalar inequalities,
allowing the elimination of one of the three unknown functions $f_j$
and thus reducing matters to a potentially simpler problem. In this section
we use the differentiability established through 
the mesoscale analysis to implement this strategy.

In the following proposition and its proof, $f'$ denotes the derivative, in the pointwise sense,
of a piecewise differentiable function. 

\begin{proposition}[Macroscale] \label{prop:macroscale}
Let $(\ba,\bPhi,\bff)$ satisfy all hypotheses of Theorem~\ref{thm:main}.
Suppose that the curvature of the web in $B$ defined by $\Phi$ does not vanish identically. 
For any $\varrho>0$ there exist
$\rho,c,C\in(0,\infty)$ with the following property.

Let $\delta>0$. 
Suppose that for each index $j$,
the domain of $f_j$ is partitioned into intervals $I$ of common lengths $\delta$,
and that the restriction of $f_j$ to each such interval $I$
is an affine function whose derivative satisfies $|f'_j|\le\delta^{-\rho}$.
Then the sublevel set
$S(\bff,\delta^{1+\varrho})$
satisfies
$|S(\bff,\delta^{1+\varrho})| \le C\delta^c$.
\end{proposition}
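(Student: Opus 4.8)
Work in the nondegenerate setting and normalize $a_3\equiv 1$ by replacing each $a_j$ with $a_j/a_3$. Fix a nowhere--vanishing $C^\omega$ vector field $W_3$ with $W_3\varphi_3\equiv 0$, write $F=\sum_{j=1}^3 a_j\,(f_j\circ\varphi_j)$ and $g_j=f_j\circ\varphi_j$, and set $\eta=\delta^{1+\varrho}$; the goal is $|S(\bff,\eta)|\le C\delta^c$. If $|S(\bff,\eta)|\le\delta^{c_0}$ for a small constant $c_0$ to be fixed later, there is nothing to prove, so assume $|S(\bff,\eta)|>\delta^{c_0}$. Using the given partition of each $\varphi_j(B)$ into $\delta$--intervals, on which $f_j$ is affine with $|f_j'|\le\delta^{-\rho}$ and $\|f_j\|_\infty=O(1)$, the function $F$ is real analytic on each of the $O(\delta^{-2})$ cells $C$ (connected components of $\{x:\varphi_j(x)\in I_j\ \forall j\}$ for $\delta$--intervals $I_j$), where it coincides with $F_C:=\sum_j a_j(P_j\circ\varphi_j)$, $P_j$ the relevant affine piece; moreover, away from the \emph{breakpoint curves} $\{\varphi_j\in\partial I_j\}$, every fixed derivative of $F$ is $O(\delta^{-\rho})$, because $f_j''\equiv 0$ there and $f_j,f_j'=O(\delta^{-\rho})$.

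\emph{Step 1: upgrade $F=O(\eta)$ to $\nabla F=O(\delta^\mu)$.} Fix $\varrho_1\in(0,\varrho)$. For pairs $(x,e^{sW_3}x)$ in $S(\bff,\eta)\times S(\bff,\eta)$ with $|s|\le\delta^{1+\varrho_1}$, subtract the two defining inequalities and expand exactly as in the derivation of \eqref{differenced1} and in the mesoscale computation preceding \eqref{differenced3}: writing $a_j(e^{sW_3}x)=a_j(x)+sW_3a_j(x)+O(s^2)$ and, for $x$ outside the $O(\delta^{1+\varrho_1})$--neighbourhood of the breakpoint curves, $f_j(\varphi_j(e^{sW_3}x))=f_j(\varphi_j(x))+s(W_3\varphi_j)(x)f_j'(\varphi_j(x))+O(s^2\delta^{-\rho})$, one obtains after dividing by $s$ that $W_3F(x)=O(\eta/|s|)+O(|s|\delta^{-\rho})$. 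Taking $|s|\sim\delta^{1+\varrho_1}$, and running the same computation with nowhere--vanishing vector fields annihilating $\varphi_1$ and $\varphi_2$ (which, with $W_3$, span $\reals^2$ pointwise), gives $|\nabla F(x)|\le C\delta^{\mu}$ with $\mu:=\min(\varrho-\varrho_1,\,1+\varrho_1-\rho)>0$. The admissible pairs are controlled by the Cauchy--Schwarz and composed--flow bookkeeping of \S\ref{section:framework}, applied to the flows rescaled to scale $\delta^{1+\varrho_1}$; the breakpoint neighbourhoods have measure $O(\delta^{\varrho_1})$, negligible against $|S(\bff,\eta)|^2>\delta^{2c_0}$ once $c_0<\varrho_1/2$. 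Hence the bound holds on a set $S'\subset S(\bff,\eta)$ with $|S'|\gtrsim|S(\bff,\eta)|^{C}$.

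\emph{Steps 2 and 3: eliminate $f_3$, and handle the reduced problem.} On $S'$ we now control the two scalars $F=O(\eta)$ and $\nabla F=O(\delta^\mu)$. Since $W_3\varphi_3\equiv 0$ and $W_3a_3\equiv 0$,
\[
W_3F=\sum_{j=1}^{2}\big[(W_3a_j)(f_j\circ\varphi_j)+a_j(W_3\varphi_j)(f_j'\circ\varphi_j)\big],
\]
in which neither $f_3$ nor $f_3'$ appears; and substituting $g_3=-(a_1g_1+a_2g_2)+O(\eta)$ from $F=O(\eta)$ removes $g_3$ as well, leaving on $S'$ a two--index relation of exactly the form \eqref{differenced3} in the unknowns $f_1,f_2$. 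Meanwhile the pointwise bound $|f_3|\ge 1$ becomes $|a_1g_1+a_2g_2|\ge 1-O(\eta)$ on $S(\bff,\eta)$, so $\max(|g_1|,|g_2|)\gtrsim 1$ there. One now attacks this reduced problem just as \eqref{differenced3} is attacked in the mesoscale analysis: lift $f_j'$ to the difference--quotient functions on $\psi_j(x,t)=(\varphi_j(x),tW_3\varphi_j(x))$, difference in $t$ to eliminate the $(f_j\circ\varphi_j)$ terms, and apply Proposition~\ref{prop:resurrected} together with the auxiliary hypothesis (its $\tau=0,1$ instances being the weak auxiliary hypothesis and Lemma~\ref{lemma:det(B)}) to reduce to a single surviving unknown among $f_1,f_2$. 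On each cell $C$ this unknown is affine, so the relevant linear form is the restriction of a fixed real--analytic function of $x$ whose coefficients, after rescaling $f_j'\mapsto\delta^\rho f_j'$, vary over a compact set; the quantitative form of the main hypothesis (Proposition~\ref{prop:equivalent_hypothesis}), combined with Lemmas~\ref{lemma:steinstreetish}, \ref{lemma:quantsublevel} and \ref{lemma:analyticsublevel}, then gives a sublevel bound $O(\delta^\kappa)$ per cell with $\kappa>0$ once $\rho$ is chosen small in terms of $\varrho$ and the constants of those lemmas, and the transversality of $\nabla\varphi_1,\nabla\varphi_2$ together with the non--vanishing of the web curvature is what allows these per--cell bounds, summed over the $O(\delta^{-2})$ cells, to yield $|S(\bff,\eta)|\lesssim\delta^{\varrho'}$ for some $\varrho'>0$. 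With $|S'|\gtrsim|S(\bff,\eta)|^{C}$ this gives the asserted bound $|S(\bff,\delta^{1+\varrho})|\le C\delta^{c}$.

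\emph{Main obstacle.} The two delicate points are, first, Step~1: keeping the finite--difference errors under control when the flows $e^{sW_k}$ cross the breakpoint curves of the piecewise--affine $f_j$ — this is why $f_j$ is carried as \emph{affine}, not merely Lipschitz, on the $\delta$--intervals, since that is exactly what keeps the second derivatives of $F$ no worse than $\delta^{-\rho}$ — together with the Cauchy--Schwarz bookkeeping needed to lose only a polynomial power of $|S(\bff,\eta)|$; and, second, the cell summation at the end of Step~3, where the naive per--cell sublevel estimates are by themselves too weak to survive multiplication by the number $\delta^{-2}$ of cells, so the transversality of the $\nabla\varphi_j$ and the web curvature must be used to confine the per--cell sublevel set to a strip of width $O(\eta)$ inside a cell of size $\delta$.
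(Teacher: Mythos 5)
Your Steps 1 and 2 track the first half of the paper's argument (the proof of Lemma~\ref{lemma:meromorphicity}): upgrade $\sum_j a_j(f_j\circ\varphi_j)=O(\delta^{1+\varrho})$ to $|\nabla\sum_j a_j(f_j\circ\varphi_j)|=O(\delta^{\varrho})$ on a large subset, then apply $W_3$ to eliminate $f_3$. Your mechanism for the gradient upgrade (flow differencing at scale $\delta^{1+\varrho_1}$ with breakpoint bookkeeping) differs from the paper's, which Taylor-expands within each cell $\bigcap_j\varphi_j^{-1}(I_j)$ and observes that either $|\nabla F(\barx)|\le\delta^{\varrho}$ or the cell meets $\sS$ in measure $O(\delta^{2+\varrho})$; both routes are workable, and the paper's avoids the Cauchy--Schwarz losses.

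The gap is in Step 3, which is the heart of the macroscale analysis. After eliminating $f_3$ you face the four unknowns $f_1,f_1',f_2,f_2'$ attached to only two mappings, as in \eqref{four_unknowns}. Your proposal to lift $f_j'$ to difference quotients on $\psi_j(x,t)$, difference in $t$, and invoke Proposition~\ref{prop:resurrected} is circular here: since $f_j$ is by hypothesis affine on $\delta$-intervals, the difference quotients $F_j(y,t)$ are already essentially independent of $t$, so the $t$-differenced relation is vacuously $O(\cdot)=O(\cdot)$ and yields no link between $f_1$ and $f_2$; in particular it cannot ``reduce to a single surviving unknown.'' (This is exactly why the paper remarks that a related issue arose at the mesoscale ``but we need to proceed differently here.'') The missing idea is the two-point matrix inversion: pair points $(t,y),(t',y)$ with the same $\varphi_2$-value to obtain the $2\times2$ system \eqref{matrixeqn} for $(f_2'(y),f_2(y))$, with coefficient matrix $B(\bt,y)$ whose determinant $\beta_\bt$ is nonvanishing by Lemma~\ref{lemma:det(B)} (the weak auxiliary hypothesis); inverting gives the meromorphic approximation $\barr\beta_\bt(y)f_2(y)=\alpha_{\bt,\theta}(y)+O(\delta^{\varrho})$ with finitely many real parameters $(\bt,\theta,\barr)$ absorbing the unknown values of $f_1,f_1'$. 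One must then do this for all three indices and substitute back into the \emph{original} three-term sum, producing the analytic function $F(x,\bt,\theta,r)$ of \eqref{Fintroduced}; the main hypothesis shows $F$ does not vanish identically in the parameters, \L{}ojasiewicz plus the lower bound $|f_3|\ge1$ (Lemma~\ref{lemma:invokeLoj}) keeps $(\bar\bt,\bar\theta,\barr)$ quantitatively away from the zero variety $\Sigma$, and Lemma~\ref{lemma:quantsublevel} closes the argument globally on $B$ -- not by a cell-by-cell summation. Your sketch never constructs this recombination, and your appeals to Proposition~\ref{prop:equivalent_hypothesis} and to ``web curvature confining the per-cell sublevel set'' do not substitute for it; in particular you never actually use $|f_3|\ge1$ except to note $\max(|g_1|,|g_2|)\gtrsim1$, whereas the paper needs it precisely to bound the parameter's distance to $\Sigma$ from below.
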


Only the case $\tau=0$ of the auxiliary hypothesis \eqref{auxhyp3} will be used
in the proof of Proposition~\ref{prop:macroscale}, but the main hypothesis and the lower bound
on $|f_3|$ will come into play for the first time.

To simplify notation below we analyze the subset $S(\bff,\delta^{1+2\varrho})$
rather than $S(\bff,\delta^{1+2\varrho})$.
Set $\eta = \delta^{1+2\varrho}$.
We will begin the proof of Proposition~\ref{prop:macroscale} 
by establishing the following approximation result.

\begin{lemma} \label{lemma:meromorphicity}
Let $(\ba,\bPhi)\in C^\omega(\tilde B)$.
Suppose that $a_j$ vanish nowhere, and that the gradients of the mappings
$\varphi_j$ are everywhere pairwise transverse.
Suppose that the curvature of the web defined by $\Phi$ does not vanish
identically, and that $(\ba,\bPhi)$ satisfies  the weak auxiliary hypothesis.

Let $k\in\{1,2,3\}$.
There exist compact sets $T_k$ and $\Gamma_k$, 
a function $(y,\bt,\theta)\mapsto  \alpha_{\bt,\theta}(y)$
that is $C^\omega$ in a neighborhood of $\varphi_k(B)\times T_k\times\Gamma_k$,
and a function $(y,\bt)\mapsto \beta_\bt(y)$ 
that is $C^\omega$ in a neighborhood of $\varphi_k(B)\times T_k$
with the following property.

For any $\varrho>0$ there exists $\rho_0(\varrho)>0$
such that for any $\rho\in(0,\rho_0(\varrho))$
there exist $c,C\in(0,\infty)$ with the following property.
Let $\delta>0$. Let $\bff$ be Lebesgue measurable
and satisfy $\norm{f_j}_{L^\infty} = O(1)$ for each index $j$.
Suppose that each
$f_j$ is affine on intervals of lengths $\delta$
and satisfies $|f'_j| = O(\delta^{-\rho})$.
Let $\sS\subset S(\bff,\delta^{1+2\varrho})$ be measurable. Then
either $|\sS| = O(\delta^c)$ or
there exist a measurable subset $\sS'\subset\sS$
satisfying $|\sS'|\gtrsim |\sS|^C$,
elements $\bt\in T_k$ and $\theta\in\Gamma_k$,
and a scalar $\barr\in[c\delta^{\rho},1]$
such that for every $y\in \varphi_k(\sS')$,
\begin{align}
&\barr \beta_\bt(y)\,f_k(y) = \alpha_{\bt,\theta}(y) + O(\delta^\varrho)
\\& |\beta_\bt(y)| \ge \delta^\rho.
\end{align}
\end{lemma}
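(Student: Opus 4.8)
The plan is to run a microscale/mesoscale-style argument one more time, but now exploiting both components of the gradient rather than a single scalar inequality, and to track what the ``difference'' trick produces when applied in the $W_k$ direction to the already-differentiable datum. Fix $k$; after dividing by $a_k$ we may assume $a_k\equiv 1$. For $(x,e^{sW_k}x)\in \sS\times\sS$ with $|s|=O(\delta)$ we subtract the two defining inequalities of $S(\bff,\eta)$ to cancel $f_k$, obtaining, after dividing by $s$ and using the affineness of $f_j$ at scale $\delta$ together with the derivative bound $|f_j'|=O(\delta^{-\rho})$, a relation of the shape
\[
\sum_{i\ne k}\Big[(W_k a_i)(x)(f_i\circ\varphi_i)(x)+a_i(x)W_k\varphi_i(x)\,(F_i\circ\psi_i^{(k)})(x,t)\Big]=O(\delta^{1+\varrho}/|s|)
\]
for $(x,t)$ in a set of measure $\gtrsim|\sS|^C$, exactly as in \eqref{differenced3}, where $\psi_i^{(k)}(x,t)=(\varphi_i(x),tW_k\varphi_i(x))$ and $F_i$ encodes the difference quotient of $f_i$. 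This is a two-term relation (indices $i\ne k$) in a three-dimensional space with transverse $\psi_i^{(k)}$, so it is of the type governed by Proposition~\ref{prop:resurrected} and by the exact-solution analysis of Lemma~\ref{lemma:exactsolns}, under the $\tau=0$ instance of \eqref{auxhyp3} (equivalently the weak auxiliary hypothesis).

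Next I would combine this with the \emph{other} component of the gradient smallness, that is, with the information coming from the original scalar inequality before differencing. Restricting $\sum_j a_j(f_j\circ\varphi_j)=O(\eta)$ to (almost any) level curve of $\varphi_k$ and using transversality, together with the mesoscale differentiable structure, pins down $f_k$ along that curve in terms of $f_i$, $i\ne k$, and their difference quotients $F_i$. Feeding in the structure of solutions of the homogeneous two-term equation from Lemma~\ref{lemma:exactsolns}, namely $g_i(y,t)=h_i(y)t^\tau$ with a unique exponent $\tau=\tau(\bPhi)$ and $h_i\in C^\omega$, and the fact (established in the discussion following the auxiliary hypothesis) that under the weak auxiliary hypothesis these homogeneous solutions are trivial, I would extract from Proposition~\ref{prop:resurrected} (applied, as in the mesoscale proof, uniformly over the one-parameter family indexed by the dilation ratio $r$) that on a subset $\sS'\subset\sS$ of measure $\gtrsim|\sS|^C$ the quantities $F_i\circ\psi_i^{(k)}$ are themselves $O(\delta^{\varrho})$-close to values of a fixed real-analytic template determined by $(\ba,\bPhi)$; call the relevant evaluation parameters $\bt\in T_k$ and $\theta\in\Gamma_k$, chosen from compact sets. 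The dilation parameter that survives this process is the scalar $\barr\in[c\delta^\rho,1]$ appearing in the statement — it is the ratio $t'/t$ that the degeneracy of the $4$-dimensional problem (see the $\scripth_r$ discussion in \S\ref{section:mesoscale}) forces us to carry along, and its lower bound $c\delta^\rho$ is inherited from the restriction $|s|\ge\eps^\varrho\delta$ used there.

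Assembling: along $\varphi_k(\sS')$ the scalar inequality plus the extracted near-homogeneity of the $F_i$ expresses $a_k^{-1}\sum_{j}a_j(f_j\circ\varphi_j)=O(\eta)$ as an identity of the form $\barr\,\beta_{\bt}(y)f_k(y)=\alpha_{\bt,\theta}(y)+O(\delta^\varrho)$, where $\beta_{\bt}$ collects the real-analytic coefficient factors multiplying $f_k$ (built from $a_i$, $W_k\varphi_i$, and the template), and $\alpha_{\bt,\theta}$ collects the remaining real-analytic terms built from the $h_i$-data and $\theta$; both are $C^\omega$ on neighborhoods of $\varphi_k(B)\times T_k\times\Gamma_k$ and $\varphi_k(B)\times T_k$ by construction, since every ingredient is a composition of fixed real-analytic functions with the flow maps $\Theta$. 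The bound $|\beta_{\bt}(y)|\ge\delta^{\rho}$ on $\sS'$ is not automatic — it must be arranged by discarding the portion of $\sS$ where $\beta_{\bt}$ is too small; here one invokes Lemma~\ref{lemma:analyticsublevel} (or Lemma~\ref{lemma:sublevel_notSigma}) applied to the real-analytic function $\beta$, whose non-identical-vanishing is exactly the content of the weak auxiliary hypothesis (cf.\ Lemma~\ref{lemma:det(B)}), so that $\{y:|\beta_\bt(y)|<\delta^\rho\}$ has measure $O(\delta^{\rho\tau})$, negligible against $|\sS'|\gtrsim|\sS|^C$ in the non-small case $|\sS|\gtrsim\delta^c$. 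I expect the main obstacle to be precisely the bookkeeping of this last step together with verifying non-degeneracy of $\beta$: one must check that the coefficient of $f_k$ that emerges really is the ratio whose non-triviality is guaranteed by the weak auxiliary hypothesis, rather than some degenerate combination, and that the parameters $(\bt,\theta,\barr)$ can be chosen measurably and from compact sets uniformly in $\delta$ — this is where the Stein–Street-type finiteness (Lemma~\ref{lemma:steinstreetish}) and the \L{}ojasiewicz input behind Lemma~\ref{lemma:sublevel_notSigma} do the work of converting the qualitative ``does not vanish identically'' into the quantitative $O(\delta^\varrho)$ and $\ge\delta^\rho$ bounds.
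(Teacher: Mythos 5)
Your proposal does not follow the paper's route, and as written it has a genuine gap: it never produces the asserted representation of $f_k$ in terms of a \emph{finite-parameter} real-analytic family. Your plan is to difference along the $W_k$ flow to cancel $f_k$, feed the resulting two-term relation into Proposition~\ref{prop:resurrected}, and then ``restrict the original scalar inequality to level curves of $\varphi_k$'' to recover $f_k$. But that last step only expresses $f_k(y)$ as $-a_k^{-1}\sum_{i\ne k}a_i\,(f_i\circ\varphi_i)$ plus an error, and the right-hand side still contains the unknown measurable functions $f_i$, $i\ne k$ --- not the $C^\omega$ template $\alpha_{\bt,\theta}$ with $(\bt,\theta)$ ranging over compact sets that the lemma requires. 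Moreover, the two-term relation you derive involves \emph{both} $f_i\circ\varphi_i$ and the difference quotients $F_i\circ\psi_i$, i.e.\ four unknown functions for two mappings; Proposition~\ref{prop:resurrected} treats two unknowns and does not apply to this mixed situation. The paper explicitly flags this (``there are now effectively two arbitrary functions for each index \dots we need to proceed differently here''), and the different procedure is the heart of the proof you are missing.

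What the paper actually does: (i) it uses the affine structure of $f_j$ at scale $\delta$ and the bound $|f_j'|=O(\delta^{-\rho})$ to Taylor-expand $\sum_j a_j(f_j\circ\varphi_j)$ on each cell $\bigcap_j\varphi_j^{-1}(I_j)$ and thereby upgrade smallness of the sum to smallness of its \emph{gradient} on a large subset (your argument uses differencing in only the one direction $W_k$ and never obtains this); (ii) after applying $W_3$ to kill $f_3$ and changing variables so $\varphi_1,\varphi_2$ are coordinates, it takes \emph{two} points $(t,y),(t',y)$ on a common level curve of $\varphi_2$ to form a $2\times2$ linear system $B(\bt,y)\,(g_2(y),f_2(y))^T=A(\bt,y)+O(\delta^\varrho)$ for the pair $(f_2'(y),f_2(y))$; $\beta_\bt=\det B$, whose non-identical vanishing is exactly Lemma~\ref{lemma:det(B)} (the weak auxiliary hypothesis), and $\alpha_{\bt,\theta}$ is the cofactor expression, with $\theta$ the normalization of the four scalars $(f_1(t),g_1(t),f_1(t'),g_1(t'))$. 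In particular your identification of $\barr$ as the dilation ratio $t'/t$ from the mesoscale foliation is incorrect: $\barr$ is (essentially the reciprocal of) $R=1+|(f_1(t),g_1(t),f_1(t'),g_1(t'))|$, and its lower bound $c\delta^\rho$ comes from $|f_1'|=O(\delta^{-\rho})$, not from a restriction $|s|\ge\eps^\varrho\delta$. Your final paragraph about discarding the set where $|\beta_\bt|<\delta^\rho$ via the analytic sublevel bound is the one piece that matches the paper, but it cannot be carried out until the matrix system producing $\beta_\bt$ has been set up.
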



\begin{proof}[Proof of Lemma~\ref{lemma:meromorphicity}]
It suffices to prove this for $k=3$.  The first step will be to establish
smallness of $|\nabla\big(\sum_j a_j (f_j\circ\varphi_j)\big)|$ 
on a significantly large subset of $\sS$.
Consider any ordered triple $(I_j: j\in\{1,2,3\})$
of intervals $I_j$ of length $\delta$ for which 
$\big(\bigcap_{j=1}^3 \varphi_j^{-1}(I_j)\big) \cap \sS \ne\emptyset$.
Choose any point
$\barx\in \big(\bigcap_{j=1}^3 \varphi_j^{-1}(I_j)\big) \cap \sS$.
For any $x\in \bigcap_j \varphi_j^{-1}(I_j)$,
the quantity $\sum_{j=1}^3 a_j(x)\,(f_j\circ\varphi_j)(x)$ can be expressed via
Taylor expansion as
\begin{multline}
\sum_{j=1}^3 a_j(x)\,(f_j\circ\varphi_j)(x) 
= \sum_{j=1}^3 a_j(\barx)\,(f_j\circ\varphi_j)(\barx)
\\
+ (x-\barx)\cdot \nabla \big(\sum_j a_j\,(f_j\circ\varphi_j)\big)(\barx)
+ O(\delta^{-\rho}|x-\barx|^2)
\end{multline}
since $f'_j = O(\delta^{-\rho})$, $f_j = O(1)$, 
the restriction of $f_j$ to $I_j$ has constant derivative, and $a_j,\varphi_j = O(1)$ in $C^2$ norm.
Therefore for any $x\in 
\big(\bigcap_{j=1}^3 \varphi_j^{-1}(I_j)\big) \cap \sS$,
\begin{equation}
\big|(x-\barx)\cdot \nabla \big(\sum_j a_j\,(f_j\circ\varphi_j)(\barx)\big)\big|
= O(\eta + \delta^{2-\rho}).
\end{equation}
Choose $\rho$ sufficiently small that $2-\rho\ge 1+2\varrho$.
Then $\eta + \delta^{2-\rho} = O(\delta^{1+2\varrho} + \delta^{2-\rho}) = O(\eta)$.

For any vector $v\in\reals^2$,
$\{x\in B(\barx,C\delta): |(x-\barx)\cdot v| = O(\eta)\}$
has Lebesgue measure $O(\min(\delta^2,\,\eta|v|^{-1}\delta))$. 
The set $\bigcap_{j=1}^3 \varphi_j^{-1}(I_j)$ is contained in $B(\barx,C\delta)$.
Therefore by choosing
$v = \nabla(\sum_j a_j (f_j\circ\varphi_j)(\barx))$, it follows that either
\begin{equation}
\big|\nabla \big(\sum_j a_j\,(f_j\circ\varphi_j)(\barx)\big)\big|
\le  \delta^{\varrho}  
\end{equation}
or
\begin{equation}
\big|\bigcap_j \varphi_j^{-1}(I_j) \cap \sS\big| 
= O(\eta |v|^{-1} \delta)
= O(\delta^{1+2\varrho}\delta^{-\varrho}\delta)
=O( \delta^{2+\varrho}).  
\end{equation}
If the former inequality holds then
\begin{equation}
\big|\nabla \big(\sum_j a_j\,(f_j\circ\varphi_j)(x)\big)\big|
= O(\delta + \delta^{1-\rho} + \delta^\varrho)
= O( \delta^{\varrho}  )
\end{equation}
as well, since $|x-\barx| = O(\delta)$,
$\norm{a_j}_{C^2} = O(1)$,
$f_j = O(1)$, $f'_j$ is constant on $I_j$,
and we may assume $\rho,\varrho$ to be sufficiently small that $1-\rho\ge\varrho$.

There are $O(\delta^{-2})$ ordered triples $(I_1,I_2,I_3)$
of intervals with $\bigcap_j \varphi_j^{-1}(I_j)\ne\emptyset$.
Summing over all of these, we conclude from the preceding dichotomy that
\begin{equation}
|\sS| \lesssim \delta^{\varrho}
+ \big|\big\{x\in S(\bff,\eta): 
|\nabla \big(\sum_j a_j\,(f_j\circ\varphi_j)(x)\big)|
	= O( \delta^{\varrho})\big\}\big|.  
\end{equation}
It remains to establish an upper bound for the measure of the set
\begin{equation}
S = \big\{x\in S(\bff,\eta): 
|\nabla \big(\sum_{j=1}^3  a_j\,(f_j\circ\varphi_j)(x)\big)| = O(\delta^{\varrho})\big\}.  
\end{equation}

We may assume that $a_3\equiv 1$. Then
\begin{equation}
S \subset \big\{x\in S(\bff,\eta): 
|\sum_{j=1}^2 W_3\big(a_j\,(f_j\circ\varphi_j)\big)(x)| = O(\delta^\varrho) \big\}.
\end{equation}
By sacrificing one scalar inequality, we have eliminated the contribution of $f_3$ from the sum.



Although the number of indices $j$
in play has been reduced from three to two, there are now effectively two arbitrary functions for each index. 
Indeed, write
\begin{equation}
W_3\big(a_j\,(f_j\circ\varphi_j)\big)
= a_j\cdot W_3\varphi_j\cdot (g_j\circ\varphi_j)
+ W_3a_j\cdot (f_j\circ\varphi_j)
\end{equation}
with 
$g_j = f'_j$.
The relationship between $g_j$ and $f_j$ provides limited
information in this context, for the sublevel set, and hence its
images under the mappings $\varphi_j$, could consist of many small
connected components, so that information about $f'_j$ cannot
be integrated to obtain useful macroscale information about $f_j$ itself.\footnote{Nonetheless,
it is clear that in future work, especially for sublevel set inequalities
with more than three summands, the tight relationship between $g_j$ and $f_j$ at one
particular scale should be exploited, in order to eliminate the need for an auxiliary hypothesis
at each recursive step.}
Thus for arbitrary measurable functions satisfying $f_j= O(1)$
and $g_j = O(\delta^{-\rho})$,
we seek to analyze the Lebesgue measure of 
\begin{equation} \label{four_unknowns}
	S_2 = \Big\{x\in B:
\big| \sum_{j=1}^2
\Big[ a_j(x)\cdot W_3\varphi_j(x)\cdot (g_j\circ\varphi_j)(x) 
+ W_3a_j(x)\cdot (f_j\circ\varphi_j)(x) \Big] \big|
= O(\delta^{\varrho}) \Big\}
\end{equation}
without positing any relationship between $f_j$ and $g_j$.
This increase in the number of unknown functions, from three to four, 
is the final difficulty to be overcome. 
A related issue arose in the mesoscale analysis, but we need to proceed differently here.

The reduction in the number of mappings $\varphi_j$
in the definition of the sublevel set, from three to two, is a crucial simplification.
To begin to exploit it, change variables so that $\varphi_1,\varphi_2$
take the form $\varphi_j(x_1,x_2)\equiv x_j$.
Let $S_3$ be the set of all $(\bt,y) = ((t,t'),y) \in\reals^2\times\reals^1$
such that $((t,y),(t',y))\in S_2\times S_2$.
Its three-dimensional Lebesgue measure  satisfies $|S_3|\gtrsim |S_2|^2$,
by the Cauchy-Schwarz inequality.

Let $\barc>0$ be a small constant and
define $S_3^*$ to be the set of all $\bt$ such that
\[ |\{y: (\bt,y)\in S_2\}| \ge \barc|S_2|.\]
If $\barc$ is chosen to be sufficiently small then necessarily $|S_3^*|\gtrsim |S_3|$.

We now follow a path taken in \S20 of \cite{triosc}.
For each $(\bt,y)\in S_3$, \eqref{four_unknowns} can be regarded as a matrix equation
\begin{equation}\label{matrixeqn} 
B(\bt,y)
\begin{pmatrix} g_2(y) \\ \\ f_2(y) \end{pmatrix}
= A(\bt,y) + O(\delta^{\varrho}) \end{equation}
for an unknown quantity $\begin{pmatrix} g_2(y) \\ f_2(y)\end{pmatrix}$,
with right-hand side $A(\bt,y)$ and coefficient matix $B(\bt,y)$ defined by
\begin{equation} \label{amatrixdefn}
A(\bt,y) = - \begin{pmatrix}
a_1(t,y))W_3\varphi_1(t,y)g_1(t) + W_3a_1(t,y)f_1(t)
\\ \\ a_1(t',y))W_3\varphi_1(t',y)g_1(t') + W_3a_1(t',y)f_1(t')
\end{pmatrix}  \end{equation}
and
	\begin{equation}
		B(\bt,y) = 
		\begin{pmatrix}
			a_2(t,y)W_3\varphi_2(t,y) & W_3a_2(t,y)
			\\ \\
			a_2(t',y)W_3\varphi_2(t',y) & W_3a_2(t',y)
		\end{pmatrix}.
	\end{equation}
We introduce a parameter $\theta = (\theta_1,\dots,\theta_4)\in\reals^4$ 
that satisfies $|\theta| = O(\delta^{-\rho})$ and define
\begin{equation} \label{amatrixtheta}
A_*(\bt,\theta,y) = - \begin{pmatrix}
a_1(t,y))W_3\varphi_1(t,y)\theta_1 + W_3a_1(t,y)\theta_2
\\ \\ a_1(t',y))W_3\varphi_1(t',y)\theta_3 + W_3a_1(t',y)\theta_4
\end{pmatrix}.  \end{equation}
The relation \eqref{matrixeqn} is thus rewritten as an instance of the more general relation 
\begin{equation}\label{matrixeqntheta} 
B(\bt,y) \begin{pmatrix} g_2(y) \\ \\ f_2(y) \end{pmatrix}
= A_*(\bt,\theta,y) + O(\delta^{\varrho}). \end{equation}

Define 
\begin{equation} \beta_\bt(y) = \det(B(\bt,y)),\end{equation}
which is a real analytic function of $(\bt,y)$.
According to Lemma~\ref{lemma:det(B)}, the $C^\omega$ function $(\bt,y)\mapsto \beta_\bt(y)$ 
does not vanish identically on any nonempty open set.
This is a consequence of the weak auxiliary hypothesis; it does not rely
on the full strength of the main hypothesis.

Since $(\bt,y)\mapsto \beta_\bt(y)$ is a real analytic function that does not vanish identically, 
	\begin{equation}  \label{betasublevelset}
		|\{(\bt,y): |\beta_\bt(y)| < \delta^{\rho}\}|
		\le C'\delta^{A\rho}
	\end{equation}
for some $A<\infty$ and $C'<\infty$.
If $|S_3|\le 2C'\delta^{A\rho}$ then $|S_2| = O(|S_3|^{1/2}) = O(\delta^c)$
with $c = A\rho/2$. Therefore $|\sS| = O(\delta^c)$,
which is one of the alternative conclusions of Lemma~\ref{lemma:meromorphicity}.
Therefore we may assume that 
$|S_3|\ge 2C'\delta^{A\rho}$. 

Multiplying by the cofactor matrix associated to $B$ 
transforms the equation \eqref{matrixeqn} to
\begin{equation} \label{transformed}
\beta_\bt(y)
\begin{pmatrix} g_2(y) \\ f_2(y) \end{pmatrix}
= \scripta(\bt,\theta(\bt),y) + O(\delta^\varrho)
\end{equation}
with $\scripta(\bt,\theta,y)$ a $C^\omega$ $\reals^2$-valued function of $(\bt,\theta,y)$
and with $\theta(\bt) = (f_1(t),g_1(t),f_1(t'),g_1(t'))$. 
$\scripta$ satisfies $\scripta(\bt,\theta,y) = O(\delta^{-\rho})$ uniformly in $(\bt,y)$,
since $f_1 = O(1)$ and $g_1 = O(\delta^{-\rho})$.
It depends linearly on $\theta$. Therefore we may restrict $\theta$ to a fixed bounded
subset of $\reals^4$ and write $\scripta(\bt,R\theta,y)$ where $0\le R$ is $O(\delta^{-\rho})$.

Restricting attention to the component of the vector equation \eqref{transformed} involving $f_2$ gives
\begin{equation} \beta_\bt(y) f_2(y) = R \alpha_{\bt,\theta}(y) + O(\delta^\varrho) 
\ \forall\,(\bt,y)\in S_3, \end{equation}
where 
\begin{equation}\left\{	\begin{aligned} R &= 1+|(f_1(t),g_1(t),f_1(t'),g_1(t'))| \in[1,O(\delta^{-\rho})),
\\ \theta &= R^{-1}(f_1(t),g_1(t),f_1(t'),g_1(t')), \end{aligned} \right. \end{equation}
and $\alpha_{\bt,\theta}(y)$ is a real analytic function of $(\bt,\theta,y)$.

There exists $\bt$ such that
$S_3^+ = \{y\in\reals: (\bt,y)\in S_3)\}$ satisfies $|S_3^+|\gtrsim |S_3|\gtrsim \delta^{A\rho}$,
where $|\cdot|$ denotes one-dimensional Lebesgue measure
on the left-hand side of this inequality, and three-dimensional measure on the right.
There exists $\theta$, which depends on $\bt$ but not on $y$, such that
\begin{equation} \label{f2betarelation}
\beta_\bt(y)\,f_2(y) = R\alpha_{\bt,\theta}(y) + O(\delta^\varrho) \ \forall\,  y\in S_3^+.
\end{equation}

It will be more convenient in the analysis below to rewrite \eqref{f2betarelation} in the equivalent form
\begin{equation} \label{f2betarelation2}
\barr \beta_\bt(y)\,f_2(y) = \alpha_{\bt,\theta}(y) + O(\delta^\varrho)
\ \forall\, y\in S_3^+
\end{equation}
with $c\delta^\rho\le \barr\le 1$. 

This is a form of the second alternative
conclusion of Lemma~\ref{lemma:meromorphicity},
but the lemma asserts the stronger conclusion that there exists
a subset $S\subset \sS$ of measure $\gtrsim |\sS|^c$ such that \eqref{f2betarelation2}
holds for every $y\in \varphi_2(S)$.
The stronger conclusion is obtained from a simple modification of this proof. 
Indeed, let $c_0>0$ be small and  define $y\in\varphi_2(B)$ to be rich
if the set of all $x\in \sS$ satisfying $\varphi_2(x)=y$ has measure $\ge c_0 |\sS|$.
If $c_0$ is chosen sufficiently small then the set $\sS'\subset \sS$ of all $x\in 
\sS$ such that $\varphi_2(x)$ is rich has measure $\gtrsim |\sS|$.
Apply the above reasoning to $\sS'$, rather than to $\sS$.
Define $S\subset \sS'$ to be the set of all $x\in \sS'$
such that $\varphi_2(x)\in S_3^+$.
Then $|S|\gtrsim |S_3^+|\cdot |\sS|\gtrsim |\sS|^c$ for some $c>0$.
Thus the second conclusion of 
Lemma~\ref{lemma:meromorphicity} has been established,
completing the proof of the lemma.
\end{proof}

\medskip
We next use Lemma~\ref{lemma:meromorphicity} to complete the proof of Proposition~\ref{prop:macroscale}.
By applying Lemma~\ref{lemma:meromorphicity} to the three indices $k=1,2,3$ in succession,
we conclude that either $|S(\bff,\delta^{1+2\varrho})|\le \delta^{c'_0}$,
or there exist a measurable subset $\sS\subset S(\bff,\delta^{1+2\varrho})$
satisfying $|\sS|\gtrsim |S(\bff,\delta^{1+2\varrho})|^C$ and
three three-tuples $(\bt_k,\theta_k, \barr_k)$ of parameters such that for each index $j\in\{1,2,3\}$,
\begin{align}
&\barr_j \beta_{\bt_j}^j(y) f_j(y) = \alpha_{\bt_j,\theta_j}^j(y) + O(\delta^\varrho)
\label{alphaandbeta}
\ \forall\, y\in \varphi_j(\sS)
\\ &|\beta_{\bt_j}^j(y)| \ge\delta^\rho\ \forall\,y\in \varphi_j(\sS)
\label{betalowerbound}
\\& c\delta^\rho\le\barr_j\le 1.
\label{barrrange}
\end{align}


\medskip
We may assume that
$|S(\bff,\delta^{1+2\varrho})| >  \delta^{c'_0}$
for a small constant $c'_0$, for otherwise
the conclusion asserted by Proposition~\ref{prop:macroscale} holds.
Consequently $|\sS|$ satisfies a lower bound of the same type, with an exponent proportional to $c'_0$.
Choose $(\bt_k,\theta_k,\barr_k)$ satisfying the conclusions
\eqref{alphaandbeta}, \eqref{betalowerbound}, \eqref{barrrange}.
Write $\bar\bt = (\bar\bt_1,\bar\bt_2,\bar\bt_3)$,
$\bar\theta= (\bar\theta_1,\bar\theta_2,\bar\theta_3)$,
and $\barr = (\barr_1,\barr_2,\barr_3)$. Then
\begin{multline} \label{gettingthere1}
\prod_{k=1}^3 \barr_k \beta_{\bar\bt_k}^k(\varphi_k(x))
\sum_{j=1}^3 a_j(x)\,(f_j\circ\varphi_j)(x) 
\\ =
\sum_{j=1}^3 a_j(x)\,
\prod_{k\ne j} \barr_k \beta_{\bar\bt_k}^k(\varphi_k(x))
\,\alpha_{\bt_j,\bar\theta_j}^j(\varphi_j(x)) 
\ + \ O(\delta^\varrho)
\end{multline}
for all $x\in \sS$.

Introduce the function 
\begin{equation} \label{Fintroduced}
F(x,\bt,\theta,r) =  F_{\bt,\theta,r}(x))  
= \sum_{j=1}^3 a_j(x)
\prod_{k\ne j} r_k \beta_{\bt_k}^k(\varphi_k(x))
\,\alpha_{\bt_j,\theta_j}^j(\varphi_j(x)), 
\end{equation}
where $\bt = (\bt_1,\bt_2,\bt_3)$, $r = (r_1,r_2,r_3)$,
and $\theta= (\theta_1,\theta_2,\theta_3)$.
The parameter $(x,\bt,\theta,r)$ varies freely over a certain compact domain,
in a neighborhood of which $F$ is an analytic function of $(x,\bt,\theta,r)$.
For the particular parameter $(\bar\bt,\bar\theta,\barr)$
that appears in \eqref{gettingthere1},
\begin{equation} |F(x,\bar\bt,\bar\theta,\barr)| = O(\delta^\varrho)
\ \forall\,x\in\sS. \end{equation}

We claim that $F$ does not vanish identically on any nonempty open set in the
space of parameters $(x,\bt,\theta,r)$.
It is through this claim that the main hypothesis of Theorem~\ref{thm:main} first comes into play.
If $F\equiv 0$ on an open set $U$ of the parameter space, 
then since none of the factors $\beta_{\bt_k}^k$ vanish identically,
$ \sum_{j=1}^3 a_j(x) \,\alpha_{\bt_j,\theta_j}^j(\varphi_j(x)) \equiv 0$ 
would vanish identically in $U$.
By the main hypothesis, $\alpha_{\bt_j,\theta_j}^j\circ\varphi_j$ would vanish identically in
a certain nonempty open subset of $B$,
for each $j\in\{1,2,3\}$, for all $\big((\bt_k,\theta_k): k\in \{1,2,3\}\big)$.

This is a contradiction.
Indeed, \eqref{amatrixtheta} expresses
$\alpha_{\bt_2,\theta_2}^2(y)$ as the second component of the vector
$B(\bt_2,y)^{-1} A_*(\bt_2,\theta_2,y)$, where $\theta_2 = (\theta_{2,1},\dots,\theta_{2,4})\in\reals^4$.
As $\theta_2$ varies over $\reals^4$,
the vector $A_*(\bt_2,\theta_2,y)$ varies over all of $\reals^2$
for generic $(\bt_2,\theta_2)$ since $a_2(t,y)$ and $W_3\varphi_2(t,y)$ vanish nowhere.
Moreover, for generic $(\bt_2,y)$ 
$B(\bt_2,y)$ is invertible 
and hence does not map $\reals^2$ into the subspace of $\reals^2$
defined by the vanishing of the second component. 
Thus it is not the case that 
$\alpha_{\bt_2,\theta_2}^2\circ \varphi_2$ vanishes identically in a certain nonempty open set
for all $\big((\bt_k,\theta_k): k\in \{1,2,3\}\big)$.

\medskip
Choose and fix $x_0\in B$.
Define
\begin{equation}
\scriptg(\bt,\theta,r) =  \sum_{|\alpha|\le N} |\partial^\alpha_x F(x_0,\bt,\theta,r)|^2,
\end{equation}
with $N$ chosen so that there exists $C$ such that
\begin{equation} \label{Fandscriptgcomparable}
C^{-1} \sum_{0\le |\alpha|\le N} |\partial^\alpha_x F(x,\bt,\theta,r)|^2
\le \scriptg(\bt,\theta,r) 
\le C \sum_{0\le |\alpha|\le N} |\partial^\alpha_x F(x,\bt,\theta,r)|^2
\end{equation}
uniformly for all $(x,\bt,\theta,r)$. 
Lemma~\ref{lemma:steinstreetish} guarantees that such an $N$ exists.


Let 
\begin{equation}
\Sigma = \{(\bt,\theta,r): \scriptg(\bt,\theta,r)=0\}.
\end{equation}
In this definition, $r$ is allowed to vary freely over $[0,1]^3$,
while $\bt,\theta$ vary over closed balls in Euclidean spaces. 

$\Sigma$ is a $C^\omega$ variety of positive codimension.
By \L{}ojasiewicz's theorem,
there exist $c,\kappa>0$ such that
\begin{equation} \label{eq:invokedLoj}
\scriptg(\bt,\theta,r) \ge c\distance((\bt,\theta,r),\Sigma)^\kappa
\ \ \forall\,(\bt,\theta,r).
\end{equation}

\begin{lemma} \label{lemma:invokeLoj}
If $0<\rho\le \varrho/4$ then
there exists a constant $c>0$ such that
for all data satisfying the hypotheses of Proposition~\ref{prop:macroscale},
the parameter $(\bar\bt,\bar\theta,\barr)$ that appears in \eqref{gettingthere1} satisfies
\begin{equation}
	\distance((\bar\bt,\bar\theta,\barr),\Sigma)\ge c\delta^{2\rho}.
\end{equation}
\end{lemma}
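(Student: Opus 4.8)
The plan is to prove the lemma by contradiction, exploiting the standing lower bound $|f_3|\ge 1$ on $\varphi_3(B)$ to force $(\bar\bt,\bar\theta,\barr)$ to stay at a definite distance from $\Sigma$. Since Proposition~\ref{prop:macroscale} is trivial when $|S(\bff,\delta^{1+2\varrho})|\le\delta^{c'_0}$, I may assume $|S(\bff,\delta^{1+2\varrho})|>\delta^{c'_0}$, so that the set $\sS$ produced by the three applications of Lemma~\ref{lemma:meromorphicity} has $|\sS|\gtrsim|S(\bff,\delta^{1+2\varrho})|^C>0$; in particular $\varphi_3(\sS)\ne\emptyset$. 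The constant $c$ in the conclusion will be a small constant depending only on $(\ba,\bPhi)$ and $\varrho$, fixed in the course of the argument.

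First I would record that $\alpha^3_{\bar\bt_3,\bar\theta_3}$ is bounded below on $\varphi_3(\sS)$. For the chosen parameters, the $j=3$ instance of \eqref{alphaandbeta} reads $\alpha^3_{\bar\bt_3,\bar\theta_3}(y)=\barr_3\,\beta^3_{\bar\bt_3}(y)\,f_3(y)+O(\delta^\varrho)$ for $y\in\varphi_3(\sS)$. Combining $\barr_3\gtrsim\delta^\rho$ from \eqref{barrrange}, $|\beta^3_{\bar\bt_3}(y)|\ge\delta^\rho$ from \eqref{betalowerbound}, and $|f_3(y)|\ge 1$, the leading term has modulus $\gtrsim\delta^{2\rho}$; and since $\rho\le\varrho/4$ gives $\delta^\varrho\le\delta^{4\rho}=\delta^{2\rho}\cdot\delta^{2\rho}$, the $O(\delta^\varrho)$ error is negligible against it for $\delta$ small, so $|\alpha^3_{\bar\bt_3,\bar\theta_3}(y)|\gtrsim\delta^{2\rho}$ on $\varphi_3(\sS)$. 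The functions $(y,\bt,\theta)\mapsto\alpha^k_{\bt,\theta}(y)$ and $(y,\bt)\mapsto\beta^k_{\bt}(y)$ of Lemma~\ref{lemma:meromorphicity} are $C^\omega$, hence Lipschitz in their parameters on the relevant compact sets, with constant depending only on $(\ba,\bPhi)$. Choosing $c$ small enough relative to that constant, every $(\bt^*,\theta^*,r^*)$ within $c\delta^{2\rho}$ of $(\bar\bt,\bar\theta,\barr)$ still satisfies $|\alpha^3_{\bt_3^*,\theta_3^*}(y)|\gtrsim\delta^{2\rho}$ for $y\in\varphi_3(\sS)$; and because $2\rho>\rho$ makes a $c\delta^{2\rho}$-perturbation small relative to the $\delta^\rho$-scale of $\barr_k$ and of \eqref{betalowerbound}, also $r_k^*\gtrsim\delta^\rho>0$ and $|\beta^k_{\bt_k^*}(y)|\ge\tfrac12\delta^\rho>0$ for $y\in\varphi_k(\sS)$, for each $k\in\{1,2,3\}$.

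Now suppose $\dist((\bar\bt,\bar\theta,\barr),\Sigma)<c\delta^{2\rho}$, and choose $(\bt^*,\theta^*,r^*)\in\Sigma$ realizing it. Then $\scriptg(\bt^*,\theta^*,r^*)=0$, so by \eqref{Fandscriptgcomparable} the function $x\mapsto F(x,\bt^*,\theta^*,r^*)$ vanishes identically. Each $\beta^k_{\bt_k^*}$, being real analytic on the interval $\varphi_k(B)$ and, by the previous paragraph, not identically zero there, has only finitely many zeros in $\varphi_k(B)$; hence $\bigcup_{k=1}^3\varphi_k^{-1}\{\beta^k_{\bt_k^*}=0\}$ is a finite union of level curves in $B$ with empty interior, and I fix a nonempty connected open $U'\subset B$ disjoint from it. Dividing the identity $F(\cdot,\bt^*,\theta^*,r^*)\equiv 0$ — written out via \eqref{Fintroduced} — by the factor $r_1^*r_2^*r_3^*\prod_{k=1}^3\beta^k_{\bt_k^*}(\varphi_k(\cdot))$, which vanishes nowhere on $U'$, yields
\[ \sum_{j=1}^3 a_j(x)\,(\gamma_j\circ\varphi_j)(x)=0\quad\forall\,x\in U',\qquad \gamma_j:=\frac{\alpha^j_{\bt_j^*,\theta_j^*}}{\,r_j^*\,\beta^j_{\bt_j^*}\,}\in C^\omega(\varphi_j(U')). \]
By the main hypothesis, $\gamma_j\equiv 0$ on $\varphi_j(U')$ for each $j$, hence $\alpha^j_{\bt_j^*,\theta_j^*}\equiv 0$ on the nonempty open subinterval $\varphi_j(U')$ of $\varphi_j(B)$, and therefore, by real analyticity of $\alpha^j_{\bt_j^*,\theta_j^*}$ on the connected set $\varphi_j(B)$, it vanishes on all of $\varphi_j(B)$. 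For $j=3$ this contradicts $|\alpha^3_{\bt_3^*,\theta_3^*}|\gtrsim\delta^{2\rho}$ on $\varphi_3(\sS)\subset\varphi_3(B)$. Hence $\dist((\bar\bt,\bar\theta,\barr),\Sigma)\ge c\delta^{2\rho}$, as claimed.

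The step I expect to be the main obstacle is the passage from $F(\cdot,\bt^*,\theta^*,r^*)\equiv 0$ to $\alpha^3_{\bt_3^*,\theta_3^*}\circ\varphi_3\equiv 0$: the reasoning used earlier to show that $F$ does not vanish identically on any open set of parameter space strips off the factors $\beta^k_{\bt_k}$ by varying the parameters, which is unavailable here since $(\bt^*,\theta^*,r^*)$ is a single frozen point of $\Sigma$, and the substitute is the pointwise division above. That division is legitimate precisely because $\barr_k\gtrsim\delta^\rho$ together with $2\rho>\rho$ keeps each $r_k^*$ bounded away from $0$, and \eqref{betalowerbound} (with the Lipschitz estimate) keeps each $\beta^k_{\bt_k^*}$ from vanishing identically on $\varphi_k(B)$, so that the main hypothesis can be invoked on a small open $U'$ avoiding the zero curves of the $\beta^k$. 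The remaining work is the routine bookkeeping that keeps the $O(\delta^\varrho)$ errors from \eqref{alphaandbeta} subordinate to the scale $\delta^{2\rho}$, which is exactly the role of the hypothesis $\rho\le\varrho/4$.
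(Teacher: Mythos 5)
Your proposal is correct and follows essentially the same route as the paper: both rest on the lower bound $|\alpha^3_{\bar\bt_3,\bar\theta_3}|\gtrsim\delta^\rho\cdot\delta^\rho\cdot 1-O(\delta^\varrho)\gtrsim\delta^{2\rho}$ coming from $|f_3|\ge 1$ together with \eqref{betalowerbound} and \eqref{barrrange}, and on the observation that any point of $\Sigma$ at which no $\beta^k_{\bt_k}\circ\varphi_k$ vanishes identically and no $r_k=0$ must, after dividing $F\equiv 0$ by $\prod_k r_k\beta^k_{\bt_k}\circ\varphi_k$ on a suitable open set and invoking the main hypothesis, have every $\alpha^j_{\bt_j,\theta_j}\equiv 0$. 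Your contradiction-plus-Lipschitz-perturbation packaging is just a reorganization of the paper's direct estimate of the distance to each of the three strata of $\Sigma$.
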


\begin{proof}
We claim that for any $(\bt,\theta,r)\in\Sigma$, 
at least one of the following three vanishing properties holds.
	\begin{enumerate}
	\item $\alpha^j_{\bt_j,\theta_j}\circ\varphi_j$ vanishes identically  on $B$ for every $j\in\{1,2,3\}$.
	\item  There exists $k\in\{1,2,3\}$ such that
	$\beta_{\bt_k}\circ\varphi_k$ vanishes identically on $B$. 
\item  There exists $k\in\{1,2,3\}$ such that $r_k=0$.
	\end{enumerate}
Indeed, for $(\bt,\theta,r)\in\Sigma$ the function 
$\sum_{|\alpha|\le N} |\partial^\alpha_x F(x,\bt,\theta,r)|^2$
vanishes identically for $x\in B$. In particular, $F(x,\bt,\theta,r)=0$ for every $x\in B$.
If none of the three factors $\beta_{\bt_k}^k\circ\varphi_k$
vanish identically on $B$, and if no $r_k$ vanishes, then 
there exists a nonempty open subset $V\subset B$ on which none of these three vanish.
Multiply by $\prod_{k=1}^3 (\beta_{\bt_j}^j\circ\varphi_j)^{-1}$ in \eqref{Fintroduced}
to conclude that 
\begin{equation*}
	\sum_{j=1}^3 a_j(x)\cdot
	[(\beta_{\bt_j}^j)^{-1} \alpha^j_{\bt_j,\theta_j}]\circ\varphi_j(x)
	=0\ \ \forall\,x\in V.
\end{equation*}
The main hypothesis of Theorem~\ref{thm:main} guarantees that for each $j\in\{1,2,3\}$,
$(\beta_{\bt_j}^j)^{-1} \alpha^j_{\bt_j,\theta_j}$ vanishes identically on $\varphi_j(V)$.
Therefore $\alpha^j_{\bt_j,\theta_j}$ vanishes identically in $\varphi_j(V)$,
hence also in $\varphi_j(B)$ by analytic continuation. This establishes the claim.

Now consider the parameter $(\bar\bt,\bar\theta,\barr)$ in \eqref{gettingthere1}.
According to \eqref{betalowerbound},
$|\beta_{\bar\bt_j}^j(\varphi_j(y))|\ge \delta^\rho$ for every $y\in\sS$, for each index $j$.
Therefore 
$(\bar\bt,\bar\theta,\barr)$ lies at distance $\gtrsim\delta^\rho$ from the
subset of $\Sigma$ on which any $\beta_{\bt_j}^j\circ\varphi_j$ vanishes identically.

Likewise, the parameter $\barr$ in \eqref{gettingthere1} belongs to $[c\delta^\rho,1]^3$
and therefore $(\bar\bt,\theta,\barr)$ 
lies at distance at least $c\delta^\rho$ from the subset of $\Sigma$ on which $r=0$.

For any $y\in\varphi_3(\sS)$, the relation
$\barr_3 \beta^3_{\bar\bt_3}(y) f_3(y) = \alpha_{\bar\bt_3,\bar\theta_3}^3(y) + O(\delta^\varrho)$,
together with the hypothesis that $|f_3(y)|\ge 1$ 
	--- used here for the first time ---
and the lower bounds established in the preceding paragraphs, imply that
	\begin{equation}
		|\alpha_{\bar\bt_3,\bar\theta_3}^3(y)|
		\gtrsim \delta^\rho\cdot \delta^\rho\cdot 1 - O(\delta^\varrho).
	\end{equation}
Since $\rho$ was chosen to satisfy $4\rho \le \varrho$,
$|\alpha_{\bar\bt_3,\bar\theta_3}^3(y)|\gtrsim \delta^{2\rho}$
for every $y\in \varphi_3(\sS)$ provided that $\delta = \eps^\gamma$ is sufficiently small;
this holds if $\eps_0$ is chosen to be sufficiently small. 
Therefore the parameter $(\bar\bt,\bar\theta,\barr)$ in \eqref{gettingthere1}
lies at distance $\gtrsim \delta^{2\rho}$ from the subset of $\Sigma$ on which
$\alpha^3_{\bt_3,\theta_3}$ vanishes identically.
\end{proof}

By Lemma~\ref{lemma:invokeLoj} and \eqref{eq:invokedLoj},
$\scriptg(\bart,\bar\theta,\barr)\gtrsim \delta^{2\kappa \rho}$. 
Therefore by \eqref{Fandscriptgcomparable},
\begin{equation}
\sum_{|\alpha|\le N} |\partial^\alpha_x F(x,\bar\bt,\bar\theta,\barr)|^2
\gtrsim\delta^{C\rho} \ \forall\,x\in \sS.
\end{equation}
Moreover, the $C^{N+1}$ norm of $F$, as a function of $x$,
is bounded above by a finite constant, uniformly in $(\bt,\theta,r)$.
By Lemma~\ref{lemma:quantsublevel}, 
the measure of the sublevel set on which $|F| = O(\delta^{\varrho})$
is $O(\delta^{c\varrho-C\rho})$ for certain constants $c,C\in(0,\infty)$.
Choosing $\rho$ to be sufficiently small, as a function of $\varrho$,
ensures that $c\varrho-C\rho>0$,
completing the proof of Proposition~\ref{prop:macroscale}.
\qed

\section{Conclusion of proof for the nondegenerate case}\label{section:conclusion}
We are now in a position to complete the proof of Theorem~\ref{thm:main} in the main case.

\begin{proof}[Proof of Theorem~\ref{thm:main} in the nondegenerate case]
Let $(\ba,\bPhi)$ satisfy the hypotheses of the nondegenerate case of the theorem.
As was explained in the introduction, we may  also
assume that the curvature of the web in $B$ defined by $\Phi$ does not vanish identically.
Choose small positive quantities $\varrho,\rho,\eps_0$,
with $\rho$ depending on the choice of $\varrho$ and $\eps_0$ depending on both.
Let $\eps\in(0,\eps_0]$.
Choose and fix any $\gamma\in(\tfrac12,1)$.

Let $\bff$ be measurable and satisfy $\norm{\bff}_{L^\infty} = O(1)$
and $|f_3(y)|\ge 1$ for every $y\in\varphi_3(B)$.
By Proposition~\ref{prop:mesoscale},
either $|S(\bff,\eps)|\le \eps^c$,
in which case the proof is complete,
or there exist $\sS\subset S(\bff,\eps)$ 
satisfying $|\sS|\gtrsim |S(\bff,\eps)|^C$
and functions $g_j$
that are affine on intervals of length $\eps^\gamma$ 
and satisfy
$|g_j(\varphi_j(x))-f_j(\varphi_j(x))| \le \eps^{1-\rho}$
for every $x\in \sS$ for each index $j$,
and also $|g_3(\varphi_3(x))| \gtrsim 1$ for every $x\in \sS$.
Moreover, according to \eqref{f*primebound}
these functions can be constructed so that their derivatives satisfy
$|g'_j(y)| \le C\eps^{-\rho}$ for every $y$.
The parameters $c,C$ also depend on $\rho$ but not on $\eps,\bff$.
In the case in which the proof is not already complete,
it suffices to show that $|S(\bg,\eps^{1-\rho})| \le C\eps^c$.

Choose $\varrho$ to satisfy $(1+\varrho)\gamma<1$.
The piecewise affine function
$\bg$ satisfies the hypotheses of Proposition~\ref{prop:macroscale}, whose conclusion is that
if $\rho$ is chosen to be sufficiently small as a function of $\varrho$ in the preceding step
then $|S(\bg,(\eps^\gamma)^{1+\varrho})| \le C\eps^{c\gamma}$.
Moreover, if $\rho,\varrho$ are both sufficiently small  then
$(1+\varrho)\gamma<(1-\rho)$.  Since $\eps\le\eps_0$,
$\eps^{(1+\varrho)\gamma}>\eps^{1-\rho}$
and therefore $S(\bg,\eps^{1-\rho}) \subset S(\bg,(\eps^\gamma)^{1+\varrho})$.
Thus $|S(\bg,\eps^{1-\rho})|\le C\eps^c$, and the proof of Theorem~\ref{thm:main} is complete
in the nondegenerate case.
\end{proof}

\section{The degenerate case} \label{section:degenerate}

We next turn to the degenerate case of Theorem~\ref{thm:main},
showing how the degenerate case is a consequence of the nondegenerate case
via a rather general reduction procedure.
Define $\Sigma\subset\tilde B$ to be the degenerate locus.
$\Sigma$ is the union of the real analytic varieties defined by 
(i) the vanishing of some $a_j(x)$,
and (ii) the linear dependence of gradients $\nabla\varphi_i(x),\nabla\varphi_j(x)$
for some pair of indices $i\ne j$.

To avoid steps in which we divided by some $a_k$
or implicitly divided by some component of some gradient $\nabla\varphi_j$,
we modify various definitions in the analysis, as follows. 
Define the vector fields 
\begin{equation}
	W_k = 
	\frac{\partial\varphi_k}{\partial x_2}\cdot\frac{\partial}{\partial x_1}
	- \frac{\partial\varphi_k}{\partial x_1}\cdot\frac{\partial}{\partial x_2}
\end{equation}
so that $W_k(\varphi_k)\equiv 0$, 
$W_k\in C^\omega$,
and $W_k$ does not vanish identically.
The mappings $\psi_j^\eps$ are now defined in terms of this modified $W_3$.
One can likewise construct real analytic vector fields
$V_j^\eps$, for $j=1,2$, that satisfy $V_j(\psi_j^\eps)\equiv 0$
but do not vanish identically.

The definitions of the quantities $b^\eps_{n}$ 
should likewise be modified to eliminate denominators.
For odd $n$, define $b^\eps_n$ 
by changing its definition \eqref{bndefn} to 
\begin{multline}
b_n^\eps(\bt,z) = - 
{a_1(\Theta^\eps_{n,z}(t_1,\dots,t_n))} \cdot {a_2(\Theta_{n-1,z}^\eps (t_1,\dots,t_{n-1}))} 
\\ \cdot 	{a_1(\Theta_{n-2,z}^\eps(t_1,\dots,t_{n-2}))} 
\cdots {a_1(\Theta_{1,z}^\eps(t_1))},
\end{multline}
the product of the numerators only in \eqref{bndefn},
incorporating also the minus sign; and
define $\tilde b^\eps_n$ to be the product of the denominators in \eqref{bndefn}
For even $n$ make the corresponding definitions, 
based on the numerators and denominators in \eqref{bdefinition2}.

The relation \eqref{Thirdly} was 
$(g_j\circ\psi_j^\eps)(\Theta^\eps_{n,z}(\bt)) = g_1(z) b_n^\eps(\bt,z) + O(\eps)$;
with the revised definition of $b_n^\eps$ it becomes 
\begin{equation} \label{Thirdlymodified}
\tilde b_n^\eps(\bt,z)(g_j\circ\psi_j^\eps)(\Theta^\eps_{n,z}(\bt)) = g_1(z) b_n^\eps(\bt,z) + O(\eps),
\end{equation}
with $n=3$ for $j=2$ and $n=4$ for $j=1$.
Likewise, modify the definition \eqref{scriptfdefn} of $\scriptf_z(\bt,s,s')$
by multiplying by a real analytic function of $(z,\bt,s,s')$ that does not vanish 
vanish identically on any open set, and that suffices to cancel all denominators
implicitly present.  With this modification, there exist $C,\tau\in(0,\infty)$ satisfying
\[ \big|\big\{(z,\bt,s,s'): |\scriptf_z(\bt,s,s')|<\delta\big\}\big|
\le C\delta^\tau\ \forall\,\delta>0.\]

Let $c_1,c_2>0$ be small parameters.
The set of all $x\in B$ lying within
distance $\eps^{c_1}$ of the positive codimension real analytic variety $\Sigma$ 
has Lebesgue measure $O(\eps^{c_3})$
for some $c_3=c_3(c_1)>0$.
Thus $B$ can be expressed as the union of a set of measure $O(\eps^{c_3})$
and of $O(\eps^{-2c_2})$ balls $B_\nu$, with each ball having radius $\eps^{c_2}$
and lying at distance $\gtrsim \eps^{c_1}$ from $\Sigma$.
By \L{}ojasiewicz's theorem,
each coefficient $a_j$ satisfies $|a_j|\gtrsim \eps^{c_4}$
at every point in each $B_\nu$, where $c_4 = Cc_1$.
Lower bounds of this type also hold for $|\nabla\varphi_j|$
and $\det(\nabla\varphi_i,\nabla\varphi_j)$ on each $B_\nu$ 
for any pair of distinct indices $i,j$.

Choosing $c_2$ to be $C_5c_1$ for a sufficiently large constant $C_5$
and using these bounds, the proof of Proposition~\ref{prop:microscale}
can be repeated on each ball $B_\nu$
with a loss of a factor that is $O(\eps^{-Cc_1-Cc_2})$ for some constant $C<\infty$.
A typical point is that an upper bound $O(1)$ for the $C^2$ norm of each $\varphi_k$
together with a lower bound $c\eps^{c_5}$ for $\det(\nabla\varphi_i,\nabla\varphi_j )$
for distinct indices $i,j)$
allow the conclusion that the restriction of each mapping $x\mapsto (\varphi(x),\varphi_j(x))$
to $B_\nu$ is a diffeomorphism, for all sufficiently small $\eps>0$, if $c_2 < 2c_5$. 
This legitimizes passing from a lower bound for the Lebesgue measure of a set $S\subset B$
to a lower bound of the form $c|S|^2$ 
for the Lebesgue measure of $\tilde S = \{(x,x')\in S\times S: \varphi_j(x)=\varphi_j(x')\}$,
as was done repeatedly in the analysis of the nondegenerate case.

Summing over all balls $B_\nu$ and taking the exceptional
set of points near $\Sigma$ into account,
the conclusion is that if $c_1,c_2$ are chosen sufficiently small relative
to the exponent $\tau$ in the upper bound for $|\scriptf_z(\bt,s,s')|$, then either
\begin{equation} \label{gettingfussy} |S(\bff,\eps)| \le C\eps^c \eps^{-Cc_1-Cc_2} + C\eps^{c_3}, \end{equation}
or there exist a measurable set $S$ contained in $S(\bff,\eps)\cap B_\nu$
for some index $\nu$ and functions $f_j^*$
that satisfy the conclusions \eqref{microscaleconclusion2A} and \eqref{microscaleconclusion2B}
of Proposition~\ref{prop:microscale}.
Choose $c_1,c_2$ to also be sufficiently small that the net exponent
$c-Cc_1-Cc_2$ in \eqref{gettingfussy} is positive.

The same reasoning applies to the mesoscale and macroscale analyses,
completing the proof of Theorem~\ref{thm:main} in the general case.

\part{Complements and further results} 

\section{Reformulation of the main hypothesis} \label{section:smoothcase}

We establish an equivalent reformulation of the main hypothesis. 
This provides a straightforward extension of that hypothesis to $C^\infty$ data $(\ba,\bPhi)$.
To any $\bff=(f_1,f_2,f_3)$ associate the function
\begin{equation} G_\bff(x) = \sum_{j=1}^3 a_j(x)(f_j\circ\varphi_j)(x).\end{equation}

\begin{proposition} \label{prop:equivalent_hypothesis}
Let $\ba,\bPhi$ be real analytic in a neighborhood $\tilde B$
of a closed ball $B\subset\reals^2$.
Assume that none of the mappings $\varphi_j$ are constant on $B$,
and that there exists no pair of distinct indices $i\ne j\in\{1,2,3\}$ for which
$\nabla\varphi_i$ and $\nabla\varphi_j$ are everywhere linearly dependent.
Then the following are equivalent.
\begin{enumerate}
\item
$(\ba,\bPhi)$ satisfies the main hypothesis.
\item
For each point $\barx\in B$ at which $(\ba,\bPhi)$ is nondegenerate,
there exists  a positive integer $M$ such that for any 
$\bff\in C^M$ defined in a neighborhood of $\Phi(\barx)$, 
if $G_\bff$ vanishes to order $M$ at $\barx$
then every $f_j$ vanishes to order $M$ at $\varphi_j(\barx)$.
\item
There exist $M\in\naturals$, $C<\infty$, and $ \tau>0$
and a real analytic variety $\Sigma\subset \tilde B$
of positive codimension
such that for every $\barx\in B$
and every tuple of functions
$\bff\in C^M$ defined in a neighborhood of $\Phi(\barx)$, 
\begin{equation}
\dist(\barx,\Sigma)^\tau
\sum_{j=1}^3 \sum_{0\le k\le M} \big|(\frac{d^k}{dy^k} f_j)(\varphi_j(\barx))\big|
\le C
\sum_{0\le |\alpha| \le M}
\Big|\frac{\partial^\alpha}{\partial x^\alpha}G_\bff(\barx) \Big|.
\end{equation}
\end{enumerate}
\end{proposition}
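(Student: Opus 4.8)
The plan is to establish $(1)\Leftrightarrow(2)$ directly and then $(2)\Rightarrow(3)\Rightarrow(1)$, which closes the chain. The central object is, for $\barx\in\tilde B$ and $M\in\naturals$, the linear map $\Lambda_{M,\barx}$ sending the $M$--jets of $(f_1,f_2,f_3)$ at $(\varphi_1(\barx),\varphi_2(\barx),\varphi_3(\barx))$ to the $M$--jet at $\barx$ of $G_\bff=\sum_j a_j\,(f_j\circ\varphi_j)$; its matrix entries are polynomials in the Taylor coefficients of $\ba,\bPhi$ at $\barx$, hence real analytic in $\barx$. Statement $(2)$ at $\barx$ is precisely injectivity of $\Lambda_{M,\barx}$ for some $M$, and the domain $3(M+1)$ against the codomain $\binom{M+2}{2}$ makes injectivity plausible once $M$ is large.

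Two elementary observations at a nondegenerate point $\barx$ (all $a_j(\barx)\neq0$, the $\nabla\varphi_j(\barx)$ pairwise independent) drive the argument. The lowest-order part of $G_\bff$ attached to a jet supported in orders $\geq M+1$ is $\sum_j a_j(\barx)\,f_j^{(M+1)}(\varphi_j(\barx))\,(\nabla\varphi_j(\barx)\cdot h)^{M+1}$, and three pairwise non-proportional linear forms in two variables have linearly independent $(M+1)$-st powers whenever $M\geq1$; hence the truncation maps $\ker\Lambda_{M+1,\barx}\to\ker\Lambda_{M,\barx}$ are injective. Consequently $\dim\ker\Lambda_{M,\barx}$ is non-increasing and stabilizes to some $k_\infty(\barx)$: if $k_\infty(\barx)=0$ then $\Lambda_{M,\barx}$ is injective for all large $M$, while if $k_\infty(\barx)\geq1$ the eventually bijective truncation maps assemble a nonzero $v\in\ker\Lambda_{M,\barx}$ into a nonzero \emph{formal} power-series solution of $\sum_j a_j\,(f_j\circ\varphi_j)=0$ at $\barx$. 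The decisive lemma $(\star)$ is that at a nondegenerate point such a formal solution is automatically convergent, i.e.\ is the Taylor expansion of a genuine real-analytic solution near $\barx$. Granting $(\star)$: if $(2)$ fails at a nondegenerate $\barx$ then $k_\infty(\barx)\geq1$, so $(\star)$ yields a nonzero $C^\omega$ solution near $\barx$, contradicting the main hypothesis; conversely, if $(2)$ holds and $\bff\in C^\omega(\bPhi(U))$ has $G_\bff\equiv0$ on $U$, then at any nondegenerate $\barx\in U$ the jet of $\bff$ lies in $\ker\Lambda_{M,\barx}$ for every large $M$, so each $f_j$ vanishes to infinite order at $\varphi_j(\barx)$, and density of nondegenerate points together with real analyticity forces $f_j\equiv0$ on $\varphi_j(U)$. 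This proves $(1)\Leftrightarrow(2)$.

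For $(2)\Rightarrow(3)$, fix any nondegenerate $\barx_0$ and an exponent $M_0$ supplied by $(2)$; then the real-analytic function $p(\barx)=\det\!\big(\Lambda_{M_0,\barx}^{\,\top}\Lambda_{M_0,\barx}\big)$ has $p(\barx_0)\neq0$ and so does not vanish identically on the connected set $\tilde B$. Take $\Sigma=\{p=0\}$, a proper real-analytic subvariety. \L{}ojasiewicz's inequality gives $|p(\barx)|\geq c\,\dist(\barx,\Sigma)^{\kappa}$ on $B$, and since the entries of $\Lambda_{M_0,\barx}$ are bounded on $B$ this upgrades to $\sigma_{\min}(\Lambda_{M_0,\barx})\geq c'\dist(\barx,\Sigma)^{\kappa'}$ for the least singular value. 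As $\Lambda_{M_0,\barx}$ maps the $M_0$--jet of $\bff$ to the vector $\big(\partial^\alpha G_\bff(\barx)\big)_{|\alpha|\leq M_0}$ up to fixed factorials, and the norm of that jet dominates $\sum_j\sum_{k\leq M_0}|f_j^{(k)}(\varphi_j(\barx))|$, inverting on the image yields the inequality of $(3)$ with $M=M_0$, $\tau=\kappa'$ for $\barx\notin\Sigma$; for $\barx\in\Sigma$ that inequality is vacuous. Finally $(3)\Rightarrow(1)$ is routine: if $G_\bff\equiv0$ on $U$ the right side of the inequality in $(3)$ vanishes, so $f_j$ vanishes to order $M$ at $\varphi_j(\barx)$ for each $\barx\in(B\cap U)\setminus\Sigma$, and since $\varphi_j$ is a submersion and $\Sigma$ a proper subvariety these values $\varphi_j(\barx)$ fill a nonempty open subset of $\varphi_j(U)$, whence $f_j\equiv0$ by real analyticity (one reduces first to $U\subset\operatorname{int}B$, which is harmless).

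The main obstacle is $(\star)$. I would work in local analytic coordinates with $\varphi_1(x)=x_1$, $\varphi_2(x)=x_2$, $\varphi_3=\kappa$, normalized so that $\barx=0$ and $\kappa(0)=0$, with $\partial_1\kappa(0),\partial_2\kappa(0)\neq0$. Restricting the equation to the leaf $\{x_2=0\}$ and dividing by the unit $a_1(x_1,0)$ expresses $\widehat f_1(x_1)$ as a real-analytic combination of the constant $\widehat f_2(0)$ and of $\widehat f_3\circ\kappa(\cdot,0)$; the leaf $\{x_1=0\}$ does the same for $\widehat f_2$. Substituting both into $\widehat f_3\circ\kappa=-a_3^{-1}\!\big(a_1\,\widehat f_1\circ\varphi_1+a_2\,\widehat f_2\circ\varphi_2\big)$ produces a single functional equation for $\widehat f_3$, and equating the coefficient of $x_1^ax_2^b$ with $a,b\geq1$ isolates $f_3^{(a+b)}(0)$ with the nonzero multiplier $\binom{a+b}{a}(\partial_1\kappa(0))^a(\partial_2\kappa(0))^b$ against a linear combination of the lower coefficients $f_3^{(n)}(0)$, $n<a+b$, and of $\widehat f_1(0),\widehat f_2(0)$, with coefficients built from finitely many Taylor coefficients of $\ba,\bPhi$. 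This is a Cauchy--Kovalevskaya-type recursion; a majorant estimate --- taking $a\approx b$ so that the multiplier is as large as $2^{a+b}$ times a geometric factor, and majorizing $\ba,\bPhi,\kappa$ by geometric series --- should force $|f_3^{(n)}(0)/n!|\leq C\rho^n$, so that $\widehat f_3$, and through the leaf formulas $\widehat f_1,\widehat f_2$, converge; the resulting analytic triple solves the equation near $0$ (it does so formally and the left side is analytic) and is nonzero since the leaf formulas recover $\widehat\bff$ from it. This is a concrete instance of the classical principle that formal abelian relations of an analytic web converge. The real work is the majorant bookkeeping --- balancing the combinatorial growth from composing power series against the gain from the multiplier.
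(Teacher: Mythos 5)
Your overall architecture — jet maps $\Lambda_{M,\barx}$, injectivity of the truncation maps between kernels, stabilization of kernel dimensions, lifting to a formal power series solution, and then a convergence claim plus \L{}ojasiewicz — is essentially the paper's, and several of your steps are sound and even cleaner than the original: in particular, your proof that $\ker\Lambda_{M+1,\barx}\to\ker\Lambda_{M,\barx}$ is injective via the linear independence of $(M+1)$-st powers of three pairwise non-proportional linear forms is a neat replacement for the paper's explicit reduction (apply $W_3$ to $a_3^{-1}G_\bff$, divide, apply $W_2$, solve for $f_1''\circ\varphi_1$). The passage $(2)\Rightarrow(3)$ via the Gram determinant of $\Lambda_{M_0,\barx}$ and \L{}ojasiewicz, and $(3)\Rightarrow(1)$, are fine.

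The genuine gap is your lemma $(\star)$, the convergence of formal power series solutions, which is the decisive ingredient: without it, a stabilized kernel of positive dimension produces only a formal obstruction, not a violation of the main hypothesis, and the equivalence $(1)\Leftrightarrow(2)$ collapses. You reduce $(\star)$ to a coefficient recursion for $\widehat f_3$ with multiplier $\binom{a+b}{a}(\partial_1\kappa(0))^a(\partial_2\kappa(0))^b$ and then write that a majorant estimate ``should force'' geometric decay, conceding that ``the real work is the majorant bookkeeping.'' That bookkeeping is not routine: for each order $n=a+b$ you have $n-1$ overdetermined equations, the lower-order coefficients enter through compositions $\widehat f_3\circ\kappa(\cdot,0)$ and $\widehat f_3\circ\kappa(0,\cdot)$ multiplied by analytic prefactors whose Taylor coefficients you must also control, and the favorable factor $\binom{a+b}{a}$ is only large for $a\approx b$ while the equations you must satisfy include the unbalanced ones. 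The paper avoids all of this: having expressed each $f_j''\circ\varphi_j$ as a $C^\omega$ linear combination of the $(f_k\circ\varphi_k,f_k'\circ\varphi_k)$, it restricts to a real analytic curve $\gamma$ transverse to all three foliations, so that $u(t)=\big((f_k\circ\varphi_k\circ\gamma)(t),(f_k'\circ\varphi_k\circ\gamma)(t)\big)_k$ satisfies a first-order linear ODE system with analytic coefficients and nowhere vanishing leading coefficient; a formal power series solution of such a system coincides with the unique genuine analytic solution with the same initial data, so convergence is immediate, and analyticity of $f_k$ follows since $\varphi_k\circ\gamma$ has an analytic inverse. If you want to keep your route, you should either carry out the majorant estimate in full or replace it by this ODE reduction, which your second-derivative relations (or the paper's) make available at no extra cost.
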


The proof developed below has subsequently been applied in Proposition~7.1 of
\cite{christzhou} to prove a corresponding result for inequalities involving two indices $j$,
rather than three. 

Condition (3) of the Proposition 
directly implies (2), which directly implies (1).
Conversely, (1) implies (3)
by the next lemma and \L{}ojasiewicz's theorem.
We say that a function vanishes to order $N$ at a point
if the function, and all of its partial derivatives of orders
less than or equal to $N$ vanish at that point.

\begin{lemma} \label{lemma:finite_type}
Let $(\ba,\bPhi)$ satisfy the main hypothesis and be nondegenerate.
There exists  
a positive integer $M$ such that for any $\barx\in B$ and any 
$\bff\in C^M$ in a neighborhood of $\Phi(\barx)$, 
if $G_\bff$ vanishes to order $M$ at $\barx$
then every $f_j$ vanishes to order $M$ at $\varphi_j(\barx)$.
\end{lemma}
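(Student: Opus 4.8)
The plan is to reduce the assertion to a statement about finite Taylor jets, establish it pointwise by combining a leading‑term computation with the main hypothesis, and then extract a single uniform $M$ by an elementary openness argument. For the reduction: whether $G_\bff$ vanishes to order $M$ at $\barx$ depends only on the Taylor jets of the $f_j$ at the points $\varphi_j(\barx)$ up to order $M$, and ``$f_j$ vanishes to order $M$ at $\varphi_j(\barx)$'' is likewise a condition on its $M$‑jet; so I would replace each $f_j$ by its degree‑$M$ Taylor polynomial there. Fixing $\barx\in B$ and coordinates with $\barx=0$ and $\varphi_j(0)=0$, the pointwise claim becomes: for $M$ sufficiently large, the linear map $\Phi_M$ carrying a triple $(P_1,P_2,P_3)$ of one‑variable polynomials of degree $\le M$ to the Taylor jet of order $M$ at $0$ of $\sum_j a_j\,(P_j\circ\varphi_j)$ is injective.

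The basic computation is that if a formal power series $g$ in one variable has lowest‑degree term $c\,t^{n}$ with $c\ne 0$, then $a_j\cdot(g\circ\varphi_j)=c\,a_j(0)\,\ell_j^{\,n}+(\text{terms of degree}>n)$, where $\ell_j\ne 0$ is the linear part of $\varphi_j$ and $a_j(0)\ne 0$ by nondegeneracy. Hence for any triple $(g_j)$, not all zero, if $n$ is the least degree occurring among the lowest‑degree terms of the $g_j$, the degree‑$n$ part of $\sum_j a_j(g_j\circ\varphi_j)$ is a combination with nonzero coefficients of those $\ell_j^{\,n}$ for which $g_j$ attains this minimal degree; since the $\ell_j$ are pairwise non‑proportional linear forms on $\reals^2$, the powers $\ell_1^{\,n},\ell_2^{\,n},\ell_3^{\,n}$ are linearly independent as soon as $n\ge 2$ (a Vandermonde determinant in the slopes), so this degree‑$n$ part is nonzero. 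Therefore, if $\sum_j a_j(g_j\circ\varphi_j)$ has no nonzero term of degree $\le M$ and no $g_j$ has a nonzero term of degree $0$ or $1$, then in fact no $g_j$ has a nonzero term of degree $\le M$. Thus $\ker\Phi_M=0$ unless there is a triple $(g_j)$, with nonzero combined degree‑$\le 1$ part, for which $\sum_j a_j(g_j\circ\varphi_j)$ has no term of degree $\le M$. To exclude this, let $W_M\subseteq\reals^6$ be the span of the degree‑$\le 1$ parts $\big((g_j(0),g_j'(0)):j\big)$ of all such triples; the $W_M$ are nested, hence stabilize to some $W_\infty$. If $W_\infty\ne 0$, fix $0\ne v\in W_\infty$ and, for each $n$, let $U_n(v)$ be the nonempty affine set of degree‑$\le n$ truncations of formal triples having combined degree‑$\le 1$ part $v$ and with $\sum_j a_j(g_j\circ\varphi_j)$ having no term of degree $\le n$; truncation gives affine‑linear maps $U_n(v)\to U_{n-1}(v)$ whose images form a stabilizing chain, so the inverse limit over $n$ of the $U_n(v)$ is nonempty and yields formal power series $(g_1,g_2,g_3)$, with nonzero degree‑$\le 1$ part, solving $\sum_j a_j(g_j\circ\varphi_j)=0$ identically. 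Applying Artin's approximation theorem to this system, which is linear in the unknowns, produces a convergent, not identically zero solution, contradicting the main hypothesis. Hence $W_M=0$, so $\ker\Phi_M=0$, for all $M\ge M_0(\barx)$.

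For uniformity: with $M$ fixed, the matrix of $\Phi_M(\barx)$ has entries that are polynomial in finitely many derivatives of $\ba,\bPhi$ at $\barx$, hence real analytic in $\barx$, so $O_M=\{\barx\in B:\Phi_M(\barx)\text{ injective}\}$ is open. One checks $\ker\Phi_M(\barx)=0\Leftrightarrow W_M(\barx)=0$ (the implication ``$\Leftarrow$'' uses the degree computation of the previous paragraph), whence the $O_M$ are increasing in $M$; and $\barx\in O_M$ whenever $M\ge M_0(\barx)$. An increasing open cover of the compact set $B$ contains a single member equal to $B$, so $\Phi_M(\barx)$ is injective for \emph{all} $\barx\in B$ for one fixed $M$. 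For such $M$: given $\bff\in C^M$ with $G_\bff$ vanishing to order $M$ at some $\barx\in B$, the degree‑$M$ Taylor polynomials $P_j$ of $f_j$ at $\varphi_j(\barx)$ satisfy $\sum_j a_j(P_j\circ\varphi_j)\equiv G_\bff$ modulo terms vanishing to order $M$ at $\barx$, so $(P_j)\in\ker\Phi_M(\barx)=0$; hence each $f_j$ vanishes to order $M$ at $\varphi_j(\barx)$, as required.

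I expect the decisive step to be the manufacture of a genuine real analytic solution of the exact equation out of the failure of any finite bound: the obvious attempt to normalize the approximate polynomial solutions and extract a convergent limit is blocked because their degrees grow without bound, and the substitute — passing to formal power series by the inverse‑limit construction and then restoring convergence through Artin approximation in this nested, linear setting — is where the argument must be handled with care (and where one might instead invoke an independently established equivalence of the main hypothesis with its formal analogue). The jet reduction, the Vandermonde independence of the $\ell_j^{\,n}$, and the concluding compactness argument are routine.
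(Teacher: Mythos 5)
Your reduction to jets, the Vandermonde computation showing that $\ell_1^n,\ell_2^n,\ell_3^n$ are independent for $n\ge 2$ (so that kernel elements of $\Phi_M$ are controlled by their $1$-jets), the stabilization of the nested spaces $W_M$, the Mittag--Leffler inverse-limit construction of a formal solution, and the concluding openness/compactness argument are all sound, and they run parallel to the paper's argument: the paper instead applies $W_3$ and then $W_2$ to the equation to express $f_j''\circ\varphi_j$ as an analytic linear combination of the $0$- and $1$-jets, and works with the nested jet spaces $V_N$ and injective truncation maps, but the finite-dimensional linear algebra is essentially the same.

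The genuine gap is the step you yourself flag as decisive: passing from a nonzero \emph{formal} power series solution of $\sum_j a_j\,(g_j\circ\varphi_j)=0$ to a \emph{convergent} one. Artin's approximation theorem does not apply here. The unknowns $g_j$ are functions of one variable precomposed with the distinct submersions $\varphi_j$, so this is a \emph{nested} (constrained) approximation problem, not a system $F(x,y(x))=0$ with unrestricted unknowns of $x$; nested Artin approximation is known to fail in the analytic category (Gabrielov), and linearity of the equation does not by itself rescue the citation. Your fallback --- invoking ``an independently established equivalence of the main hypothesis with its formal analogue'' --- is circular, since that equivalence is exactly what Lemma~\ref{lemma:finite_type} (as a component of Proposition~\ref{prop:equivalent_hypothesis}) is establishing. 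The paper closes this gap with a concrete device: having expressed $f_j''\circ\varphi_j$ in terms of lower jets, it restricts the tuple $u=\big((f_k\circ\varphi_k\circ\gamma,\,f_k'\circ\varphi_k\circ\gamma):k\big)$ to a real analytic curve $\gamma$ transverse to all three foliations, observes that $u$ satisfies a linear ODE $u'=H(t)u$ with analytic coefficients, and uses uniqueness of the analytic solution with prescribed initial data to force the formal series $f_k\circ\varphi_k\circ\gamma$ --- and hence $f_k$ itself, after composing with the analytic inverse of $\varphi_k\circ\gamma$ --- to converge. Some argument of this kind (or another proof that formal solutions of this specific composite equation converge) is required to complete your proposal.
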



Conversely, if $(\ba,\bPhi)$ satisfies the conclusion of Lemma~\ref{lemma:finite_type}, 
then the main hypothesis of Theorem~\ref{thm:main} is satisfied.

\begin{proof}[Proof of Lemma~\ref{lemma:finite_type}]
	Let $U\subset B$ be open and nonempty.
Let $\bff = (f_1,f_2,f_3)$, with each $f_j\in C^2(\varphi_j(U))$. 
Suppose that $G_\bff$ vanishes identically on some open set. 
With $b_j = a_j/a_3$,
\[ 0 =  W_3(a_3^{-1}G_\bff)
= \sum_{j=1}^2 \big[W_3(b_j)(f_j\circ\varphi_j) + b_j W_3(\varphi_j)(f'_j\circ\varphi_j)
\big]. \]
By dividing by $b_2 W_3(\varphi_2)$ and then applying the vector field $W_2$,
we obtain a linear relation, with $C^\omega$ coefficients,
between $f_j\circ\varphi_j$ and $f'_j\circ\varphi_j$
for $j=1,2$ and $f''_1\circ\varphi_1$.
The coefficient of $f''_1\circ\varphi_1$ vanishes nowhere, since $W_2(\varphi_1)$ vanishes nowhere.
Thus we may express $f''_1\circ\varphi_1$
as a $C^\omega$--linear combination of those $f_j\circ\varphi_j$ and $f'_j\circ\varphi_j$
with $j\in\{1,2\}$.

Since the roles of the indices can be freely permuted,
there is an expression of this type for $(f''_j\circ\varphi_j: j\in\{1,2,3\})$
as a $C^\omega$ linear combination
of $((f_j\circ\varphi_j,f'_j\circ\varphi_j): j\in\{1,2,3\})$.
If each $f_j\in C^\infty$, this relation can be differentiated arbitrarily many times.
We conclude that if $G_\bff$ vanishes identically on some open set $U$, 
then for each $x\in U$ and for every $n\ge 2$, the $n$-th derivative 
$f_j^{(n)}(\varphi_j(x))$ is determined 
as a linear combination of $(f_k(\varphi_k(x)),f'_k(\varphi_k(x))): k\in\{1,2,3\})$. 
The coefficients in these linear combinations are real analytic functions of $x$.

Likewise, if $G_\bff$ vanishes to order $N$ at some point $x$,
and if each $f_j\in C^N$, then the $N$-jet of each $f_j$ at $\varphi_j(x)$
is uniquely determined as a linear function of the $1$-jets of 
the three functions $f_k$ at $\varphi_k(x)$.
If $\bff$ is a formal power series at $\Phi(x)$,
by which we mean that each $f_j$ is a formal power series at $\varphi_j(x-\barx)$,
then as a formal power series in $x-\barx$ , $G_\bff$ vanishes identically
if and only if the full formal power series of each $f_j$ at $\varphi_j(\barx)$
equals a certain linear function of the $1$-jets of the three functions $f_k$ at 
$\varphi_k(\barx)$.

Composition, multiplication, and addition of formal power series are well-defined.
We claim that if $\bff$ is a formal power series at $\Phi(\barx)$, with the property that  $G_\bff=0$ as a formal
power series in $x-\barx$, then $\bff$ has positive radius of convergence. 
That is, any formal power series solution of $G_\bff=0$ is real analytic.
This claim will be shown below, and assumed here.

Consider any $\barx\in B$. Define $V_N$ to be the vector space of all $N$-jets
$\bff$ at $\Phi(\barx)$ such that the associated $N$-jet $G$ vanishes.
These are nested in the natural sense. That is, 
the image of $V_N$ under the natural map from $N$-jets to $N-1$-jets
is contained in $V_{N-1}$.
This map is injective for every $N\ge 2$, since as we have shown,
partial derivatives of order $N$ are determined as linear functions of
lower-order partial derivatives.
Thus $\dim(V_N) \le \dim(V_{N-1})$ for every $N\ge 2$. 
Therefore either there exists $M$ such that $V_M = \{0\}$,
or there exist $M$  and $D$ such that $V_N$ has dimension $D$ for every $N\ge M$.

Let $\barx\in B$, and $N\ge 2$. 
If $V_N(\barx)=\{0\}$ then there exists a neighborhood $V$ of $\barx$ such that
$V_N(x)=\{0\}$ for every $x\in V$.
Indeed, $V_k(x)=\{0\}$ if and only if a certain linear mapping is injective,
and injectivity is stable under small perturbations.


On the other hand, if $\dim(V_N(\barx))=\dim(V_M(\barx))>0$ for every $N\ge M$
then for each $N\ge M$, every $N$-jet $\bg$ satisfying $G_\bg(\barx)=0$
in the sense of $N$-jets, extends in a unique way to an $N+1$--jet with the 
corresponding property. Beginning with a nonzero element of $V_M(\barx)$
and iterating this process, produces a nonzero formal power series solution
of $G_{\bg^*} = 0$.
By the claim announced above, the formal power series $\bg^*$ defines a $C^\omega$ solution $\bff$
of $G_\bff\equiv 0$ in some neighborhood of $\barx$. Since the $M$-jet of $\bff$ at $\barx$ is nonzero,
$\bff$ does not vanish identically. Therefore $\bff$ violates the main hypothesis
of Theorem~\ref{thm:main}.
Thus only the case in which $V_N(\barx)=\{0\}$ for all sufficiently large $N$, can arise. 

We have seen that triviality of $V_M(x)$ is an open condition in $x$.
Therefore by compactness, there exists a single $M$ satisfying this conclusion for every point
$x$ in the closed ball $B$.
\end{proof}

It remains to prove the claim. Choose any real analytic curve $(-1,1)\owns t\mapsto \gamma(t)\in B$
with $\gamma(0)=\barx$ and $(\varphi_j\circ\gamma)'(t)\ne 0$ for every $(j,t)$. 
Consider any function
\[ u(t) = ((g_k\circ\varphi_k\circ\gamma(t),g'_k\circ\varphi_k\circ\gamma(t)): k\in\{1,2,3\})\]
where each $g_k$ is an unknown function defined in a neighborhood of $\varphi_k(\barx)$.
By the above discussion, 
if $\bff$ is smooth and $G_\bff\equiv 0$ in a neighborhood of $\barx$
then the $\reals^2$--valued function $u$ defined by 
\[ u(t) = ((f_k\circ\varphi_k\circ\gamma(t),f'_k\circ\varphi_k\circ\gamma(t)): k\in\{1,2,3\})\]
satisfies a certain linear ordinary differential equation
\begin{equation} \label{uODE} \frac{d}{dt}u(t) = H(u(t))\end{equation}
in a neighborhood of $t=0$.
The coefficients of this equation are real analytic functions of $t$,
and are determined by $(\ba,\bPhi)$ and the choice of $\gamma$.


If $\bff$ is a formal power series at $\Phi(\barx)$, and if
$G_\bff=0$ in the sense of formal power series, then
the formal power series $v$ defined by
\[ v(t) = ((f_k\circ\varphi_k\circ\gamma,f'_k\circ\varphi_k\circ\gamma): k\in\{1,2,3\})\]
is a solution of the same differential equation \eqref{uODE}, in the sense of formal power series.

Since \eqref{uODE} is a linear ordinary differential equation with analytic coefficients 
and nonvanishing leading-order coefficient,
there exists a real analytic solution $u$ of \eqref{uODE} in a neighborhood of $t=0$,
with the initial condition $u(0) = 
((f_k\circ\varphi_k\circ\gamma(0),f'_k\circ\varphi_k\circ\gamma(0)): k\in\{1,2,3\})$.
This solution is unique. Its Taylor series at $t=0$ must coincide with
the formal power series $v$. 
Therefore the formal power series $t\mapsto f_k\circ\varphi_k\circ\gamma$
is convergent in a neighborhood of $t=0$, for each $k\in\{1,2,3\}$.
Since $\varphi_k\circ \gamma$ is a locally invertible mapping from $\reals^1$ to $\reals^1$ 
with real analytic inverse, the formal power series $f_k$ are likewise convergent. \qed

\medskip
We next formulate a variant, which bears the same relationship to a hypothesis used in \cite{triosc}
as does Lemma~\ref{lemma:finite_type} to the main hypothesis of Theorem~\ref{thm:main}.
In Lemma~\ref{lemma:finite_type2}, the notation is
$ G_\bff(x) = \sum_{j=1}^3 (f_j\circ\varphi_j)(x)$;
the coefficients $a_j$ are identically equal to $1$.

\begin{lemma} \label{lemma:finite_type2}
Let $\varphi_j$ be real analytic mappings from $\tilde B$ to  $\reals^1$.
Suppose that for any $i\ne j$, $\nabla\varphi_i$ and $\nabla\varphi_j$
are linearly independent at each point of $B$.
Assume that for any connected open set $U\subset\tilde B$
and any $\bff\in C^\omega(\Phi(U))$ satisfying $G_\bff\equiv 0$ in $U$,
the functions $f_j\circ\varphi_j$ are all constant in $U$.
There exist a positive integer $M$ and a positive real number $C$
such that for any $\barx\in B$, for any 
$\bff\in C^M$ in a neighborhood of $\Phi(\barx)$ satisfying $f_j(\varphi_j(\barx))=0$ for each index $j$,
\begin{equation}
	\sum_{j=1}^3 \sum_{0<k\le M} |f_j^{(k)}(\varphi_j(\barx))|
	\le C\sum_{0<|\alpha|\le M} \big|\frac{\partial^\alpha G_\bff}{\partial x^\alpha}(\barx) \big|.
\end{equation}
\end{lemma}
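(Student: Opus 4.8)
The plan is to follow the proof of Lemma~\ref{lemma:finite_type} almost verbatim, with one structural adjustment: when $\ba\equiv(1,1,1)$ there is a two-dimensional space of analytic solutions of $G_\bff\equiv 0$, namely the constants $f_j\equiv c_j$ with $c_1+c_2+c_3=0$, whereas in Lemma~\ref{lemma:finite_type} the only analytic solution was the trivial one. So in the jet-space argument the relevant stable dimension will be $2$ rather than $0$, and the final statement is stated modulo those constants, which is exactly what the hypothesis $f_j(\varphi_j(\barx))=0$ achieves.

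First I would record the reduction relation. Let $W_k\in C^\omega$ be nowhere vanishing with $W_k(\varphi_k)\equiv 0$; by transversality $W_k(\varphi_i)$ vanishes nowhere for $i\ne k$. From $G_\bff=\sum_{j=1}^3 f_j\circ\varphi_j$ one gets $W_3 G_\bff=\sum_{j=1}^2 W_3(\varphi_j)\,(f_j'\circ\varphi_j)$; dividing by $W_3(\varphi_2)$ and applying $W_2$ annihilates the $j=2$ term and isolates $f_1''\circ\varphi_1$ with coefficient $\big(W_3\varphi_1/W_3\varphi_2\big)W_2(\varphi_1)$, which vanishes nowhere. Hence $f_1''\circ\varphi_1$ is a $C^\omega$-linear combination of $f_1'\circ\varphi_1$ and of $\partial^\alpha G_\bff$ with $1\le|\alpha|\le 2$; note that (unlike in Lemma~\ref{lemma:finite_type}) no zeroth-order terms $f_j\circ\varphi_j$ and no functions with index $\ne 1$ enter. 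Permuting indices gives the analogue for each $j$. Differentiating these relations in $x$ and dividing by a nonvanishing component of $\nabla\varphi_j$ shows, by induction on $k$, that $f_j^{(k)}(\varphi_j(\barx))$ for $k\ge 2$ is a linear function, with coefficients real analytic in $\barx$, of $f_j'(\varphi_j(\barx))$ and of $\{\partial^\alpha G_\bff(\barx):1\le|\alpha|\le k\}$; in particular no zeroth-order datum $f_j(\varphi_j(\barx))$ occurs.

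Next I would run the jet-space argument. For $\barx\in B$ let $V_N(\barx)$ be the space of $N$-jets of $\bff$ at $\bPhi(\barx)$ for which $G_\bff$ vanishes to order $N$ at $\barx$. The reduction relation makes the restriction maps $V_N(\barx)\to V_{N-1}(\barx)$ injective for $N\ge 2$, so $\dim V_N(\barx)$ is nonincreasing; and $V_N(\barx)$ always contains the two-dimensional space of $N$-jets of constant solutions $f_j\equiv c_j$, $c_1+c_2+c_3=0$. As in Lemma~\ref{lemma:finite_type}, every formal power series solution of $G_\bff=0$ is real analytic: restricting to a real analytic curve $\gamma$ transverse to the level sets of all $\varphi_j$, the $\reals^6$-valued function $u(t)=((f_k\circ\varphi_k\circ\gamma,\,f_k'\circ\varphi_k\circ\gamma):k\in\{1,2,3\})$ satisfies a linear ODE with real analytic coefficients and nonvanishing leading coefficient, whose formal solution therefore converges. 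By the standing hypothesis such an analytic solution has each $f_j\circ\varphi_j$ constant, hence each $f_j$ constant; so every formal power series solution of $G_\bff=0$ is constant. Consequently $\dim V_N(\barx)$ cannot stabilize above $2$: if it did, the eventually bijective restriction maps would lift a non-constant $M$-jet of $V_M(\barx)$ to a formal, hence analytic, hence constant solution, a contradiction. Thus for each $\barx$ there is $M(\barx)$ with $V_M(\barx)$ equal to the constant-solution space for $M\ge M(\barx)$; since $\{\barx:\dim V_M(\barx)=2\}$ is open and these increasing sets exhaust $B$, compactness furnishes a single $M$ valid for all $\barx\in B$. In particular, if $G_\bff$ vanishes to order $M$ at $\barx$ and $f_j(\varphi_j(\barx))=0$ for every $j$, then each $f_j$ vanishes to order $M$ at $\varphi_j(\barx)$.

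Finally, since $G_\bff$ depends linearly on $\bff$, for each $\barx$ the map sending an $M$-jet of $\bff$ with $f_j(\varphi_j(\barx))=0$ for all $j$ to the collection $(\partial^\alpha G_\bff(\barx):1\le|\alpha|\le M)$ is a linear map $\mathcal{T}_\barx$ with entries real analytic in $\barx$. The previous paragraph shows $\mathcal{T}_\barx$ is injective for every $\barx\in B$: if its value vanishes then $G_\bff$ vanishes to order $M$ at $\barx$ (its zeroth-order part being $\sum_j f_j(\varphi_j(\barx))=0$), so the jet is zero. An injective linear map depending continuously on a parameter in the compact set $B$ has smallest singular value bounded below by a positive constant uniformly in $\barx$; translating this between the operator bound and the sums of absolute values yields $\sum_{j=1}^3\sum_{0<k\le M}|f_j^{(k)}(\varphi_j(\barx))|\le C\sum_{0<|\alpha|\le M}|\partial^\alpha G_\bff(\barx)|$ with $C$ independent of $\barx$ and $\bff$. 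The main obstacle is the middle step: keeping precise track of the two-dimensional constant-solution space so as to pin the stable value of $\dim V_N(\barx)$ at exactly $2$, together with carrying over the formal-power-series convergence argument; once that qualitative statement is secured, the reduction relation and the concluding compactness argument are routine. (No appeal to \L{}ojasiewicz's theorem is needed here, since transversality is assumed everywhere on $B$ and there is no exceptional variety.)
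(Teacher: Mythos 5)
Your proposal is correct and follows essentially the route the paper intends: the paper omits the proof, stating it is "almost identical" to that of Lemma~\ref{lemma:finite_type}, and you carry out exactly that adaptation, correctly identifying the two changes needed (the reduction relation loses its zeroth-order terms since $a_j\equiv 1$, and the stable jet-space dimension is the two-dimensional space of constant solutions with $c_1+c_2+c_3=0$, which the normalization $f_j(\varphi_j(\barx))=0$ quotients out). The concluding compactness/injectivity step matches the remark the paper makes immediately after the lemma's statement.
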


Here $f_j^{(k)}$ is the $k$-th order derivative of $f_j$.
The sum on the right-hand side extends over the indicated multi-indices $\alpha$,
with $|\alpha| = |(\alpha_1,\alpha_2)| = \alpha_1+\alpha_2$.
The conclusion for a fixed $M,\barx$ is equivalent to the assertion that if $G_\bff-G_\bff(\barx)$
vanishes to order $M$ at $\barx$ then each $f_j-f_j(\varphi_j(\barx))$ vanishes to order $M$
at $\varphi_j(\barx)$. With $M$ held fixed, validity of this assertion at $\barx_0$
implies its validity at all $\barx$ in some neighborhood of $\barx_0$,
and implies the inequality with $C$ independent of $\barx$ in that neighborhood.

The proof of Lemma~\ref{lemma:finite_type2}
is almost identical to that of Lemma~\ref{lemma:finite_type},
and is consequently omitted. \qed



\section{Supplementary remarks} \label{section:furtherremarks}

We formulate a more general version of Proposition~\ref{prop:resurrected},
which is proved in the same way as that proposition.
It will not be used in this paper, so details of its proof are omitted.

\begin{proposition} \label{prop:alt2sublevel}
Let $B\subset\reals^3$ be a closed ball of positive, finite radius,
and let $\tilde B$ be an open neighborhood of $B$.
Let $b_j:\tilde B\to\reals$ be nowhere vanishing $C^\omega$ functions.
Let $\psi_1,\psi_2:\tilde B\to\reals^1$ be $C^\omega$
submersions whose gradients are everywhere linearly independent.
Assume that there exist nowhere vanishing  $C^\omega$ vector fields $V_j$
in $\tilde B$ that satisfy $V_j(\psi_j)\equiv 0$, and 
satisfy the bracket condition at each point of $B$.
Let sublevel sets $S(\bg,\eps)$ be defined as in \eqref{notation:resurrected}.

There exist $\bh = (h_1,h_2)\in C^\omega$ 
satisfying $\sum_{j=1}^2 b_j(x) (h_j\circ\psi_j)(x)\equiv 0$
in $B\times (0,\infty)$,
and $C<\infty$, with the following property.
For any $\eps>0$ and any measurable functions $g_j:\psi_j(B)\to\reals$
there exist $\alpha\in\reals$
and a measurable subset $S\subset S(\bg,\eps)$
satisfying \[ |S|\ge C^{-1} |S(\bg,\eps)|^C\]
such that
\begin{equation} \label{digressionconclusion}
|g_j(y) - \alpha h_j(y)| \le C \eps
\ \ \forall\, y\in \psi_j(S)\ \forall\, j\in\{1,2\}.
\end{equation}
\end{proposition}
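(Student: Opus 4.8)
The plan is to transcribe the proof of Lemma~\ref{lemma:microstep2A}, of which Proposition~\ref{prop:resurrected} is already a variant, to the present three‑dimensional setting; as in that proposition the argument naturally produces the dichotomous conclusion that (for each $\sigma>0$) either $|S(\bg,\eps)|\le C\eps^{c}$ or the stated conclusion holds, and I would prove it in that form. First I would fix nowhere vanishing $C^\omega$ vector fields $V_1,V_2$ as in the hypotheses and, for $z\in B$, form the iterated flow maps $\Theta_{n,z}$ alternating $e^{t_nV_1}$ and $e^{t_nV_2}$, exactly as in \S\ref{section:morelemmas}. Because $\dim B=3$ and the codomains $\psi_j(B)\subset\reals^1$ are one dimensional, the bracket condition for $V_1,V_2$ guarantees that for generic $z$ the map $\bt\mapsto\Theta_{3,z}(\bt)$ has nonvanishing Jacobian on a dense open set and that there $\psi_1\circ\Theta_{2,z}$ and $\psi_2\circ\Theta_{3,z}$ are submersions onto open subintervals of $\psi_1(B)$ and $\psi_2(B)$. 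Next I would propagate the relation $\sum_{j=1}^2 b_j\,(g_j\circ\psi_j)=0$ along the flows of $V_1,V_2$ — this is the step carried out in the proof of Lemma~\ref{lemma:exactsolns}, using only the part of that argument that does not exploit the product form of $\psi_j$ — obtaining real analytic functions $b_2^0$ and $b_3^0$ (of $(t_1,t_2,z)$ and $(t_1,t_2,t_3,z)$ respectively), each a product of ratios $b_1/b_2$ evaluated at the points $\Theta_{k,z}$ and hence nowhere zero and bounded on compacta, with the property that any $C^\omega$ solution $\bh=(h_1,h_2)$ of $\sum_j b_j\,(h_j\circ\psi_j)\equiv0$, should one exist, is unique up to a nonzero constant multiple and satisfies $(h_1\circ\psi_1)\circ\Theta_{2,z}\equiv(h_1\circ\psi_1)(z)\,b_2^0$ and $(h_2\circ\psi_2)\circ\Theta_{3,z}\equiv(h_1\circ\psi_1)(z)\,b_3^0$. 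I would then fix $\bh$ to be such a nonvanishing solution if one exists, and $\bh=0$ otherwise.

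Given $\eps>0$ and measurable $g_j$, writing $S=S(\bg,\eps)$, I would next apply the Cauchy--Schwarz inequality repeatedly, exactly as in the proof of Lemma~\ref{lemma:microstep2A}, to produce a point $\barz$ and a set of parameter tuples of measure $\gtrsim|S|^{C}$ along which all of $\Theta_{k,\barz}(t_1,\dots,t_k)$, $0\le k\le3$, lie in $S$ and $D\Theta_{3,\barz}$ is invertible; then the $\psi_j$‑images of the corresponding points fill subsets of $\psi_j(B)$ of measure $\gtrsim|S|^{C}$. Setting $\alpha=(g_1\circ\psi_1)(\barz)$, propagation gives, on this configuration,
\[
(g_1\circ\psi_1)(\Theta_{2,\barz}(t_1,t_2))=\alpha\,b_2^0((t_1,t_2),\barz)+O(\eps),\qquad
(g_2\circ\psi_2)(\Theta_{3,\barz}(\bt))=\alpha\,b_3^0(\bt,\barz)+O(\eps).
\]
If a nonvanishing $C^\omega$ solution exists, I would substitute the identities for $b_2^0,b_3^0$ and absorb the nonzero scalar $(h_1\circ\psi_1)(\barz)$ into $\alpha$, turning these into $g_j(y)=\alpha h_j(y)+O(\eps)$ for $y$ in the image sets; taking $S$ to be the union of the orbit points $\Theta_{k,\barz}(\cdot)\subset S(\bg,\eps)$, which has measure $\gtrsim|S(\bg,\eps)|^{C}$, then yields the conclusion with this $\bh$.

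The hard part is the case in which no nonvanishing $C^\omega$ solution exists, so $\bh=0$ and one must show that $S(\bg,\eps)$ is small unless $g_j=O(\eps^{1-\sigma})$ on a subset of measure $\gtrsim|S(\bg,\eps)|^{C}$; this is the content of Proposition~\ref{prop:resurrected}. The obstacle is identical to the one overcome in the proof of Lemma~\ref{lemma:microstep2A}: since $\bt\mapsto\Theta_{n,z}(\bt)$ degenerates at $\bt=0$, one cannot regard $b_n^0(\bt,z)$ naively as a function of $\Theta_{n,z}(\bt)$, still less of $\psi_j(\Theta_{n,z}(\bt))$, which is what is needed to extract structure of $g_j$. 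I would resolve it exactly as there: introduce $C^\omega$ vector fields $U_2^z$ in $\reals^2$ and $U_3^z$ (a pair) in $\reals^3$, depending analytically on $z$ and nowhere locally identically zero, that span the tangent directions of the fibres of $\psi_1\circ\Theta_{2,z}$ and $\psi_2\circ\Theta_{3,z}$, and form the real analytic function
\[
\scriptf_z(\bt,s,s')=\bigl[b_1(\Theta_{3,z}(\bt))\,b_2^0(\bt',z)+b_2(\Theta_{3,z}(\bt))\,b_3^0(\bt,z)\bigr]^2
+\bigl[b_2^0(e^{sU_2^z}(\bt'),z)-b_2^0(\bt',z)\bigr]^2+\bigl[b_3^0(e^{s'\cdot U_3^z}(\bt),z)-b_3^0(\bt,z)\bigr]^2,
\]
with $\bt=(t_1,t_2,t_3)$ and $\bt'=(t_1,t_2)$.

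Finally, as in the discussion preceding the proof of Lemma~\ref{lemma:microstep2A}, $\scriptf_z$ vanishes identically on some nonempty open set precisely when a nonvanishing $C^\omega$ solution of $\sum_j b_j(h_j\circ\psi_j)\equiv0$ exists; so in the present case no $\scriptf_z$ does, and Lemma~\ref{lemma:analyticsublevel} applied to $\scriptf_z$ gives $|\{(\bt,s,s'):|\scriptf_z(\bt,s,s')|<\delta\}|\le C\delta^{\tau}$ uniformly in $z$. Now I would distinguish two cases. If $|\alpha|\le\eps^{1-\sigma}$, then since $b_2^0,b_3^0=O(1)$ the two propagation relations give $g_j=O(\eps^{1-\sigma})$ on the configuration, and $S$ can be taken as the union of the orbit points. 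If $|\alpha|>\eps^{1-\sigma}$, then the sublevel membership of the points $\Theta_{3,\barz}(\cdot)$ together with the $U_2^z$‑ and $U_3^z$‑flow invariances of $\psi_j\circ\Theta_{n,\barz}$ yield, on the flow enlargement of the above configuration, $\alpha\,\scriptf_{\barz}=O(\eps)$, hence $\scriptf_{\barz}=O(\eps/|\alpha|)=O(\eps^{\sigma})$ there; by the sublevel bound that enlarged configuration, and therefore $S(\bg,\eps)$, has measure $O(\eps^{\sigma\tau})$. Either way the dichotomy is established, and the passage from it to the stated conclusion — recording, tracking, and combining the images and measures — is exactly as in \S\ref{section:microscale} and \S\ref{section:conclusion}.
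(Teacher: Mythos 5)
Your proposal is correct and is exactly the route the paper intends: the paper omits the proof of Proposition~\ref{prop:alt2sublevel}, saying only that it is proved in the same way as Proposition~\ref{prop:resurrected} (itself implicit in Lemma~\ref{lemma:microstep2A}), and your transcription of that argument to the three-dimensional setting --- the iterated flows $\Theta_{n,z}$, the analytic approximants $b_2^0,b_3^0$, the lifted function $\scriptf_z$ built from the fibre vector fields, and the appeal to Lemma~\ref{lemma:analyticsublevel} --- is the intended one, with the dimension counts ($n=2$ for $g_1$, $n=3$ for $g_2$, one fibre field in $\reals^2$ and a pair in $\reals^3$) correctly adjusted. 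Your decision to prove the dichotomous form is also the right reading of the statement: the literal unconditional version with bound $C\eps$ fails when no nontrivial solution exists (take $b_1\equiv 1$, $b_2=e^{x_3}$, $\psi_j(x)=x_j$, $g_1\equiv\eps^{1/2}$, $g_2\equiv-\eps^{1/2}$, for which $|S(\bg,\eps)|\asymp\eps^{1/2}$ yet $|g_j|\equiv\eps^{1/2}$ everywhere), and the paper itself only ever invokes Proposition~\ref{prop:resurrected} in its dichotomous form in \S\ref{section:mesoscale}.
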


In particular, the vector space of all solutions of
$\sum_{j=1}^2 b_j\cdot (g_j\circ\psi_j)\equiv 0$
in any connected nonempty open set is a vector space
of dimension $\le 1$. If the dimension equals $0$
then \eqref{digressionconclusion} states that $|g_j(y)| = O(\eps)$
for every $y\in\psi_j(S)$.


\medskip
The next result may be useful in the treatment, in future work, of those rare data $(\ba,\bPhi)$ for which
the main hypothesis holds, but the auxiliary hypothesis does not.

\begin{observation} \label{observe:continuation}
Let $\ba,\bPhi$ be $C^\omega$. Assume that each $a_j$ vanishes nowhere,
and that $\nabla\varphi_1,\nabla\varphi_2$ are everywhere linearly independent.
Let $B$ be connected and simply connected.
Let $U\subset B\times(0,\infty)$ be a nonempty connected open set.
Let $g_j \in C^\omega(\psi_j(U))$
satisfy $\sum_{j=1}^2 (a_j\circ\pi)\cdot (g_j\circ\psi_j) \equiv 0$ in $U$. 
Then each function $g_j$ extends to a real analytic function in $\varphi_j(B)\times(0,\infty)$.
\end{observation}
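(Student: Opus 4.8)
\medskip
The plan is to extend $g_1$ and $g_2$ by analytic continuation, using the functional equation itself as the continuation mechanism, with single-valuedness coming from simple connectivity of $B$. As a preliminary I would fix the local picture. Each $\psi_j$ is, as throughout the paper, a submersion, and after replacing $\varphi_j$ by $-\varphi_j$ for $j\in\{1,2\}$ if necessary we may assume $W_3\varphi_1,W_3\varphi_2>0$ on $B$, so that $\psi_j$ maps $B\times(0,\infty)$ onto $\varphi_j(B)\times(0,\infty)$. Since $\nabla\varphi_1,\nabla\varphi_2$ are everywhere linearly independent, near any point of $B$ one may use $(y_1,y_2)=(\varphi_1,\varphi_2)$ as real analytic coordinates; adjoining $\sigma=tW_3\varphi_1(x)$ gives coordinates $(y_1,y_2,\sigma)$ on an open subset of $B\times(0,\infty)$ in which $\psi_1(x,t)=(y_1,\sigma)$ and $\psi_2(x,t)=(y_2,\sigma m)$ for a real analytic, nowhere vanishing function $m$. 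In these coordinates the equation $\sum_{j=1}^2 (a_j\circ\pi)\,(g_j\circ\psi_j)\equiv 0$ reads
\begin{equation*}
a_1\,g_1(y_1,\sigma)+a_2\,g_2\big(y_2,\sigma m(y_1,y_2)\big)\equiv 0 ,
\end{equation*}
with $a_1,a_2$ nowhere vanishing. Solving for $g_1$, respectively for $g_2$, exhibits the value of one of the $g_j$ as an explicit nowhere vanishing analytic multiple of a value of the other; this is the propagation relation. Note that moving in the $y_2$-direction leaves $g_1(y_1,\sigma)$ fixed while sliding the second argument $\sigma m(y_1,y_2)$ of $g_2$, and symmetrically in the $y_1$-direction.

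\emph{Continuation along paths.} Fix $q_0\in U$ and an arbitrary path $\gamma\colon[0,1]\to B\times(0,\infty)$ with $\gamma(0)=q_0$. I would show that the pair $(g_1\circ\psi_1,\,g_2\circ\psi_2)$, known analytically near $q_0$, continues analytically along $\gamma$, remaining a solution of the equation. The germ at the current endpoint is extended as follows: by the propagation relation one of $g_1\circ\psi_1$, $g_2\circ\psi_2$ is already determined on a neighborhood, and that relation then produces the other on the $\psi_j$-saturation of that neighborhood; alternately invoking the two forms of the relation, and using that moving in the $y_2$- and $y_1$-directions enlarges the second-argument ranges of $g_2$ and of $g_1$ respectively, one pushes the germ along an initial segment of $\gamma$. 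The continuation can fail only through degeneration of the coefficients $a_1/a_2$, $a_2/a_1$, or $m$, or through $\sigma$ reaching $0$; none occurs, since $a_1,a_2,m$ are real analytic and nowhere vanishing on $B$ and $\sigma>0$ throughout $B\times(0,\infty)$. Hence the continuation exists along every path in $B\times(0,\infty)$. Since $B$ is connected and simply connected, so is $B\times(0,\infty)$, and the monodromy theorem yields single-valued real analytic functions $\hat g_1,\hat g_2$ on $B\times(0,\infty)$ that agree near $U$ with $g_1\circ\psi_1,\,g_2\circ\psi_2$ and satisfy $(a_1\circ\pi)\,\hat g_1+(a_2\circ\pi)\,\hat g_2\equiv 0$.

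\emph{Descent through $\psi_j$.} It remains to see that $\hat g_j=g_j\circ\psi_j$ for an analytic $g_j$ on $\varphi_j(B)\times(0,\infty)$. The property of being locally of the form $(\text{analytic})\circ\psi_j$ holds near $U$, is open, and is closed on the connected manifold $B\times(0,\infty)$, hence holds globally; thus $\hat g_j$ is locally constant along the fibres of $\psi_j$. A fibre $\psi_j^{-1}(c,s)$ is a copy of $\{x\in B:\varphi_j(x)=c\}$, and, using that $B$ is a ball and $\varphi_j$ a submersion, a connectedness argument (letting $(c,s)$ vary to relate the components of nearby fibres) shows $\hat g_j$ assigns a single value to each fibre. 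Therefore $\hat g_j$ descends to a well-defined real analytic $g_j$ on $\varphi_j(B)\times(0,\infty)$, which by uniqueness of analytic continuation extends the given $g_j$ and solves the equation everywhere.

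\emph{Main obstacle.} The delicate step is the passage from ``continuable along every path'' to an honest single-valued extension: the functional equation pins down each $g_j$ only up to the holonomy of the web associated with $\bPhi$ as $x$ traverses loops in $B$, and it is precisely simple connectivity of $B$, through the monodromy theorem, that kills this holonomy. I expect the second genuinely technical point to be the descent of the previous paragraph, where the ball geometry of $B$ is needed to control the components of the level sets of $\varphi_j$ and thereby legitimize factoring through the possibly non-injective map $(\varphi_1,\varphi_2)$; the bookkeeping of the bootstrap in the continuation step (spreading in the second argument) is routine but must be carried out with care when $m$ happens to be nearly constant in one of the coordinates.
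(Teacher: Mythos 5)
Your route is genuinely different from the paper's, and it has a gap at its central step. The paper first invokes Lemma~\ref{lemma:exactsolns} to conclude that $g_j(y,t)=h_j(y)\,t^\tau$ on $U$; this factors out the $t$-dependence entirely, so that only the two-dimensional problem $\sum_{j}b_j\,(h_j\circ\varphi_j)\equiv 0$, with $b_j=a_j\,(W_3\varphi_j)^\tau$, remains, and $h_i$ is then extended along integral curves of $W_1,W_2$ inside $B$ --- a one-variable continuation on the interval $\varphi_i(B)$, where single-valuedness is automatic and no monodromy argument is needed. You instead work directly in $B\times(0,\infty)$ and propagate the pair $(g_1\circ\psi_1,g_2\circ\psi_2)$ by alternately saturating under $\psi_1$ and $\psi_2$. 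The set reachable in this way is the orbit of $U$ under the vector fields $V_1,V_2$ annihilating $\psi_1,\psi_2$, and it equals all of $B\times(0,\infty)$ only when $V_1,V_2$ satisfy the bracket condition, i.e.\ (Lemma~\ref{lemma:bracket}) when the web defined by $\bPhi$ has somewhere nonvanishing curvature. You never invoke this, and the point you set aside as ``bookkeeping when $m$ is nearly constant'' is exactly where the argument fails: if $m$ is constant (linearizable web), the second argument of $g_2$ never moves, no continuation in the $t$-direction is possible, and the statement is then actually false --- take $a_2/a_1=u(\varphi_1(x))/v(\varphi_2(x))$ and $g_1(y,s)=\lambda(s)u(y)$, $g_2(y,s)=-\lambda(s)v(y)$ with $\lambda$ real analytic on a subinterval of $(0,\infty)$ but not extendable to all of it. So some form of the curvature/bracket hypothesis must enter, and making your zigzag continuation rigorous in its presence amounts to redoing the $\Theta_{n,z}$ analysis underlying Lemma~\ref{lemma:exactsolns}; citing that lemma's conclusion, as the paper does, is the efficient way out.

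A secondary gap is the descent step. A level set $\{x\in B:\varphi_j(x)=c\}$ of a submersion on a ball need not be connected (e.g.\ $\varphi(x_1,x_2)=x_2-10\sin(10x_1)$ on the unit disc), so knowing that $\hat g_j$ is locally of the form $(\text{analytic})\circ\psi_j$ does not by itself yield a well-defined function on $\varphi_j(B)\times(0,\infty)$: distinct components of a single fibre could carry distinct values. Your parenthetical ``connectedness argument'' is the entire content of this step and is not supplied. The paper sidesteps the issue by constructing the extension of $h_j$ directly as an analytic function on the interval $\varphi_j(B)\subset\reals$, transporting its value along an integral curve of $W_i$ on which $h_j\circ\varphi_j$ is constant, so that there is nothing to descend.
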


By analytic continuation, then, the extensions satisfy
$\sum_{j=1}^2 (a_j\circ\pi)\cdot (g_j\circ\psi_j) \equiv 0$ in $B$. 
For equations of this type,
existence of local solutions is equivalent to existence of global solutions.

\begin{proof}
Let $U'\subset\reals^2$
be a nonempty connected open subset such that $U'\times I\subset U$
for some nonempty open interval $I\subset\reals$.
We have shown that there exists $\tau\in\reals$
such that for $j=1,2$, $g_j(y,t)= h_j(y)t^\tau$
on $U'\times I$ for some $h_j\in C^\omega(\varphi_j(U'))$.
These functions $h_j$ satisfy $\sum_{j=1}^2  b_j(x)(h_j\circ\varphi_j)(x)\equiv 0$
in $U'$, with $b_j(x) = a_j(x)(W_3\varphi_j(x))^\tau$ being nowhere vanishing real 
analytic functions.

It suffices to show that each $h_j$ extends to
a $C^\omega$ function $\tilde h_j:\varphi_j(B)\to\reals$; by the 
connectivity of $B$, it then follows that $\sum_{j=1}^2 b_j\cdot(\tilde h_j\circ\varphi_j)$
vanishes identically on $B$.

For $j\in\{1,2\}$ let $W_j$ be nowhere vanishing $C^\omega$ vector fields
satisfying $W_j(\varphi_j)\equiv 0$.
Let $(i,j)$ be any permutation of $(1,2)$.
Consider any curve $\Gamma$
parametrized by $s\mapsto e^{sW_j}\barx$ with $\barx\in U'$.
Suppose that $e^{sW_j}\barx\in B$ for every $s$ belonging to 
some open interval $J\subset\reals$ that contains $0$.
Then 
\[(h_i\circ\varphi_i)(x) \equiv -a_j(x)a_i(x)^{-1} (h_j\circ\varphi_j)(x)\]
for all $x\in\Gamma$ in some neighborhood of $\barx$.
That is,
\[(h_i\circ\varphi_i)(e^{sW_j}\barx) \equiv 
-a_j(e^{sW_j}\barx)a_i(e^{sW_j}\barx)^{-1} (h_j\circ\varphi_j)(e^{sW_j}\barx)\]
for all $s$ in some neighborhood of $0$.
The right-hand side is a well-defined real analytic function
of $s$ on the full interval $J$, since
$\varphi_j(e^{sW_j}\barx)$ is constant and consequently
$(h_j\circ\varphi_j)(e^{sW_j}\barx)$ is likewise constant.
Thus the function $s\mapsto (h_i\circ\varphi_i)(e^{sW_j}\barx)$
extends to a real analytic function on the full interval $J$.
The mapping $\varphi_i$ restricts to an invertible real analytic
mapping from $J$ to the interval $\{e^{sW_j}\barx: s\in J\}$.
Therefore $h_i$ extends to a real analytic function
on that same interval.

For any two points in $B$ there exists a continuous path in $B$
from one point to the other, consisting of a concatenation of
integral curves of $W_1$ and $W_2$. Since each $h_k\circ\varphi_i$
is constant along integral curves of $W_k$,
it follows that $h_i$ extends to a real analytic function on 
the full interval $\varphi_j(B)$.
\end{proof}

\medskip
The equation $\sum_{j=1}^3 a_j\cdot (f_j\circ\varphi_j)=0$ shares this extension property.

\begin{observation} \label{observation:continuation}
Let $\ba,\bPhi$ be $C^\omega$. Assume that each $a_j$ vanishes nowhere,
and that $\nabla\varphi_i,\nabla\varphi_j$ are everywhere linearly independent
for all $i\ne j\in\{1,2,3\}$.
Let $B\subset\reals^2$ be connected and simply connected,
and let $U\subset B$ be nonempty, connected, and open.
Let $f_j:\varphi_j(U)\to\reals$ be $C^\omega$ and
satisfy $\sum_{j=1}^3 a_j\cdot (f_j\circ\varphi_j) \equiv 0$ in $U$. 
Then each $f_j$ extends to a real analytic function in $\varphi_j(B)$.
\end{observation}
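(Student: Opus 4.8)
The plan is to upgrade the single scalar relation $\sum_j a_j\,(f_j\circ\varphi_j)\equiv 0$ to an overdetermined linear system for the first jets of the $f_j$, extend a solution of that system from $U$ to all of $B$ using the simple connectivity of $B$, and then read off the individual extensions. Dividing through by the nowhere vanishing $a_3$ we may assume $a_3\equiv 1$; this changes the other coefficients but not the conclusion. Exactly as in the first part of the proof of Lemma~\ref{lemma:finite_type} --- a computation that uses only the nondegeneracy of $(\ba,\bPhi)$, not the main hypothesis --- two differentiations of $\sum_j a_j(f_j\circ\varphi_j)\equiv 0$ followed by isolation of the top order term show that for each $j\in\{1,2,3\}$ the function $f_j''\circ\varphi_j$ is a linear combination of $\{f_k\circ\varphi_k,\ f_k'\circ\varphi_k:k\in\{1,2,3\}\}$ with coefficients real analytic on $\tilde B$ and depending only on $(\ba,\bPhi)$. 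Consequently the $\reals^6$--valued function $u=\big((f_j\circ\varphi_j,\ f_j'\circ\varphi_j):j\in\{1,2,3\}\big)$ satisfies an overdetermined first order linear system $Wu=M_W u$ in $U$, one equation for each $C^\omega$ vector field $W$ on $\tilde B$, with $M_W$ real analytic and linear in $W$; in particular $\partial_{x_1}u=M_1u$ and $\partial_{x_2}u=M_2u$ with $M_1,M_2$ real analytic matrix valued functions on $\tilde B$. Because $W_j(\varphi_j)\equiv 0$, the two rows of $M_{W_j}$ attached to the entries $f_j\circ\varphi_j$ and $f_j'\circ\varphi_j$ vanish identically.

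First I would record that $u$ propagates along curves. Along any real analytic curve $\gamma$ in $\tilde B$ with $(\varphi_k\circ\gamma)'$ nowhere zero for each $k$, the restriction of the above system to $\gamma$ is a linear ordinary differential equation with real analytic coefficients and nonvanishing leading coefficient --- precisely the ordinary differential equation that appears at the end of the proof of Lemma~\ref{lemma:finite_type}. Hence $u\circ\gamma$ extends to a real analytic function along all of $\gamma$, provided $\gamma$ stays in $\tilde B$; in particular along any such curve lying in $B$. That same computation shows in addition that every formal power series solution of $\sum_j a_j(f_j\circ\varphi_j)=0$ at a point of $\tilde B$ is convergent, so near each point the space of real analytic solutions of this equation is finite dimensional and is parametrized, linearly and bijectively, by the $1$--jet of $\bff$ (injectivity because higher derivatives are determined by lower ones, surjectivity onto the admissible $1$--jets because of the convergence just mentioned).

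Next I would extend $u$ itself to a single valued real analytic function on $B$. Since $\bff$ exists on the nonempty open set $U$, the set of admissible $1$--jets is nonzero on $U$; being cut out by the vanishing of real analytic functions, it is then nonzero throughout $B$, and off a proper real analytic subvariety $\Sigma\subset B$ the space of local solutions of the system has a fixed dimension $D\ge 1$, so the solution sheaf of $Wu=M_Wu$ is a local system of rank $D$ on $B\setminus\Sigma$. Fix a base point $p\in U$. For $q\in B$ one continues the germ of $u$ at $p$ along a path in $B$ from $p$ to $q$; by the previous paragraph such continuation is always possible along paths transverse to the three foliations (and arbitrary paths are handled by a small perturbation), and the continued germ at $q$ is always the $1$--jet of a genuine real analytic local solution of $\sum_j a_j(f_j\circ\varphi_j)=0$, so it persists even as the path crosses $\Sigma$, the coefficients $M_i$ being analytic everywhere in $\tilde B$. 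Two paths from $p$ to $q$ are homotopic rel endpoints because $B$ is simply connected; combining this with the monodromy principle for the local system, one concludes that the continuation is single valued, producing a real analytic $u$ on $B$ solving $\partial_{x_i}u=M_iu$ and restricting to the original $u$ on $U$. I expect this single-valuedness --- in particular, ruling out nontrivial monodromy around components of $\Sigma$ --- to be the main obstacle in converting the proposal into a proof.

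Finally I would descend to the individual functions. For each $j$ the component of the now global $u$ equal to $f_j\circ\varphi_j$ on $U$ is annihilated by $W_j$ everywhere on $B$, by the structural vanishing of the relevant row of $M_{W_j}$; since $W_j$ is nowhere vanishing, this component is locally a function of $\varphi_j$ alone, and since $B$ is connected --- so that $\varphi_j(B)$ is an interval --- and the component agrees with $f_j\circ\varphi_j$ on $U$, these local representations patch to a single real analytic $\tilde f_j:\varphi_j(B)\to\reals$ extending $f_j$. The real analytic function $x\mapsto\sum_j a_j(x)(\tilde f_j\circ\varphi_j)(x)$ then vanishes on the nonempty open set $U$, hence on all of $B$ by connectedness; undoing the normalization $a_3\equiv 1$ completes the argument.
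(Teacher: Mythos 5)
Your proposal takes a genuinely different route from the paper's, and the step you yourself flag --- single-valuedness of the continuation of the jet vector $u$ --- is a real gap, not a technicality. The connection $d-\sum_i M_i\,dx_i$ on the trivial rank-$6$ bundle over $B$ is not flat in general: the system $\partial_{x_i}u=M_iu$ is overdetermined, and for an arbitrary initial vector the continuations of the ODE along two homotopic paths need not agree at the common endpoint. Simple connectivity of $B$ therefore buys nothing until you have shown (i) that the family of admissible $1$-jets (those arising from genuine local solutions of $\sum_j a_j\,(f_j\circ\varphi_j)=0$) is preserved by the connection, and (ii) that the connection restricted to that family is flat. Both hold where the local solution space has locally constant dimension, but that dimension can jump up on a proper subvariety $\Sigma$; you give no argument that a continued germ remains admissible as a path crosses $\Sigma$, nor that the monodromy of the resulting local system on $B\setminus\Sigma$ is trivial --- and in $\reals^2$ the set $B\setminus\Sigma$ need not be simply connected (if $\Sigma$ contains isolated points) and may even be disconnected by $\Sigma$ (if $\Sigma$ contains curves), so neither the monodromy principle nor connectivity is available off the shelf. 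Everything before and after this step in your write-up is sound, but this step is where the proof currently fails to close.

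The paper sidesteps the issue by never extending a function of two variables. After one application of $W_3$ eliminates $f_3$, it restricts the resulting identity to integral curves $\Gamma$ of $W_2$, along which $f_2\circ\varphi_2$ and $f_2'\circ\varphi_2$ are constant; what remains is an inhomogeneous first-order linear ODE for $f_1$ on the interval $\varphi_1(\Gamma)$, with analytic coefficients and nowhere vanishing leading coefficient $a_1\cdot W_3\varphi_1$. This continues $f_1$ analytically along $\varphi_1(\Gamma)$ for every integral curve $\Gamma$ meeting the region already reached, and the continuations are automatically consistent because each $f_j$ is a function of \emph{one} variable on the interval $\varphi_j(B)$, where analytic continuation from a subinterval is single-valued. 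Concatenating integral curves of $W_1$ and $W_2$ (as in the final paragraph of the proof of Observation~\ref{observe:continuation}) then reaches all of $B$, and permuting indices handles $f_3$. If you wish to keep your jet formulation, the repair is essentially this: propagate the one-variable functions along the leaves of the foliations, where single-valuedness is free, rather than the two-variable vector $u$ over $B$, where it is not.
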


\begin{proof}
Assume without loss of generality that $a_3\equiv 1$.
Apply $W_3$ to obtain
\[ \sum_{j=1}^2 \big[
W_3a_j\cdot (f_j\circ\varphi_j)
+ a_j\cdot W_3\varphi_j\cdot (f'_j\circ\varphi_j)
\big]=0 \ \text{ in $U$.}\]
On any integral curve of $W_2$ in any open subset of $B$ in which
both functions $f_j\circ\varphi_j$ are analytic,
the two terms with $j=2$ are constant, and this equation
becomes an inhomogeneous linear ordinary differential equation for the restriction of $f_1$
to an open interval. 
The coefficients of this differential equation are analytic,
and the leading coefficient, which is $a_1\cdot W_3\varphi_1$ composed with
an invertible analytic mapping, vanishes nowhere.
Therefore $f_1$ continues analytically to the image under $\varphi_1$
of any integral curve of $W_2$ that passes through $U$.

The proof of analytic extendability for $j=1,2$ is completed as in the final paragraph
of the preceding argument.  For $j=3$, it suffices to permute the indices.
\end{proof}

\begin{observation}
Under the hypotheses of Observation~\ref{observation:continuation},
the vector space of all real analytic solutions $\bff$ 
of the equation $\sum_{j=1}^3 a_j\cdot(f_j\circ\varphi_j)\equiv 0$ in $B$
has finite dimension. 
\end{observation}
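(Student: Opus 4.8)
The plan is to produce an injective linear map from the space $\mathcal{V}$ of global real analytic solutions of $\sum_{j=1}^3 a_j\,(f_j\circ\varphi_j)\equiv 0$ in $B$ into a fixed finite-dimensional Euclidean space, and thereby bound $\dim\mathcal{V}$. Assume as usual that $a_3\equiv 1$, fix a point $\barx_0$ in the interior of $B$, and define the linear map
\[
\mathcal{L}(\bff) = \big((f_j(\varphi_j(\barx_0)),\,f_j'(\varphi_j(\barx_0))):j\in\{1,2,3\}\big)\in\reals^6 .
\]
The goal is then to show that $\mathcal{L}$ is injective on $\mathcal{V}$, which gives $\dim\mathcal{V}\le 6$ at once.

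The first step is to recall the jet-rigidity established in the proof of Lemma~\ref{lemma:finite_type}. Under the hypotheses of Observation~\ref{observation:continuation} --- each $a_j$ nowhere zero, and $\nabla\varphi_i,\nabla\varphi_j$ everywhere linearly independent for $i\ne j$ --- one applies $W_3$ to the equation, divides by the nowhere-vanishing function $a_2\,W_3\varphi_2$, applies $W_2$, solves for $f_1''\circ\varphi_1$ (its coefficient $W_2\varphi_1$ being nowhere zero), permutes the indices, and differentiates the resulting identities repeatedly. The conclusion is that whenever $G_\bff\equiv 0$ on an open set $U$, for every $x\in U$, every $j\in\{1,2,3\}$, and every integer $n\ge 2$, the number $f_j^{(n)}(\varphi_j(x))$ is a fixed $\reals$-linear combination --- with coefficients real analytic in $x$ and depending only on $(\ba,\bPhi)$ --- of the six numbers $(f_k(\varphi_k(x)),f_k'(\varphi_k(x)):k\in\{1,2,3\})$. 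I would emphasize that this portion of the proof of Lemma~\ref{lemma:finite_type} uses only the nondegeneracy in play here and not the main hypothesis (the main hypothesis enters that proof only to exclude positive-dimensional jet spaces, which plays no role now).

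The second step is a connectedness argument. If $\bff\in\mathcal{V}$ with $\mathcal{L}(\bff)=0$, then specializing the jet identity to $x=\barx_0$ forces $f_j^{(n)}(\varphi_j(\barx_0))=0$ for all $n\ge 0$ and all $j$; that is, each $f_j$ vanishes to infinite order at $\varphi_j(\barx_0)$. Since $f_j$ is real analytic on $\varphi_j(B)$, which is connected (as $B$ is connected) and has $\varphi_j(\barx_0)$ as an interior point (since $\varphi_j$ is a submersion, hence an open map), the identity theorem for real analytic functions of one real variable gives $f_j\equiv 0$ on $\varphi_j(B)$ for each $j$. Hence $\mathcal{L}$ is injective, and $\dim\mathcal{V}\le 6$.

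I do not expect a genuine obstacle; the only delicate point is the verification flagged in the first step, namely that the jet-determination survives without the main hypothesis. As a fallback that sidesteps the jet machinery, one may instead observe that for any real analytic curve $\gamma$ through $\barx_0$ with $(\varphi_j\circ\gamma)'$ nowhere vanishing, the $\reals^6$-valued function $t\mapsto ((f_k\circ\varphi_k\circ\gamma(t),\,f_k'\circ\varphi_k\circ\gamma(t)):k\in\{1,2,3\})$ satisfies the linear ordinary differential equation \eqref{uODE} appearing in the proof of Lemma~\ref{lemma:finite_type}; uniqueness for that ODE, together with the same identity theorem, again forces injectivity of $\mathcal{L}$.
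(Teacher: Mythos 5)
Your argument is correct, but it is not the route the paper takes, and the two are worth contrasting. You linearize via the $1$-jet evaluation map $\mathcal L:\bff\mapsto\big((f_j(\varphi_j(\barx_0)),f_j'(\varphi_j(\barx_0))):j\big)\in\reals^6$ and prove injectivity using the second-order jet-rigidity extracted in the proof of Lemma~\ref{lemma:finite_type} --- and you are right that that portion of the argument uses only the nondegeneracy hypotheses (nowhere-vanishing $a_j$, pairwise transverse gradients) and not the main hypothesis, so it is legitimately available here. Combined with the one-variable identity theorem on the connected interval $\varphi_j(B)$, this yields $\dim\le 6$. The paper instead propagates uniqueness geometrically: after applying $W_3$, the equation restricted to an integral curve of $W_2$ becomes a \emph{first-order} inhomogeneous linear ODE for $f_1$ (the $j=2$ terms being constant along the curve), so $f_1|_{\varphi_1(\Gamma)}$ is determined by the three scalars $f_1(\varphi_1(\barx))$, $f_2(\varphi_2(\barx))$, $f_2'(\varphi_2(\barx))$; alternating integral curves of $W_1$ and $W_2$ and then solving the original equation for $f_3$ determines the whole solution from those three numbers, giving the sharper bound $\dim\le 3$. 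What your approach buys is brevity and a clean reduction to a single pointwise linear-algebra statement (and your ODE fallback via \eqref{uODE} is essentially a repackaging of the same $6$-dimensional jet transport); what the paper's approach buys is the better constant, reflecting that $f_3$, $f_3'$, and $f_1'$ at the base point are not free parameters but are themselves determined by the other three. Since the Observation asserts only finite dimensionality, your weaker bound suffices.
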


\begin{proof}
The vector space in question has dimension less than or equal to $3$.
Indeed, consider any point $\barx\in B$,
and consider the integral curve $\Gamma$ of $W_2$ that passes through $\barx$.
The restriction of $f_1\circ\varphi_1$ to $\Gamma$ --- equivalently,
the restriction of $f_1$ to the subinterval $\varphi_1(\Gamma)$ of its domain ---
is uniquely determined by the three scalar quantities 
$f_1(\varphi_1(\barx))$, $f_2(\varphi_2(\barx))$,
and $f'_2(\varphi_2(\barx))$, as well as by the fixed data $\ba,\bPhi$,
through the ordinary  differential equation that it satisfies.
The restriction of $f_2\circ\varphi_2$ to $\Gamma$ is constant,
so both functions $f_j\circ\varphi_j|_\Gamma$ are determined
by the initial three quantities.

Consider the integral curve $\Gamma'$ of $W_1$ that passes through any point $\barx'\in\Gamma$.
The restriction of $f_2\circ\varphi_2$ to $\Gamma'$
is uniquely determined  by
the three scalar quantities $f_2(\varphi_2(\barx')) = f_2(\barx)$, 
$f_1(\varphi_1(\barx'))$,
and $f'_1(\varphi_1(\barx'))$,
and thus by 
$f_1(\varphi_1(\barx))$, $f_2(\varphi_2(\barx))$,
and $f'_2(\varphi_2(\barx))$. 
Thus both functions $f_j\circ\varphi_j$ are uniquely determined
at $e^{s_1W_1}e^{s_2W_2}\barx$
for all $(s_1,s_2)$ in an open neighborhood of $(0,0)$.
This reasoning can be iterated.
Thus for $j=1,2$, $f_j$ is uniquely determined in a neighborhood of $\varphi_j(\barx)$, 
and hence by analytic continuation in the connected domain $\varphi_j(B)$.

The same conclusion is reached for $f_3$ by using the relation
$\sum_{j=1}^3 a_j\cdot (f_j\circ\varphi_j) =0$ to
solve for $f_3(\varphi_3(\barx))$,
then repeating the above analysis with the indices permuted.
\end{proof}


\section{Approximation by regular functions}

The proof of Theorem~\ref{thm:main} has implicitly established the following
intermediate result, in which the main hypothesis of Theorem~\ref{thm:main}
is not assumed; nor is any lower bound assumed for $|\bff|$.
We write $\alpha_{j,\omega}(y) = \alpha_j(y,\omega)$
and $\beta_{j,\omega}(y) = \beta_j(y,\omega)$
to emphasize that these are regarded as functions of $y$
that depend on a parameter $\omega$.
 
\begin{theorem} \label{thm:regularity}
Let $B\subset\reals^2$ be a closed ball, and let $\tilde B$ be an open neighborhood of $B$.
Let $\ba,\bPhi\in C^\omega(\tilde B)$.
Assume that none of the mappings $\varphi_j$ are constant on $B$,
and that for each $i\ne j\in\{1,2,3\}$,
$\nabla\varphi_i(x)$ and $\nabla\varphi_j(x)$ are not everywhere linearly dependent on $B$.
Assume that the curvature of the web defined by $\Phi$ does not vanish identically.
Let $(\ba,\bPhi)$ be nondegenerate and satisfy the auxiliary hypothesis. 

There exist $N<\infty$, a compact set $\Omega\subset\reals^N$,
open neighborhoods $U$ of $B$ and $\tilde\Omega$ of $\Omega$,
and $C^\omega$ functions $\alpha_j,\beta_j:\varphi_j(U)\times\tilde\Omega\to\reals$,
such that for each $\rho>0$ there exist $c,C,\tau\in(0,\infty)$
with the following property. 

Let $\eps>0$ be arbitrary.
Let $\bff$ be an ordered triple of Lebesgue measurable functions
satisfying $\norm{f_j}_{L^\infty}\le 1$.
Either
\begin{equation} |S(\bff,\eps)|\le C'\eps^{\tau} \end{equation}
or there exist a measurable set $\sS\subset S(\bff,\eps)$ satisfying
\begin{equation} |\sS|\ge c|S(\bff,\eps)|^C \end{equation}
and a parameter $\omega\in \Omega$ such that for each $j\in\{1,2,3\}$,
\begin{equation}
\left\{ \begin{aligned}
& |f_j(y)-\frac{\alpha_{j,\omega}(y)}{\beta_{j,\omega}(y)}| 
\le C\eps^{1-\rho} \ \ \forall\,y\in\varphi_j(\sS)
\\
&|\beta_{j,\omega}(y)| \ge \eps^\rho\ \ \forall\,y\in\varphi_j(\sS).
\end{aligned} \right.
\end{equation}
\end{theorem}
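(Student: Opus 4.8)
The plan is to extract Theorem~\ref{thm:regularity} directly from the mesoscale and macroscale steps of the proof of Theorem~\ref{thm:main}, stopping one step short of its conclusion: the main hypothesis and the lower bound $|f_3|\ge 1$ enter only in the final part of the macroscale analysis (Lemma~\ref{lemma:invokeLoj} and the subsequent \L{}ojasiewicz estimate), so the approximation content of the argument remains available without them.

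First I would fix $\gamma\in(\tfrac12,1)$ and small parameters $\rho_1,\rho_2,\varrho>0$, to be chosen later in terms of the given $\rho$, and apply Proposition~\ref{prop:mesoscale} to $(\bff,\eps)$ with its parameter equal to $\rho_1$. Either $|S(\bff,\eps)|\le C\eps^{c}$, which is the first alternative of the theorem, or there exist a measurable set $\sS_1\subset S(\bff,\eps)$ with $|\sS_1|\gtrsim|S(\bff,\eps)|^{C}$ and piecewise-affine functions $g_j$, affine on intervals of length $\eps^\gamma$, with $|g_j'|=O(\eps^{-\rho_1})$ and $|g_j\circ\varphi_j-f_j\circ\varphi_j|\le C\eps^{1-\rho_1}$ on $\sS_1$. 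Since $\ba=O(1)$ and $\bff=O(1)$, the inequality defining $S(\bff,\eps)$ upgrades on $\sS_1$ to $\big|\sum_j a_j\,(g_j\circ\varphi_j)\big|\le C\eps^{1-\rho_1}$, so $\sS_1\subset S(\bg,C\eps^{1-\rho_1})$.

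Next I would feed $\bg$ into Lemma~\ref{lemma:meromorphicity} with $\delta=\eps^\gamma$, applied successively for $k=1,2,3$. For $\rho_1,\varrho$ small and $\rho_2$ chosen large enough relative to $\rho_1/\gamma$, the hypotheses hold: $\delta^{1+2\varrho}\ge C\eps^{1-\rho_1}$ (because $\gamma<1$), so $\sS_1\subset S(\bg,\delta^{1+2\varrho})$; the functions $g_j$ are affine on length-$\delta$ intervals with $|g_j'|=O(\delta^{-\rho_2})$; and the remaining structural hypotheses --- nonvanishing coefficients, pairwise transverse gradients, nonvanishing web curvature, the weak auxiliary hypothesis --- are among the standing assumptions of Theorem~\ref{thm:regularity}, the weak auxiliary hypothesis following from the auxiliary hypothesis. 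Each application passes to a further large subset; either that set shrinks to $O(\delta^{c})$, which together with $|\sS_1|\gtrsim|S(\bff,\eps)|^{C}$ gives the first alternative, or one obtains $\sS\subset\sS_1$ with $|\sS|\gtrsim|S(\bff,\eps)|^{C}$ and, for each $j$, parameters $\bt_j\in T_j$, $\theta_j\in\Gamma_j$, $\barr_j\in[c\delta^{\rho_2},1]$ with
\[
\barr_j\,\beta^j_{\bt_j}(y)\,g_j(y)=\alpha^j_{\bt_j,\theta_j}(y)+O(\delta^\varrho),\qquad |\beta^j_{\bt_j}(y)|\ge\delta^{\rho_2}
\]
for all $y\in\varphi_j(\sS)$, where $\alpha^j,\beta^j$ are the $C^\omega$ functions supplied by Lemma~\ref{lemma:meromorphicity}.

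Finally I would package the output. Set $N=\sum_j(\dim T_j+\dim\Gamma_j+1)$, let $\Omega$ be the compact set $\prod_j(T_j\times\Gamma_j\times[0,1])$ with $\tilde\Omega$ a neighborhood, put $\omega=\big((\bt_j,\theta_j,\barr_j):j\in\{1,2,3\}\big)$, and define $\alpha_{j,\omega}(y)=\alpha^j_{\bt_j,\theta_j}(y)$, $\beta_{j,\omega}(y)=\barr_j\,\beta^j_{\bt_j}(y)$, which are $C^\omega$ in $(y,\omega)$ on a neighborhood of $\varphi_j(B)\times\Omega$. On $\varphi_j(\sS)$ one has $|\beta_{j,\omega}(y)|\ge c\delta^{2\rho_2}$, so dividing the displayed identity by $\beta_{j,\omega}(y)$ and then using $|g_j\circ\varphi_j-f_j\circ\varphi_j|\le C\eps^{1-\rho_1}$ gives $|f_j(y)-\alpha_{j,\omega}(y)/\beta_{j,\omega}(y)|\le C(\eps^{1-\rho_1}+\delta^{\varrho-2\rho_2})$ and $|\beta_{j,\omega}(y)|\ge c\delta^{2\rho_2}$ for $y\in\varphi_j(\sS)$. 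Choosing $\gamma,\rho_1,\rho_2,\varrho$ suitably small in terms of the prescribed $\rho$ --- the same tuning as in \S\ref{section:conclusion} --- brings the error and the lower bound into the forms $C\eps^{1-\rho}$ and $\eps^{\rho}$ of the statement, with $\tau$ and the other constants coming from the two alternatives. I do not expect a fundamentally new difficulty here: the theorem is essentially a repackaging of Proposition~\ref{prop:mesoscale} and Lemma~\ref{lemma:meromorphicity}. The one genuinely delicate point is the three-scale bookkeeping --- verifying that the sublevel inclusion and the derivative bound needed to apply Lemma~\ref{lemma:meromorphicity} to $\bg$ hold at $\delta=\eps^\gamma$, and carefully tracking how the $O(\delta^\varrho)$ error and $\delta^{\rho_2}$ lower bound of that lemma, combined with the scalars $\barr_j$ absorbed into $\beta_{j,\omega}$, become the powers of $\eps$ asserted in the theorem.
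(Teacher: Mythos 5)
Your route is exactly the one the paper intends: it offers no explicit proof of Theorem~\ref{thm:regularity}, describing it as implicit in the proof of Theorem~\ref{thm:main}, and the implicit proof is precisely Proposition~\ref{prop:mesoscale} followed by Lemma~\ref{lemma:meromorphicity} applied for $k=1,2,3$ (the two ingredients that use only the auxiliary hypothesis, stopping before the point where the main hypothesis and the lower bound on $|f_3|$ enter), with the parameters $(\bt_j,\theta_j,\barr_j)$ packaged into $\omega$ and $\barr_j$ absorbed into $\beta_{j,\omega}$. Structurally the proposal is faithful and correct.

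The one step that does not check out is the final tuning claim. After dividing by $\beta_{j,\omega}=\barr_j\beta^j_{\bt_j}$, the error contributed by Lemma~\ref{lemma:meromorphicity} is $O(\delta^{\varrho-2\rho_2})=O(\eps^{\gamma(\varrho-2\rho_2)})$. Since the proof of that lemma forces $2-\rho\ge 1+2\varrho$, hence $\varrho<\tfrac12$, and since $\gamma<1$, the exponent $\gamma(\varrho-2\rho_2)$ is bounded above by $\tfrac12$; no admissible choice of $\gamma,\varrho,\rho_1,\rho_2$ makes this error $\le C\eps^{1-\rho}$ when $\rho$ is small. So your assertion that the parameters can be tuned to reach the stated exponent is unjustified: this route yields $|f_j(y)-\alpha_{j,\omega}(y)/\beta_{j,\omega}(y)|\le C\eps^{c}$ for some small $c=c(\rho)>0$, not $C\eps^{1-\rho}$. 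To be fair, this appears to be a defect of the exponent as printed in Theorem~\ref{thm:regularity} rather than of your strategy --- the paper's own implicit argument produces exactly the same small power --- but since you explicitly claim the tuning closes the gap, you should either prove that claim (which I do not believe is possible by this route) or state the conclusion with the weaker, correct exponent.
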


Results of this type may play a role in the removal of auxiliary hypotheses
such as those in Theorems~\ref{thm:main} and \ref{thm:linearmappings}.

\section{A special case with arbitrarily many summands}  \label{section:linearmappings}

In this section we consider sublevel sets $S(\bff,\eps)$ 
for sums $\sum_{j=1}^n a_j(x)\,(f_j\circ\varphi_j)(x)$
with an arbitrary number $n$ of terms, in the special case in which all mappings $\varphi_j$
are linear. Such a situation is specified by
a datum \[\scriptd = (n,\bPhi,\ba) =  (n,\{\varphi_j: 1\le j\le n\},\{a_j: 1\le j\le n\}).\]

Below, we define a family of derived data $\scriptd^*$
associated to a datum $\scriptd$. This family is finite, but has cardinality
very roughly of size $2^{2^{2^{\cdots}}}$ with an exponential tower of height $n-2$.
To $\scriptd$ is associated the linear functional
$\sum_{j=1}^n a_j(x)\,(f_j\circ\varphi_j)(x)=0$;
to each derived datum $\scriptd^*$ is associated a corresponding functional.

\subsection{Formulation of the inequality}
The following result applies to sublevel set inequalities with arbitrarily many summands.

\begin{theorem}  \label{thm:linearmappings}
Let $B\subset\reals^2$ be a closed ball of positive, finite radius.
Let $\tilde B\subset\reals^2$ be an open neighborhood of $B$.
Let $n\ge 1$. 
For each $j\in\{1,2,\dots,n\}$ let $\varphi_j:\reals^2\to\reals^1$
be a surjective linear mapping, and
let $a_j:\tilde B\to\complex$ be real analytic. 
Let $\scriptd  = (n,\{\varphi_j\},\{a_{j}\})$.

Suppose that for any two distinct indices $i\ne j\in J$,
$\nabla\varphi_i,\nabla\varphi_j\in \reals^2$ are linearly independent.
Suppose that none of the coefficients $a_{j}$ vanish identically in $B$.
Suppose that for any nonempty open set $U\subset\tilde B$,
and for $\scriptd$ as well as for any datum $\scriptd^*$ associated to $\scriptd$, 
any real analytic solution $\bff$ of 
the linear equation associated to $\scriptd^*$
vanishes identically in $\bPhi(U)$.

Then there exist $C<\infty$ and $\tau>0$ such that for any Lebesgue measurable $\bff$
and any $\eps>0$,
\begin{equation}
|\{x\in S(\bff,\eps): \sum_j  |f_{j}\circ\varphi_j(x)|\ge 1\}| \le C\eps^\tau.
\end{equation}
\end{theorem}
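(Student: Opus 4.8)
The plan is to prove Theorem~\ref{thm:linearmappings} by induction on the number $n$ of summands, following the three-scale architecture of the proof of Theorem~\ref{thm:main}. The preliminary reductions of \S\ref{section:notation} apply verbatim: dyadic decomposition of $S(\bff,\eps)$ according to the size of $\max_j|f_j\circ\varphi_j|$ reduces matters to $\norm{f_j}_{L^\infty}\le 2$, and it suffices to bound $|S(\bff,\eps)|$ under the assumption $|f_{j_0}\circ\varphi_{j_0}|\ge 1$ for one index $j_0$. Since all $\varphi_j$ are linear with pairwise linearly independent gradients, the web they define is flat and the only degeneracy is the union of the zero sets of the $a_j$; exactly as in \S\ref{section:degenerate} one covers $B$, off a set of measure $O(\eps^c)$, by $O(\eps^{-C})$ balls on which every $|a_j|$ exceeds a fixed small power of $\eps$, works on each ball, and absorbs the resulting powers of $\eps$ at the end. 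So one may assume the $a_j$ vanish nowhere.

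The engine is an \emph{elimination step}. Having arranged, through the $n$-term analogues of the microscale and mesoscale analyses (Propositions~\ref{prop:microscale} and \ref{prop:mesoscale}, established by a parallel induction whose two-term input is Proposition~\ref{prop:resurrected}), that on a set $\sS$ with $|\sS|\gtrsim|S(\bff,\eps)|^C$ each $f_j$ agrees to error $O(\eps^{1-\rho})$ with a function $f_j^*$ that is affine on intervals of length $\eps^\gamma$ and has $|(f_j^*)'| = O(\eps^{-\rho})$, one differentiates $\sum_{j=1}^n a_j(f_j^*\circ\varphi_j)$ along a constant vector field $W$ annihilating $\varphi_n$ but no other $\varphi_j$. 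Smallness of this sum on $\sS$ becomes smallness of $\sum_{j=1}^{n-1}\big[(Wa_j)(f_j^*\circ\varphi_j)+a_j(W\varphi_j)(g_j^*\circ\varphi_j)\big]$ with $g_j^*=(f_j^*)'$: a sublevel problem attached to the linear functional of a first-generation derived datum, with $n-1$ mappings but a vector of unknown functions per mapping. Iterating --- each round differentiating along a vector field killing the current top mapping, then replacing the equation by the linear combination of it and finitely many of its derivatives that kills the coefficient block of the mapping to be removed, after division by the relevant non-vanishing minor --- produces in $n-2$ rounds a two-mapping problem of exactly the type governed by Proposition~\ref{prop:resurrected} and its parametrized extension. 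The branching is forced by the choice of minor at each round together with the dichotomy ``the minor is bounded below on a large set'' versus ``the minor is small everywhere,'' which is what makes the family $\{\scriptd^*\}$ of tower-of-exponentials size. The base cases $n\le 2$ are immediate: for $n=1$, $|a_1(f_1\circ\varphi_1)|\gtrsim 1$ on most of $B$ contradicts $<\eps$; for $n=2$, Proposition~\ref{prop:resurrected} applies directly, the hypothesis being precisely that the two-term form of $\scriptd$ has no nontrivial analytic solution.

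One then chains the sublevel estimates from the leaves of the derived-data tree back up to $\scriptd$. At each node the non-vanishing of the eliminating minor, guaranteed by the hypothesis that every analytic solution of the equation attached to each $\scriptd^*$ vanishes identically, together with \L{}ojasiewicz's inequality and Lemmas~\ref{lemma:analyticsublevel}, \ref{lemma:quantsublevel} and \ref{lemma:sublevel_notSigma}, converts an $O(\eta^\kappa)$ bound for the child's sublevel set into an $O(\eta^{\kappa'})$ bound for the parent's, at the cost of a fixed power of the minor controlled by a power of $\eps$. After $n-2$ such steps one recovers $|S(\bff,\eps)|\le C\eps^\tau$, completing the induction.

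The main obstacle I expect is the combinatorial and algebraic bookkeeping of this recursion: defining the class of derived data so that it is closed under the operations used and stays finite (precisely what forces the branching and the tower-type cardinality); keeping the number of unknown functions attached to each surviving mapping bounded so that the linear-algebra elimination terminates, which is where linearity of the $\varphi_j$ is used decisively, since it keeps each $W\varphi_j$ constant and suppresses the cascade of lower-order correction terms a curved web would produce; and checking that the transversality and bracket hypotheses demanded by Proposition~\ref{prop:resurrected} survive at the bottom of the recursion, which is automatic in the linear case. Propagating the quantitative exponents uniformly over all $O(2^{2^{\cdots}})$ branches, and confirming that the final $\tau$ stays positive, is the remaining technical point.
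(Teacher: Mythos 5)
Your overall architecture --- induction on $n$, three scales, elimination of one mapping per round by differentiating along a vector field annihilating it, with the branching encoded by the derived data --- matches the paper's. But there is a genuine gap at the heart of the proposal: you repeatedly lean on Proposition~\ref{prop:resurrected}, both as the ``two-term input'' for your microscale/mesoscale analyses and as the terminus of the elimination, and you assert that its bracket hypothesis is ``automatic in the linear case.'' It is the opposite: by Lemma~\ref{lemma:bracket}, the vector fields $V_1,V_2$ annihilating $\psi_1,\psi_2$ satisfy the bracket condition precisely when the web has somewhere nonvanishing curvature, and for linear $\varphi_j$ the web is flat, so the bracket condition fails identically (with $\varphi_j(x)=x_j$ and $W_n\varphi_j$ constant, the annihilating fields are $\partial_{x_2}$ and $\partial_{x_1}$, which commute). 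So Proposition~\ref{prop:resurrected} is simply unavailable here. The paper's proof of Theorem~\ref{thm:linearrecursive} turns this defect into the main simplification: because $W_n\varphi_j$ is a \emph{constant}, the differenced inequality \eqref{Linear_system1} is not an irreducible sublevel problem in $(x,s)\in B\times\reals$ but a one-parameter family, indexed by $s$, of sublevel problems in $x\in B$ for the \emph{same} datum with $N_n$ replaced by $N_n-1$. These are disposed of by an inner induction on $N_n$ (nested inside the outer induction on $n$), uniformly in $s$; no bracket condition and no associated data enter the micro- or mesoscale steps at all. Associated data arise only in the macroscale step, exactly as in your elimination picture.

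A second, related gap is in your base cases. The recursion inevitably produces data with several unknown functions attached to a single mapping ($N_j^*=2N_j$), so the induction must be run for the generalized statement (Theorem~\ref{thm:linearrecursive}), and its true base case is $n=1$ with arbitrary $N_1$: a single mapping $\varphi$ and a sum $\sum_{k=1}^{N}a_k\,(f_k\circ\varphi)$. This is not the ``immediate'' contradiction you describe for a single function; it is Proposition~\ref{prop:n=1}, proved by evaluating the system at $N$ points $x_1,\dots,x_N$ on a common level line of $\varphi$, forming the $N\times N$ matrix $\bigl(a_k(x_j,y)\bigr)$, and showing by a separate induction on $N$ that its determinant cannot vanish identically unless the main hypothesis fails. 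Likewise your $n=2$ base case cannot be ``Proposition~\ref{prop:resurrected} applies directly'' --- both because of the bracket failure above and because with $N_1,N_2>1$ one has vectors of unknowns per mapping, handled instead by the matrix-elimination device of \eqref{matrixeqn} and Lemma~\ref{lemma:det(B)}. Once these two points are repaired --- replace the Proposition~\ref{prop:resurrected} input by the inner induction on $N_n$ exploiting constancy of $W_n\varphi_j$, and install Proposition~\ref{prop:n=1} as the base case --- the rest of your outline tracks the paper's argument.
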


The proof of this theorem combines a simplification 
of the proof of Theorem~\ref{thm:main} with a recursion.
The complexity of this recursion increases quite rapidly as 
the number of summands increases.
We conjecture that the conclusion holds without
any auxiliary hypotheses involving associated data.
Since each datum $\scriptd^*$ gives rise to its own auxiliary
hypothesis, and since the number of data associated to a given
datum $\scriptd$ grows so rapidly as the order of $\scriptd$ increases,
it would be desirable to eliminate these auxiliary hypotheses.

By a ratio of real analytic functions we will mean such a ratio, with denominator
not vanishing identically in any nonempty open set.
Such a ratio is well-defined on the complement of an analytic variety
of positive codimension. 

\subsection{Associated data and associated linear equations}
In order to complete the statement of Theorem~\ref{thm:linearmappings},
we next introduce the associated data and their associated linear equations. 
Let $n\ge 1$. 
Let $J$ be an index set of cardinality $n$.
For each $j\in J$
let $\varphi_j:\reals^2\to\reals^1$ be linear and surjective, and 
let $a_j:\tilde B\to\complex$ be a ratio of real analytic functions
that does not vanish identically in any nonempty open set.

Let $W_j$ be nonzero constant-coefficient real vector fields that satisfy $W_j(\varphi_j)\equiv 0$.
We are interested in sublevel sets
\[ S(\bff,\eps)
= \big\{x\in B: \big|\sum_{j\in J} a_j(x)(f_j\circ\varphi_j)(x)\big| <\eps\big\}. \]
Our main hypothesis
is that there are no nonzero $C^\omega$ solutions, in any nonempty open set,
of the equation
\begin{equation} \label{nequation} \sum_{j\in J} a_j(x)(f_j\circ\varphi_j)(x)=0. \end{equation}
However, the analysis developed below requires certain auxiliary hypotheses,
which are formulated in terms of certain associated equations that generalize \eqref{nequation}.
These associated equations take the general form
\begin{equation} \label{nequation2} \sum_{j=1}^m \sum_{k_j=1}^{N_j} 
a_{j,k_j}(x)(f_{j,k_j}\circ\varphi_j)(x)=0. \end{equation}
They are constructed recursively from $n$, $\{\varphi_j\}$, $\{a_j\}$ as follows.

Begin with a more general datum 
\[\scriptd = (n,\{\varphi_j: 1\le j\le n\}, \{N_j: 1\le j\le n\}, \{a_{j,k_j}: 1\le k_j\le N_j\})\]
for any $n,N_j\in\naturals$.
Construct an associated datum, as follows.

If $N_n=1$ set $m=n-1$ and 
$\varphi_j^* = \varphi_j$ for each $j\le m$. Set
$N_j^* = 2N_j$ for each $j\le m$.
For each $j \le m$ define
\begin{alignat*}{2} 
	a_{j,k_j}^* &= a_{j,k_j}/a_{n,k_{N_n}} &&\forall\,1\le k_j\le N_j
	\\ a_{j,k_j}^* &= W_n (a_{j,k_j}/a_{n,k_{N_n}})  \qquad &&\forall\, N_j<k_j\le 2N_j. 
\end{alignat*}

If $N_n>1$ set $m=n$. Set 
$\varphi_j^* = \varphi_j$ for each $j\le m$. 
Set $N_j^* = 2N_j$ for each $j\le n-1$, and $N_m^* = N_n-1$.
For each $j\in \{1,2,\dots,n-1\}$ 
and each $k_j\in\{1,2,\dots,N_j\}$,
define $a_{j,k_j}^*$ as in the preceding paragraph.
Set \[a_{n,k_n}^* = a_{n,k_n}/a_{n,k_{N_n}} \ \forall\, k_n<N_n. \]
The datum $\scriptd^* = (m,\{\varphi_j^*\}, \{N_j^*\}, \{a_{j,k_j}^*\})$
is said to be associated to $\scriptd = (n,\{\varphi_j\},\{N_j\},\{a_{j,k_j}\})$.

Applying this association rule recursively, and making association a transitive relation,
generates a finite collection of data associated to $\scriptd$.
We apply this association rule with $(1,2,\dots,n)$ replaced by an arbitrary
permutation of itself, and with the indices $k_j$ also arbitrarily
permuted for each index $j$, so that any associated datum $\scriptd^*$
for any such permutation is also associated to $\scriptd$.

\subsection{Formulation of the theorem for general data $\scriptd^*$}
Theorem~\ref{thm:linearmappings} is the special case 
in which $N_j=1$ for every index $j$ of the following result. 
We write
\[\bff = \big(f_{j,k_j}: j\in J \text{ and } 1\le k_j\le N_j\big)\]
and
\begin{equation} \label{doublesum}
S(\bff,\eps) = \big\{x\in B: \big|\sum_{j=1}^n \sum_{k_j=1}^{N_j}
a_{j,k_j}(x)\,(f_{j,k_j}\circ\varphi_j)(x)\big|<\eps\big\}.
\end{equation}

\begin{theorem} \label{thm:linearrecursive}
Let $B\subset\reals^2$ be a closed ball of positive, finite radius.
Let $\tilde B \subset\reals^2$ be an open neighborhood of $B$.
Let $n\ge 1$. 
For each $j\in\{1,2,\dots,n\}$ let $N_j\in\naturals$.
For each $j\in\{1,2,\dots,n\}$ let $\varphi_j:\reals^2\to\reals^1$
be a surjective linear mapping.
For each $j\in\{1,2,\dots,n\}$ and each $k\in\{1,2,\dots,N_j\}$ let $a_{j,k}:
\tilde B\to\complex$ be a ratio of real analytic functions.
Let $J = \{1,2,\dots,n\}$  and $\scriptd  = (J,\{\varphi_j\},\{N_j\},\{a_{j,k_j}\})$.

Suppose that for any two distinct indices $i\ne j\in J$,
$\nabla\varphi_i,\nabla\varphi_j\in \reals^2$ are linearly independent.
Suppose that none of the coefficients $a_{j,k_j}$ vanish identically in $B$.
Suppose that for any nonempty open set $U\subset\tilde B$,
and for $\scriptd$ as well as for any datum 
$\scriptd^* = \big(n^*,\{\varphi_j^*\},\{N_j^*\},\{a_{j,k_j}^*\}\big)$ 
associated to $\scriptd$, 
any real analytic solution $\bff$ of \[\sum_{j=1}^{n^*} \sum_{k_j=1}^{N_j^*}
a_{j,k}^*(x)\,(f_{j,k}\circ\varphi^*_j)(x)=0 \ \text{ in $U$} \]
satisfies $f_{j,k}\equiv 0$ in $\varphi_j(U)$ for all $(j,k)$.

Then there exist $C<\infty$ and $\tau>0$ such that for any Lebesgue measurable $\bff$
and any $\eps>0$,
\begin{equation}
	|\{x\in S(\bff,\eps): \sum_{j=1}^n \sum_{k_j=1}^{N_j} |f_{j,k}\circ\varphi_j(x)|\ge 1\}| \le C\eps^\tau.
\end{equation}
\end{theorem}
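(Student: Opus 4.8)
The plan is to argue by induction along the association relation introduced in \S\ref{section:linearmappings}. Each association step either removes a group of summands (when the multiplicity $N_n$ of the eliminated group equals $1$, so that the number $n$ of groups drops by one) or leaves $n$ unchanged while lowering $N_n$; and the family of data associated to a fixed $\scriptd$ is finite. Hence, if at each step one eliminates (or shrinks) a group of minimal multiplicity --- which is permitted, since the association rule may be applied after an arbitrary permutation of the indices --- the complexity $(n,\min_j N_j)$ decreases lexicographically, so ``$\scriptd^*$ is associated to $\scriptd$'' is a well-founded relation and one may induct on that complexity. Because the hypotheses of Theorem~\ref{thm:linearrecursive} are imposed on $\scriptd$ \emph{and} on every datum associated to it, and association is transitive, these hypotheses are inherited by every $\scriptd^*$; thus whenever the inductive hypothesis is invoked for $\scriptd^*$, its own auxiliary hypotheses are automatically in force. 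Finally, writing $\sum_{j,k}a_{j,k}(f_{j,k}\circ\varphi_j)$ in real and imaginary parts and using $\{|z|<\eps\}\subseteq\{|\operatorname{Re}z|<\eps\}$, it suffices to treat real-valued coefficients, and by the dyadic decomposition of \S\ref{section:notation} it suffices to treat $\norm{f_{j,k}}_{L^\infty}\le 2$.

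For the base case $n=1$ the sublevel set lies below a single surjective linear map $\varphi_1$. After a pigeonhole argument producing a measurable $\sS\subset S(\bff,\eps)$ with $|\sS|\gtrsim|S(\bff,\eps)|$ on which $|f_{1,k_0}|\gtrsim 1$ on $\varphi_1(\sS)$ for some index $k_0$, one slices along the level lines of $\varphi_1$: on the line $\varphi_1=y$ the quantity $\sum_k a_{1,k}(x)f_{1,k}(y)$ equals $H_\omega$ evaluated along that line, where $\omega=(f_{1,k}(y))_k/\max_k|f_{1,k}(y)|$ ranges over a fixed compact subset of $\reals^{N_1}$ with $|\omega|_\infty=1$. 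The no-nontrivial-solution hypothesis for $\scriptd$ says precisely that $\sum_k\omega_k a_{1,k}$ does not vanish on any nonempty open set in $x$ for any fixed $\omega$ in that set, so Lemma~\ref{lemma:sublevel_notSigma} applies; integrating the resulting one-dimensional sublevel bounds over $y$ (the distance-to-$\Sigma$ factor contributes only a finite power loss) gives $|S(\bff,\eps)|\le C\eps^\tau$. (The case $n=2$ may be folded into the inductive step below, or handled directly by the several-functions-per-mapping version of Proposition~\ref{prop:resurrected} combined with the same slicing.)

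For the inductive step with $n\ge 2$ one follows the three-scale architecture of the proof of Theorem~\ref{thm:main}, simplified by the linearity of the $\varphi_j$: the mappings $\psi_j$ and the vector fields $W_j,V_j$ are explicit, and the curvature/bracket hypotheses of \S\ref{section:morelemmas} hold automatically. (i) Normalize and, by pigeonholing, obtain $\sS\subset S(\bff,\eps)$ with $|\sS|\gtrsim|S(\bff,\eps)|$ carrying a pointwise lower bound $|f_{j_0,k_0}|\gtrsim 1$ on $\varphi_{j_0}(\sS)$. (ii) Microscale and mesoscale: for each index $j_*$ in turn, subtract the defining inequality at $x$ and at $e^{sW_{j_*}}x$, divide by $s$, and pass to the limit $s\to 0$; modulo $O(\eps^{1-\sigma})$ errors on a large subset, the resulting differenced equation --- in which both $f_{j,k}$ and $f'_{j,k}$ occur, whence the doubling $N_j\mapsto 2N_j$ --- is exactly the linear equation attached to the datum $\scriptd^*$ associated to $\scriptd$ by eliminating the group $j_*$. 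Its no-nontrivial-solution hypothesis (supplied by the hypotheses of the theorem) lets one run the $\Theta$-composition and analytic-sublevel argument of \S\ref{section:microscale}--\S\ref{section:mesoscale}, concluding that either $|\sS|\le C\eps^c$ or every $f_{j,k}$ agrees, on a large subset of $\sS$, with a function that is affine on intervals of length $\delta=\eps^\gamma$ with derivative $O(\eps^{-\rho})$. (iii) Macroscale: given these piecewise-affine approximants, repeat \S\ref{section:macroscale}. Differentiating with $W_{j_*}$ eliminates one group and, after solving the resulting over-determined linear system for the pair $(f_{j_0,k_0},f_{j_0,k_0}')$, yields a meromorphic representation $\barr\,\beta_\bt(y)f_{j_0,k_0}(y)=\alpha_{\bt,\theta}(y)+O(\delta^\varrho)$ with $|\beta_\bt|\ge\delta^\rho$, in the manner of Lemma~\ref{lemma:meromorphicity}, the requisite nonvanishing of the relevant determinant being the two-index, $\tau=1$ instance of the auxiliary hypothesis for $\scriptd^*$ (Lemma~\ref{lemma:det(B)} generalized). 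Applying this for every $j$ and multiplying the $n$ representations into the identity $\prod_j\barr_j\beta^j_{\bt_j}(\varphi_j(x))\cdot\sum_{j,k}a_{j,k}(f_{j,k}\circ\varphi_j)(x)=\sum_{j,k}a_{j,k}(x)\prod_{l\ne j}\barr_l\beta^l_{\bt_l}(\varphi_l(x))\,\alpha^j_{\bt_j,\theta_j}(\varphi_j(x))+O(\delta^\varrho)$, one invokes the main hypothesis for $\scriptd$ itself to see that the analytic function of the parameters so produced does not vanish on any nonempty open set; Łojasiewicz's inequality together with the lower bound $|f_{j_0,k_0}|\gtrsim 1$ keeps the relevant parameter at distance $\gtrsim\delta^{2\rho}$ from the bad variety, and Lemma~\ref{lemma:quantsublevel} then gives $|S(\bff,\eps)|\le C\eps^c$. (As in Theorem~\ref{thm:regularity}, the natural object on which the induction actually runs is the pair consisting of the sublevel bound and this meromorphic-approximation statement.)

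The main obstacle is structural bookkeeping rather than any new analytic idea: one must verify at each scale that the equation produced by the reduction is \emph{precisely} (up to acceptable errors) the linear equation attached to a datum associated to $\scriptd$, so that the matching auxiliary hypothesis is available, and one must control the tower-exponential growth in the number of associated data so that (a) the induction stays well-founded --- handled by the lexicographic measure above --- and (b) the constants $C,\tau$ depend only on $\scriptd$, which holds since only finitely many data occur below a given rank. A secondary difficulty, absent in the $n=3$ case, is that several functions are attached to each $\varphi_j$, so the microscale and mesoscale steps must extract and propagate the regularity of all of them at once; this is dealt with by applying the reduction with $j_*$ ranging over every index and intersecting the resulting large sets, exactly as Proposition~\ref{prop:microscale} iterates Lemma~\ref{lemma:exactsolns} over permutations.
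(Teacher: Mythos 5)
The central step of your inductive argument does not go through as described. You assert that for linear $\varphi_j$ ``the curvature/bracket hypotheses of \S\ref{section:morelemmas} hold automatically,'' and on that basis you propose to run the $\Theta$-composition and analytic-sublevel machinery of \S\ref{section:microscale}--\S\ref{section:mesoscale} at the micro- and mesoscales. This is exactly backwards: by Lemma~\ref{lemma:bracket} the vector fields $V_1,V_2$ satisfy the bracket condition if and only if the curvature of the web defined by $\bPhi$ does \emph{not} vanish identically, and for linear mappings that curvature vanishes identically, so the bracket condition fails everywhere. Consequently Lemma~\ref{lemma:exactsolns}, Lemma~\ref{lemma:prelimmicrostep}, Proposition~\ref{prop:resurrected}, and the proof of Lemma~\ref{lemma:microstep2A} are all unavailable. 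The mechanism that replaces them --- and this is the idea your proposal is missing --- is that because each $W_n\varphi_j$ is a \emph{constant}, the differenced inequality never becomes an irreducible sublevel problem in $(x,s)\in B\times\reals$: for each fixed $s$ it is already a two-dimensional sublevel inequality over $B$ for the datum obtained from $\scriptd$ by replacing $N_n$ with $N_n-1$ (no doubling, no associated datum at this stage), and one applies the \emph{inductive hypothesis itself} fiberwise, uniformly in $s$ (resp.\ $(s,s')$ at the mesoscale). The associated data with doubled multiplicities arise only at the macroscale, where the differentiated inequality is literally the sublevel inequality for $\scriptd^*$ and is disposed of by the inductive hypothesis together with the surviving lower bound $|f_{1,1}|\ge 1$; no meromorphic representation, cofactor system, or \L{}ojasiewicz step is needed there.

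Two smaller problems. First, your base case $n=1$ misreads the hypothesis: for fixed $y$ the vector $\omega=(f_{1,k}(y))_k$ is an arbitrary unit vector, and the hypothesis does not forbid $\sum_k\omega_k a_{1,k}$ from vanishing along the single line $\{\varphi_1=y\}$ --- it only forbids analytic families $y\mapsto f_{1,k}(y)$ solving the equation on an open set --- so slicing plus Lemma~\ref{lemma:sublevel_notSigma} does not close. The correct device is the determinant of the $N_1\times N_1$ matrix $A(\bx,y)=\big(a_{1,k}(x_j,y)\big)$ built from $N_1$ points on a common level line: the hypothesis shows $\det A$ is not identically zero (if it were, an elimination-and-differentiation argument produces coefficients $c_k(y)$ depending on $y$ alone, i.e.\ an analytic solution), and Lemma~\ref{lemma:analyticsublevel} applied to $\det A$ on the Cauchy--Schwarz-inflated set gives the bound. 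Second, your well-founded measure $(n,\min_j N_j)$ need not decrease under an association step: the eliminated group must differ from the one carrying the pointwise lower bound, and if that group is the unique one of minimal multiplicity the minimum can only increase, since the surviving multiplicities double. The induction should instead be the double induction on $n$ and then on the multiplicity of the eliminated group, with the inner statement quantified over all choices of the remaining multiplicities.
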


It can be shown that in the sense of jets of sufficiently high order at a point,
generic data $\scriptd$ do satisfy all of the hypotheses of the theorem.

\subsection{The case $n=1$} \label{section:n=1}


In this subsection we prove Theorem~\ref{thm:linearrecursive} for $n=1$.
With only one mapping $\varphi = \varphi_1$,
the linearizability hypothesis is tautologous wherever $\nabla\varphi\ne 0$.
The recursion on $N_1$ is now straightforward, and no auxiliary hypotheses are needed.
The statement is as follows.


\begin{proposition} \label{prop:n=1}
Let $B\subset\reals^2$ be a closed ball of finite, positive radius. 
Let $\tilde B$ be an open neighborhood of $B$.
Let $N\in\naturals$.
For each $k\in\{1,2,\cdots,N\}$ let $a_k:\tilde B\to\reals$ be real analytic.
Let $\varphi:\tilde B\to\reals$ be real analytic,  and be nonconstant on $B$.
Suppose that for any nonempty open set $U\subset\tilde B$
and any $\bff = (f_1,\dots,f_N)\in C^\omega(\varphi(U))$,
if $\sum_{k=1}^N a_k(x) (f_k\circ\varphi)(x)=0$ in $U$ then $\bff$
vanishes identically in $\varphi(U)$.
Then there exist $\tau>0$ and $C<\infty$ such that for any $\eps>0$
and any Lebesgue measurable $\bff$,
\begin{equation}
\big|\big\{
x\in B: |\sum_{k=1}^N a_k(x)(f_k\circ\varphi)(x)|<\eps
\ \text{ and } |\bff\circ\varphi(x)|\ge 1 \big\}\big| \le C\eps^\tau.
\end{equation}
\end{proposition}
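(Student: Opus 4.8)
Because a single mapping $\varphi$ occurs, each $f_k\circ\varphi$ is constant along the level curves of $\varphi$, so the near‑vanishing of $\sum_{k=1}^N a_k\,(f_k\circ\varphi)$ at $N$ points lying on one level curve is a near‑linear system of $N$ equations for the $N$ unknowns $f_1(y),\dots,f_N(y)$, which one can solve by linear algebra. No microscale/mesoscale/macroscale apparatus is needed. I would first dispose of the critical set $Z=\{x\in B:\nabla\varphi(x)=0\}$. Since $\varphi$ is real analytic and nonconstant on the ball $B$, $Z$ is a real analytic subvariety of positive codimension, so $|\{x\in B:\dist(x,Z)<\rho\}|=O(\rho)$, and by \L{}ojasiewicz's theorem $|\nabla\varphi|\gtrsim\dist(\cdot,Z)^C$ on $B$. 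As in \S\ref{section:degenerate}, it then suffices to work on a finite family of coordinate charts in each of which $\varphi(x_1,x_2)\equiv x_1$ and each level set $\{x_1=y\}$ is a single arc, provided one records the powers of $\rho$ lost to metric distortions and at the end takes $\rho$ to be a small power of $\eps$. For the linear mappings of Theorem~\ref{thm:linearrecursive} one has $Z=\emptyset$ and this reduction is vacuous.

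\textbf{A nonvanishing determinant.} On the real analytic manifold
\[
\Lambda^{(N)}=\{(p_0,\dots,p_{N-1})\in B^N:\ \varphi(p_0)=\cdots=\varphi(p_{N-1})\}
\]
(of dimension $N+1$ away from $Z$) consider the real analytic function
\[
\Delta(p_0,\dots,p_{N-1})=\det\big(a_k(p_i)\big)_{0\le i\le N-1,\ 1\le k\le N}.
\]
The key step is to prove that \emph{$\Delta$ does not vanish identically on any nonempty relatively open subset of $\Lambda^{(N)}$}. In a chart where $\varphi(x_1,x_2)=x_1$, identical vanishing of $\Delta$ on a box $I_1\times I_2^N$ in coordinates $(x_1;v_0,\dots,v_{N-1})$ forces, for each $x_1\in I_1$, the analytic functions $v\mapsto a_1(x_1,v),\dots,a_N(x_1,v)$ on $I_2$ to be linearly dependent (a vanishing generalized Wronskian). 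Fixing finitely many derivatives at one point of $I_2$, their common kernel is the kernel of an analytic, everywhere rank‑deficient matrix pencil in $x_1$; off a discrete set this kernel has constant positive dimension and hence admits an analytic nowhere‑zero section $x_1\mapsto c(x_1)\in\reals^N$. Then $f_k:=c_k$ is a nonzero $C^\omega$ solution of $\sum_k a_k\,(f_k\circ\varphi)\equiv0$ on a nonempty open set, contradicting the hypothesis.

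\textbf{Testing and conclusion.} Let
\[
S=\{x\in B:\ |\textstyle\sum_{k=1}^N a_k(x)(f_k\circ\varphi)(x)|<\eps,\ \ |\bff\circ\varphi(x)|\ge1\}.
\]
If $p_0,\dots,p_{N-1}\in S$ all have $\varphi$‑value $y$, then $\big(a_k(p_i)\big)_{i,k}\lambda=v$ with $\lambda=(f_1(y),\dots,f_N(y))$, where $|v_i|<\eps$ and $|\lambda|\ge1$; expanding $\lambda$ by Cramer's rule and using that the cofactors of $\big(a_k(p_i)\big)$ are $O(1)$ (as $\|a_k\|_{L^\infty}=O(1)$) gives $|\Delta(p_0,\dots,p_{N-1})|\le C\eps$. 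Hence the set of such admissible tuples lies in $\{|\Delta|<C\eps\}\subset\Lambda^{(N)}$, whose measure is $O(\eps^\tau)$ by the previous step together with Lemma~\ref{lemma:analyticsublevel} applied chart by chart. On the other hand, writing $S_y=\{v:(y,v)\in S\}$ and fibering $\Lambda^{(N)}$ over the line of $\varphi$‑values, the same admissible set has measure $\asymp\int|S_y|^N\,dy\gtrsim|S|^N$ by Hölder's inequality (Cauchy--Schwarz when $N=2$). Comparing, $|S|^N\lesssim\eps^\tau$; reinstating the $O(\rho)$ contribution of the $\rho$‑neighborhood of $Z$ and choosing $\rho=\eps^{c_1}$ with $c_1$ sufficiently small relative to $\tau$ yields $|S|\le C\eps^c$ for some $c>0$.

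\textbf{Main obstacle.} The ideas are elementary; the actual work is bookkeeping around the critical set of $\varphi$ in the general (non‑linear) case — arranging the passage to coordinate charts so that it is simultaneously compatible with the inference ``$\Delta\not\equiv0$ on every chart'' from ``$\Delta\not\equiv0$ on every relatively open subset of $\Lambda^{(N)}$'', and with the fibered lower bound $\gtrsim|S|^N$, all with losses that are controlled powers of $\dist(\cdot,Z)$, exactly as in \S\ref{section:degenerate}. (One could instead try a recursion on $N$ via the difference trick, but it requires the sublevel exponent to exceed $1$ to close, so the direct determinant argument is preferable. Finally, deducing the $n=1$ case of Theorem~\ref{thm:linearrecursive}, where the $a_k$ may be ratios of real analytic functions, from Proposition~\ref{prop:n=1} is a routine clearing of denominators after discarding the small set where the common denominator is $<\rho$.)
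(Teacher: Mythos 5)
Your proposal is correct and follows essentially the same route as the paper: test the inequality at $N$ points on a common level curve of $\varphi$, observe via Cramer's rule that the $N\times N$ determinant $\det\big(a_k(p_i)\big)$ is then $O(\eps)$ on a set of tuples of measure $\gtrsim|S|^N$, prove that this real analytic determinant does not vanish identically, and invoke the analytic sublevel set bound (Lemma~\ref{lemma:analyticsublevel}), handling the critical locus of $\varphi$ as in \S\ref{section:degenerate}. The one place where you genuinely diverge is the proof that $\Delta\not\equiv 0$: the paper argues by induction on $N$, extracting from an identically vanishing determinant a dependence relation $a_N(x_j,y)=\sum_{k<N}c_k(\bx,y)a_k(x_j,y)$ and differentiating in the auxiliary variables $x_i$ to either drop to $N-1$ functions or force the $c_k$ to depend on $y$ alone (contradicting the main hypothesis), whereas you use the classical fact that identical vanishing of all such determinants is equivalent to linear dependence of the functions $v\mapsto a_k(x_1,v)$ along each level curve, and then produce an analytic, nowhere-vanishing kernel section by a generic-rank argument. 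Both are valid; your version makes the contradiction with the main hypothesis more direct, while the paper's induction avoids the (routine but nontrivial) stabilization argument needed to get an analytic section of the kernel uniformly in $x_1$.
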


\begin{proof}
Consider the case in which $\varphi(x,y)\equiv y$.
Write $\bx = (x_1,\dots,x_N)\in\reals^N$. Define
\begin{equation}
A(\bx,y) = \begin{pmatrix} a_k(x_j,y) \end{pmatrix}_{j,k=1}^N\ 
\end{equation}
for those $(\bx,y)$ with each $(x_j,y)\in\tilde B$.
Define \begin{equation} \alpha(\bx,y) = \det(A(\bx,y)).  \end{equation}
We claim that $\alpha$ does not vanish identically in $B$.
The conclusion of Proposition~\ref{prop:n=1}
follows from this claim, by reasoning developed above.  Indeed, if
\begin{equation}
S = \big\{ x\in B: |\sum_{k=1}^N a_k(x)(f_k\circ\varphi)(x)|<\eps
\ \text{ and } |\bff\circ\varphi(x)|\ge 1 \big\}
\end{equation}
then \[ S^* = \big\{ (\bx,y): (x_k,y)\in S\ \forall\,k\in\{1,2,\dots,N\} \big\}\] satisfies
\begin{equation} |S^*| \ge c|S|^C.  \end{equation}
If $(\bx,y)\in S^*$ then 
\[A(\bx,y) \begin{pmatrix} f_1(y) \\ \vdots \\ f_N(y)\end{pmatrix} = O(\eps).\]
Since $|\bff(y)|\ge 1$, it follows that $\alpha(\bx,y) = \det(A((\bx,y)))$ satisfies
\begin{equation} |\alpha(\bx,y)| = O(\eps).  \end{equation}
But if $\alpha$ does not vanish identically, then since it is a real analytic function, $\alpha$ satisfies
\begin{equation}
\big|\big\{ (\bx,y): |\alpha(\bx,y)| \le\delta \big\}\big| = O(\delta^\tau)
\end{equation}
for some exponent $\tau>0$.

To prove the claim, suppose to the contrary that $\det(A(\bx,y))$ were to vanish identically.
Then in some nonempty open set, possibly after permuting the indices $k$,
there would exist real analytic functions $c_k(\bx,y)$ that did not vanish identically
and that satisfied
\[ a_N(x_j,y) = \sum_{k=1}^{N-1} c_k(\bx,y) a_k(x_j,y)\]
for all $j\in\{1,2,\dots,N\}$	 for every $(\bx,y)\in U$.
Differentiating with respect to $x_i$ for any $i\ne j$ gives
\[  \sum_{k=1}^{N-1} \frac{\partial c_k(\bx,y)}{\partial x_i} a_k(x_j,y)\equiv 0\]
in $U$.

We proceed by induction on $N$, the case $N=1$ being the quintessence of triviality.
For the inductive step,
there are two cases. If there exists a pair of indices $i\ne j$
for which there exists $k$ such that $\frac{\partial c_k(\bx,y)}{\partial x_i}$ 
does not vanish identically in $U$,
then the conclusion follows by induction on $N$.  On the other hand,	
if $\frac{\partial c_k(\bx,y)}{\partial x_i}$ vanishes identically for any $i$
then each $c_k(\bx,y)$ is a function $c_k(y)$ of $y$ alone, and we have
\[ a_N(x,y) = \sum_{k=1}^{N-1} c_k(y) a_k(x,y).\]
The main hypothesis of the Proposition is contradicted upon
defining $f_N\equiv -1$ and $f_k(y) = c_k(y)$ for $k<N$.

This completes the discussion of the special case in which $\varphi_1(x,y) = \varphi(x,y)\equiv y$.
The nondegenerate case, in which $\nabla\varphi$ vanishes nowhere, is reducible to this special case
by a change of variables. 
The general case can be reduced to the nondegenerate case by the same reasoning that was used
in \S\ref{section:degenerate}
to reduce the general case of Theorem~\ref{thm:main} to the nondegenerate case of that theorem.
\end{proof}


\subsection{Sketch of the proof of Theorem~\ref{thm:linearrecursive} for $n>1$}

The proof is a variant of the proof of Theorem~\ref{thm:main},
with some significant differences. 
We will sketch it, indicating the points at which these differences arise.

The proof is by a double induction, with the outer induction on $n$
and the inner induction on $N_n$ with $n$ and $(N_j: j<n)$ held fixed.
Let $W_j$ be a nonzero real analytic vector field, with constant coefficients, 
satisfying $W_j(\varphi_j)\equiv 0$.
Let $N_j^* = N_j$ for $j<n$ and $=N_n-1$ for $j=n$.

By the permutation invariance of the hypotheses, we may assume that
\begin{equation}\label{f11lowerbound} |f_{1,1}(y)|\ge 1 
\ \text{ for each $y\in\varphi_1(B)$}\end{equation}
and that $f_{j,k}=O(1)$ for all $j,k$.

Let $\eps,\bff$ be given.
Let $\sS$ be the set of all $x\in S(\bff,\eps)$
such that $\sum_j \sum_{k_j} |f_{j,k}\circ\varphi_j(x)|\ge 1$.
We may assume without generality that 
$\sum_j \sum_{k_j} |f_{j,k}\circ\varphi_j(x)|\ge 1$ for every $x\in B$.
We may also assume without loss of generality that $a_{n,N_n}\equiv 1$.

For the microscale step,  let a small quantity $\sigma>0$ be given.
Consider the set $S_1$ of all $(x,s)$ with $x\in B$ and $|s|\le\eps$
such that $x\in \sS$ and $e^{sW_n}x\in \sS$.
This set satisfies $|S_1|\gtrsim \eps |\sS|^C$.
For any $(x,s)\in S_1$,
\begin{equation} \label{Linear_system1}
\sum_{j=1}^n \sum_{k_j=1}^{N_j^*} a_{j,k_j}(x)F_{j,k_j}(\varphi_j(x),sW_n(\varphi_j)) = O(\eps)
\end{equation}
where 
\begin{equation}
F_{j,k}(y,t) = f_{j,k}(y+t)-f_{j,k}(y).
\end{equation}
This is the first point at which the analysis for linear mappings $\varphi_j$
diverges from the analysis for the case of webs with nonvanishing curvature. 
Since each $W_n\varphi_j$ is a constant independent of $x$, \eqref{Linear_system1} is a family 
of inequalities for the functions
$x\mapsto F_{j,k_j}(\varphi_j(x),sW_n(\varphi_j)$;
we are dealing with a one-parameter family of sublevel set inequalities for $x\in B$
parametrized by $s$,
rather than with a single irreducible sublevel set inequality for $(x,s)\in B\times \reals^1$.
Moreover, the datum
$\big(n,\{\varphi_j\},\{N_j\},\{a_{j,k_j}\}\big)$
defining each inequality is independent of $s$,
and is the same datum with which we began, 
except that $N_n$ has been replaced by $N_n-1$,
effectively equivalent to requiring that $f_{n,N_n}$ vanishes identically.
Thus associated data do not arise in this step.

For each $s\in[-\eps,\eps]$ we may apply the induction hypothesis to the
tuple of functions $\tilde f_{j,k_j}(y) = F_{j,k_j}(y,sW_n(\varphi_j))$
indexed by $(j,k_j)$ satisfying $1\le j\le n$,
$1\le k_j\le N_j$ for $n<N$, and $1\le k_n\le N_n-1$.
We conclude that the set of all $x\in S_1$ satisfing 
\[\sum_{j=1}^n \sum_{k_j = 1}^{N_j^*} |F_{j,k_j}(\varphi_j(x),sW_n\varphi_j)|\gtrsim \eps^{1-\sigma}\]
has Lebesgue measure $\lesssim \eps^{c\sigma}$,
uniformly for all parameters $s$.

Therefore either $|S_1|\lesssim \eps^{1+c\sigma}$, in which case the proof is complete,
or there exists $S_2\subset S_1$ whose Lebesgue measure satisfies $\eps^{-1}|S_2|\gtrsim (\eps^{-1}|S_1|)^C$	
such that for every $j$ and every $k_j\le N_j^*$,
\[ |f_{j,k_j}(\varphi_j(x)+sW_n\varphi_j)-f_{j,k_j}(\varphi_j(x))| = O(\eps^{1-\sigma})
\ \ \forall\,(x,s)\in S_2.\]
By permuting the indices and repeating this argument, the same conclusion follows for $f_{n,k_N}$.
We assume henceforth that the second possibility arises.

As in the proof of Proposition~\ref{prop:microscale}, it follows that there
exist functions $f_{j,k_j}^*$ that are constant on intervals of length $\eps$,
in the same sense as in the discussion in \S\ref{section:microscale},
such that \[|f_{j,k_j}\circ\varphi_j(x)-f_{j,k_j}^*\circ\varphi_j(x)| = O(\eps^{1-\sigma})\]
for every $(j,k_j)$, for every $x\in S_3$, where $S_3\subset \sS$
satisfies $|S_3|\gtrsim |\sS|^C$.
We may, and do, replace $f_{j,k_j}$ by $f_{j,k_j}^*$ henceforth.

We next adapt the mesoscale analysis. Let a small quantity $\rho>0$ be given.
Fix $\gamma\in(\tfrac12,1)$ and set $\delta = \eps^\gamma$.
The above reasoning can be repeated at scale $\delta$. Therefore either the proof is
again complete, or there exists
$S_4\subset S_3$ satisfying $|S_4|\gtrsim |\sS|^C$ such that
\begin{equation} \label{deltascaleLip}
|f_{j,k_j}\circ\varphi_j(x) - f_{j,k_j}\circ\varphi_j(x')| = O(\delta^{1-\sigma}) 
\end{equation}
whenever $x,x'\in S_4$ satisfy $|x-x'| = O(\delta)$.

Let $\varrho>0$ be another small parameter, to be specified below.
Consider the set $S_5$ of all ordered pairs $(x',x)\in S_4\times S_4$ 
of the form $x' = e^{sW_n}x$ with $\delta^{1+\varrho} \le |s|\le \delta$. 
Define 
\[ F_{j,k_j}(y,t) = t^{-1}\big(f_{j,k_j}(y+t)-f_{j,k_j}(y)\big)\]
for $t\ne 0$
and
\[ \psi_j(x,t) = (\varphi_j(x),tW_n\varphi_j).\]
Provided that $|S_4|\gg \delta^\varrho$, as we may assume, $S_5$ satisfies
$|S_5| \gtrsim \delta |S_4|^2$. For $(x',x)\in S_5$,
\[ \sum_{j=1}^n \sum_{k_j=1}^{N_j^*}
\Big[W_na_{j,k_j}(x) f_{j,k_j}(x) + 
a_{j,k_j}(x) W_n\varphi_j (F_{j,k_j}\circ\psi_j)(x,s) \Big]
= O(\eps^{1-\gamma-\varrho}) \]
for every $(x',x) = (e^{sW_n}x,x)\in S_5$,
provided that $\sigma,\varrho$ are sufficiently small.
Note that each quantity $W_n\varphi_j$ is a constant.
We have used the bound \eqref{deltascaleLip}
to ensure that 
\[|a_{j,k_j}(x + sW_n\varphi_j)-a_{j,k_j}(x)| \cdot |F_{j,k_j}\circ\psi_j(x,s)|
= O(\delta \cdot \delta^{1-\sigma}(\delta\delta^{\varrho})^{-1})
= O(\delta^{1-\sigma-\varrho}),
\]
which is negligible relative to $\eps^{1-\gamma-\varrho} = \eps^{1-\varrho}\delta^{-1}$
for sufficiently small $\eps,\varrho,\sigma$ since
$\delta^2 = \eps^{2\gamma}$ is negligible relative to $\eps^1$ because $\gamma > \tfrac12$.

Let $S_6$ be the set of all $(x,s,s')$ such that
$(x,s)\in S_5$ and $(x,s')\in S_5$. By the Cauchy-Schwarz inequality,
$\delta^{-2} |S_6|\gtrsim (\delta^{-1}|S_5|)^2$.
For any $(x,s,s')\in S_6$,
\[ \sum_{j=1}^n \sum_{k_j=1}^{N_j^*}
a_{j,k_j}(x) 
\Big[
(W_n\varphi_j \cdot F_{j,k_j}\circ\psi_j)(x,s') 
- (W_n\varphi_j \cdot F_{j,k_j}\circ\psi_j)(x,s) 
\Big]
= O(\eps^{1-\gamma-\varrho}). \]
Because of the special form $\psi_j(x,s) = (\varphi_j(x), sW_n\varphi_j)$
with $W_n\varphi_j$ constant, 
for each fixed ordered pair $(s',s)$, this is a sublevel set inequality in the variable $x\in B\subset\reals^2$,
of the same form as that with which we began, except that $N_n$ has been replaced by $N_n^* = N_n-1$.

Invoking the induction hypothesis for each $(s,s')$,
we conclude, as in the microscale step above, 
that either $\delta^{-2}|S_6| = O(\eps^{c})$ for some $c>0$,
or there exists $S_7\subset S_6$ satisfying $\delta^{-2} |S_7|\gtrsim (\delta^{-2} |S_6|)^C$
such that 
\begin{equation} \label{longexponent}
\big| (F_{j,k_j}\circ\psi_j)(x,s') - (F_{j,k_j}\circ\psi_j)(x,s) \big| 
= O(\eps^{(1-\gamma-\varrho)(1-\sigma)})\ \forall\,j\le n\ \forall\,k_j\le N_j^*
\end{equation}
for every $(x,s',s)\in S_7$.
The same reasoning can be repeated with the roles of the indices permuted
to yield the same conclusion for the index $(n,N_n)$ as well,
on a set $S_8\subset S_7$ whose Lebesgue measure satisfies
$\delta^{-2}|S_8| \gtrsim (\delta^{-2}|S_7|)^C$.

Choose $\sigma,\varrho$ so that $(1-\gamma-\varrho)(1-\sigma) >\gamma$.
It follows from \eqref{longexponent}, as in the proof of \eqref{upon_partitioning}
in \S\ref{section:mesoscale},
that there exist $S_9\subset\sS$ of measure $|S_9|\gtrsim |\sS|^C$
and functions $f_{j,k_j}^*$
that are affine on intervals of lengths $\eps^\gamma$
and satisfy
\begin{equation*} 
|f_{j,k_j}^*(\varphi_j(x))-f_{j,k_j}(\varphi_j(x))| \le C\eps^{1-\rho} \ \forall\,x\in S
\ \forall\,j\in\{1,2,3\}.
\end{equation*}
Moreover, the derivatives $(f_{j,k_j}^*)'$ satisfy
\begin{equation*} 
|(f_{j,k_j}^*)'| \le C \eps^{-\rho}.  \end{equation*}
These constants $C$ depend on $\rho$.
We may replace each $f_{j,k_j}$ by $f_{j,k_j}^*$ henceforth.

We next carry out the macroscale step.
It is here that associated data arise.
Let $\varrho>0$ be another sufficiently small parameter, to be chosen below.
By the same reasoning as in \S\ref{section:macroscale},
it suffices to show that the set $S$ defined to be
\begin{equation*}
	S = \big\{x\in B:
	\Big|\sum_{j=1}^n \sum_{k_j=1}^{N_j}
	W_na_{j,k_j}(x)\,(f_{j,k_j}\circ\varphi_j)(x)
	+ a_{j,k_j}(x)\,W_n\varphi_j(x)\, (g_{j,k_j}\circ\varphi_j)(x)
	\Big|<\eps^\varrho
	\big\}
\end{equation*}
satisfies $|S| = O(\eps^c)$ for some $c = c(\varrho)>0$.
Here $g_{j,k_j}$ arises as the derivative of $f_{j,k_j}$,
but as in \S\ref{section:macroscale}, we regard it as an independent function,
not necessarily related to $f_{j,k_j}$ in any way.
This sum is interpreted under the convention that if $N_n=1$ then
the summation extends only over $j\in\{1,2,\dots,n-1\}$.

Rewrite this sum by setting $N_j^* = 2N_j$ for $j<n$,
and for $j<n$, $f_{j,k_j}=g_{j,k_j-N_j}$ for $N_j<k_j\le N_j^*$.
For $j=n$, $W_n\varphi_j=0$ so the contribution of $j=n$ is
$\sum_{k_n \le N_n-1} a_{n,k_n}(x)\,(f_{n,k_n}\circ\varphi_n)(x)$.
Thus the sum becomes
\[ \sum_{j=1}^n \sum_{k_j=1}^{N_j^*} a_{j,k_j}(x)\,(f_{j,k_j}\circ\varphi_j)(x)\]
if $N_n>1$, and becomes
\[ \sum_{j=1}^{n-1} \sum_{k_j=1}^{N_j^*} a_{j,k_j}(x)\,(f_{j,k_j}\circ\varphi_j)(x)\]
if $N_n=1$.
This is a sum for the datum 
\[\scriptd^* = (n,\{\varphi_j: j\le n\},\{N_j^*\},\{a_{j,k_j}: j\le n\text{ and } k_j\le N_j^*\})\]
if $N_n>1$,
and for the datum
\[\scriptd^* = (n-1,\{\varphi_j: j\le n-1\},\{N_j^*\},\{a_{j,k_j}: j\le n-1\text{ and }k_j\le N_j^*\})\]
if $N_n=1$.
In each case, the datum $\scriptd^*$ is associated to $\scriptd$.

Thus we have arrived at a sublevel set inequality, not for the given datum 
but rather for an associated datum $\scriptd^*$
under one step of the recursion defined above.
By the inductive hypothesis, a sublevel set inequality holds for this
associated datum. Since $|f_{1,1}(y)|\ge 1$ for every $y\in\varphi_1(B)$,
the inductive hypothesis ensures that $|S|$ satisfies such a upper bound,
completing the proof of Theorem~\ref{thm:linearrecursive}.
\qed

\medskip
It is the author's hope that the analysis developed in this paper provides an 
outline for a treatment of sums $\sum_{j=1}^n a_j\cdot(f_j\circ\varphi_j)$ 
with general mappings $\varphi_j$ and with an arbitrary number of summands, 
by a recursive argument,
albeit under increasingly complicated auxiliary hypotheses as $n$ increases. 
Moreover, it would be desirable to remove the auxiliary hypotheses,
which for linear mappings are encoded by associated data,
and which would take a still more complicated form for general mappings.
As of this writing, work in this direction is underway.



\section{Another form of degeneracy} \label{section:epilogue}

In \S\ref{section:intro} we stated that an excluded degenerate case of Theorem~\ref{thm:main}, 
in which two or more mappings $\varphi_j$ have gradients that are everywhere linearly dependent,
can be treated by a modification of the main argument.
No auxiliary hypothesis is needed for those cases.
In this section, we justify that statement.

There are two subcases, of which the first, 
the subcase in which the gradients of all three mappings $\varphi_j$
are everywhere linearly dependent, was treated above in \S\ref{section:n=1}.
The remaining subcase, in which $\nabla\varphi_2,\nabla\varphi_3$
are everywhere linearly dependent but $\nabla\varphi_1$ is linearly
independent of these at generic points, is a special case of 
the case $n=2$ of Theorem~\ref{thm:linearrecursive} if it is possible to change
variables so that all three mappings become linear.

In the general case of this remaining subcase,
a change of notation we have a sum of the type appearing in
\eqref{doublesum} with $n=2$, $N_1=1$, and $N_2=2$.
Assuming for simplicity that $\nabla\varphi_j$ vanish nowhere,
and that $\{\nabla\varphi_1,\nabla\varphi_2\}$ is everywhere linearly independent,
we may change variables so that $\varphi_j(x_1,x_2)\equiv x_j$.
By fixing $N_1$ values of $x_2$ and allowing $x_1$ to vary freely,
we obtain a linear system of $N_1$ approximate equations
for the $N_1$ quantities $f_{1,k_1}(x_1)$. 
This can be exploited by proceeding as in the discussion
beginning with the matrix equation \eqref{matrixeqn}
in the macroscale analysis of \S\ref{section:macroscale}.
See also \S\ref{section:n=1}.

The more general case, in which no $\nabla\varphi_j$ vanishes identically
but each may vanish on some variety of positive codimension
and $\nabla\varphi_1,\nabla\varphi_2$ are linearly independent at generic points
but not necessarily at every point, can be reduced to the 
case of the preceding paragraph by the arguments in \S\ref{section:degenerate}.


\begin{thebibliography}{20}

\bibitem{triosc}
M.~Christ, 
{\em On trilinear oscillatory integral inequalities and related topics},
preprint, arXiv:2007.12753 

\bibitem{christquad}
\bysame,
{\em On implicitly oscillatory quadrilinear integrals}, preprint

\bibitem{CDR} 
M.~Christ, P.~Durcik, and J.~Roos, 
{\em Trilinear smoothing inequalities and a variant of the triangular Hilbert transform},
Adv. Math. 390 (2021), Paper No.~107863

\bibitem{christzhou}
M.~Christ and Z.~Zhou, 
{\em A class of singular bilinear maximal functions}, preprint, arXiv:2203.16725 


\bibitem{NSW}
A.~Nagel, E.~M.~Stein, and S.~Wainger,
{\em Balls and metrics defined by vector fields. I. Basic properties},
Acta Math. 155 (1985), no. 1-2, 103-147

\bibitem{steinstreet}
E.~M.~Stein and B.~Street,
{\em Multi-parameter singular Radon transforms III: Real analytic surfaces}, 
Adv. Math. 229 (2012), no. 4, 2210--2238


\end{thebibliography}
\end{document}